\documentclass[oneside]{amsart}

\usepackage[dvipsnames]{xcolor}
\usepackage[utf8]{inputenc}
\usepackage[UKenglish]{babel}

\usepackage{amsmath}
\usepackage{mathtools}
\usepackage{amssymb}
\usepackage{amsthm}
\usepackage{thmtools}
\usepackage{enumitem}
\usepackage{hyperref}
\usepackage{cleveref}

\usepackage{subcaption}
\usepackage{pgfplotstable}
\usepackage{bbm}
\usepackage{tikz}
\usepackage{tikz-cd}
\usetikzlibrary{angles,quotes}

\crefname{equation}{}{}
\Crefname{equation}{Equation}{Equations}

\theoremstyle{plain}
\newtheorem{theorem}{Theorem}[section]
\newtheorem{proposition}[theorem]{Proposition}
\newtheorem{lemma}[theorem]{Lemma}
\newtheorem{corollary}[theorem]{Corollary}
\theoremstyle{remark}
\newtheorem{remark}[theorem]{Remark}
\theoremstyle{definition}
\newtheorem{definition}[theorem]{Definition}

\numberwithin{equation}{section}

\DeclarePairedDelimiter{\abs}{\lvert}{\rvert}
\DeclarePairedDelimiter{\norm}{\lVert}{\rVert}
 
\title{Existence and convergence of the area-constrained elastic flow}

\author[F.~Hauser]{Florian Hauser}
\email{fhauser@uni-bonn.de}

\author[C.~Scharrer]{Christian Scharrer}
\email{scharrer@iam.uni-bonn.de}

\author[A.~West]{Alexander West}
\email{west@iam.uni-bonn.de}

\address{Institute for Applied Mathematics, University of Bonn, Endenicher Allee 60, 53115 Bonn, Germany}

\begin{document}

\maketitle

\section*{Abstract}
We study the evolution of plane closed curves with fixed area moving by the negative $L^2$-gradient of their elastic energy. For smooth initial data, we
establish local and global existence of the flow. By imposing a simplicity assumption and an initial energy bound, we show that the length of the
evolving curve remains uniformly bounded. This yields subconvergence to a critical point, which is then improved to full convergence by utilizing a
\L{}ojasiewicz--Simon inequality. Conversely, an analysis of the energy profile curve, which maps a given length to the minimal energy among all curves
with that length and fixed area, reveals that the length diverges to infinity for initial data satisfying specific length and energy criteria. We visualize
our findings through numerical simulations.

\tableofcontents
\section{Introduction}
The Bernoulli model of an elastic rod, represented by a smooth immersed curve $\gamma: \mathbb{S}^1 \to \mathbb{R}^2$, aims to describe the bending behaviour through its stored elastic energy. This has led to an extensive study of minimizers and gradient flows, often subject to a length constraint, of this energy. In this article, we analyze the gradient flow subject to prescribed area and demonstrate local and global existence, as well as convergence under conditions on the initial data.

We denote the arc-length element $ds \coloneqq \abs{\partial_x \gamma} dx$ and the arc-length derivative $\partial_s \coloneqq \frac{\partial_x}{\abs{\partial_x \gamma}}$. This allows us to define the unit tangent vector $\tau \coloneqq \partial_s \gamma$. Throughout this work, we assume that closed embedded curves are positively oriented (traced counter-clockwise). We define the unit normal vector $\nu$ as the inner normal, obtained by a rotation $\nu \vcentcolon= (-\tau_2,\tau_1)$ of the tangent vector. The \emph{Frenet equations} describe the relations
\begin{equation*}
    \kappa \nu = \partial_s \tau,\qquad -\kappa \tau = \partial_s \nu,
\end{equation*}
where $\kappa$ denotes the curvature. Hence, the curvature is nonnegative for convex curves.
Let  
\begin{equation*}
	\mathcal{E}(\gamma) \coloneqq \int_{\mathbb{S}^1} \kappa^2 ds
\end{equation*}
be twice the elastic energy. This functional is not invariant under scaling: $\mathcal{E}(r \gamma) = r^{-1}\mathcal{E}(\gamma)$ for $r>0$. Consequently, the functional does not admit a global minimizer on the space of all closed curves. Thus, we study the variational problem under constraints. For instance, \textsc{Langer--Singer}~\cite{LANGER198575} proved that among curves of fixed length and winding number $1$, the circle is the unique minimizer.
However, the problem is also interesting from a dynamical perspective. Starting from an initial curve, we aim to deform it to reduce the elastic energy while keeping the length fixed. This evolution is modeled by the constrained $L^2(ds)$-gradient flow of the elastic energy. For a family of curves $\gamma:[0,T) \times \mathbb{S}^1 \to \mathbb{R}^2$ evolving according to the normal velocity $\partial_t \gamma = \xi \nu$ for $\xi \in C^\infty(\mathbb{S}^1, \mathbb{R})$ (tangential components represent reparametrizations and do not affect the energy), the time derivative of the energy is given by
\begin{equation*}
    \partial_t \mathcal{E}(\gamma) = \int_{\mathbb{S}^1} (2\partial_s^2 \kappa + \kappa^3)\xi ds.
\end{equation*}
Consequently, the unconstrained $L^2(ds)$-gradient reads
\begin{equation*}
    \nabla \mathcal{E}(\gamma) = \nabla_{L^2(ds)} \mathcal{E}(\gamma) = (2\partial_s^2 \kappa + \kappa^3)\nu.
\end{equation*}
Similarly, for the length functional  
\begin{equation*}
	\mathcal{L}(\gamma) = \int_{\mathbb{S}^1}ds,
\end{equation*}
there holds 
\begin{equation*}
	\nabla \mathcal{L}(\gamma) = -\kappa \nu.
\end{equation*}
To enforce length preservation, we define the flow as a linear combination of the gradients
\begin{equation*}
\begin{cases}
\begin{aligned}
    \partial_t \gamma &= -\nabla \mathcal{E}(\gamma) + \mu(\gamma) \nabla \mathcal{L}(\gamma) = -(2\partial_s^2 \kappa + \kappa^3 + \mu(\gamma) \kappa)\nu && \text{on } (0,T) \times \mathbb{S}^1,\\
    \gamma(0) &= \gamma_0 && \text{on } \mathbb{S}^1,
\end{aligned}
\end{cases}
\end{equation*}
for some $T \in (0,\infty]$.
Here, the time-dependent Lagrange multiplier $\mu$ is determined by the condition $\frac{d}{dt} \mathcal{L}(\gamma) = 0$. A short calculation yields
\begin{equation*}
    \mu(\gamma) = - \frac{\int_{\mathbb{S}^1}(2\partial_s^2 \kappa+\kappa^3)\kappa ds}{\mathcal{E}(\gamma)}.
\end{equation*}
\textsc{Dziuk--Kuwert--Schätzle} \cite{Dziuk2002} proved that the gradient flow exists globally in time and subconverges to a length-constrained elastica. Such an elastica is a critical point of the energy under the length constraint, satisfying the Euler-Lagrange equation
\begin{equation*}
    2 \partial_s^2 \kappa + \kappa^3 + \mu_\infty \kappa = 0
\end{equation*}
for some constant $\mu_\infty \in \mathbb{R}$. More recently, \textsc{Dall'Acqua--Pozzi--Spener}~\cite{Dall_Acqua_2016} and \textsc{Rupp}~\cite{RUPP2020108708} established full convergence by utilizing a \L{}ojasiewicz--Simon inequality.

One may also introduce further constraints, such as on the enclosed area. For a simple closed planar positively oriented curve $\gamma$, the enclosed area is given by
\begin{equation*}
    \mathcal{A}(\gamma)=-\frac{1}{2}\int_{\mathbb{S}^1}\nu \cdot \gamma ds.
\end{equation*}
For general immersed curves, this remains well defined and is often referred to as \emph{algebraic area}. The $L^2(ds)$-gradient is given by  
\begin{equation*}
	\nabla \mathcal{A}(\gamma) = -\nu.
\end{equation*}
Solutions to the Euler-Lagrange equation 
\begin{equation*}
    0 = -\nabla \mathcal{E}(\gamma) + \lambda_1 \nabla \mathcal{A}(\gamma) + \lambda_2 \nabla \mathcal{L}(\gamma)
\end{equation*}
subject to fixed length and area were analyzed by \textsc{Murai--Matsumoto--Yotsutani} \cite{MinoruMurai2013ConferencePublications}.
The corresponding dynamical problem was studied by \textsc{Okabe} \cite{okabe2007}. %In this work, also global existence and convergence were established.

A natural question arises: What happens if we omit the length constraint? \textsc{Bucur--Henrot} \cite{bucur2014newisoperimetricinequalityelasticae}, as well as \textsc{Ferone--Kawohl--Nitsch} \cite{ferone2015elasticaproblemareaconstraint}, proved the existence of a minimizer, namely the circle, among all simple curves with fixed area. The simplicity assumption is crucial, as there exist non-simple curves with fixed area but length diverging to infinity whose elastic energy tends to zero, see \Cref{fig:triple_eight_energy_to_zero}.

In this article, we address the associated dynamical problem. We study solutions to the equation
\begin{equation}
\label{eq:evolution_gradient_flow}
\begin{cases}
\begin{aligned}
    \partial_t \gamma &= -\nabla \mathcal{E}(\gamma) + \lambda(\gamma) \nabla \mathcal{A}(\gamma) = -(2\partial_s^2 \kappa + \kappa^3 + \lambda(\gamma)) \nu && \text{on } (0,T) \times \mathbb{S}^1,\\
    \gamma(0) &= \gamma_0 && \text{on } \mathbb{S}^1,
\end{aligned}
\end{cases}
\end{equation}
for some $T \in (0,\infty]$.
To ensure the area constraint $\frac{d}{dt} \mathcal{A}(\gamma) = 0$, the Lagrange multiplier is chosen as
\begin{equation}
\label{eq:a_lagrange_multiplier}
    \lambda(\gamma) = -\frac{\int_{\mathbb{S}^1} \kappa^3 ds}{\mathcal{L}(\gamma)}.
\end{equation}
The area constraint $\int_{\mathbb{S}^1}\nabla \mathcal{A}(\gamma) \cdot \partial_t \gamma ds = 0$ yields the energy identity
\begin{equation}
\label{eq:energy_identity}
    \partial_t \mathcal{E}(\gamma) = \int_{\mathbb{S}^1} \nabla \mathcal{E}(\gamma) \cdot \partial_t \gamma ds = -\int_{\mathbb{S}^1} |\partial_t \gamma|^2 ds,
\end{equation}
which implies that the elastic energy is non-increasing. Local and global existence of solutions can be shown by methods similar to \cite{dallacqua2017elasticflowcurveshyperbolic} and \cite{Dziuk2002}.
\begin{theorem}
\label{thm:global_existence}
    There exists a global (i.e. $T=\infty$) smooth solution $\gamma$ to \cref{eq:evolution_gradient_flow} for smooth immersed initial data $\gamma_0:\mathbb S^1\to\mathbb R^2$.
\end{theorem}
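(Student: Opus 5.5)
The proof has two parts: short-time existence via a parabolic fixed-point argument, and global existence via energy-type curvature estimates that rule out a finite maximal time, following \cite{dallacqua2017elasticflowcurveshyperbolic} and \cite{Dziuk2002}.

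\textbf{Short-time existence.} The tangential degree of freedom makes the equation degenerate, so we first fix it. Write $\gamma(t,x) = \gamma_0(x) + u(t,x)\nu_0(x)$, or equivalently add a tangential term so that the leading operator becomes $-2\partial_x^4\gamma/\abs{\partial_x\gamma}^4$. The resulting quasilinear fourth-order system is strictly parabolic. The Lagrange multiplier $\lambda(\gamma) = -\int\kappa^3\,ds/\mathcal L(\gamma)$ is a nonlocal term of lower order: it involves only second derivatives, and it is locally Lipschitz on bounded sets of $C^{2,\alpha}$ immersions with $\abs{\partial_x\gamma}$ bounded below.

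We linearize at $\gamma_0$ and use Schauder theory in parabolic Hölder spaces $C^{\frac{4+\alpha}{4},4+\alpha}$. A contraction argument on a small time interval then gives a unique solution of the modified problem. Parabolic bootstrapping makes it smooth for $t>0$, and smooth up to $t=0$ since $\gamma_0$ is smooth. Finally we undo the tangential reparametrization by solving an ODE for the diffeomorphisms, which produces a solution of \cref{eq:evolution_gradient_flow}.

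\textbf{Continuation.} Let $T$ be the maximal existence time and suppose $T<\infty$. By the energy identity \cref{eq:energy_identity}, $\mathcal E(\gamma(t))\le \mathcal E(\gamma_0)$. The next step is to bound the length from above and below on $[0,T)$.
\begin{itemize}
\item \emph{Lower bound.} Fenchel's inequality gives $2\pi \le \int\abs{\kappa}\,ds \le \mathcal L^{1/2}\mathcal E^{1/2}$, so $\mathcal L \ge 4\pi^2/\mathcal E(\gamma_0)$.
\item \emph{Upper bound.} Compute
\[
\partial_t \mathcal L = \int \kappa\,(2\partial_s^2\kappa+\kappa^3+\lambda)\,ds = -2\int(\partial_s\kappa)^2ds + \int\kappa^4ds + \lambda\int\kappa\,ds.
\]
Interpolation (Gagliardo--Nirenberg on curves) gives $\int\kappa^4 \le \varepsilon\int(\partial_s\kappa)^2 + C(\mathcal E,\mathcal L^{-1})$. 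For the multiplier, $\abs{\lambda}\le \norm{\kappa}_\infty\mathcal E/\mathcal L$ and $\abs{\int\kappa}\le(\mathcal L\mathcal E)^{1/2}$, so $\abs{\lambda\int\kappa\,ds}$ is again absorbed with a constant depending on $\mathcal E(\gamma_0)$ and the lower bound on $\mathcal L$. Hence $\partial_t\mathcal L \le C$, and $\mathcal L(t)\le \mathcal L(\gamma_0)+CT$ stays bounded on the finite interval.
\end{itemize}

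\textbf{Higher-order estimates.} Next we derive bounds for $\norm{\partial_s^m\kappa}_{L^2}$ in the style of Dziuk--Kuwert--Schätzle. The curvature evolves by
\[
\partial_t\kappa = -2\partial_s^4\kappa + P(\kappa) - \lambda\kappa^2,
\]
where $P(\kappa)$ is a sum of polynomial terms, and the commutator $[\partial_t,\partial_s]$ contributes $\kappa\,(2\partial_s^2\kappa+\kappa^3+\lambda)\,\partial_s$. This yields
\[
\frac{d}{dt}\int(\partial_s^m\kappa)^2ds + \int(\partial_s^{m+2}\kappa)^2ds \le C\bigl(\mathcal E(\gamma_0), \mathcal L^{\pm1}, m\bigr)\bigl(1+\int(\partial_s^m\kappa)^2ds\bigr).
\]
The terms generated by $P$ are handled by the standard interpolation inequalities of \cite{Dziuk2002}. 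The new terms carrying $\lambda$ are controlled by $\abs{\lambda}\le C\norm{\kappa}_{L^\infty}$, so they are of lower order than the polynomial terms already absorbed. Gronwall's inequality then gives bounds on $[0,T)$. Together with the length bounds and $\partial_t\abs{\partial_x\gamma} = \kappa\,\xi\,\abs{\partial_x\gamma}$, this bounds $\abs{\partial_x\gamma}$ above and below, and hence all parametrization derivatives. The position $\gamma$ itself stays bounded since $\abs{\partial_t\gamma}$ is bounded. Therefore $\gamma(t)$ converges smoothly to an immersion as $t\to T$, and restarting the flow with short-time existence contradicts maximality. So $T=\infty$.

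\textbf{Main obstacle.} I expect the hardest step to be the length upper bound. Unlike the length-constrained flow, length is not preserved here, and the nonlocal $\lambda$ must be controlled through the interpolation absorption. Because the constant is finite on finite time intervals, this is enough for global existence.
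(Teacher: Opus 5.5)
Your proposal is correct and follows essentially the paper's route. For short-time existence, both use a tangential modification, parabolic H\"older theory and a reparametrization ODE; you use a contraction where the paper uses the Inverse Function Theorem. For global existence, both use energy decay, a lower length bound, interpolation control of $\lambda$, Dziuk--Kuwert--Sch\"atzle estimates and a continuation argument.

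The one real difference is that the upper length bound you call the main obstacle is not needed in the paper's version. With the scale-invariant norms only non-positive powers of $\mathcal{L}$ appear, so $\mathcal{L}\ge 4\pi^2/\mathcal{E}(\gamma_0)$ alone gives time-uniform bounds on all $\partial_s^m\kappa$, which the paper reuses in the convergence analysis. Control of $\abs{\partial_x\gamma}$, and hence of the length, on $[0,T)$ then follows from Gr\"onwall.
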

However, the asymptotic analysis poses challenges, primarily because there is no uniform bound on the length of the curve. As in the static case, curves may develop increasing length while the energy vanishes and the area is preserved. This behavior arises due to negative contributions to the area, for example of the middle 'belly' of \Cref{fig:triple_eight_energy_to_zero}.
\begin{figure}
    \centering
    \begin{tikzpicture}[scale=0.8]
        \draw[thick] (-3,1) arc (45:315:{sqrt(2)});
        \draw[thick] (-1,1) arc (135:45:{sqrt(2)});
        \draw[thick] (1,-1) arc (315:225:{sqrt(2)});
        \draw[thick] (3,-1) arc (-135:135:{sqrt(2)});
        \draw[thick] (-3,-1) -- (-1,1);
        \draw[thick] (-3,1) -- (-1,-1);
        \draw[thick] (3,-1) -- (1,1);
        \draw[thick] (3,1) -- (1,-1);
    \end{tikzpicture}
\caption{An example of a curve exhibiting three 'bellies'. If the radii of the circular segments are increased appropriately, the elastic energy of this configuration approaches zero while the total enclosed area remains constant.}
\label{fig:triple_eight_energy_to_zero}
\end{figure}
These negative contributions allow the curve's length to increase without bound, as the negative area of the middle 'belly' compensates for the gain in area of the outer 'bellies'.
To prevent this, we require restrictions on the initial data, relying on the 'optimal drop' result from \cite{bucur2014newisoperimetricinequalityelasticae}.

To formulate our main result, it is convenient to describe the curve geometry of a continuously differentiable immersion $\gamma :\mathbb S^1\to\mathbb R^2$ using the tangent angle $\theta$, representing the angle between the tangent vector and the horizontal axis. To be more precise, writing $\mathbb S^1$ as the interval $[0,2\pi]$ with identified endpoints, we choose $\theta$ according to \cite[Theorem 2.24]{MR1882174} satisfying
\begin{equation*}
	\dot \gamma = |\dot\gamma|(\cos \theta,\sin\theta) 
\end{equation*}
such that $\theta$ is continuous on $[0,2\pi)$ and $\theta(0) \in [0, 2\pi)$. Following \textsc{Bucur--Henrot} \cite{bucur2014newisoperimetricinequalityelasticae}, we define a drop as a simple, immersed curve $\gamma \in W^{2,2}([0,\pi], \mathbb{R}^2)$ with $\gamma(0)=\gamma(\pi)$ and $\theta(\pi)=\theta(0)+\pi$. The space $W^{2,2}$ is the natural choice, since the curvature is well defined and the elastic energy is finite.

We denote by $\gamma^*$ the optimal drop \cite{bucur2014newisoperimetricinequalityelasticae}, which is a minimizer of the functional $\mathcal{A} + \mathcal{E}$ in the class of drops. Let $\gamma^*_a$ be the rescaled version of $\gamma^*$ such that $\mathcal{A}(\gamma^*_a)=a$. The drop $\gamma^*_a$ is the minimizer of the elastic energy among all drops with area $a$.
Indeed, if there were a drop $\gamma$ with area $a$ and lower energy, the rescaling of $\gamma$ would contradict the optimality of $\gamma^*$.

The crucial idea for the convergence is an energy barrier argument. By restricting the initial energy, we ensure that the flow preserves the simplicity of the curve. Specifically, we compare the initial energy to the energy of a configuration consisting of two disjoint optimal drops. The minimal energy for such a 'split' configuration with total area $a$ is attained by two equal drops with area $\frac{a}{2}$. If the initial energy is strictly below this threshold, the curve is energetically prevented from forming self-intersections. Consequently, the curve remains simple along the flow. Although the length may theoretically diverge, we show that in this regime of preserved simplicity, a divergence of length would imply that the curve approaches a configuration with energy close to $2\mathcal{E}(\gamma_{\frac{a}{2}}^*)$, leading to a contradiction. The energy identity implies that the flow converges to an area-constrained elastica, that is, a solution to
\begin{equation*}
    2 \partial_s^2 \kappa + \kappa^3 + \lambda_\infty = 0
\end{equation*}
for some constant $\lambda_\infty \in \mathbb{R}$.
\begin{theorem}
\label{thm:convergence}
    Let $\gamma_0 \in C^\infty(\mathbb{S}^1, \mathbb{R}^2)$ be simple and immersed with
    \begin{equation*}
        \mathcal{A}(\gamma_0) =a,\quad
        \mathcal{E}(\gamma_0) < 2 \mathcal{E}(\gamma^*_{\frac{a}{2}}).
    \end{equation*}
    A translated (time-dependent) and reparametrized version $\tilde{\gamma}$ of the global, smooth and simple solution $\gamma$ to \cref{eq:evolution_gradient_flow} from \Cref{thm:global_existence} with initial data $\gamma_0$ converges smoothly to an area-constrained elastica $\gamma_\infty$.
\end{theorem}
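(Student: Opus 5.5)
The plan has four steps: preserve simplicity, bound the length, extract a subsequential limit, and upgrade it to full convergence with a \L{}ojasiewicz--Simon inequality.

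\emph{Step 1: simplicity is preserved.} Let $t_0$ be the first time at which $\gamma(t_0)$ is not simple. By smoothness of the flow and the fact that embeddedness is an open condition, $\gamma(t_0)$ has a self-touching point $\gamma(t_0,x_1)=\gamma(t_0,x_2)$ with $x_1\neq x_2$. At such a tangential contact of a limit of simple curves, the tangents at the contact point are antiparallel. Cutting the curve at the contact point splits it into two closed loops $\gamma^1,\gamma^2$. Each loop is a drop in the sense of \cite{bucur2014newisoperimetricinequalityelasticae} (possibly after rotating so that the angle condition $\theta(\pi)=\theta(0)+\pi$ holds), and their areas $a_1,a_2>0$ satisfy $a_1+a_2=a$. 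The case of internal tangency also has to be handled: there one loop lies inside the other and the areas subtract. I expect this case to be the delicate one and would check whether it can arise from the outside or must be excluded by orientation.

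Optimality of $\gamma^*$ together with the scaling $\mathcal{E}(\gamma^*_{b})=c\,b^{-1/2}$ gives
\[
\mathcal{E}(\gamma(t_0))\ge c\bigl(a_1^{-1/2}+a_2^{-1/2}\bigr)\ge 2c\,(a/2)^{-1/2}=2\mathcal{E}(\gamma^*_{a/2}),
\]
using convexity of $b\mapsto b^{-1/2}$. This contradicts the energy identity \cref{eq:energy_identity}, which gives $\mathcal{E}(\gamma(t_0))\le\mathcal{E}(\gamma_0)<2\mathcal{E}(\gamma^*_{a/2})$.

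\emph{Step 2: uniform length bound.} This is the main obstacle. Suppose $\mathcal{L}(\gamma(t_k))\to\infty$ for a sequence of times $t_k$ while the curves stay simple with area $a$ and energy at most $\mathcal{E}(\gamma_0)$. I would show that, up to an error $o(1)$, the energy is at least $2\mathcal{E}(\gamma^*_{a/2})$.

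First try: a long simple curve with bounded energy and fixed area must contain two far-apart points at distance $\ll$ diameter, a ``neck''. Otherwise, by isoperimetric-type reasoning with $\int\kappa^2$ bounded, the enclosed region would be thick and its area would grow. Closing the neck with a short segment of small energy cost splits the curve into two almost-drops. These can be compared to drops of areas $a_1,a_2$ with $a_1+a_2\to a$, since the thin neck region carries area $o(1)$. Applying the inequality of Step 1 with an $o(1)$ correction then contradicts the strict initial energy gap. The technical heart is making the neck-closing quantitative: I would use that along the long thin part the two strands have nearly antiparallel tangents, so the angle condition of a drop is approximately satisfied, and then modify the curve by a small $W^{2,2}$ perturbation to turn each piece into an honest drop.

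\emph{Step 3: subconvergence.} With $\mathcal{L}$ bounded above, and bounded below by the isoperimetric inequality since the curve is simple with area $a$, the standard interpolation estimates used for \Cref{thm:global_existence}, following \cite{Dziuk2002}, give uniform bounds on $\norm{\partial_s^m\kappa}_{L^2}$ for all $m$. The Lagrange multiplier $\lambda$ stays bounded via \cref{eq:a_lagrange_multiplier} and the Gagliardo--Nirenberg inequality. The energy identity gives $\int_0^\infty\norm{\partial_t\gamma}^2_{L^2}\,dt<\infty$, and higher derivatives of the normal velocity then decay. After translating so that, say, the barycenter is $0$ and reparametrizing by constant speed, Arzelà--Ascoli yields smooth subconvergence to a solution of $2\partial_s^2\kappa+\kappa^3+\lambda_\infty=0$.

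\emph{Step 4: full convergence.} As in \cite{Dall_Acqua_2016,RUPP2020108708}, I would prove a constrained \L{}ojasiewicz--Simon inequality for $\mathcal{E}$ restricted to the analytic submanifold $\{\mathcal{A}=a\}$ near $\gamma_\infty$, working with normal graphs over $\gamma_\infty$. Fredholmness of the linearization modulo the finite-dimensional constraint directions is what makes this work. Combined with the energy identity, the standard argument gives $\int_{t_0}^\infty\norm{\partial_t\gamma}_{L^2}\,dt<\infty$, so the flow stays in the neighbourhood and converges; smooth convergence follows from the uniform higher-order bounds.
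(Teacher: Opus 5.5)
Your four-step architecture is the paper's: a two-drop energy barrier for simplicity and for the length bound, then subconvergence and a constrained \L{}ojasiewicz--Simon inequality. Steps 3--4 are fine, but the barrier argument has a genuine hole, located exactly in the case you leave open in Step 1.

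\textbf{Step 1: the exterior-contact case.} A first self-contact of a positively oriented simple curve can occur across an \emph{exterior} channel; think of a thick horseshoe whose two ends close up. The tangents there are still antiparallel, so orientation does not exclude this. Cutting at the contact point gives two pieces, neither of which is a drop:
\begin{itemize}
\item an outer loop whose tangent angle turns by $3\pi$ and which encloses area $a+h$;
\item an inner loop turning by $-\pi$, with signed area $-h$.
\end{itemize}
Rotating cannot fix this, because $\theta(\pi)=\theta(0)+\pi$ is a condition on total turning, not on directions. What rules the configuration out is the energy, via the Bucur--Henrot construction (\Cref{lem:a_geometric_information}, i.e.\ Lemma~3.3 of \cite{bucur2014newisoperimetricinequalityelasticae}). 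An arc $[x,y]$ with $\theta(y)\le\theta(x)-\pi$, here the inner loop, can be extended by straight segments until it meets the rest of the curve. This produces two drops lying in the enclosed region, so $\mathcal{E}(\gamma_1)+\mathcal{E}(\gamma_2)\le\mathcal{E}(\gamma)$ and $\mathcal{A}(\gamma_1)+\mathcal{A}(\gamma_2)\le a$. Your convexity estimate then gives the contradiction. Without this lemma, or an equivalent argument, Step 1 is incomplete.

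\textbf{Step 2: the same issue, plus missing quantitative input.}
\begin{itemize}
\item \emph{Drop condition.} Antiparallel tangents at a neck only give $\theta(y)-\theta(x)\equiv\pi \pmod{2\pi}$, so ``approximately satisfying the angle condition'' is not meaningful. Either make sure the closing chord lies in the enclosed region, so that the Umlaufsatz forces turning exactly $\pi$ on both pieces, or exclude the $3\pi/{-\pi}$ split with the lemma above. The paper does the latter in \Cref{lem:a_gamma_w_touch}, since its minimizing chord from \Cref{lem:a_thin_piece_tangential} may cross an exterior channel.
\item \emph{Curvature bound.} You do not need a length bound to get curvature bounds. \Cref{lem:infinity_bound_kappa} gives $\norm{\kappa}_{L^\infty}\le c(\gamma_0)$ uniformly in time, using only $\mathcal{L}\ge 4\pi^2/\mathcal{E}(\gamma_0)$.
\item \emph{Existence of the neck.} With that bound, suppose no two inner normals of length $\delta\le 1/\norm{\kappa}_{L^\infty}$ meet. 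Then the inner collar of width $\delta$ is embedded in the enclosed region and has area $\delta\mathcal{L}-\pi\delta^2\ge\frac{3}{2}a$ once $\mathcal{L}\ge 3a/\delta$, a contradiction (\Cref{lem:a_thin_piece}). With only $\int\kappa^2$ bounded, as in your sketch, the collar map need not be injective and the ``thick region'' heuristic does not close.
\item \emph{Closing the neck.} A short straight segment creates right-angle corners, which are not $W^{2,2}$, so the pieces are not drops. You must bend the two strands so they meet tangentially with opposite tangents. The paper's bump insertion does this at energy cost $O(\delta^2)$ and area cost $O(\delta)$ for fixed bump width.
\end{itemize}
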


It remains unclear whether the conditions specified in \Cref{thm:convergence} are the least restrictive necessary to ensure convergence. For example, one possible improvement would be to omit the simplicity assumption. In this case, we need to specify the winding number  
\begin{equation*}
	n_\gamma = \frac{1}{2\pi}\int_{\mathbb S^1}\kappa ds
\end{equation*}
of the curve $\gamma$, which is $1$ if the curve is simple. The analysis of the energy profile curve $E: [2\pi, \infty) \to [0, \infty)$
\begin{equation}
\label{eq:energy_profile_curve}
    E(\ell) \coloneqq \inf \bigl\{\mathcal{E}(\gamma) \mid \gamma \in W^{2,2}_{\text{Imm}}(\mathbb{S}^1,\, \mathbb{R}^2),\, n_\gamma=1,\, \mathcal{A}(\gamma) = \pi,\, \mathcal{L}(\gamma)=\ell\bigr\}
\end{equation}
reveals another strategy to obtain a length bound. The subscript 'Imm' restricts the space to immersed curves. The domain of the function starts at $2\pi$, a consequence of the isoperimetric inequality $\mathcal{A}(\gamma) \leq \frac{\mathcal{L}^2(\gamma)}{4\pi}$ (see \cite{osserman1978isoperimetric}), where equality is attained exclusively for the circle. We will demonstrate that this function is continuous (see \Cref{thm:energy_continuity}) and, as illustrated by the example in \Cref{fig:triple_eight_energy_to_zero}, $\mathcal{E}(\ell) \to 0$ as $\ell \to \infty$. Consequently, the energy profile curve attains its maximum at some $\ell^* \in [2\pi, \infty)$. As elaborated in \Cref{thm:energy_maximum}, the maximum is not achieved by the circle, proving $\ell^* \neq 2 \pi$. Employing an energy barrier argument once more, we establish a bound on the length, which yields convergence.

\begin{corollary}
\label{cor:energy_profile_convergence}
    Let $\gamma_0 \in C^\infty(\mathbb{S}^1, \mathbb{R}^2)$ be an immersed curve satisfying $n_\gamma=1$, $\mathcal{A}(\gamma_0) = \pi$, $\mathcal{L}(\gamma_0) < \ell^*$ and $\mathcal{E}(\gamma_0) < E(\ell^*)$. Then, the solution $\gamma$ to \cref{eq:evolution_gradient_flow} has length strictly bounded from above by $\ell^*$, and a reparametrized and translated (time-dependent) version $\tilde{\gamma}$ of the flow converges smoothly to an area-constrained elastica.
\end{corollary}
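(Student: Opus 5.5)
The argument has two parts. First, an energy barrier using the continuity of the energy profile curve $E$ (\Cref{thm:energy_continuity}) gives the length bound $\mathcal{L}(\gamma(t))<\ell^*$ for all times. Second, the subconvergence and \L{}ojasiewicz--Simon machinery used for \Cref{thm:convergence} then yields full convergence, since that machinery only needs a uniform length bound.

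\emph{Preserved quantities.} By \Cref{thm:global_existence} the solution $\gamma$ exists globally and is smooth. It remains an immersion for every $t$. The winding number $n_{\gamma(t)}=\frac{1}{2\pi}\int\kappa\,ds$ is integer-valued and continuous in $t$, so it stays equal to $1$. The area is preserved by the choice \cref{eq:a_lagrange_multiplier}, so $\mathcal{A}(\gamma(t))=\pi$. Hence each $\gamma(t)$ is admissible in \cref{eq:energy_profile_curve}, and
\begin{equation*}
E(\mathcal{L}(\gamma(t)))\le\mathcal{E}(\gamma(t))\le\mathcal{E}(\gamma_0)<E(\ell^*),
\end{equation*}
where the middle inequality is the energy identity \cref{eq:energy_identity}.

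\emph{Length bound.} Suppose $\mathcal{L}(\gamma(t_1))\ge\ell^*$ for some $t_1$. The map $t\mapsto\mathcal{L}(\gamma(t))$ is continuous and $\mathcal{L}(\gamma_0)<\ell^*$, so there is a $t_0\in(0,t_1]$ with $\mathcal{L}(\gamma(t_0))=\ell^*$. At this time the display above gives $E(\ell^*)\le\mathcal{E}(\gamma(t_0))<E(\ell^*)$, a contradiction. Therefore $\mathcal{L}(\gamma(t))<\ell^*$ for all $t\ge0$. This step uses only the intermediate value theorem for $\mathcal{L}(\gamma(t))$ and the definition of $E$; continuity of $E$ is needed only to guarantee that $\ell^*$ exists.

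\emph{Convergence.} With $\mathcal{L}\le\ell^*$ and $\mathcal{E}\le\mathcal{E}(\gamma_0)$ uniformly in time, I would rerun the proof of \Cref{thm:convergence} with the simplicity hypothesis removed.
\begin{enumerate}[label=(\roman*)]
\item Bound $\lambda$: by Gagliardo--Nirenberg type interpolation, $\int\kappa^3\,ds$ is controlled by $\mathcal{E}$ and higher norms. A lower bound on the length is also needed. For curves of winding number $1$, Fenchel and Cauchy--Schwarz give $(2\pi)^2\le\mathcal{L}\,\mathcal{E}$, so $\mathcal{L}\ge4\pi^2/\mathcal{E}(\gamma_0)$.
\item Derive uniform bounds on $\|\partial_s^m\kappa\|_{L^2}$ by the Dziuk--Kuwert--Schätzle energy method.
\item Translate the curves, e.g.\ fixing $\gamma(t,0)$ or the barycentre, and reparametrize by constant speed. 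Arzel\`a--Ascoli then gives smooth subconvergence along $t_j\to\infty$.
\item Since $\int_0^\infty\|\partial_t\gamma\|_{L^2}^2\,dt<\infty$, a further subsequence converges to a solution of $2\partial_s^2\kappa+\kappa^3+\lambda_\infty=0$.
\item Upgrade to full convergence with the constrained \L{}ojasiewicz--Simon inequality near the limit, using analyticity of $\mathcal{E}$ and $\mathcal{A}$ and the same argument as for \Cref{thm:convergence}.
\end{enumerate}

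The main obstacle is step (v), making sure the earlier convergence argument really used simplicity only to obtain the length bound. If it also used simplicity elsewhere, for instance for the lower bound on $\mathcal{L}$ or for nondegeneracy of $\nabla\mathcal{A}=-\nu$ in the \L{}ojasiewicz--Simon constraint (which needs $\mathcal{L}>0$ only), I would replace those uses by the winding-number and Fenchel arguments above.
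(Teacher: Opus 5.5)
Your proposal is correct and follows the paper's proof: the same intermediate-value energy barrier at $\ell^*$ (with $\mathcal{E}(\gamma(t_0))\ge E(\ell^*)>\mathcal{E}(\gamma_0)$), followed by rerunning the convergence argument of \Cref{thm:convergence}, where simplicity enters only through the length bound. One small point on ordering in (iii)--(iv): first choose times $t_j\to\infty$ with $\|\partial_t\gamma(t_j)\|_{L^2(ds)}\to0$ and then extract a smoothly convergent subsequence, which is possible because your length bound holds uniformly in time; integrability of $\|\partial_t\gamma\|_{L^2(ds)}^2$ alone does not let you take a further subsequence of an arbitrary convergent sequence along which it tends to zero.
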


Moreover, this approach allows us to specify explicit conditions under which the length of the curve is guaranteed to diverge. If the length remains bounded, the flow converges to an area-constrained elastica. However, if the initial data $\gamma_0$ lies energetically below these area-constrained elasticae, convergence cannot occur, leading to a divergence in length. A detailed analysis, similar to \cite{bucur2014newisoperimetricinequalityelasticae}, reveals that each area-constrained elastica exhibits periodicity, with exactly one such elastica existing for each number of periods. It is found that, apart from the circle, the only area-constrained elasticae that could potentially lie on the energy profile curve are those with $2$ and $3$ periods. By employing a similar energy barrier argument as previously discussed, we can exclude convergence to the circle.

\begin{corollary}
\label{cor:length_divergence}
    Let $\gamma_2, \gamma_3$ be the area-constrained elasticae with two and three periods, respectively, and $\mathcal A(\gamma_2)=\mathcal A(\gamma_3)=\pi$.
    Let $\gamma_0 \in C^\infty(\mathbb{S}^1, \mathbb{R}^2)$ be immersed and such that $n_{\gamma_0}=1$, $\mathcal{A}(\gamma_0) = \pi$, and
    \begin{equation*}
        \mathcal{L}(\gamma_0) > \ell^*,\quad \mathcal{E}(\gamma_0) < \min\{E(\ell^*), \mathcal{E}(\gamma_2), \mathcal{E}(\gamma_3)\}.
    \end{equation*}
    Then, $\mathcal{L}(\gamma_t) \to \infty$ as $t \to \infty$ for the solution $\gamma$ to the area-constrained elastic flow \cref{eq:evolution_gradient_flow}.
\end{corollary}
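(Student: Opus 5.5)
The plan is an energy-barrier argument combined with a contradiction via the convergence result. The first step is to show that the length stays above $\ell^*$ for all times. The flow is smooth by \Cref{thm:global_existence}, so $t\mapsto\mathcal{L}(\gamma_t)$ is continuous. The winding number $n_{\gamma_t}=\frac{1}{2\pi}\int\kappa\,ds$ is a continuous integer-valued function along the smooth flow of immersions, hence stays equal to $1$, and the area stays equal to $\pi$. Suppose $\mathcal{L}(\gamma_{t_0})=\ell^*$ for some $t_0$. Then $\gamma_{t_0}$ is admissible in \cref{eq:energy_profile_curve} with $\ell=\ell^*$, so $\mathcal{E}(\gamma_{t_0})\ge E(\ell^*)$. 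By the energy identity \cref{eq:energy_identity}, however, $\mathcal{E}(\gamma_{t_0})\le\mathcal{E}(\gamma_0)<E(\ell^*)$, which is a contradiction. By the intermediate value theorem, $\mathcal{L}(\gamma_t)>\ell^*$ for all $t\ge 0$.

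Next, argue by contradiction and assume $\mathcal{L}(\gamma_t)\not\to\infty$. Then there are times $t_j\to\infty$ with $\mathcal{L}(\gamma_{t_j})\le C$. I would first try to upgrade this to a uniform bound on the whole time axis. If that fails, the plan is to work directly along the sequence: the standard interpolation estimates of Dziuk--Kuwert--Schätzle type, which are used in the proof of \Cref{cor:energy_profile_convergence} and \Cref{thm:convergence}, give uniform bounds on $\partial_s^m\kappa$ on time intervals $[t_j,t_j+1]$ once the length is bounded at $t_j$. Since the energy is bounded, $\int|\partial_t\gamma|^2$ is integrable in time, which keeps the length controlled on those intervals. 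After translating and reparametrizing by arc length, we then obtain a subsequence converging smoothly to some $\gamma_\infty$ with $\partial_t\gamma\to 0$ in $L^2$, that is, an area-constrained elastica. The limit satisfies $n_{\gamma_\infty}=1$, $\mathcal{A}(\gamma_\infty)=\pi$, $\mathcal{L}(\gamma_\infty)\ge\ell^*$ and
\[
\mathcal{E}(\gamma_\infty)=\lim_{t\to\infty}\mathcal{E}(\gamma_t)\le\mathcal{E}(\gamma_0).
\]
Here we use the monotonicity of the energy together with the fact that length is preserved under smooth convergence.

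Then classify the limit using the structure result announced in the introduction: every area-constrained elastica with area $\pi$ and winding number one is periodic with some number $k$ of periods, and exactly one exists for each $k$. The circle has length $2\pi<\ell^*$, since $\ell^*\neq 2\pi$ by \Cref{thm:energy_maximum}, so it is excluded by $\mathcal{L}(\gamma_\infty)\ge\ell^*$. The cases $k=2,3$ are excluded because
\[
\mathcal{E}(\gamma_\infty)\le\mathcal{E}(\gamma_0)<\min\{\mathcal{E}(\gamma_2),\mathcal{E}(\gamma_3)\}.
\]
For $k\ge 4$, the introduction asserts that these elasticae do not lie on the energy profile curve. What is needed, though, is the energy comparison $\mathcal{E}(\gamma_k)\ge E(\ell^*)$ (or $\ge\min\{\mathcal{E}(\gamma_2),\mathcal{E}(\gamma_3)\}$), which would give $\mathcal{E}(\gamma_\infty)>\mathcal{E}(\gamma_0)$, a contradiction. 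I would try to get this from the explicit elliptic-function description of the $k$-periodic elasticae, showing that their energy grows in $k$: for instance $\mathcal{E}(\gamma_k)\ge\mathcal{E}(\gamma_3)$ for $k\ge 3$, via monotonicity in $k$ of the energy after normalizing the area.

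I expect the main obstacle to be this last classification and energy-ordering step for higher-period elasticae, since it relies on detailed knowledge of the explicit solutions rather than soft arguments. A secondary difficulty is the compactness step along a subsequence when the length is only bounded at the times $t_j$ and not uniformly in time.
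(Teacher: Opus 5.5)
Your architecture matches the paper's. You argue by contradiction along times $t_j$ with bounded length, extract a smooth subsequential limit $\gamma_\infty$ that is an area-constrained elastica, and then exclude three cases: the circle (your global bound $\mathcal{L}(\gamma_t)>\ell^*$ is equivalent to the paper's barrier argument at $\ell^*$), $\gamma_2,\gamma_3$ (by energy), and the elasticae with $k\ge 4$ periods. \emph{The last exclusion is a genuine gap.} You correctly isolate the needed inequality: the area-$\pi$ rescaling of every $k$-period elastica with $k\ge 4$ must have energy at least $E(\ell^*)$. This is what \Cref{thm:number_critical_points} actually proves, and it is stronger than ``not lying on the profile curve''. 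But you only hope to obtain it from monotonicity in $k$ via elliptic functions, and you do not carry this out. The missing idea is that no monotonicity is needed. One compares a crude lower bound on the normalized energy of the $k$-period elastica, growing like $k^{3/2}$, with an explicit upper bound on $\sup E$. Comparing with $E(\ell^*)$ rather than with $\mathcal{E}(\gamma_3)$ is what makes this soft, because $E$ is an infimum and is therefore bounded above by any admissible test curve.

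Concretely, take a solution of $-\kappa''-\tfrac{1}{2}\kappa^3+1=0$. Integrating condition (4) of \Cref{thm:optimality_conditions}, $(p-\gamma)\cdot\nu=\tfrac{1}{2}\kappa^2$, against $ds$ gives $\mathcal{A}(\gamma)=\tfrac{1}{4}\mathcal{E}(\gamma)$. Hence the rescaling $\tilde\gamma$ with area $\pi$ satisfies $\mathcal{E}(\tilde\gamma)=\sqrt{\mathcal{A}(\gamma)/\pi}\,\mathcal{E}(\gamma)=\mathcal{E}(\gamma)^{3/2}/(2\sqrt{\pi})$. The energy of a $k$-period solution is $2k$ times its energy over a half-period. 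The Bucur--Henrot estimate quoted in \Cref{lem:area_constrained_elastica_properties} bounds the half-period energy below by $\tfrac{\pi}{4}\sqrt{22/3}$. This gives $\mathcal{E}(\gamma)\ge\sqrt{11/6}\,k\pi$, and so $\mathcal{E}(\tilde\gamma)\ge\tfrac{1}{2}(\tfrac{11}{6})^{3/4}k^{3/2}\pi\ge 6.302\pi$ for $k\ge 4$. On the other side, stadium curves give $E(\ell)\le\tfrac{16}{3}\pi$ for $\ell\in[2\pi,\tfrac{8}{3}\pi]$ (\Cref{lem:example_branch1}). Two three-quarter circles joined by segments, with overlapping straight pieces inserted once the inner arcs touch, give $E(\ell)\le 4\pi\sqrt{(\pi+4)/\pi}\approx 6.031\pi$ for all $\ell\ge 4\pi\sqrt{\pi/(\pi+4)}\approx 2.653\pi$ (\Cref{lem:example_branch2}). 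A limit with $k\ge 4$ periods would therefore satisfy $\mathcal{E}(\gamma_\infty)>E(\ell^*)>\mathcal{E}(\gamma_0)$, which is the desired contradiction.

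Separately, your compactness step is sound and shorter than the paper's, but you should state the selection of times explicitly. Since $\abs{\tfrac{d}{dt}\mathcal{L}(\gamma_t)}\le\mathcal{E}(\gamma_0)^{1/2}\norm{\partial_t\gamma}_{L^2(ds)}$ and $\int_{t_j}^{t_j+1}\norm{\partial_t\gamma}_{L^2(ds)}^2\,dt\to 0$, you can pick $t_j'\in[t_j,t_j+1]$ with $\norm{\partial_t\gamma(t_j')}_{L^2(ds)}\to 0$ and $\mathcal{L}(\gamma_{t_j'})$ bounded. The bounds on $\partial_s^m\kappa$ from \Cref{lem:infinity_bound_kappa} already hold for all times, so no interval-wise estimates are needed. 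The paper instead shows that $\tfrac{d}{dt}\norm{\partial_t\gamma}_{L^2(ds)}^2$ is uniformly bounded, so this quantity tends to zero along every sequence of times.
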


Despite these results, several open questions remain. We believe that the circle and $\gamma_2$ are the only area-constrained elasticae that lie on the energy profile curve. Furthermore, we expect that the energy profile curve is differentiable at its maximum, and that this maximum is attained by $\gamma_2$. Should this be the case, it would provide a direct method to compute the explicit energy and length barriers required in \Cref{cor:length_divergence}.

Another natural extension of this work concerns the stability analysis of the area-constrained elastic flow. While such an analysis has recently been carried out for the circle under the free elastic flow \cite{Miura_2025}, our interest lies in the multi-loop configuration depicted in \Cref{fig:limit-shape}. Our numerical simulations suggest that this configuration exhibits unbounded growth while asymptotically preserving its shape; more precisely, we expect that the corresponding blowdown converges. However, rigorously proving this behavior presents a formidable challenge, especially considering that a comparable stability analysis remains an open problem even for the lemniscate of Bernoulli under the free elastic flow.

\begin{figure}[htbp]
    \centering
    \includegraphics[width=0.8\textwidth]{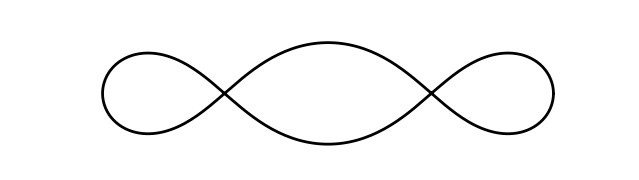}
\caption{A numerical picture of a shape conjectured to be stable under the area-constrained elastic flow.}
\label{fig:limit-shape}
\end{figure}

The article is organized as follows. In \Cref{sec:local_existence}, we establish the local existence of the area-constrained elastic flow. \Cref{sec:global_existence} is devoted to deriving the necessary energy estimates for long-time existence. In \Cref{sec:convergence}, we show subconvergence by proving a length bound using an analysis that relies on the 'optimal drop' result. Moreover, we demonstrate how a \L{}ojasiewicz--Simon inequality can be used to deduce full convergence. In \Cref{sec:energy_profile}, we analyze the energy profile curve, which also yields conditions for the convergence of the flow. Furthermore, we show that this analysis can be used to specify conditions under which the length of the flow is guaranteed to diverge. \Cref{sec:numerical_simulations} illustrates our findings with numerical simulations.

\section{Local existence}
\label{sec:local_existence}

Following an approach similar to \cite[Chapter~3, Section~2]{Rupp_Phd}, we introduce a tangential motion to transform \cref{eq:evolution_gradient_flow} into a quasilinear parabolic system, for which we then establish local existence using the Inverse Function Theorem. A comparable strategy was applied in \cite{dallacqua2017elasticflowcurveshyperbolic}.

To begin, we require an explicit formulation of \cref{eq:evolution_gradient_flow} in terms of the standard derivatives $\partial_x$.

\begin{proposition}[{\cite[Proposition~A.1]{Rupp2024}}]
\label{prop:a_explicit_formulation}
    Suppose $\gamma \in C^\infty(\mathbb{S}^1, \mathbb{R}^2)$ is immersed. Then, 
    \begin{enumerate}[label=(\arabic*)]
        \item $\vec{\kappa} \coloneqq \kappa \nu = \partial_s^2 \gamma = \frac{\partial_x^2 \gamma}{\abs{\partial_x \gamma}^2} - \frac{\partial_x^2 \gamma \cdot \partial_x \gamma}{\abs{\partial_x \gamma}^4} \partial_x \gamma = \frac{(\partial_x^2 \gamma)^\perp}{\abs{\partial_x \gamma}^2}$,
        \item $(\partial_s \kappa) \nu = \nabla_s \vec{\kappa} = \frac{\partial_x^3 \gamma}{\abs{\partial_x \gamma}^3} - \frac{\partial_x^3 \gamma \cdot \partial_x \gamma}{\abs{\partial_x \gamma}^5} \partial_x \gamma - 3 \frac{\partial_x^2 \gamma \cdot \partial_x \gamma}{\abs{\partial_x \gamma}^5} \partial_x^2 \gamma + 3 \frac{(\partial_x^2 \gamma \cdot \partial_x \gamma)^2}{\abs{\partial_x \gamma}^7} \partial_x \gamma$,
        \item \begin{equation*}
        \begin{split}
            (\partial_s^2 \kappa) \nu &= \nabla_s^2 \vec{\kappa} = \Bigg( \frac{\partial_x^{4}\gamma}{\abs{\partial_x \gamma}^{4}} -6 \frac{\partial_{x}^{2} \gamma \cdot \partial_x \gamma}{\abs{\partial_x \gamma}^{6}}\partial_x^{3}\gamma - 4 \frac{\partial_x^{3}\gamma \cdot \partial_x \gamma}{\abs{\partial_x \gamma}^{6}}\partial_x^{2}\gamma \\
			&\quad - 3 \frac{\abs{\partial_x^{2} \gamma}^{2}}{\abs{\partial_x \gamma}^{6}} \partial_x^{2} \gamma + 18 \frac{(\partial_x^{2}\gamma \cdot \partial_x \gamma)^{2}}{\abs{\partial_x \gamma}^{8}} \partial_x^{2}\gamma\Bigg)^{\perp},
        \end{split}
        \end{equation*}
        \item
        \label{prop:a_explicit_formulation_4}
        \begin{equation*}
            \nabla \mathcal{E}(\gamma) = \mathcal{T}(\gamma)^{\perp},
        \end{equation*}
        with
        \begin{equation*}
        \begin{split}
            \mathcal{T}(\gamma) &= 2\frac{\partial_x^{4}\gamma}{\abs{\partial_x \gamma}^{4}} - 12 \frac{\partial_{x}^{2} \gamma \cdot \partial_x \gamma}{\abs{\partial_x \gamma}^{6}}\partial_x^{3}\gamma - 8 \frac{\partial_x^{3}\gamma \cdot \partial_x \gamma}{\abs{\partial_x \gamma}^{6}}\partial_x^{2}\gamma\\
            &\quad- 5 \frac{\abs{\partial_x^{2} \gamma}^{2}}{\abs{\partial_x \gamma}^{6}} \partial_x^{2} \gamma + 35 \frac{(\partial_x^{2}\gamma \cdot \partial_x \gamma)^{2}}{\abs{\partial_x \gamma}^{8}} \partial_x^{2}\gamma,
        \end{split}
        \end{equation*}
    \end{enumerate}
    where $f^\perp = (f \cdot \nu) \nu$ and $\nabla_s f = (\partial_s f)^\perp$.
\end{proposition}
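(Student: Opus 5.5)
The plan is a direct computation that differentiates $\partial_x\gamma$ repeatedly. We set $v\coloneqq\abs{\partial_x\gamma}$, so that $\partial_s=v^{-1}\partial_x$, and record $\partial_x v=\frac{\partial_x^2\gamma\cdot\partial_x\gamma}{v}$. Then $\tau=\partial_x\gamma/v$.

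For (1), we compute
\[
\partial_s^2\gamma=v^{-1}\partial_x\bigl(v^{-1}\partial_x\gamma\bigr)=\frac{\partial_x^2\gamma}{v^2}-\frac{(\partial_x^2\gamma\cdot\partial_x\gamma)\,\partial_x\gamma}{v^4}.
\]
The subtracted term is exactly the tangential projection of $\partial_x^2\gamma/v^2$, so what remains is $(\partial_x^2\gamma)^\perp/v^2$. The Frenet equation $\partial_s\tau=\kappa\nu$ identifies this with $\vec\kappa$.

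For (2), we use the Frenet equations again. They give $\partial_s\vec\kappa=(\partial_s\kappa)\nu-\kappa^2\tau$, and hence $\nabla_s\vec\kappa=(\partial_s\kappa)\nu$. It then suffices to apply $v^{-1}\partial_x$ to the middle expression of (1) and discard the tangential components. Concretely, we differentiate $\partial_x^2\gamma\, v^{-2}$ and $-(\partial_x^2\gamma\cdot\partial_x\gamma)\,\partial_x\gamma\, v^{-4}$ by the product rule, using the formula for $\partial_x v$. After that we either project onto $\nu$ or keep the terms parallel to $\partial_x\gamma$ so as to match the stated form. Matching the stated form exactly requires collecting the tangential terms as written. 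As a sanity check, one can verify that the displayed vector is indeed normal by dotting it with $\partial_x\gamma$: the terms cancel, $1-1-3+3$ after normalization.

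For (3), we apply the same procedure once more. Because $\partial_s\bigl((\partial_s\kappa)\nu\bigr)=(\partial_s^2\kappa)\nu-\kappa\,\partial_s\kappa\,\tau$, we have $\nabla_s^2\vec\kappa=(\partial_s^2\kappa)\nu$. Since only the normal projection is needed, every term in $v^{-1}\partial_x$ of the expression in (2) that is parallel to $\partial_x\gamma$ can be dropped immediately. This is the main bookkeeping step. There are three places to watch:
\begin{itemize}
\item differentiating $v^{-3}$ and $v^{-5}$ produces coefficients such as $-3(\partial_x^2\gamma\cdot\partial_x\gamma)v^{-5}$;
\item differentiating $\partial_x^2\gamma\cdot\partial_x\gamma$ produces $\partial_x^3\gamma\cdot\partial_x\gamma+\abs{\partial_x^2\gamma}^2$;
\item the term $\partial_x(\partial_x\gamma)=\partial_x^2\gamma$, with its coefficients, contributes normal components as well.
\end{itemize}
Collecting the coefficients of $\partial_x^4\gamma$, $\partial_x^3\gamma$ and $\partial_x^2\gamma$ should yield $1$, $-6$, $-4$, $-3$ and $18$.

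For (4), we write $\nabla\mathcal E(\gamma)=2(\partial_s^2\kappa)\nu+\kappa^3\nu$. By (1), $\kappa=\frac{(\partial_x^2\gamma)\cdot\nu}{v^2}$, so
\[
\kappa^3\nu=\kappa^2\Bigl(\frac{\partial_x^2\gamma}{v^2}\Bigr)^\perp,\qquad \kappa^2=\frac{\abs{(\partial_x^2\gamma)^\perp}^2}{v^4}=\frac{\abs{\partial_x^2\gamma}^2}{v^4}-\frac{(\partial_x^2\gamma\cdot\partial_x\gamma)^2}{v^6}.
\]
Thus $\kappa^3\nu$ equals the normal projection of $\frac{\abs{\partial_x^2\gamma}^2}{v^6}\partial_x^2\gamma-\frac{(\partial_x^2\gamma\cdot\partial_x\gamma)^2}{v^8}\partial_x^2\gamma$. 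Adding this to twice (3) gives the coefficients $2$, $-12$, $-8$, $-6+1=-5$ and $36-1=35$, which is $\mathcal T(\gamma)$.

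The main obstacle is the error-prone coefficient bookkeeping in (3). I would organize it by working modulo $\partial_x\gamma$ throughout, so that any term proportional to $\partial_x\gamma$ is dropped as soon as it appears.
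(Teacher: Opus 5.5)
Your direct computation is correct and is essentially the intended route. The paper gives no proof of its own and simply imports these formulas from Rupp--Spener (Proposition~A.1), which is a repeated differentiation of $\partial_x\gamma$ followed by normal projection; your coefficients ($1,-6,-4,-3,18$ in (3), and $2,-12,-8,-5,35$ after adding $\kappa^3\nu$) all check out. As your third bullet already indicates, keep the $\partial_x\gamma$-terms of the expression in (2) when you differentiate it, and drop only the $\partial_x\gamma$-terms that appear afterwards; those tangential terms supply the $-1$ in $-4$ and the $+3$ in $18$.
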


By adding the tangential motion $-(\mathcal{T}(\gamma) \cdot \tau) \tau$, the problem \cref{eq:evolution_gradient_flow} takes the form
\begin{equation}
\label{eq:evolution_gradient_flow_explicit}
\begin{cases}
\begin{aligned}
    \partial_t \gamma &= -\mathcal{T}(\gamma) - \lambda(\gamma)\nu\\
    &= \tilde{F}(\abs{\partial_x \gamma}^{-1}, \partial_x \gamma, \ldots, \partial_x^4 \gamma) - \lambda(\gamma)\nu && \text{on } [0,T) \times \mathbb{S}^1\\
    \gamma(0,\cdot) &= \gamma_0 && \text{on } \mathbb{S}^1,
\end{aligned}
\end{cases}
\end{equation}
for some smooth function $\tilde{F}: (0,\infty) \times (\mathbb{R}^2)^4 \to \mathbb{R}^2$.

It is sufficient to solve \cref{eq:evolution_gradient_flow_explicit}. Indeed, suppose $\gamma \in C^\infty([0,T)\times \mathbb{S}^1, \mathbb{R}^2)$ solves it. Define the reparametrized version $\tilde{\gamma} \coloneqq \gamma(t, \phi(t,x))$ for a family of diffeomorphisms $\phi(t,\cdot): \mathbb{S}^1 \to \mathbb{S}^1$ yet to be determined. Taking the derivative gives
\begin{equation*}
    \partial_t \tilde{\gamma}(t,x) = \partial_t \gamma(t,\phi(t,x)) + \partial_x \gamma(t,\phi(t,x)) \partial_t \phi(t,x).
\end{equation*}
We claim that there is a smooth solution $\phi: [0,T) \times \mathbb{S}^1 \to \mathbb{S}^1=\mathbb R/2\pi\mathbb Z$ of the ordinary differential equation
\begin{equation}\label{eq:reparametrization_flow}
\begin{cases}
        \partial_t \phi(t,x) &= \varphi(t,\phi(t,x)),\\
        \phi(0,x) &= x,
\end{cases}
\end{equation}
where $\varphi(t,x) \coloneqq \frac{\mathcal{T}(\gamma)(t,x)\cdot \tau(t,x)}{\abs{\partial_x \gamma(t,x)}}$. Observe from the initial condition that $\phi$ is orientation preserving.
Because $\lambda$ and $\mathcal{T}^\perp = \nabla \mathcal{E}$ are invariant under orientation preserving reparametrizations, $\tilde{\gamma}$ solves $\tilde{\gamma}(0,x) = \gamma_0(x)$ and
\begin{equation*}
\begin{split}
    \partial_t \tilde{\gamma}(t,x) &= (\partial_t \gamma)(t,\phi(t,x)) + (\mathcal{T}(\gamma)(t,\phi(t,x))\cdot \tau(t,\phi(t,x))) \tau(t,\phi(t,x))\\
    &= -\mathcal{T}^\perp(\gamma)(t,\phi(t,x)) - \lambda(\gamma)\nu(t,\phi(t,x))\\
    &= -\mathcal{T}^\perp(\tilde{\gamma})(t,x) - \lambda(\tilde{\gamma}) \tilde{\nu}(t,x),
\end{split}
\end{equation*}
where $\tilde{\nu}$ is the normal of $\tilde{\gamma}$.

Notice that by the global existence and uniqueness theorem for ordinary differential equations \cite[(7.6)]{MR1071170}, the maximal integral curve solving \eqref{eq:reparametrization_flow} corresponding to any given initial data $x\in\mathbb S^1$ exists on all of $[0,T)$. Thus, after extending $\varphi$ to $(-T,T)\times \mathbb S^1$ according to \cite{seeley1964extension}, the existence of a solution $\phi$ for \eqref{eq:reparametrization_flow} follows from \cite[Theorem~9.48]{lee2003smooth}. 

To solve \cref{eq:evolution_gradient_flow_explicit}, we require the following setup.

\begin{definition}[{\cite[Chapter~II.3.1]{Eidelman1998}}]
\label{def:par_hölder_spaces}
    For $s \in \mathbb{N}, \alpha \in (0,1)$, define the parabolic Hölder space $H^{\frac{s+\alpha}{4}, s+\alpha}([0,T]\times \mathbb{S}^1, \mathbb{R}^n)$ as the set of functions $\gamma: [0,T]\times \mathbb{S}^1 \to \mathbb{R}^n$, for which the norm
    \begin{equation*}
    \begin{split}
        \norm{\gamma}_{H^{\frac{s+\alpha}{4}, s+\alpha}} &\coloneqq \sum_{k+4l \leq s} \sup_{(t,x) \in [0,T]\times \mathbb{S}^1} \abs{\partial_x^k \partial_t^l \gamma(t,x)}\\
        &\quad+ \sum_{k+4l=s} \sup_{t \in [0,T],\,x\neq y \in \mathbb{S}^1} \frac{\abs{\partial_x^k \partial_t^l \gamma(t,x)-\partial_x^k \partial_t^l \gamma(t,y)}}{\operatorname{dist}(x,y)^\alpha}\\
        &\quad+ \sum_{0<s+\alpha-4l-k < 4} \sup_{t\neq \tilde{t} \in [0,T],\, x \in \mathbb{S}^1} \frac{\abs{\partial_x^k \partial_t^l \gamma(t,x)-\partial_x^k\partial_t^l \gamma(\tilde{t},x)}}{\abs{t-\tilde{t}}^{\frac{s+\alpha-4l-k}{4}}}
    \end{split}
    \end{equation*}
    is finite. Here, $\operatorname{dist}(x,y)$ denotes the geodesic distance between $x$ and $y$ on $\mathbb{S}^1$. 
\end{definition}

\begin{remark}
    Parabolic Hölder spaces are Banach spaces.
\end{remark}

\begin{lemma}
\label{lem:Hölder_norm_estimate}
    Let $s \in \{1,2,3\}$ and $v \in H^{\frac{s+\alpha}{4}, s+\alpha}([0,T]\times \mathbb{S}^1, \mathbb{R}^n)$ with $v(0,\cdot) = 0$ and $T \leq 1$. Then, there is a constant $c>0$ independent of $v$ such that for every $\tilde{s} \in \{0,\ldots,s-1\}$ we have
    \begin{equation*}
        \norm{v}_{H^{\frac{\tilde{s}+\alpha}{4}, \tilde{s}+\alpha}} \leq c T^{\frac{s+\alpha-\tilde{s}-1}{4}} \norm{v}_{H^{\frac{s+\alpha}{4}, s+\alpha}}.
    \end{equation*}
\end{lemma}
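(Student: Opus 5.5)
The plan is to bound each of the three types of terms in the norm $\norm{v}_{H^{\frac{\tilde{s}+\alpha}{4}, \tilde{s}+\alpha}}$ separately by $cT^{\frac{s+\alpha-\tilde s-1}{4}}\norm{v}_{H^{\frac{s+\alpha}{4},s+\alpha}}$, using only $v(0,\cdot)=0$ and $T\le 1$. Since $s\le 3$, no time derivatives occur in the lower-order parts: $k+4l\le\tilde s<s\le 3$ forces $l=0$, and the relevant exponents are then $k\le\tilde s$. Set $\beta\coloneqq s+\alpha$. The key observation is that every $\partial_x^k v$ with $k\le s$ has a time Hölder seminorm of order $\frac{\beta-k}{4}$, since $0<\beta-k<4$. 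Together with $\partial_x^k v(0,x)=0$, this gives the pointwise bound
\[
\abs{\partial_x^k v(t,x)}\le t^{\frac{\beta-k}{4}}\norm{v}_{H^{\frac{\beta}{4},\beta}}\le T^{\frac{\beta-k}{4}}\norm{v}_{H^{\frac{\beta}{4},\beta}} .
\]

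First I handle the sup terms. For $k\le\tilde s$ we have $\frac{\beta-k}{4}\ge\frac{\beta-\tilde s}{4}\ge\frac{\beta-\tilde s-1}{4}$, and since $T\le1$ each term is bounded by $T^{\frac{\beta-\tilde s-1}{4}}\norm{v}$.

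Second, I handle the time Hölder seminorms $\sup\frac{\abs{\partial_x^kv(t)-\partial_x^kv(\tilde t)}}{\abs{t-\tilde t}^{(\tilde s+\alpha-k)/4}}$ for $k\le\tilde s$. I interpolate, writing the difference as $\abs{t-\tilde t}^{(\beta-k)/4}\norm{v}$ and dividing by $\abs{t-\tilde t}^{(\tilde s+\alpha-k)/4}$. This gives $\abs{t-\tilde t}^{(s-\tilde s)/4}\norm{v}\le T^{(s-\tilde s)/4}\norm{v}$, and since $s-\tilde s\ge \beta-\tilde s-1=s-\tilde s-1+\alpha$ (because $\alpha<1$), this is fine.

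Third, and this is the step I expect to be the main point, I handle the space Hölder seminorm of $\partial_x^{\tilde s}v$ of order $\alpha$. I split into two cases.
\begin{itemize}
\item If $\operatorname{dist}(x,y)\ge \delta$, I use the sup bound: the quotient is at most $2\delta^{-\alpha}T^{\frac{\beta-\tilde s}{4}}\norm{v}$.
\item If $\operatorname{dist}(x,y)<\delta$, I use the mean value theorem with $\partial_x^{\tilde s+1}v$ (available since $\tilde s+1\le s$). This gives $\abs{\partial_x^{\tilde s}v(t,x)-\partial_x^{\tilde s}v(t,y)}\le \operatorname{dist}(x,y)\,T^{\frac{\beta-\tilde s-1}{4}}\norm{v}$, so the quotient is at most $\delta^{1-\alpha}T^{\frac{\beta-\tilde s-1}{4}}\norm{v}$.
\end{itemize}
Choosing $\delta=\pi$ (the diameter of $\mathbb S^1$), only the second case occurs, and we get the bound $\pi^{1-\alpha}T^{\frac{\beta-\tilde s-1}{4}}\norm{v}$ directly. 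This explains the exponent with the $-1$: it is the loss from trading one spatial derivative for the Lipschitz estimate.

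Summing finitely many terms yields the claim with $c$ depending only on $s,\alpha$.
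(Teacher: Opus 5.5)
Your proposal is correct and follows the paper's proof essentially step by step. The sup terms and the time-Hölder terms are controlled through $\partial_x^k v(0,\cdot)=0$ and the time-Hölder seminorms of order $\frac{s+\alpha-k}{4}$, and the spatial Hölder term through the mean value theorem applied to $\partial_x^{\tilde s+1}v$, which is where the extra $-\frac14$ in the exponent comes from. Your case split in the spatial step is unnecessary. It is also not literally vacuous for $\delta=\pi$, since antipodal points have $\operatorname{dist}(x,y)=\pi$. This is harmless, because the mean value estimate holds for all pairs when $\operatorname{dist}(x,y)\le\pi$, which is exactly how the paper uses it.
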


\begin{proof}
    We bound each term of the norm $\norm{v}_{H^{\frac{\tilde{s}+\alpha}{4}, \tilde{s}+\alpha}}$. For the supremum term, since $v(0,\cdot) = 0$, we have $\partial_x^k v(0,x) = 0$. The condition $k+4l \leq \tilde{s} \leq 2$ requires $l=0$, hence no time derivatives appear in the expression. Thus,
    \begin{equation*}
    \begin{split}
        \sum_{k+4l \leq \tilde{s}} \sup_{(t,x)} \abs{\partial_x^k \partial_t^l v(t,x)} 
        &\leq \sum_{k+4l \leq \tilde{s}} \sup_{(t,x)} \abs{t}^{\frac{s+\alpha-4l-k}{4}} \frac{\abs{\partial_x^k \partial_t^l v(t,x)-\partial_x^k \partial_t^l v(0,x)}}{\abs{t}^{\frac{s+\alpha-4l-k}{4}}}\\
        &\leq T^{\frac{s+\alpha-\tilde{s}}{4}} \norm{v}_{H^{\frac{s+\alpha}{4}, s+\alpha}}.
    \end{split}
    \end{equation*}
    For the spatial Hölder term, since the geodesic diameter of $\mathbb{S}^1$ is bounded, the Mean Value Theorem yields
    \begin{equation*}
    \begin{split}
        &\quad\sum_{k+4l=\tilde{s}} \sup_{t,x,y} \frac{\abs{\partial_x^k \partial_t^l v(t,x)-\partial_x^k \partial_t^l v(t,y)}}{\operatorname{dist}(x,y)^\alpha}\\
        &\leq c \sum_{k+4l=\tilde{s}} \sup_{(t,x)} \abs{\partial_x^{k+1} \partial_t^l v(t,x)}\\
        &\leq c \sum_{k+4l=\tilde{s}} \sup_{(t,x)} \abs{t}^{\frac{s+\alpha-4l-k-1}{4}} \frac{\abs{\partial_x^{k+1} \partial_t^l v(t,x)-\partial_x^{k+1} \partial_t^l v(0,x)}}{\abs{t}^{\frac{s+\alpha-4l-k-1}{4}}}\\
        &\leq c T^{\frac{s+\alpha-\tilde{s}-1}{4}} \norm{v}_{H^{\frac{s+\alpha}{4}, s+\alpha}},
    \end{split}
    \end{equation*}
    where the last inequality holds since $0 < s + \alpha - \tilde{s} - 1 < 4$.
    Finally, for the time Hölder term, we obtain
    \begin{equation*}
    \begin{split}
        &\quad\sum_{0<\tilde{s}+\alpha-4l-k < 4} \sup_{t,\tilde{t}, x} \frac{\abs{\partial_x^k \partial_t^l v(t,x)-\partial_x^k\partial_t^l v(\tilde{t},x)}}{\abs{t-\tilde{t}}^{\frac{\tilde{s}+\alpha-4l-k}{4}}}\\
        &= \sum_{0<\tilde{s}+\alpha-4l-k < 4} \sup_{t,\tilde{t}, x} \abs{t-\tilde{t}}^{\frac{s-\tilde{s}}{4}} \frac{\abs{\partial_x^k \partial_t^l v(t,x)-\partial_x^k\partial_t^l v(\tilde{t},x)}}{\abs{t-\tilde{t}}^{\frac{s+\alpha-4l-k}{4}}}\\
        &\leq T^{\frac{s-\tilde{s}}{4}} \norm{v}_{H^{\frac{s+\alpha}{4}, s+\alpha}}.
    \end{split}
    \end{equation*}
    Since $\alpha \in (0,1)$, the smallest exponent of $T$ extracted across all terms is $\frac{s+\alpha-\tilde{s}-1}{4}$.
\end{proof}

\begin{lemma}
\label{lem:hölder_interpolation}
    Let $\gamma \in H^{\frac{\alpha}{4}, \alpha}([0,T] \times \mathbb{S}^1, \mathbb{R}^n)$ and $0 < \beta < \alpha < 1$. Then, $\gamma \in H^{\frac{\beta}{4}, \beta}([0,T] \times \mathbb{S}^1, \mathbb{R}^n)$ and there is a constant $c>0$ such that we have
    \begin{equation*}
        \norm{\gamma}_{H^{\frac{\beta}{4}, \beta}} \leq c \norm{\gamma}_{C^0}^{1-\frac{\beta}{\alpha}} \norm{\gamma}_{H^{\frac{\alpha}{4}, \alpha}}^{\frac{\beta}{\alpha}}.
    \end{equation*}
\end{lemma}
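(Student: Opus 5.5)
We prove the estimate term by term for the norm $\norm{\gamma}_{H^{\frac{\beta}{4},\beta}}$. With $s=0$, \Cref{def:par_hölder_spaces} reduces this norm to three pieces: the sup norm $\norm{\gamma}_{C^0}$; the spatial seminorm $\sup_{t,x\neq y}\abs{\gamma(t,x)-\gamma(t,y)}/\operatorname{dist}(x,y)^\beta$; and the temporal seminorm $\sup_{t\neq\tilde t,x}\abs{\gamma(t,x)-\gamma(\tilde t,x)}/\abs{t-\tilde t}^{\beta/4}$. The only admissible index in the time sum is $k=l=0$, since $0<\beta<4$. We bound each piece by $\norm{\gamma}_{C^0}^{1-\beta/\alpha}\norm{\gamma}_{H^{\frac{\alpha}{4},\alpha}}^{\beta/\alpha}$ and add the three bounds.

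The sup term is immediate. Because $\norm{\gamma}_{C^0}\le\norm{\gamma}_{H^{\frac{\alpha}{4},\alpha}}$, we have
\begin{equation*}
\norm{\gamma}_{C^0}=\norm{\gamma}_{C^0}^{1-\frac{\beta}{\alpha}}\norm{\gamma}_{C^0}^{\frac{\beta}{\alpha}}\le \norm{\gamma}_{C^0}^{1-\frac{\beta}{\alpha}}\norm{\gamma}_{H^{\frac{\alpha}{4},\alpha}}^{\frac{\beta}{\alpha}}.
\end{equation*}

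For the spatial seminorm we use the standard interpolation of a difference quotient. Fix $t$ and $x\neq y$, and write $d=\operatorname{dist}(x,y)$. The difference $\abs{\gamma(t,x)-\gamma(t,y)}$ is bounded both by $2\norm{\gamma}_{C^0}$ and by $[\gamma]_\alpha d^\alpha$, where $[\gamma]_\alpha\le\norm{\gamma}_{H^{\frac{\alpha}{4},\alpha}}$ is the spatial $\alpha$-seminorm. Taking the geometric mean of the two bounds with weights $1-\frac{\beta}{\alpha}$ and $\frac{\beta}{\alpha}$ gives
\begin{equation*}
\abs{\gamma(t,x)-\gamma(t,y)}\le (2\norm{\gamma}_{C^0})^{1-\frac{\beta}{\alpha}}\bigl([\gamma]_\alpha d^{\alpha}\bigr)^{\frac{\beta}{\alpha}} = 2^{1-\frac{\beta}{\alpha}}\norm{\gamma}_{C^0}^{1-\frac{\beta}{\alpha}}[\gamma]_\alpha^{\frac{\beta}{\alpha}}\, d^{\beta}.
\end{equation*}
Dividing by $d^\beta$ and taking the supremum yields the desired bound with constant $2^{1-\beta/\alpha}$.

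The temporal seminorm is handled in exactly the same way. The time difference is bounded by $2\norm{\gamma}_{C^0}$ and by $[\gamma]_{\alpha/4,t}\abs{t-\tilde t}^{\alpha/4}$. Interpolating with the same weights produces the factor $\abs{t-\tilde t}^{\frac{\alpha}{4}\cdot\frac{\beta}{\alpha}}=\abs{t-\tilde t}^{\beta/4}$, which is precisely the exponent required.

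Summing the three estimates gives the claim with $c=1+2\cdot 2^{1-\beta/\alpha}$. Finiteness of the right-hand side shows $\gamma\in H^{\frac{\beta}{4},\beta}$. The only care needed is to verify that the index sets in \Cref{def:par_hölder_spaces} for $s=0$ contain no terms beyond the three listed. This is a direct check: no derivatives appear, because $k+4l\le 0$ forces $k=l=0$.
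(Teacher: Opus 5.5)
Your proof is correct and follows essentially the same approach as the paper: the sup term is trivial, and the spatial and temporal seminorms are handled by interpolating the difference between the bound $2\norm{\gamma}_{C^0}$ and the $\alpha$-Hölder bound with weights $1-\frac{\beta}{\alpha}$ and $\frac{\beta}{\alpha}$. You also spell out the index check and the constant explicitly, which the paper leaves implicit. One small precision: it is $\beta<1$, not just $\beta<4$, that rules out $k\ge 1$ in the time sum.
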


\begin{proof}
    The bound on the supremum term in the parabolic Hölder norm holds trivially. For the spatial Hölder term we obtain
    \begin{equation*}
    \begin{split}
        \frac{\abs{\gamma(t,x)-\gamma(t,y)}}{\operatorname{dist}(x,y)^\beta} &\leq \abs{\gamma(t,x)-\gamma(t,y)}^{1-\frac{\beta}{\alpha}} \Bigg(\frac{\abs{\gamma(t,x)-\gamma(t,y)}}{\operatorname{dist}(x,y)^\alpha}\Bigg)^{\frac{\beta}{\alpha}}\\
        &\leq (2\norm{\gamma}_{C^0})^{1-\frac{\beta}{\alpha}} \norm{\gamma}_{H^{\frac{\alpha}{4}, \alpha}}^{\frac{\beta}{\alpha}}.
    \end{split}
    \end{equation*}
    The time Hölder term can be treated in the same way.
\end{proof}

\begin{corollary}
\label{cor:compact_hölder_embedding}
    Let $0 < \beta < \alpha < 1$. The embedding $H^{\frac{\alpha}{4}, \alpha}([0,T] \times \mathbb{S}^1, \mathbb{R}^n) \hookrightarrow H^{\frac{\beta}{4}, \beta}([0,T] \times \mathbb{S}^1, \mathbb{R}^n)$ is compact.
\end{corollary}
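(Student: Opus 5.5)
The plan is to combine Arzelà--Ascoli with the interpolation estimate of \Cref{lem:hölder_interpolation}. The domain $[0,T]\times\mathbb{S}^1$ is compact, which makes this route natural. The strategy is to show that every bounded sequence in $H^{\frac{\alpha}{4},\alpha}$ has a subsequence that converges uniformly, and then to upgrade uniform convergence to convergence in the weaker Hölder norm.

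First, let $(\gamma_k)_k$ be a sequence with $\norm{\gamma_k}_{H^{\frac{\alpha}{4},\alpha}} \leq M$. The sup part of the norm bounds the sequence uniformly. For equicontinuity, use the parabolic metric $\abs{\gamma_k(t,x)-\gamma_k(\tilde t,y)}$ and split it through the point $(t,y)$. The spatial Hölder term bounds one piece by $M\operatorname{dist}(x,y)^\alpha$. The time Hölder term, with exponent $\frac{\alpha}{4}$ for $k=l=0$, bounds the other by $M\abs{t-\tilde t}^{\alpha/4}$. Arzelà--Ascoli on the compact metric space $[0,T]\times\mathbb{S}^1$ then yields a subsequence converging uniformly to some continuous $\gamma$.

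Next, passing to the pointwise limit in the Hölder difference quotients shows that $\gamma$ also lies in $H^{\frac{\alpha}{4},\alpha}$ with norm at most $M$. Hence $\gamma_k-\gamma$ is bounded in $H^{\frac{\alpha}{4},\alpha}$ by $2M$. Applying \Cref{lem:hölder_interpolation} to $\gamma_k-\gamma$ gives
\begin{equation*}
\norm{\gamma_k-\gamma}_{H^{\frac{\beta}{4},\beta}} \leq c\,\norm{\gamma_k-\gamma}_{C^0}^{1-\frac{\beta}{\alpha}} (2M)^{\frac{\beta}{\alpha}} \to 0,
\end{equation*}
since $1-\frac{\beta}{\alpha}>0$. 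This proves compactness.

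No step is really an obstacle. The only points to check are the lower semicontinuity of the Hölder seminorms under pointwise convergence and the correct index bookkeeping in \Cref{def:par_hölder_spaces} for $s=0$. When $s=0$, the only terms are the sup term, the spatial $\alpha$-Hölder term, and the single time term with $k=l=0$ and exponent $\frac{\alpha}{4}$. This matches what the argument above uses.
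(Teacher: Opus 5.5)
Your proof is correct and takes the same approach as the paper. The paper's proof is a one-line appeal to Arzel\`a--Ascoli, stated as a corollary of the interpolation lemma. You fill in exactly that argument: uniform subsequential convergence from Arzel\`a--Ascoli, the limit lying in $H^{\frac{\alpha}{4},\alpha}$ because the difference quotients are lower semicontinuous, and the interpolation estimate upgrading uniform convergence to convergence in $H^{\frac{\beta}{4},\beta}$.
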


\begin{proof}
    This follows for instance from the Arzel\`a--Ascoli theorem.
\end{proof}

\subsection{The linearized problem}

Consider the following problem for $\gamma: [0,T] \times \mathbb{S}^1 \to \mathbb{R}^2$.

\begin{equation}
\label{eq:lin_equation}
\begin{cases}
\begin{aligned}
    L \gamma &= \partial_t \gamma - \sum_{k=0}^4 a_k \partial_x^k \gamma = \psi && \text{on } [0,T] \times \mathbb{S}^1\\
    \gamma(0,\cdot) &= \gamma_0 && \text{on } \mathbb{S}^1,
\end{aligned}
\end{cases}
\end{equation}
where $a_0,\ldots,a_4$ are $2\times2$ matrices.

We will see that $L$ is an isomorphism of Banach spaces under appropriate assumptions. We use the notation for Hölder spaces $C^{4+\alpha} = C^{\lfloor 4+\alpha \rfloor, 4+\alpha - \lfloor 4+\alpha \rfloor}$.

\begin{theorem}
\label{thm:sol_linearized_problem}
    Suppose $L$ is parabolic in the sense of Petrovski\v{\i} (cf. \cite[Definition~I.3]{Eidelman1998}) and $a_k \in H^{\frac{\alpha}{4}, \alpha}([0,T_0] \times \mathbb{S}^1, \mathbb{R}^{2\times 2})$ for $k=0,\ldots,4$ for some $\alpha \in (0,1)$. Then, for all $(\psi, \gamma_0) \in H^{\frac{\alpha}{4}, \alpha}([0,T_0] \times \mathbb{S}^1, \mathbb{R}^2) \times C^{4+\alpha}(\mathbb{S}^1, \mathbb{R}^2)$ there exists a unique $\gamma \in H^{\frac{4+\alpha}{4}, 4+\alpha}([0,T] \times \mathbb{S}^1, \mathbb{R}^2)$ for some $0 < T \leq T_0$ such that \cref{eq:lin_equation} is satisfied. Moreover, there exists a constant $c > 0$ independent of $(\psi, \gamma_0)$ such that the following estimate holds
    \begin{equation*}
        \norm{\gamma}_{H^{\frac{4+\alpha}{4}, 4+\alpha}([0,T] \times \mathbb{S}^1, \mathbb{R}^2)} \leq c(\norm{\psi}_{H^{\frac{\alpha}{4}, \alpha}([0,T] \times \mathbb{S}^1, \mathbb{R}^2)} + \norm{\gamma_0}_{C^{4+\alpha}(\mathbb{S}^1, \mathbb{R}^2)}).
    \end{equation*}
\end{theorem}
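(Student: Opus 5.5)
This is essentially the classical Schauder theory for linear parabolic systems in the sense of Petrovski\v{\i}, and I would prove it by reduction to the existence and estimate results in \cite{Eidelman1998}.

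\textbf{Reduction to the periodic Cauchy problem.} Since $\mathbb{S}^1=\mathbb{R}/2\pi\mathbb{Z}$, I would regard all data as $2\pi$-periodic functions on $[0,T_0]\times\mathbb{R}$. The periodic Cauchy problem for $L$ with Hölder coefficients is then covered by the Cauchy-problem theory in \cite[Chapter~II]{Eidelman1998}, as follows:
\begin{itemize}
\item one constructs the fundamental solution $Z(t,x;\tau,\xi)$ by Levi's parametrix method, freezing the coefficients of the principal part $a_4\partial_x^4$;
\item the Gaussian-type bounds $\abs{\partial_x^k Z}\le c(t-\tau)^{-\frac{1+k}{4}}\exp\bigl(-c\abs{x-\xi}^{4/3}(t-\tau)^{-1/3}\bigr)$ hold by Petrovski\v{\i} parabolicity of the frozen operator;
\item the solution is the Duhamel formula
\[
\gamma(t,x)=\int Z(t,x;0,\xi)\gamma_0(\xi)\,d\xi+\int_0^t\!\!\int Z(t,x;\tau,\xi)\psi(\tau,\xi)\,d\xi\,d\tau .
\]
\end{itemize}
By uniqueness of the bounded solution, the solution is $2\pi$-periodic whenever the data are. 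So it defines a function on $[0,T]\times\mathbb{S}^1$.

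\textbf{Schauder estimate.} The estimate
\[
\norm{\gamma}_{H^{\frac{4+\alpha}{4},4+\alpha}}\le c\bigl(\norm{\psi}_{H^{\frac{\alpha}{4},\alpha}}+\norm{\gamma_0}_{C^{4+\alpha}}\bigr)
\]
is the corresponding Schauder-type theorem in \cite{Eidelman1998}. I would organise it in three steps.
\begin{itemize}
\item \emph{Constant coefficients.} Prove the estimate by explicit kernel bounds. On $\mathbb{S}^1$ one may alternatively use Fourier series, where the symbol $\partial_t+a_4k^4$ has $\operatorname{Re}\sigma(a_4)\ge\delta>0$ by parabolicity.
\item \emph{Localisation.} Pass to variable coefficients by freezing them on small parabolic cylinders and using a partition of unity in $x$.
\item \emph{Absorption.} Absorb the errors by the smallness of the oscillation of $a_4$ on short time intervals and small balls. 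The lower-order terms are treated as perturbations, using interpolation inequalities of the kind in \Cref{lem:Hölder_norm_estimate} and \Cref{lem:hölder_interpolation}.
\end{itemize}
Uniqueness then follows from the estimate applied to the difference of two solutions, which has $\psi=0$ and $\gamma_0=0$.

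\textbf{Restricting to a short time $T$.} The statement only asks for some $T\le T_0$, and I would use this freedom deliberately. First extend $\gamma_0$ to $\bar\gamma(t,x)=\gamma_0(x)$ and write $\gamma=\bar\gamma+v$ with $v(0)=0$. Then $v$ solves
\[
Lv=\psi-L\bar\gamma=\psi+\sum_k a_k\partial_x^k\gamma_0\in H^{\frac{\alpha}{4},\alpha}.
\]
Next solve for $v$ by a contraction or Neumann-series argument around the frozen operator $\partial_t-a_4(0,x_0)\partial_x^4$ near each point $x_0$. The coefficient differences and lower-order terms produce factors $T^{\epsilon}$ via \Cref{lem:Hölder_norm_estimate}, and these factors become small for $T$ small.

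\textbf{Main obstacle.} The main work is the construction of the fundamental solution and its Hölder estimates for systems with merely Hölder (rather than smooth) matrix coefficients. I would not redo this in detail; instead I would cite \cite[Theorem~II.3.? / Chapter~IV]{Eidelman1998} for the Cauchy problem on $\mathbb{R}$, and verify only that periodicity is preserved and that the norms on $\mathbb{S}^1$ agree with the periodic ones on $\mathbb{R}$.
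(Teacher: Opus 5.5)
Your argument is correct, but it follows a genuinely different route from the paper's. You lift coefficients and data to $2\pi$-periodic functions on $[0,T_0]\times\mathbb{R}$ and invoke the whole-line Cauchy theory (parametrix, Duhamel formula, Schauder estimate). You then recover periodicity from uniqueness of bounded solutions under the shift $x\mapsto x+2\pi$, and transfer norms using that geodesic and Euclidean H\"older seminorms are equivalent for periodic functions. Uniqueness on $\mathbb{S}^1$ is then just uniqueness for the Cauchy problem.

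The paper never touches the Cauchy problem. It uses only the initial-boundary value problem on $[0,\pi]$ with clamped conditions $\gamma=\partial_x\gamma=0$ from \cite[Theorem~VI.21]{Eidelman1998}. It solves this problem on four isometric half-circle charts, with data cut off by bump functions $\eta_i$ so that the compatibility conditions hold trivially, and glues the pieces with a partition of unity $\chi_i$. The gluing error is removed by inverting $I+K$, where $K$ is built from the commutators $[L,\chi_i]$. These commutators have order at most three and contain no time derivative, so for zero initial data they gain a power of $T$ and $\norm{K}<1$ for $T$ small. Uniqueness is obtained the same way: the paper localizes an arbitrary solution and absorbs the commutator term for small $T$.

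Your route buys existence on the full interval $[0,T_0]$ with no boundary operators, no Shapiro--Lopatinski\v{\i} or compatibility checks, and no smallness of $T$. Its cost is that everything rests on a whole-space Schauder theorem for Petrovski\v{\i} systems with merely H\"older coefficients, which you have not actually located. Your ``Theorem~II.3.?'' is a placeholder and needs a precise source. The paper, by contrast, needs only the single bounded-domain theorem it cites anyway.

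Once that citation is fixed, your ``short time'' freezing argument and the constant-coefficient sketch are redundant. There is also one small slip in that sketch. With the paper's convention $L=\partial_t-\sum_k a_k\partial_x^k$, the symbol is $\partial_t-a_4k^4$, and parabolicity means $\operatorname{Re}\sigma(a_4)\le-\delta<0$, consistent with $a_4=-2\abs{\partial_x\gamma}^{-4}I$. So the sign in your Fourier remark is reversed.
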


We prove this with a partition of unity and the analogue of this theorem on an interval. Consider the following problem for $\gamma: [0,T] \times [0,\pi]\to \mathbb{R}^2$

\begin{equation}
\label{eq:lin_equation_interval}
\begin{cases}
\begin{aligned}
    L \gamma &= \partial_t \gamma - \sum_{k=0}^4 a_k \partial_x^k \gamma = \psi && \text{on } [0,T] \times [0,\pi]\\
    \gamma(0,\cdot) &= \gamma_0 && \text{on } [0,\pi]\\
    \gamma(t,x) &= 0 &&\text{for } t \in [0,T], x \in \{0, \pi\},\\
    \partial_x \gamma(t,x) &= 0 &&\text{for } t \in [0,T], x \in \{0, \pi\}.
\end{aligned}
\end{cases}
\end{equation}
The Shapiro-Lopatinski\v{\i} condition (cf. \cite[p. 10-15]{Eidelman1998}) can be checked as in \cite[p. 2070]{Spener2017}.

Since our boundary conditions are identically zero, the required compatibility conditions of order $\lfloor\frac{4+\alpha}{4}\rfloor$ (see \cite[p.~217]{Eidelman1998}) ensure that the initial data matches the boundary data at the corners $(t,x) \in \{0\} \times \{0, \pi\}$. Specifically, the zero-order condition requires 
\begin{equation*}
	\gamma_0(x) = 0,\quad\partial_x \gamma_0(x) = 0\qquad\text{for $x \in \{0, \pi\}$}.
\end{equation*}
Because $\partial_t \gamma(t,x) = 0$ on the boundary, the first-order compatibility condition requires evaluating \cref{eq:lin_equation_interval} at $t=0$, yielding
\begin{equation*}
    \psi(0, x) + \sum_{k=0}^4 a_k(0, x) \partial_x^k \gamma_0(x) = 0 \quad \text{for } x \in \{0, \pi\}.
\end{equation*}
For $\alpha > 4$, higher-order compatibility conditions mandate that \cref{eq:lin_equation_interval} differentiated in time, evaluated at $t=0$, vanishes at the corners to match the higher time derivatives of the boundary condition.

We refer to \cite[Chapter~II.3.1]{Eidelman1998} for the definition of parabolic Hölder spaces on an interval instead of $\mathbb{S}^1$.

\begin{theorem}[{\cite[Theorem~VI.21]{Eidelman1998}}]
\label{thm:sol_linearized_problem_interval}
    Suppose $L$ is parabolic in the sense of Petrovski\v{\i} (see \cite[Definition~I.3]{Eidelman1998}) and $a_k \in H^{\frac{\alpha}{4}, \alpha}([0,T] \times [0,\pi], \mathbb{R}^{2\times 2})$ for $k=0,\ldots,4$ for some $\alpha>0, \alpha \notin \mathbb{N}$. Then, for $(\psi, \gamma_0) \in H^{\frac{\alpha}{4}, \alpha}([0,T] \times [0,\pi], \mathbb{R}^2) \times C^{4+\alpha}([0,\pi], \mathbb{R}^2)$ that satisfy the compatibility conditions of order $\lfloor\frac{4+\alpha}{4}\rfloor$, there exists a unique $\gamma \in H^{\frac{4+\alpha}{4}, 4+\alpha}([0,T] \times [0,\pi], \mathbb{R}^2)$ such that \cref{eq:lin_equation_interval} is satisfied. Moreover, there exists a constant $c > 0$ independent of $(\psi, \gamma_0)$ such that the following estimate holds
    \begin{equation*}
        \norm{\gamma}_{H^{\frac{4+\alpha}{4}, 4+\alpha}([0,T] \times [0,\pi], \mathbb{R}^2)} \leq c(\norm{\psi}_{H^{\frac{\alpha}{4}, \alpha}([0,T] \times [0,\pi], \mathbb{R}^2)} + \norm{\gamma_0}_{C^{4+\alpha}([0,\pi], \mathbb{R}^2)}).
    \end{equation*}
\end{theorem}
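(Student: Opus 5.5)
This is a classical Schauder-type result for linear parabolic initial–boundary value problems. I would prove it with the standard program of freezing coefficients, solving model problems, localizing and perturbing, and then using the method of continuity for existence. I expect to rely on the Petrovski\v{\i} parabolicity of $L$, on the Shapiro--Lopatinski\v{\i} condition for the clamped boundary operators $\gamma$, $\partial_x\gamma$ (which the paper notes can be checked as in \cite{Spener2017}), and on the compatibility conditions of order $\lfloor\frac{4+\alpha}{4}\rfloor$.

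\emph{A priori estimate.} The first step is the estimate
\begin{equation*}
\norm{\gamma}_{H^{\frac{4+\alpha}{4},4+\alpha}} \leq c\bigl(\norm{L\gamma}_{H^{\frac{\alpha}{4},\alpha}} + \norm{\gamma(0)}_{C^{4+\alpha}} + \norm{\gamma}_{C^0}\bigr)
\end{equation*}
for every $\gamma$ in the target space satisfying the boundary and compatibility conditions.
\begin{enumerate}
\item Freeze the coefficients at a point $(t_0,x_0)$ and drop the lower-order terms. This gives the constant-coefficient operator $\partial_t - a_4(t_0,x_0)\partial_x^4$.
\item Build the model solution operators for this frozen operator in three geometries:
\begin{itemize}
\item on the whole line, via the fundamental solution (heat-type kernel);
\item on the half-lines $x>0$ and $x<\pi$, via a Poisson/Green matrix, which exists because of the Shapiro--Lopatinski\v{\i} condition;
\item at the corners $\{0\}\times\{0,\pi\}$, where the compatibility conditions let one subtract a suitable extension of the initial data and reduce to zero initial values.
\end{itemize}
\item Show that these kernels satisfy Gaussian-type bounds $\abs{\partial_t^l\partial_x^k G}\lesssim t^{-(1+k+4l)/4}\exp(-c(\abs{x}^{4}/t)^{1/3})$.
\item Deduce the Hölder estimates for the model problems from these bounds by standard singular-integral (Schauder) arguments.
\end{enumerate}

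\emph{Localization and absorption.} Next I would take a fine partition of unity $\{\eta_j\}$ in $[0,T]\times[0,\pi]$ of mesh $\delta$ and apply the model estimates to $\eta_j\gamma$. The errors come from two sources:
\begin{itemize}
\item the variation of $a_4$ over the support of $\eta_j$, which has size $\lesssim\delta^\alpha$ in $H^{\frac{\alpha}{4},\alpha}$ times the top-order norm;
\item the lower-order terms and the commutators $[L,\eta_j]$.
\end{itemize}
For the second source I would use interpolation between Hölder norms, in the spirit of \Cref{lem:hölder_interpolation}, and smallness in $T$ as in \Cref{lem:Hölder_norm_estimate}. Choosing $\delta$ small lets the first error be absorbed into the left side, and the second is bounded by $\varepsilon$ times the top-order norm plus $C_\varepsilon\norm{\gamma}_{C^0}$. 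Finally, the $\norm{\gamma}_{C^0}$ term is removed by a contradiction–compactness argument using \Cref{cor:compact_hölder_embedding} together with uniqueness, which comes from the energy method or a Gronwall estimate.

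\emph{Existence and main obstacle.} For existence I would use the method of continuity along $L_\sigma=(1-\sigma)(\partial_t+\partial_x^4)+\sigma L$. Each $L_\sigma$ is uniformly parabolic and satisfies the same Lopatinski\v{\i} condition, so the a priori estimate holds uniformly in $\sigma$. For $\sigma=0$, solvability is explicit by reflection and Fourier series, for data satisfying the compatibility conditions. Openness of the solvable set follows from the uniform estimate, and closedness from completeness, so solvability propagates to $\sigma=1$. The main obstacle is the model corner problem: deriving Hölder estimates there requires the full Green-matrix construction under the Lopatinski\v{\i} condition and careful handling of compatibility. I would treat this step as a black box from \cite[Chapter~VI]{Eidelman1998}, which is exactly what citing Theorem~VI.21 provides.
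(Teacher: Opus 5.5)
The paper gives no proof of this statement; it is quoted from \cite[Theorem~VI.21]{Eidelman1998}. Since your sketch black-boxes the model half-line and corner problems from that same chapter, it ultimately rests on the same citation, which is acceptable. The problems are in the steps you carry out yourself.

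\emph{Base case $\sigma=0$.} The base case of your continuity argument is not ``explicit by reflection and Fourier series''. For $\partial_t+\partial_x^4$ on $[0,\pi]$:
\begin{itemize}
\item odd reflection (a sine series) enforces $u=\partial_x^2u=0$ at the endpoints;
\item even reflection (a cosine series) enforces $\partial_xu=\partial_x^3u=0$.
\end{itemize}
Neither is the clamped pair $u=\partial_xu=0$. The correct modes are the clamped-beam eigenfunctions (mixtures of $\cos,\sin,\cosh,\sinh$). These give only an $L^2$/semigroup solution. Obtaining $H^{\frac{4+\alpha}{4},4+\alpha}$ bounds from them up to $t=0$ and the corners is itself a maximal-regularity theorem. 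So the base case must also come from the Green-matrix theory you are black-boxing.

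\emph{Uniqueness.} Uniqueness, which your contradiction--compactness step needs, does not follow from the energy method at this generality, for two reasons.
\begin{itemize}
\item Making $\int u\cdot a_4\partial_x^4u\,dx$ coercive means moving two $x$-derivatives onto $a_4$, which is only in $H^{\frac{\alpha}{4},\alpha}$.
\item Coercivity also needs the symmetric part of $-a_4$ to be positive definite. Petrovski\v{\i} parabolicity is only an eigenvalue condition and does not give this. For example, $a_4=\begin{pmatrix}-1&M\\0&-1\end{pmatrix}$ with $M>2$ is parabolic but has indefinite symmetric part.
\end{itemize}
Gronwall alone does not control the top-order term either.

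The fix is the short-time absorption the paper uses in the uniqueness part of the proof of \Cref{thm:sol_linearized_problem}. If $u$ solves the homogeneous clamped problem with $u(0,\cdot)=0$, then $\abs{u(t,x)}\le t\sup\abs{\partial_t u}\le t\norm{u}_{H^{\frac{4+\alpha}{4},4+\alpha}}$. On a short interval the $\norm{u}_{C^0}$ term in your a priori estimate is therefore absorbed, so $u\equiv0$ there, and you restart from the endpoint. With this, the compactness step is no longer needed.

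\emph{Compatibility along the homotopy.} The first-order compatibility condition $\psi(0,x)+\sum_k a_k(0,x)\partial_x^k\gamma_0(x)=0$, $x\in\{0,\pi\}$, involves the coefficients. Hence the admissible data set moves with $\sigma$. The method of continuity must be run after reducing to zero initial data, and for $\alpha>4$ also to sources vanishing to the required order at $t=0$, so that the spaces no longer depend on $\sigma$.
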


\begin{proof}[{Proof of \Cref{thm:sol_linearized_problem}}]
    We want to apply \Cref{thm:sol_linearized_problem_interval}. For this, consider the charts of $\mathbb{S}^1$
    \begin{equation}
    \label{eq:parametrizations_S_1}
    \begin{split}
        \phi_1: [0,\pi] &\to \{(x,y) \in \mathbb{S}^1 \mid y\geq0 \} = V_1 \subset \mathbb{S}^1,\\
        \phi_2: [0,\pi] &\to \{(x,y) \in \mathbb{S}^1 \mid x\leq0 \} = V_2 \subset \mathbb{S}^1,\\
        \phi_3: [0,\pi] &\to \{(x,y) \in \mathbb{S}^1 \mid y\leq0 \} = V_3 \subset \mathbb{S}^1,\\
        \phi_4: [0,\pi] &\to \{(x,y) \in \mathbb{S}^1 \mid x\geq0 \} = V_4 \subset \mathbb{S}^1,
    \end{split}
    \end{equation}
    where the charts are chosen to be isometries. Define the corresponding charts
    \begin{equation*}
        \tilde{\phi}_i: [0,T] \times [0,\pi] \to [0,T] \times V_i
    \end{equation*}
    on $[0,T] \times \mathbb{S}^1$. Choose a partition of unity $\chi_1+\ldots+\chi_4 = 1$ of $\mathbb S^1$ subordinate to $\{V_i^\circ\}$, where $V_i^\circ$ denotes the interior of $V_i$. Moreover, choose bump functions $\eta_i: \mathbb{S}^1 \to [0,1]$ with $\operatorname{supp}\eta_i \subset V_i^\circ$ and $\eta_i|_{\operatorname{supp}\chi_i}=1$. Then, for $(\tilde{\psi},\tilde{\gamma}_0) \in H^{\frac{\alpha}{4}, \alpha}([0,T] \times \mathbb{S}^1, \mathbb{R}^2) \times C^{4+\alpha}(\mathbb{S}^1, \mathbb{R}^2)$ consider the equation
    \begin{equation}
    \label{eq:u_i}
    \begin{cases}
        \partial_t u_i - \sum_{k=0}^4 a_k\circ \tilde{\phi}_i \partial_x^k u_i &= (\tilde{\psi} \eta_i) \circ \tilde{\phi}_i\\
        u_i(0,\cdot) &= (\tilde{\gamma}_0 \eta_i) \circ \tilde{\phi}_i\\
        u_i|_{\{0, \pi\}} &= 0,\\
        \partial_x u_i|_{\{0, \pi\}} &= 0.
    \end{cases}
    \end{equation}
    Since $\eta_i$ is compactly supported in $V_i^\circ$, the compatibility conditions are satisfied. We find with \Cref{thm:sol_linearized_problem_interval} a unique solution $u_i \in H^{\frac{4+\alpha}{4}, 4+\alpha}([0,T] \times [0,\pi], \mathbb{R}^2)$ of \cref{eq:u_i}. Moreover, there are constants $C_i = C_i(\alpha, T, a_k)$ such that
    \begin{equation}
    \label{eq:bound_norm_u_i}
    \begin{split}
        &\quad\norm{u_i}_{H^{\frac{4+\alpha}{4}, 4+\alpha}([0,T] \times [0,\pi], \mathbb{R}^2)}\\
        &\leq C_i (\norm{(\tilde{\psi} \eta_i)\circ \tilde{\phi}_i}_{H^{\frac{\alpha}{4}, \alpha}([0,T] \times [0,\pi], \mathbb{R}^2)} + \norm{(\tilde{\gamma}_0 \eta_i)\circ \tilde{\phi}_i}_{C^{4+\alpha}([0,\pi], \mathbb{R}^2)})\\
        &\leq C_i(\norm{\tilde{\psi} \eta_i}_{H^{\frac{\alpha}{4}, \alpha}([0,T] \times \mathbb{S}^1, \mathbb{R}^2)} + \norm{\tilde{\gamma}_0 \eta_i}_{C^{4+\alpha}(\mathbb{S}^1, \mathbb{R}^2)})\\
        &\leq C_i(\norm{\tilde{\psi}}_{H^{\frac{\alpha}{4}, \alpha}([0,T] \times \mathbb{S}^1, \mathbb{R}^2)} + \norm{\tilde{\gamma}_0}_{C^{4+\alpha}(\mathbb{S}^1, \mathbb{R}^2)}).
    \end{split}
    \end{equation}
    In the following, we will need the commutator $[L,f]v \coloneqq L(fv)-f(Lv)$ for functions $f\in H^{\frac{4+\alpha}{4}, 4+\alpha}([0,T] \times \mathbb{S}^1, \mathbb{R})$ and $v \in H^{\frac{4+\alpha}{4}, 4+\alpha}([0,T] \times \mathbb{S}^1, \mathbb{R}^2)$.
    First, consider solutions $u_i$ to \cref{eq:u_i} for $\tilde{\gamma}_0 = \gamma_0$ and $\tilde{\psi}=0$. Let $\tilde{\gamma}_1 \coloneqq \sum_{i=1}^4 \chi_i u_i \circ \tilde{\phi}_i^{-1}$. Since the $\phi_i$ are isometries, we have $\partial_x^k (u_i \circ \tilde{\phi}_i^{-1}) = \partial_x^k u_i \circ \tilde{\phi}_i^{-1}$. Hence,
    \begin{equation*}
    \begin{split}
        L \tilde{\gamma}_1 &= \sum_{i=1}^4 L(\chi_i u_i \circ \tilde{\phi}_i^{-1})\\
        &= \sum_{i=1}^4 \Big(\chi_i \partial_t u_i \circ \tilde{\phi}_i^{-1} - \chi_i \sum_{k=0}^4 a_k \partial_x^k u_i \circ \tilde{\phi}_i^{-1} + \big[L, \chi_i\big] u_i \circ \tilde{\phi}_i^{-1}\Big)\\
        &= \sum_{i=1}^4 (\chi_i \eta_i \tilde{\psi}) + E = E,
    \end{split}
    \end{equation*}
    where we define the error term $E \coloneqq \sum_{i=1}^4 \big[L, \chi_i\big] u_i \circ \tilde{\phi}_i^{-1}$.
    
    Now, we are left with solving \cref{eq:lin_equation} for initial data zero. Consider solutions $\tilde{u}_i$ to \cref{eq:u_i} with $\tilde{\gamma}_0=0$. We will choose $\tilde{\psi}$ later. Let $\tilde{\gamma}_2 \coloneqq \sum_{i=1}^4 \chi_i \tilde{u}_i \circ \tilde{\phi}_i^{-1}$. We have
    \begin{equation*}
        L \tilde{\gamma}_2 = \sum_{i=1}^4 (\chi_i \eta_i \tilde{\psi}) + \sum_{i=1}^4 \big[L, \chi_i\big] \tilde{u}_i \circ \tilde{\phi}_i^{-1} = \tilde{\psi} + K \tilde{\psi} = (I+K)\tilde{\psi},
    \end{equation*}
    where $I$ is the identity and $K$ is a linear operator defined by
    \begin{equation*}
        K \tilde{\psi} \coloneqq \sum_{i=1}^4 \big[L, \chi_i\big] \tilde{u}_i \circ \tilde{\phi}_i^{-1}. 
    \end{equation*}
    Here we need the initial data zero, otherwise the map $\tilde{\psi} \mapsto \tilde{u}_i$ is not linear. The commutator $[L, \chi_i]$ is a differential operator containing derivatives only up to order three and no time derivatives
    \begin{equation*}
        [L, \chi_i] = -\sum_{k=1}^4 \sum_{l=0}^{k-1} \binom{k}{l} a_k (\partial_x^{k-l} \chi_i)\partial_x^l.
    \end{equation*}
    Hence, we have the bound for some constant $c = c(a_1,\ldots,a_4,\chi_i) > 0$
    \begin{equation*}
        \norm{[L, \chi_i]\tilde{u}_i \circ \tilde{\phi}_i^{-1}}_{H^{\frac{\alpha}{4},\alpha}} \leq  c \sum_{k=0}^3\norm{\partial_x^k\tilde{u}_i}_{H^{\frac{\alpha}{4},\alpha}}.
    \end{equation*}
    By \Cref{lem:Hölder_norm_estimate} (more specifically, an analogue on an interval), which we can apply because $\tilde{\gamma}_0=0$, we have
    \begin{equation}\label{eq:time-Holder}
        \norm{[L, \chi_i]\tilde{u}_i \circ \tilde{\phi}_i^{-1}}_{H^{\frac{\alpha}{4},\alpha}} \leq c T^\alpha\sum_{k=0}^3\norm{\partial_x^k\tilde{u}_i}_{H^{\frac{1+\alpha}{4},1+\alpha}} \leq c T^\alpha \norm{\tilde{u}_i}_{H^{\frac{4+\alpha}{4},4+\alpha}}.
    \end{equation}
    Then, by \cref{eq:bound_norm_u_i} we may choose $T$ sufficiently small such that 
    \begin{equation}
    \label{eq:norm_K}
        \norm{K}_{H^{\frac{\alpha}{4}, \alpha}([0,T] \times \mathbb{S}^1, \mathbb{R}^2) \to H^{\frac{\alpha}{4}, \alpha}([0,T] \times \mathbb{S}^1, \mathbb{R}^2)} < 1.
    \end{equation}
    Hence, $I+K: H^{\frac{\alpha}{4}, \alpha}([0,T] \times \mathbb{S}^1, \mathbb{R}^2) \to H^{\frac{\alpha}{4}, \alpha}([0,T] \times \mathbb{S}^1, \mathbb{R}^2)$ is invertible.
    Consequently, for $\tilde{\psi} = (I+K)^{-1}(\psi-E)$, the function $\gamma\coloneqq\tilde{\gamma}_1 + \tilde{\gamma}_2$ solves $L\gamma = L (\tilde{\gamma}_1+\tilde{\gamma}_2) = E+\psi-E = \psi$. Since $\tilde{\gamma}_1(0,\cdot)=\gamma_0$ and $\tilde{\gamma}_2(0,\cdot)=0$, the inital condition is satisfied. \Cref{eq:bound_norm_u_i} applied to the solutions $u_i$ as well as $\tilde{u}_i$ yields

    \begin{equation*}
    \begin{split}
        \norm{\tilde{\gamma}_1}_{H^{\frac{4+\alpha}{4},4+\alpha}} &\leq c \sum_{i=1}^4 \norm{u_i}_{H^{\frac{4+\alpha}{4},4+\alpha}} \leq c \norm{\gamma_0}_{C^{4+\alpha}},\\
        \norm{\tilde{\gamma}_2}_{H^{\frac{4+\alpha}{4},4+\alpha}} &\leq c \sum_{i=1}^4 \norm{\tilde{u}_i}_{H^{\frac{4+\alpha}{4},4+\alpha}} \leq c\norm{(I+K)^{-1}(\psi-E)}_{H^{\frac{\alpha}{4},\alpha}}\\
        &\leq c \norm{(I+K)^{-1}}(\norm{\psi}_{H^{\frac{\alpha}{4},\alpha}}+\norm{E}_{H^{\frac{\alpha}{4},\alpha}})\\
        &\leq c (\norm{\psi}_{H^{\frac{\alpha}{4},\alpha}}+\norm{\gamma_0}_{C^{4+\alpha}})
    \end{split}
    \end{equation*}
    for some constant $c>0$, which proves the desired bound on the norm of $\gamma$.

    In order to prove uniqueness, it is enough to show that any solution of \eqref{eq:lin_equation_interval} satisfies the $H^{\frac{4+\alpha}{4}, 4+\alpha}$-estimate of \Cref{thm:sol_linearized_problem}. To this end, let $T$ be as in \eqref{eq:norm_K}, and $\gamma: [0,T]\times \mathbb S^1\to\mathbb S^1$ be a solution of \eqref{eq:lin_equation}. Then, $\chi_i\gamma$ solves
    \begin{equation}\label{eq:localized_linear_system}
    	\begin{cases}
    		\begin{aligned}
    			L_i((\chi_i \gamma) \circ \tilde{\phi}_i) &= (\chi_i L(\gamma))\circ \tilde{\phi}_i + ([L,\chi_i] \gamma) \circ \tilde{\phi}_i\\
    			&= (\chi_i \psi) \circ \tilde{\phi}_i + ([L,\chi_i] \gamma) \circ \tilde{\phi}_i &&\text{ on }[0,T] \times [0,\pi]\\
    			(\chi_i \gamma) \circ \tilde{\phi}_i(0,\cdot) &= (\chi_i \gamma_0) \circ \phi_i &&\text{ on }[0,\pi]\\
    			(\chi_i \gamma) \circ \tilde{\phi}_i(t,x) &= 0 &&\text{ for }x \in \{0,\pi\},\\
    			\partial_x((\chi_i \gamma) \circ \tilde{\phi}_i(t,x)) &= 0 &&\text{ for }x \in \{0,\pi\},
    		\end{aligned}
    	\end{cases} 
    \end{equation}
	where $L_i$ denotes the operator $L$ written in local coordinates on $[0,T] \times [0,\pi]$, that is, $L_i = \partial_t - \sum_{k=0}^4 a_k \circ \tilde{\phi}_i \partial_x^k$. \Cref{thm:sol_linearized_problem_interval} and Equation \eqref{eq:time-Holder} imply
	\begin{align*}
		\|(\chi_i\gamma)\circ\tilde\phi_i\|_{H^{\frac{4+\alpha}{4},4+\alpha}}&\le c(\|(\chi_i\psi)\circ\tilde\phi_i + ([L,\chi_i]\gamma)\circ\tilde\phi_i)\|_{H^{\frac{\alpha}{4},\alpha}} + \|(\chi_i\gamma_0)\circ\phi_i\|_{C^{4+\alpha}})\\
		&\le c(\|\psi\|_{H^{\frac{\alpha}{4},\alpha}}+T^\alpha\|\gamma\|_{H^{\frac{4+\alpha}{4},4+\alpha}}+\|\gamma_0\|_{C^{4+\alpha}}).
	\end{align*}
	Choosing $T>0$ small enough, it follows that 
	\begin{equation*}
		\|\gamma\|_{H^{\frac{4+\alpha}{4},4+\alpha}}\le c(\|\psi\|_{H^{\frac{\alpha}{4},\alpha}}+\|\gamma_0\|_{C^{4+\alpha}})
	\end{equation*}
	which implies the conclusion.
\end{proof}

So far, we have only shown that there is a solution for $\alpha \in (0,1)$. To obtain higher regularity, we apply \Cref{thm:sol_linearized_problem_interval} on four patches.

\begin{corollary}
\label{cor:sol_linearized_problem}
    Suppose $L$ is parabolic in the sense of Petrovski\v{\i} (see \cite[Definition~I.3]{Eidelman1998}) and $a_k \in H^{\frac{\alpha}{4}, \alpha}([0,T_0] \times \mathbb{S}^1, \mathbb{R}^{2\times 2})$ for $k=0,\ldots,4$ for some $\alpha > 0, \alpha \notin \mathbb{N}$. Then, for all $(\psi, \gamma_0) \in H^{\frac{\alpha}{4}, \alpha}([0,T_0] \times \mathbb{S}^1, \mathbb{R}^2) \times C^{4+\alpha}(\mathbb{S}^1, \mathbb{R}^2)$ there exists a unique $\gamma \in H^{\frac{4+\alpha}{4}, 4+\alpha}([0,T] \times \mathbb{S}^1, \mathbb{R}^2)$ for some $0 < T \leq T_0$ such that \cref{eq:lin_equation} is satisfied. Moreover, there exists a constant $c > 0$ independent of $(\psi, \gamma_0)$ such that the following estimate holds
    \begin{equation*}
        \norm{\gamma}_{H^{\frac{4+\alpha}{4}, 4+\alpha}([0,T] \times \mathbb{S}^1, \mathbb{R}^2)} \leq c(\norm{\psi}_{H^{\frac{\alpha}{4}, \alpha}([0,T] \times \mathbb{S}^1, \mathbb{R}^2)} + \norm{\gamma_0}_{C^{4+\alpha}(\mathbb{S}^1, \mathbb{R}^2)}).
    \end{equation*}
\end{corollary}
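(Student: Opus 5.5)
We prove \Cref{cor:sol_linearized_problem} by running the proof of \Cref{thm:sol_linearized_problem} again, now for arbitrary $\alpha>0$, $\alpha\notin\mathbb N$. Only two inputs in that proof used $\alpha<1$. The first is \Cref{thm:sol_linearized_problem_interval}, which is already stated for every non-integer $\alpha>0$. The second is the smallness of the commutator in \eqref{eq:time-Holder}, which relied on \Cref{lem:Hölder_norm_estimate}. So the plan has two parts. We use the four charts $\phi_i$ from \eqref{eq:parametrizations_S_1}, the partition of unity $\chi_i$ and the cutoffs $\eta_i$ exactly as before. Then we redo the two constructions: $\tilde\gamma_1$ for the initial datum, and $\tilde\gamma_2=\sum_i\chi_i\tilde u_i\circ\tilde\phi_i^{-1}$ for the inhomogeneity.

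First, we solve the localized problems \eqref{eq:u_i} in $H^{\frac{4+\alpha}{4},4+\alpha}$. The data $(\tilde\psi\eta_i)\circ\tilde\phi_i$ and $(\tilde\gamma_0\eta_i)\circ\tilde\phi_i$ vanish near $x\in\{0,\pi\}$ for all times. Hence the compatibility conditions of every order $\lfloor\frac{4+\alpha}{4}\rfloor$ hold at the corners: the time derivatives of the equation evaluated at $t=0$ are built from $\partial_t^j\psi$ and $x$-derivatives of the initial data, and all of these vanish near the boundary. \Cref{thm:sol_linearized_problem_interval} therefore gives $u_i$ with the estimate \eqref{eq:bound_norm_u_i}. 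The construction of $\tilde\gamma_1$ is then unchanged. The error term $E=\sum_i[L,\chi_i]u_i\circ\tilde\phi_i^{-1}$ lies in $H^{\frac{\alpha}{4},\alpha}$, because $[L,\chi_i]$ has order at most three in $x$ and has coefficients $a_k\partial_x^{k-l}\chi_i\in H^{\frac\alpha4,\alpha}$.

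The main obstacle is the contraction estimate $\|K\|<1$ for $T$ small, which now has to hold in $H^{\frac\alpha4,\alpha}$ with $\alpha>1$. I would prove a general version of \Cref{lem:Hölder_norm_estimate}. Take $v\in H^{\frac{s+\alpha}{4},s+\alpha}$ with $\partial_t^jv(0,\cdot)=0$ for all $j$ with $4j<s+\alpha$. Then for $\tilde s<s$,
\[
\|v\|_{H^{\frac{\tilde s+\alpha}{4},\tilde s+\alpha}}\le cT^{\delta}\|v\|_{H^{\frac{s+\alpha}{4},s+\alpha}}
\]
for some $\delta>0$. The proof is the same as before: we bound each derivative $\partial_x^k\partial_t^lv$ by Taylor expansion in $t$ from $t=0$, where all lower time derivatives vanish, and then apply Hölder continuity in time of the top-order term.

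The hypothesis $\partial_t^j\tilde u_i(0)=0$ holds here. Because $\tilde\gamma_0=0$, the equation gives $\partial_t\tilde u_i(0)=\tilde\psi\eta_i(0)\circ\tilde\phi_i$, which need not vanish. To handle this, I would first subtract a fixed function $w\in H^{\frac{4+\alpha}{4},4+\alpha}$ whose time derivatives at $t=0$ match those forced by the equation; such a $w$ exists by a standard extension in time. Applying the lemma to $\tilde u_i-w$ controls $K$ up to a bounded remainder. Alternatively, and more simply, one can avoid the issue with a bootstrap. The case $\alpha\in(0,1)$ already gives a unique solution, and uniqueness forces any higher-regularity solution to coincide with it. Higher regularity then follows by induction on $\lfloor\alpha\rfloor$. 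Differentiating the equation in $x$ shows that $\partial_x\gamma$ solves a problem of the same type, with right-hand side $\partial_x\psi+\sum_k(\partial_xa_k)\partial_x^k\gamma$. Since $\gamma$ already lies in $H^{\frac{4+\beta}{4},4+\beta}$ with $\beta=\alpha-1$, the terms $(\partial_x a_k)\partial_x^k\gamma$ with $k\le 3$ lie in $H^{\frac\beta4,\beta}$. The case $k=4$ is the delicate one, and I would handle it with the same localized interval regularity on each patch: apply \Cref{thm:sol_linearized_problem_interval} to $(\chi_i\gamma)\circ\tilde\phi_i$ as in \eqref{eq:localized_linear_system}, now at level $\alpha$.

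In the patch argument, the equation \eqref{eq:localized_linear_system} has right-hand side $(\chi_i\psi)\circ\tilde\phi_i+([L,\chi_i]\gamma)\circ\tilde\phi_i$, and the commutator involves only three $x$-derivatives of $\gamma$. Knowing $\gamma\in H^{\frac{4+\alpha-1}{4},4+\alpha-1}$ from the induction step places this right-hand side in $H^{\frac{\alpha}{4},\alpha}$, since three derivatives cost three of the four available orders. The compatibility conditions hold because $\chi_i$ vanishes near the ends. \Cref{thm:sol_linearized_problem_interval} then yields $(\chi_i\gamma)\circ\tilde\phi_i\in H^{\frac{4+\alpha}{4},4+\alpha}$ together with the estimate. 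Summing over $i$ gives the claimed bound, with constants depending on the lower-order norm. That lower-order norm is in turn controlled by $\|\psi\|+\|\gamma_0\|$ through the induction hypothesis, which closes the estimate. Both the induction and the interpolation of intermediate Hölder exponents are routine.
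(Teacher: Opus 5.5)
Your final bootstrap argument is exactly the paper's proof: start from the unique solution for $\alpha-\lfloor\alpha\rfloor\in(0,1)$, then apply \Cref{thm:sol_linearized_problem_interval} to $(\chi_i\gamma)\circ\tilde\phi_i$ in \eqref{eq:localized_linear_system}, gaining one order each time because $[L,\chi_i]$ has $x$-order three and no time derivatives, with compatibility trivial since $\chi_i$ vanishes near $\{0,\pi\}$, and close the estimate with the lower-level bound. The contraction route you sketch first is unnecessary and does not close as written: for $\alpha>4$ the $H^{\frac{\alpha}{4},\alpha}$-norm of $[L,\chi_i]\tilde u_i$ contains $\sup\abs{\partial_t [L,\chi_i]\tilde u_i}$, which at $t=0$ is determined by $\tilde\psi(0,\cdot)$ and is not small in $T$, so subtracting $w$ leaves a bounded but non-small remainder and yields no bound $\norm{K}<1$.
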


\begin{proof}
    By \Cref{thm:sol_linearized_problem}, we may assume $\alpha > 1$ and let $\alpha_0 \coloneqq \alpha - \lfloor \alpha \rfloor$. Then, \Cref{thm:sol_linearized_problem} yields a unique solution $\gamma \in H^{\frac{4+\alpha_0}{4}, 4+\alpha_0}([0,T] \times \mathbb{S}^1, \mathbb{R}^2)$ to \cref{eq:lin_equation}. It remains to show that $\gamma \in H^{\frac{4+\alpha}{4}, 4+\alpha}$ and that the estimate holds. 
   	Let $V_i,\phi_i,\tilde\phi_i,\chi_i$ be as in the proof of \Cref{thm:sol_linearized_problem}.
   	The function $\chi_i \gamma$ solves \eqref{eq:localized_linear_system}. 
    Since the commutator is a differential operator containing derivatives only up to order three and no time derivatives, the source term $(\chi_i \psi) \circ \tilde{\phi}_i + ([L,\chi_i] \gamma) \circ \tilde{\phi}_i$ belongs to $H^{\frac{1+\alpha_0}{4}, 1+\alpha_0}([0,T] \times [0,\pi], \mathbb{R}^2)$. Because $\chi_i$ is compactly supported strictly inside $V_i^\circ$, the source term and initial data vanish near the boundary points $x \in \{0,\pi\}$, meaning all higher-order compatibility conditions required by \Cref{thm:sol_linearized_problem_interval} are trivially satisfied. \Cref{thm:sol_linearized_problem_interval} gives $\chi_i \gamma \circ \tilde{\phi}_i \in H^{\frac{5+\alpha_0}{4}, 5+\alpha_0}([0,T] \times [0,\pi], \mathbb{R}^2)$ with
    \begin{equation*}
    \begin{split}
        &\quad \norm{\chi_i \gamma \circ \tilde{\phi}_i}_{H^{\frac{5+\alpha_0}{4}, 5+\alpha_0}}\\
        &\leq c(\norm{(\chi_i \psi) \circ \tilde{\phi}_i + ([L,\chi_i] \gamma) \circ \tilde{\phi}_i}_{H^{\frac{1+\alpha_0}{4},1+\alpha_0}} + \norm{(\chi_i \gamma_0)\circ \phi_i}_{C^{5+\alpha_0}})\\
        &\leq c(\norm{\psi}_{H^{\frac{1+\alpha_0}{4},1+\alpha_0}} + \norm{\gamma}_{H^{\frac{4+\alpha_0}{4},4+\alpha_0}} + \norm{\gamma_0}_{C^{5+\alpha_0}})\\
        &\leq c (\norm{\psi}_{H^{\frac{1+\alpha_0}{4},1+\alpha_0}} + \norm{\gamma_0}_{C^{5+\alpha_0}})
    \end{split}
    \end{equation*}
    for some constant $c>0$. It follows that $\gamma \in H^{\frac{5+\alpha_0}{4}, 5+\alpha_0}([0,T] \times \mathbb{S}^1, \mathbb{R}^2)$. Iterating this argument yields $\gamma \in H^{\frac{4+\alpha}{4}, 4+\alpha}([0,T] \times \mathbb{S}^1, \mathbb{R}^2)$.
\end{proof}

\subsection{The nonlinear problem}
In this chapter, we assume $\alpha \in (0,1)$.
Define
\begin{equation*}
    W \coloneqq \{\gamma \in C^4(\mathbb{S}^1, \mathbb{R}^2) \mid \gamma \text{ is immersed}\}.
\end{equation*}
Then, $W$ is an open subset of $C^4(\mathbb{S}^1, \mathbb{R}^2)$ and we assume $\gamma_0 \in W$. Since $\tilde{F}$ from \cref{eq:evolution_gradient_flow_explicit} is a polynomial componentwise with smooth coefficients, $\tilde{F}$ induces a smooth mapping
\begin{equation}
\label{eq:definition_F}
\begin{split}
    \mathcal{F}&: H^{\frac{4+\alpha}{4}, 4+\alpha}([0,T] \times \mathbb{S}^1, \mathbb{R}^2) \cap W \to H^{\frac{\alpha}{4}, \alpha}([0,T] \times \mathbb{S}^1, \mathbb{R}^2),\\
    \gamma &\mapsto \mathcal{F}(\gamma) \coloneqq \tilde{F}(\abs{\partial_x \gamma}^{-1}, \partial_x \gamma, \ldots, \partial_x^4 \gamma).
\end{split}
\end{equation}
We write $H^{\frac{4+\alpha}{4}, 4+\alpha}([0,T] \times \mathbb{S}^1, \mathbb{R}^2) \cap W$ for the subset of functions $\gamma \in H^{\frac{4+\alpha}{4}, 4+\alpha}([0,T] \times \mathbb{S}^1, \mathbb{R}^2)$ such that $\gamma(t,\cdot) \in W$ for all $0\leq t \leq T$. Computing the derivative of $\mathcal{F}$ at $\gamma \in H^{\frac{4+\alpha}{4}, 4+\alpha}([0,T] \times \mathbb{S}^1, \mathbb{R}^2) \cap W$ yields
\begin{equation}
\label{eq:derivative_F}
    D\mathcal{F}[\gamma]v = \sum_{k=0}^4 a_k \partial_x^k v.
\end{equation}
We have
\begin{equation*}
    a_4(t,x) = \begin{pmatrix}
        \frac{-2}{\abs{\partial_x \gamma}^4} & 0\\
        0 & \frac{-2}{\abs{\partial_x \gamma}^4}
    \end{pmatrix}
\end{equation*}
and $a_k(t,x) = \tilde{a}_k (\abs{\partial_x \gamma}^{-1}, \partial_x \gamma, \ldots, \partial_x^4 \gamma)$ for smooth functions $\tilde{a}_k: (0,\infty) \times (\mathbb{R}^2)^4 \to \mathbb{R}^{2 \times 2}$ that are polynomials componentwise. The computation is given in \cref{eq:computation_derivative_F}.
Moreover, let
\begin{equation}
\label{eq:definition_G}
\begin{split}
        \mathcal{G}&: H^{\frac{4+\alpha}{4}, 4+\alpha}([0,T] \times \mathbb{S}^1, \mathbb{R}^2) \cap W \to H^{\frac{2+\alpha}{4}, 2+\alpha}([0,T] \times \mathbb{S}^1, \mathbb{R}^2),\\
        \gamma &\mapsto \mathcal{G}(\gamma) \coloneqq -\lambda(\gamma)\nu_\gamma.
\end{split}
\end{equation}
As can be seen in \cref{eq:estimate_norm_derivative_G}, the norm can be estimated
\begin{equation*}
    \norm{D\mathcal{G}[\gamma]v}_{H^{\frac{\alpha}{4}, \alpha}} \leq c(\abs{\partial_x \gamma}^{-1}, \norm{\gamma}_{H^{\frac{4+\alpha}{4}, 4+\alpha}})(\norm{\partial_x v}_{H^{\frac{\alpha}{4},\alpha}}+\norm{\partial_x^2 v}_{H^{\frac{\alpha}{4},\alpha}}).
\end{equation*}
\Cref{lem:Hölder_norm_estimate} yields
\begin{equation}
\label{eq:estimate_derivative_G}
\begin{split}
    \norm{D\mathcal{G}[\gamma]v}_{H^{\frac{\alpha}{4}, \alpha}} &\leq c(\abs{\partial_x \gamma}^{-1}, \norm{\gamma}_{H^{\frac{4+\alpha}{4}, 4+\alpha}})\norm{v}_{H^{\frac{2+\alpha}{4},2+\alpha}}\\
    &\leq T^{\frac{\alpha}{4}}c(\abs{\partial_x \gamma}^{-1}, \norm{\gamma}_{H^{\frac{4+\alpha}{4}, 4+\alpha}}) \norm{v}_{H^{\frac{3+\alpha}{4}, 3+\alpha}}.
\end{split}
\end{equation}
Hence, the operator norm can be made arbitrarily small for fixed $\gamma$, if we choose $T$ small enough.

\begin{remark}
\label{rem:parabolicity}
    By continuity and compactness, for $\gamma \in H^{\frac{4+\alpha}{4}, 4+\alpha}([0,T] \times \mathbb{S}^1, \mathbb{R}^2)$ such that $\gamma(0,\cdot) \in W$, we find $\delta > 0$ such that $\delta \leq \frac{1}{\abs{\partial_x \gamma}} \leq \frac{1}{\delta}$ uniformly on $[0,T] \times \mathbb{S}^1$.  Hence, $L = \partial_t - \sum_{k=0}^{4} a_k \partial_x^k$ is parabolic in the sense of Petrovski\v{\i} for $a_k$ as in \cref{eq:derivative_F}.
\end{remark}
This allows us to solve \cref{eq:evolution_gradient_flow_explicit}.
\begin{theorem}
\label{thm:sol_nonlinear_problem}
    Let $\gamma_0 \in C^{4+\alpha}(\mathbb{S}^1, \mathbb{R}^2) \cap W$. Then, there exist $\varepsilon > 0$ and a unique solution $\gamma \in H^{\frac{4+\alpha}{4}, 4+\alpha}([0,\varepsilon] \times \mathbb{S}^1, \mathbb{R}^2) \cap W$ to
    \begin{equation*}
    \begin{cases}
        \partial_t \gamma &= \mathcal{F}(\gamma) + \mathcal{G}(\gamma)\\
        \gamma(0,\cdot) &= \gamma_0.
    \end{cases}
    \end{equation*}
\end{theorem}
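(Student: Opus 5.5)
We will solve the problem with the Inverse Function Theorem, following the strategy of \cite{Rupp_Phd}. Set $X_T \coloneqq H^{\frac{4+\alpha}{4},4+\alpha}([0,T]\times\mathbb S^1,\mathbb R^2)$ and $Y_T \coloneqq H^{\frac{\alpha}{4},\alpha}([0,T]\times\mathbb S^1,\mathbb R^2)\times C^{4+\alpha}(\mathbb S^1,\mathbb R^2)$. On the open set $X_T\cap W$ consider the map
\begin{equation*}
    \Phi(\gamma) \coloneqq \bigl(\partial_t\gamma - \mathcal F(\gamma) - \mathcal G(\gamma),\ \gamma(0,\cdot)\bigr)\in Y_T .
\end{equation*}
The map $\Phi$ is smooth. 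For $\mathcal F$ this follows from \eqref{eq:definition_F}, and for $\mathcal G$ from \eqref{eq:definition_G}, using that $\lambda$ is a quotient of integrals of smooth expressions with $\mathcal L(\gamma)>0$. Solving the problem in \Cref{thm:sol_nonlinear_problem} amounts to finding $\gamma$ with $\Phi(\gamma)=(0,\gamma_0)$.

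First I construct a reference point. Let $\bar\gamma\in X_{T_0}$ be the solution of the linear problem $\partial_t\bar\gamma - \sum_k \bar a_k\partial_x^k\bar\gamma = 0$, $\bar\gamma(0)=\gamma_0$, where the $\bar a_k$ are the coefficients \eqref{eq:derivative_F} frozen at $\gamma_0$, i.e.\ at the time-independent curve. By \Cref{rem:parabolicity} these coefficients make the operator parabolic, so \Cref{thm:sol_linearized_problem} applies and gives $\bar\gamma$. Shrinking $T_0$, we have $\bar\gamma(t)\in W$ for all $t$, by continuity of $\partial_x\bar\gamma$.

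Next I show that $D\Phi[\bar\gamma]v=(\partial_t v - D\mathcal F[\bar\gamma]v - D\mathcal G[\bar\gamma]v,\ v(0))$ is an isomorphism $X_T\to Y_T$ for $T$ small. The part $v\mapsto(\partial_t v - \sum_k a_k\partial_x^k v,\ v(0))$ is an isomorphism by \Cref{thm:sol_linearized_problem}; the coefficients $a_k$ lie in $H^{\frac\alpha4,\alpha}$ since $\bar\gamma\in X_T$, and the operator is parabolic by \Cref{rem:parabolicity}. The nonlocal part $D\mathcal G$ is handled as a perturbation. Split $v = w + z$, where $w$ solves the linear problem with the given initial datum and $z(0)=0$. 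By \eqref{eq:estimate_derivative_G} and \Cref{lem:Hölder_norm_estimate}, $D\mathcal G[\bar\gamma]$ restricted to functions vanishing at $t=0$ has operator norm $O(T^{\alpha/4})$. A Neumann series then gives invertibility for small $T$, with bounds uniform in $T$.

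I expect this step to be the main obstacle. The constant in \Cref{thm:sol_linearized_problem} must not blow up as $T\to0$, which holds because a solution on $[0,T_0]$ restricts to one on $[0,T]$. Also, $D\mathcal G[\bar\gamma]w$ for the $w$-component does not vanish at $t=0$, but it is merely a fixed element of $H^{\frac\alpha4,\alpha}$, so it can be absorbed on the right-hand side.

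Finally I apply the Inverse Function Theorem. It yields neighbourhoods $U\ni\bar\gamma$ and $V\ni\Phi(\bar\gamma)$ such that $\Phi\colon U\to V$ is a diffeomorphism. We have $\Phi(\bar\gamma) = (f,\gamma_0)$ with
\begin{equation*}
    f = \sum_k \bar a_k\partial_x^k\bar\gamma - \mathcal F(\bar\gamma) - \mathcal G(\bar\gamma),
\end{equation*}
and $f(0,\cdot)=0$: indeed $D\mathcal F[\gamma_0]\gamma_0=\mathcal F(\gamma_0)$ by the homogeneity structure of $\tilde F$, and the term $\mathcal G(\gamma_0)$ has to be accounted for. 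To avoid this issue, I simply include $\mathcal F(\gamma_0)+\mathcal G(\gamma_0)$ as a source term when defining $\bar\gamma$. Then $f\in H^{\frac\alpha4,\alpha}$ vanishes at $t=0$. Let $f_\varepsilon \coloneqq f\cdot\mathbbm 1_{\{t>\varepsilon\}}$, suitably smoothed, or equivalently consider $f$ restricted to $[0,\varepsilon]$ and extended. By \Cref{lem:hölder_interpolation} and \Cref{cor:compact_hölder_embedding}, working in a slightly smaller exponent $\beta<\alpha$ if necessary, the $H^{\frac\beta4,\beta}$-norm of $f$ on $[0,\varepsilon]$ tends to $0$. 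Hence $(0,\gamma_0)\in V$ for $\varepsilon$ small, and $\gamma\coloneqq\Phi^{-1}(0,\gamma_0)$ restricted to $[0,\varepsilon]$ is the desired solution.

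Uniqueness is proved separately. Given two solutions, their difference solves a linear parabolic equation with zero data and coefficients obtained by integrating $D\mathcal F$ and $D\mathcal G$ along the segment between them. The uniqueness part of \Cref{thm:sol_linearized_problem}, together with the smallness of the $\mathcal G$-part, forces the difference to vanish on a short interval, and iterating gives uniqueness on all of $[0,\varepsilon]$.
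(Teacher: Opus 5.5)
Your overall plan is the paper's. You build a reference solution $\bar\gamma$ of the problem linearised at $\gamma_0$, apply the Inverse Function Theorem around $\bar\gamma$ with $D\mathcal G$ as a perturbation that is small for small $T$, cut off the defect $f$ near $t=0$, and prove uniqueness by linearising along the segment. Putting the trace $\gamma(0,\cdot)$ into $\Phi$ instead of working in $\{\eta(0,\cdot)=0\}$ is an equivalent variant. Two steps, however, fail as written.

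\textbf{The reference point.} Every summand of $\mathcal F$ scales like $r^{-3}$ under $\gamma\mapsto r\gamma$. Euler's identity therefore gives $D\mathcal F[\gamma_0]\gamma_0=-3\mathcal F(\gamma_0)$, not $\mathcal F(\gamma_0)$. With your source $\mathcal F(\gamma_0)+\mathcal G(\gamma_0)$ you get $\partial_t\bar\gamma(0)=D\mathcal F[\gamma_0]\gamma_0+\mathcal F(\gamma_0)+\mathcal G(\gamma_0)$, hence $f(0,\cdot)=D\mathcal F[\gamma_0]\gamma_0\neq0$ in general. The cut-off argument relies on $f(0,\cdot)=0$, both for the uniform convergence $\phi_\varepsilon f\to f$ and for the uniform $H^{\frac\alpha4,\alpha}$-bound on $\phi_\varepsilon f$. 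So you must take the source $\mathcal F(\gamma_0)+\mathcal G(\gamma_0)-D\mathcal F[\gamma_0]\gamma_0$, as in \eqref{eq:sol_tilde_gamma}; then no homogeneity is needed.

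\textbf{The closing step.} This is the more serious problem, and it has three parts.

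First, for fixed $T$ the point $(0,\gamma_0)$ is in general not close to $\Phi(\bar\gamma)=(f,\gamma_0)$. What is close is $(\phi_\varepsilon f,\gamma_0)$, with $\phi_\varepsilon f=0$ on $[0,\varepsilon]$. Its preimage solves the equation only on $[0,\varepsilon]$. If you instead shrink $T$ to $\varepsilon$, you need an inverse function theorem whose neighbourhood radius is uniform in $\varepsilon$, which you do not provide.

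Second, interpolation and the compact embedding give $\phi_\varepsilon f\to f$ only in $H^{\frac\beta4,\beta}$ with $\beta<\alpha$. In $H^{\frac\alpha4,\alpha}$ this is false for a general $f$ vanishing at $t=0$: for $f=t^{\alpha/4}$, the time-H\"older quotient of $(1-\phi_\varepsilon)f$ between $0$ and $\varepsilon$ equals $1$. So the passage to $\beta$ is not optional. $\Phi$ has to be set up from $H^{\frac{4+\beta}{4},4+\beta}$ into $H^{\frac\beta4,\beta}\times C^{4+\beta}$, and the solution it produces lies only in $H^{\frac{4+\beta}{4},4+\beta}$.

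Third, to reach the class $H^{\frac{4+\alpha}{4},4+\alpha}$ claimed in the theorem, you need the final regularity step of the paper's proof, which is missing from yours:
\begin{enumerate}
\item Freeze $\gamma$ in the coefficients of the quasilinear equation. The lower-order coefficients then lie in $H^{\frac{1+\beta}{4},1+\beta}\subset H^{\frac\alpha4,\alpha}$, the leading one is uniformly parabolic, and $\mathcal G(\gamma)$ is a source in $H^{\frac{2+\beta}{4},2+\beta}$.
\item Apply \Cref{cor:sol_linearized_problem} with exponent $\alpha$ and $\gamma_0\in C^{4+\alpha}$.
\item Identify the two solutions by uniqueness.
\end{enumerate}

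Alternatively, you could stay in the $\alpha$-scale by proving $\|f\|_{H^{\frac\alpha4,\alpha}([0,2\varepsilon])}\to0$ directly. Together with $f(0,\cdot)=0$, this yields $\phi_\varepsilon f\to f$ in $H^{\frac\alpha4,\alpha}$. The smallness does hold for the corrected $f$. It is the Taylor remainder of the quasilinear $\mathcal F$ at $\gamma_0$ plus the lower-order difference $\mathcal G(\bar\gamma)-\mathcal G(\gamma_0)$. Each term of the remainder contains a factor $\partial_x^j(\bar\gamma-\gamma_0)$ with $j\le3$, which vanishes at $t=0$ and has extra regularity. But that argument is not in your proposal either.
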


\begin{proof}
    Let us first show existence. Notice that $\mathcal{F}(\gamma_0) + \mathcal{G}(\gamma_0) - D\mathcal{F}[\gamma_0]\gamma_0 \in H^{\frac{\alpha}{4}, \alpha}([0,T] \times \mathbb{S}^1, \mathbb{R}^2)$. Thus, we infer with \Cref{thm:sol_linearized_problem} that there exists a unique solution $\tilde{\gamma} \in H^{\frac{4+\alpha}{4}, 4+\alpha}([0,T] \times \mathbb{S}^1, \mathbb{R}^2)$ to the linear problem
    \begin{equation}
    \label{eq:sol_tilde_gamma}
    \begin{cases}
        \partial_t \tilde{\gamma} - D\mathcal{F}[\gamma_0]\tilde{\gamma} &= \mathcal{F}(\gamma_0) + \mathcal{G}(\gamma_0) - D\mathcal{F}[\gamma_0]\gamma_0,\\
        \tilde{\gamma}(0,\cdot) &= \gamma_0.
    \end{cases}
    \end{equation}
    For $T$ sufficiently small, we have $\tilde{\gamma}(t,\cdot) \in W$ for all $0\leq t \leq T$. Define $\tilde{f} \coloneqq \partial_t\tilde{\gamma} - \mathcal{F}(\tilde{\gamma}) - \mathcal{G}(\tilde{\gamma})$. Then, $\tilde{f} \in H^{\frac{\alpha}{4}, \alpha}([0,T] \times \mathbb{S}^1, \mathbb{R}^2)$. Continuity of $\mathcal{F}$ and $\mathcal{G}$ implies
    \begin{equation}
    \label{eq:tilde_f_time_zero}
        \tilde{f}(0,\cdot) = \partial_t\tilde{\gamma}|_{t=0} - \mathcal{F}(\gamma_0) - \mathcal{G}(\gamma_0) = \Big( \mathcal{F}(\gamma_0) + \mathcal{G}(\gamma_0) \Big) - \mathcal{F}(\gamma_0) - \mathcal{G}(\gamma_0) = 0.
    \end{equation}
    We apply the Inverse Function Theorem to the following spaces for $0<\beta<\alpha$
    \begin{equation*}
    \begin{split}
        \mathcal{X}_T &\coloneqq \{\eta \in H^{\frac{4+\beta}{4}, 4+\beta}([0,T] \times \mathbb{S}^1, \mathbb{R}^2) \mid \eta(0,\cdot) = 0\},\\
        \mathcal{Y}_T &\coloneqq H^{\frac{\beta}{4}, \beta}([0,T] \times \mathbb{S}^1, \mathbb{R}^2).
    \end{split}
    \end{equation*}
    Consider the map
    \begin{equation*}
        \Phi: \mathcal{X}_T \to \mathcal{Y}_T, \eta \mapsto \partial_t (\tilde{\gamma}+ \eta) - \mathcal{F}(\tilde{\gamma}+\eta) - \mathcal{G}(\tilde{\gamma}+\eta).
    \end{equation*}
    The map $\Phi$ is well defined and smooth on a neighborhood of $0$ and $\Phi(0) = \tilde{f}$. We deduce from \Cref{rem:parabolicity} and \Cref{thm:sol_linearized_problem} that $\partial_t - D\mathcal{F}[\tilde{\gamma}] \coloneqq L_{\tilde{\gamma}}$ is a linear isomorphism $L_{\tilde{\gamma}}: \mathcal{X}_T \to \mathcal{Y}_T$. Write
    \begin{equation*}
        D\Phi[0] = L_{\tilde{\gamma}}-D\mathcal{G}[\tilde{\gamma}] =  L_{\tilde{\gamma}}(I-L_{\tilde{\gamma}}^{-1}D\mathcal{G}[\tilde{\gamma}]),
    \end{equation*}
    where $L_{\tilde{\gamma}}^{-1}$ has bounded operator norm by \Cref{thm:sol_linearized_problem} (with a bound independent of $T$).
    Moreover, by reducing $T$, the operator norm of $D\mathcal{G}[\tilde{\gamma}]$ can be made arbitrarily small, as remarked in \cref{eq:estimate_derivative_G}. Hence, $I-L_{\tilde{\gamma}}^{-1}D\mathcal{G}[\tilde{\gamma}]$ is invertible. Therefore, the Inverse Function Theorem \cite[Chapter~3, Theorem~8]{Doria2021} yields the existence of neighborhoods $0 \in U \subset \mathcal{X}_T$ and $\tilde{f} = \Phi(0) \in V \subset \mathcal{Y}_T$ such that $\Phi: U \to V$ is a diffeomorphism. It remains to show $0 \in V$. For all $0 < \varepsilon < \min \{1, \frac{T}{2}\}$, define a cut-off function $ \phi_\varepsilon \in C^\infty([0,T])$ satisfying
    \begin{equation*}
        0 \leq \phi_\varepsilon \leq 1, \quad 0 \leq \partial_t \phi_\varepsilon \leq \frac{2}{\varepsilon} \quad \text{and} \quad \phi_\varepsilon(t) = \begin{cases}
            0, \quad 0 \leq t \leq \varepsilon,\\
            1, \quad 2 \varepsilon \leq t \leq T.
        \end{cases}
    \end{equation*}
    Let $f_\varepsilon \coloneqq \phi_\varepsilon \tilde{f} \in H^{\frac{\alpha}{4}, \alpha}$. By the proof of \cite[Lemma~2.5.8]{gerhardt2006curvature} (setting $\gamma=\frac{\alpha}{4}$), we obtain that the family $(f_\varepsilon)_\varepsilon$ is uniformly bounded in $H^{\frac{\alpha}{4}, \alpha}([0,T] \times \mathbb{S}^1, \mathbb{R}^2)$. By \Cref{cor:compact_hölder_embedding}, the embedding $H^{\frac{\alpha}{4}, \alpha}([0,T] \times \mathbb{S}^1, \mathbb{R}^2) \hookrightarrow H^{\frac{\beta}{4}, \beta}([0,T] \times \mathbb{S}^1, \mathbb{R}^2)$ is compact. Therefore, there exists $\hat{f} \in \mathcal{Y}_T$ with $f_{\varepsilon_k} \to \hat{f}$ in $\mathcal{Y}_T$ for some subsequence $\varepsilon_k \to 0$. On the other hand, we have
    \begin{equation*}
        \sup_{(t,x) \in [0,T] \times \mathbb{S}^1} \abs{f_\varepsilon - \tilde{f}} \leq \sup_{t \in [0,2\varepsilon]} \norm{\tilde{f}}_{C(\mathbb{S}^1, \mathbb{R}^2)}.
    \end{equation*}
    Since $\tilde{f}(0,\cdot) = 0$ by \cref{eq:tilde_f_time_zero}, we obtain $f_\varepsilon \to \tilde{f}$ uniformly. Hence, $\tilde{f} = \hat{f}$ and $f_{\varepsilon_k} \to \tilde{f}$ in $\mathcal{Y}_T$. Thus, there exists $\varepsilon \coloneqq \varepsilon_{k_0} > 0$ such that $f_\varepsilon \in V$. Since $\Phi: U \to V$ is a diffeomorphism, there exists $\eta \in U$ such that $f_\varepsilon = \Phi(\eta) = \partial_t (\tilde{\gamma} +\eta) - \mathcal{F}(\tilde{\gamma}+\eta)-\mathcal{G}(\tilde{\gamma}+\eta)$. Then, $\gamma \coloneqq \tilde{\gamma} + \eta \in H^{\frac{4+\beta}{4}, 4+\beta}([0,T] \times \mathbb{S}^1, \mathbb{R}^2)$ satisfies
    \begin{equation*}
    \begin{cases}
        \partial_t \gamma &= \mathcal{F}(\gamma) + \mathcal{G}(\gamma) + f_\varepsilon\\
        \gamma(0,\cdot) &= 0 + \tilde{\gamma}(0,\cdot) = \gamma_0.
    \end{cases}
    \end{equation*}
    Since $f_\varepsilon = 0$ on $[0,\varepsilon]$, we find that $\gamma$ solves \cref{eq:evolution_gradient_flow_explicit} on $[0,\varepsilon] \times \mathbb{S}^1$.
    
    We still need to show that $\gamma \in H^{\frac{4+\alpha}{4}, 4+\alpha}([0,\varepsilon] \times \mathbb{S}^1, \mathbb{R}^2)$. We can rewrite \cref{eq:evolution_gradient_flow_explicit} as a linear equation in $\gamma$ by freezing the argument $\gamma$ in the coefficients of $\partial_x \gamma, \ldots, \partial_x^4 \gamma$. Since the coefficients of the lower-order terms depend on derivatives of maximal order three, they lie in the space $H^{\frac{1+\beta}{4}, 1+\beta}([0,\varepsilon]\times \mathbb{S}^1, \mathbb{R}^{2 \times 2}) \subset H^{\frac{\alpha}{4}, \alpha}([0,\varepsilon]\times \mathbb{S}^1, \mathbb{R}^{2 \times 2})$. Note that the coefficient 
    $\begin{pmatrix}
        \frac{-2}{\abs{\partial_x \gamma}^4} & 0\\
        0 & \frac{-2}{\abs{\partial_x \gamma}^4}
    \end{pmatrix}$
    of the principal part $\partial_x^4 \gamma$ makes the problem uniformly parabolic by the compactness of $[0,T] \times \mathbb{S}^1$, as before. By \Cref{cor:sol_linearized_problem}, $\gamma \in H^{\frac{4+\beta}{4}, 4+\beta}([0,\varepsilon]\times \mathbb{S}^1, \mathbb{R}^{2})$ is the unique solution to this frozen linear problem with initial data $\gamma_0 \in C^{4+\alpha}(\mathbb{S}^1, \mathbb{R}^2)$ and coefficients in $H^{\frac{\alpha}{4}, \alpha}$. Consequently, \Cref{cor:sol_linearized_problem} yields the improved regularity $\gamma \in H^{\frac{4+\alpha}{4}, 4+\alpha}([0,\varepsilon] \times \mathbb{S}^1, \mathbb{R}^2)$, by potentially reducing $\varepsilon$.

    It remains to show uniqueness. Suppose there are two solutions $\gamma_1, \gamma_2$ to \cref{eq:evolution_gradient_flow_explicit} in the space $H^{\frac{4+\alpha}{4}, 4+\alpha}([0,T_i] \times \mathbb{S}^1, \mathbb{R}^2)$. Without loss of generality assume $0 < T_1 \leq T_2$. We want to show that the solutions coincide on the interval $[0,T_1]$.
    First, we show that they coincide on an interval $[0,T]$ for some $T>0$.
    
    Define $u \coloneqq \gamma_1 - \gamma_2$. It solves
    \begin{equation}
    \label{eq:sol_uniqueness}
    \begin{cases}
        &\quad\partial_t u - \mathcal{F}(\gamma_1) + \mathcal{F}(\gamma_2) - \mathcal{G}(\gamma_1) + \mathcal{G}(\gamma_2)\\
        &= (\partial_t - \int_0^1 D\mathcal{F}[u_r] dr - \int_0^1 D\mathcal{G}[u_r] dr)u = 0\\
        u(0,\cdot) &= 0,
    \end{cases}
    \end{equation}
    where $u_r \coloneqq r \gamma_1 + (1-r) \gamma_2$. Since $\gamma_0 \in W$, there is $c>0$ such that $\abs{\partial_x \gamma_0} \geq c$. Moreover, by continuity for $T>0$ sufficiently small, we find $\abs{\partial_x \gamma_i - \partial_x \gamma_0} \leq \frac{c}{2}$. The reverse triangle inequality yields
    \begin{equation*}
    \begin{split}
        \abs{\partial_x u_r} &= \abs{\partial_x \gamma_0 + r (\partial_x \gamma_1 - \partial_x \gamma_0) + (1-r)(\partial_x \gamma_2 - \partial_x \gamma_0)}\\
        &\geq c - r \frac{c}{2} - (1-r)\frac{c}{2} = \frac{c}{2}.
    \end{split} 
    \end{equation*}
    Therefore, $D\mathcal{F}[u_r], D\mathcal{G}[u_r]$ are well defined. Furthermore, $\partial_t-\int_0^1 D\mathcal{F}[u_r]dr$ is parabolic and, by \Cref{thm:sol_linearized_problem}, an isomorphism. Similarly to \cref{eq:estimate_derivative_G}, we can show that the operator norm of $\int_0^1 D\mathcal{G}[u_r]dr$ is arbitrarily small for $T>0$ small enough. Consequently, $\partial_t - \int_0^1 D\mathcal{F}[u_r] dr - \int_0^1 D\mathcal{G}[u_r] dr$ is an isomorphism and \cref{eq:sol_uniqueness} has the unique solution $u=0$.
    To conclude the argument, assume that $0 < T_s < T_1$ where
    \begin{equation*}
        T_s \coloneqq \sup\{t \in [0,T_1) \mid \gamma_1 = \gamma_2 \text{ in } H^{\frac{4+\alpha}{4}, 4+\alpha}([0,t] \times \mathbb{S}^1, \mathbb{R}^2)\}.
    \end{equation*}
    Define $v_0 \coloneqq \gamma_1(T_s) = \gamma_2(T_s)$. Since $\gamma_1$ is a solution to \cref{eq:evolution_gradient_flow_explicit} on $[0,T_1]$ and $T_s < T_1$, the function $\gamma_1(T_s)$ is an immersion. We may use the same argument to find $\delta > 0$ with $\gamma_1(T_s+\cdot)=\gamma_2(T_s+\cdot)$ in $H^{\frac{4+\alpha}{4}, 4+\alpha}([0,\delta] \times \mathbb{S}^1, \mathbb{R}^2)$, contradicting the maximality of $T_s$. 
\end{proof}

Using a similar bootstrap argument as in the end of the proof above, we deduce that higher regularity of the initial data $\gamma_0$ yields higher regularity of the solution. Note that this step requires \Cref{cor:sol_linearized_problem} instead of \Cref{thm:sol_linearized_problem}. 

\begin{corollary}
    If $\gamma_0 \in C^{4+m+\alpha}(\mathbb{S}^1, \mathbb{R}^2)$ for the initial data from \Cref{thm:sol_nonlinear_problem}, where $m \in \mathbb{N}$, $\alpha \in (0,1)$, then the solution $\gamma$ satisfies $\gamma \in H^{\frac{4+m+\alpha}{4}, 4+m+\alpha}([0,\varepsilon] \times \mathbb{S}^1, \mathbb{R}^2)$. In particular, if $\gamma_0 \in C^\infty(\mathbb{S}^1, \mathbb{R}^2)$, we have $\gamma \in C^\infty([0,\varepsilon] \times \mathbb{S}^1, \mathbb{R}^2)$.
\end{corollary}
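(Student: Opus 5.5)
The plan is an induction on $m$, bootstrapping through the frozen-coefficient linear problem exactly as at the end of the proof of \Cref{thm:sol_nonlinear_problem}. The base case $m=0$ is \Cref{thm:sol_nonlinear_problem} itself. For the induction step, assume $\gamma_0 \in C^{4+m+\alpha}$ and $\gamma \in H^{\frac{4+m-1+\alpha}{4},4+m-1+\alpha}([0,\varepsilon]\times\mathbb S^1,\mathbb R^2)$. Note that $\varepsilon$ may be taken to be the existence time from \Cref{thm:sol_nonlinear_problem}. This is because \Cref{cor:sol_linearized_problem} gives a solution on some $[0,T]$, and uniqueness lets us identify it with $\gamma$; if the resulting time is smaller, we shrink $\varepsilon$ as in the theorem.

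\textbf{Step 1: freeze the coefficients.} Write \cref{eq:evolution_gradient_flow_explicit} as a linear equation $\partial_t\gamma-\sum_{k=0}^4 a_k\partial_x^k\gamma=\psi$. The coefficients $a_k$ are obtained by freezing $\abs{\partial_x\gamma}^{-1},\partial_x\gamma,\dots,\partial_x^3\gamma$. The leading coefficient $-2\abs{\partial_x\gamma}^{-4}\,\mathrm{Id}$ is unchanged, so the operator is Petrovski\v{\i} parabolic by \Cref{rem:parabolicity}. The inhomogeneity is $\psi=-\lambda(\gamma)\nu_\gamma$. The key observation is that the structure of $\mathcal T$ in \Cref{prop:a_explicit_formulation}\ref{prop:a_explicit_formulation_4} makes the equation linear in $\partial_x^4\gamma$, with coefficients depending only on derivatives of order at most three. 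The terms like $(\partial_x^3\gamma\cdot\partial_x\gamma)\partial_x^2\gamma$ can be assigned as coefficient times $\partial_x^2\gamma$, or as coefficient times $\partial_x^3\gamma$ with factors of order at most two. Either way, each $a_k$ is a smooth function of $\abs{\partial_x\gamma}^{-1}$ and $\partial_x^j\gamma$ for $j\le 3$.

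\textbf{Step 2: regularity of the data.} Since $\partial_x^j\gamma\in H^{\frac{m+\alpha}{4},m+\alpha}$ for $j\le 3$, the coefficients satisfy $a_k\in H^{\frac{m+\alpha}{4},m+\alpha}$. This uses that parabolic Hölder spaces are algebras and are stable under composition with smooth functions, with $\abs{\partial_x\gamma}$ bounded away from zero. For $\psi$, the factor $\lambda(\gamma)(t)=-\int\kappa^3ds/\mathcal L$ is a function of $t$ only. Its time regularity has to be checked: it is an integral of a smooth expression in $\partial_x\gamma,\partial_x^2\gamma$. Integration in $x$ preserves the time Hölder exponent of the integrand, which lies in $H^{\frac{m+1+\alpha}{4},m+1+\alpha}$. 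Together with $\nu_\gamma\in H^{\frac{m+2+\alpha}{4},m+2+\alpha}$, this gives $\psi\in H^{\frac{m+\alpha}{4},m+\alpha}$.

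\textbf{Step 3: apply the linear theory.} Apply \Cref{cor:sol_linearized_problem} with exponent $m+\alpha\notin\mathbb N$ and initial datum $\gamma_0\in C^{4+m+\alpha}$. This yields a solution in $H^{\frac{4+m+\alpha}{4},4+m+\alpha}$. By uniqueness in the lower space $H^{\frac{4+\alpha}{4},4+\alpha}$ (\Cref{thm:sol_linearized_problem}), this solution coincides with $\gamma$, which closes the induction. Smoothness for $\gamma_0\in C^\infty$ follows because $\gamma\in H^{\frac{4+m+\alpha}{4},4+m+\alpha}$ for all $m$, and these spaces contain all mixed derivatives $\partial_t^l\partial_x^k$ with $k+4l\le 4+m$.

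The main obstacle is Step 2, specifically the time regularity of the nonlocal term $\lambda(\gamma)$ and the algebra/composition property in the parabolic Hölder spaces at the higher orders. I would prove these by direct estimates from \Cref{def:par_hölder_spaces}, writing products via the Leibniz rule and using the identity $a b-a'b'=a(b-b')+(a-a')b'$ for the Hölder quotients.
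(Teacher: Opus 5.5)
Your proposal is correct and is essentially the paper's argument. The paper only invokes the same bootstrap as at the end of the proof of \Cref{thm:sol_nonlinear_problem}: freeze the coefficients, which involve at most third derivatives, and apply \Cref{cor:sol_linearized_problem} instead of \Cref{thm:sol_linearized_problem}. Your induction, together with your checks on $a_k$ and on $\lambda(\gamma)\nu$, spells that out.

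For the $C^\infty$ claim, it is worth stating explicitly that $\varepsilon$ is not shrunk at every step. The existence time delivered by \Cref{cor:sol_linearized_problem} is the one from \Cref{thm:sol_linearized_problem} with the fractional exponent $\alpha$, and the upgrade to higher regularity happens on that same interval. That time depends only on $\alpha$ and on the frozen coefficients in $H^{\frac{\alpha}{4},\alpha}$, and these are the same functions at every induction step. So one reduction of $\varepsilon$, the one already made in the theorem, suffices for all $m$.
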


\section{Global existence}
\label{sec:global_existence}
We argue by contradiction to establish the global existence of the area-constrained elastic flow. Suppose the maximal time of existence $T$ is finite. Using a compactness argument, we will show that the solution can be smoothly extended past $T$, contradicting its maximality. For the compactness argument, we need bounds on the solution $\gamma$ and all its derivatives. These bounds are obtained using Gagliardo-Nirenberg inequalities and the fact that the elastic energy is bounded, as in \cite{Dziuk2002}.

First, we present the following statements from the literature, which will be needed later in this chapter.
\begin{lemma}[{\cite[Lemma~2.1]{Dziuk2002}}]
    \label{lem:differential_properties}
    Suppose $\gamma \in C^2([0,T) \times \mathbb{S}^1, \mathbb{R}^2)$ is immersed and evolves by $\partial_t \gamma = \xi \nu + \varphi \tau$ for some $\xi, \varphi \in C^1([0,T), C^2(\mathbb{S}^1,\mathbb{R}))$. Then,
    \begin{enumerate}[label=(\arabic*)]
        \item \label{lem:differential_properties_i} $\partial_t(ds) = (\partial_s \varphi - \kappa \xi)ds$
        \item \label{lem:differential_properties_ii}$\partial_t \partial_s - \partial_s \partial_t = (\kappa \xi - \partial_s \varphi)\partial_s$
        \item \label{lem:differential_properties_iii}$\partial_t \tau = (\partial_s \xi + \varphi \kappa) \nu$
        \item $\partial_t\nu = - (\partial_s \xi + \varphi \kappa) \tau$
        \item $\partial_t \kappa = \partial_s^2 \xi + \kappa^2 \xi + \varphi \partial_s \kappa$.
    \end{enumerate}
\end{lemma}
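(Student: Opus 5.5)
We prove the five identities by direct computation, differentiating the basic quantities $|\partial_x\gamma|$, $\tau$, $\nu$ and $\kappa$ in time and using only the Frenet equations $\partial_s\tau=\kappa\nu$ and $\partial_s\nu=-\kappa\tau$. The order of the steps matters, since each identity feeds into the next. Throughout we write $\partial_t\gamma=\xi\nu+\varphi\tau$. Because time and space derivatives in the parameter $x$ commute, we have $\partial_t\partial_x\gamma=\partial_x\partial_t\gamma$. The regularity $\gamma\in C^2$ with $\xi,\varphi\in C^1([0,T),C^2)$ is exactly what makes all derivatives below well defined, with $\kappa$ continuous and $\partial_s^2\xi$ existing. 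For item (5) we tacitly assume enough regularity that $\partial_t\kappa$ and $\partial_t\partial_s\tau$ make sense; if needed, we first argue for smooth data and then approximate.

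\emph{Step 1 (item (1)).} Compute $\partial_s\partial_t\gamma=\partial_s(\xi\nu+\varphi\tau)=(\partial_s\xi+\varphi\kappa)\nu+(\partial_s\varphi-\kappa\xi)\tau$ using the Frenet equations. Then
\[
\partial_t|\partial_x\gamma|=\frac{\partial_x\gamma\cdot\partial_x\partial_t\gamma}{|\partial_x\gamma|}=\tau\cdot\partial_s\partial_t\gamma\,|\partial_x\gamma|=(\partial_s\varphi-\kappa\xi)|\partial_x\gamma|,
\]
which is item (1).

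\emph{Step 2 (item (2)).} Since $\partial_s=|\partial_x\gamma|^{-1}\partial_x$, differentiating the prefactor with Step 1 gives
\[
\partial_t\partial_s=-\frac{\partial_t|\partial_x\gamma|}{|\partial_x\gamma|}\,\partial_s+\partial_s\partial_t=(\kappa\xi-\partial_s\varphi)\partial_s+\partial_s\partial_t,
\]
which is item (2).

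\emph{Step 3 (items (3) and (4)).} Apply item (2) to $\gamma$: $\partial_t\tau=\partial_s\partial_t\gamma+(\kappa\xi-\partial_s\varphi)\tau$. By Step 1, the tangential components cancel and what remains is $(\partial_s\xi+\varphi\kappa)\nu$, which is item (3). Since $\nu$ is the rotation of $\tau$ by $\pi/2$ and rotation commutes with $\partial_t$, item (4) follows by rotating item (3), using that the rotation of $\nu$ is $-\tau$.

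\emph{Step 4 (item (5)).} Write $\kappa=\partial_s\tau\cdot\nu$ and differentiate. The term $\partial_s\tau\cdot\partial_t\nu$ vanishes, because $\partial_s\tau\parallel\nu$ and $\partial_t\nu\parallel\tau$. Hence
\[
\partial_t\kappa=\nu\cdot\partial_t\partial_s\tau=\nu\cdot\bigl[(\kappa\xi-\partial_s\varphi)\kappa\nu+\partial_s\bigl((\partial_s\xi+\varphi\kappa)\nu\bigr)\bigr],
\]
using item (2) and then item (3). The last derivative has normal component $\partial_s^2\xi+\partial_s\varphi\,\kappa+\varphi\partial_s\kappa$. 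Adding the first term gives $\kappa^2\xi-\kappa\partial_s\varphi$, so the $\partial_s\varphi$ terms cancel and we obtain $\partial_s^2\xi+\kappa^2\xi+\varphi\partial_s\kappa$, which is item (5).

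The only real care needed is bookkeeping of signs in the orientation convention, where $\nu=(-\tau_2,\tau_1)$ is the inner normal, together with the regularity remark in item (5). No step is a genuine obstacle.
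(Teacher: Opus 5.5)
Your computation is correct and is essentially the standard argument. The paper gives no proof of its own and only cites Lemma~2.1 of \cite{Dziuk2002}, where the identities are derived by exactly this chain: differentiate $|\partial_x\gamma|$, deduce the commutator, then apply it to $\gamma$ and afterwards to $\tau$. Your regularity caveat for item (5) is warranted. Item (5) involves $\partial_s\kappa$ and $\partial_t\kappa$, so $\gamma$ must be $C^3$ in space; this extra regularity should simply be assumed, as it holds for the smooth flows to which the paper applies the lemma, rather than obtained by an approximation argument.
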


\begin{lemma}[{\cite[Lemma~2.2]{Dziuk2002}}]
\label{lem:main_inequality}
    Suppose $\gamma \in C^2([0,T) \times \mathbb{S}^1, \mathbb{R}^2)$ is immersed with $\partial_t \gamma = \xi \nu$ and let $\phi \in C^5([0,T)\times \mathbb{S}^1, \mathbb{R})$. Moreover, let $Y= \partial_t \phi + 2\partial_s^4 \phi$. Then,
    \begin{equation*}
        \frac{1}{2} \frac{d}{dt} \int_{\mathbb{S}^1} \phi^2 ds + 2\int_{\mathbb{S}^1} (\partial_s^2 \phi)^2 ds = \int_{\mathbb{S}^1} Y \phi ds - \frac{1}{2} \int_{\mathbb{S}^1} \phi^2 \kappa \xi ds.
    \end{equation*}
    Furthermore, we have
    \begin{equation*}
        \partial_t \partial_s \phi + 2\partial_s^5 \phi = \partial_s Y + \xi \kappa \partial_s \phi.
    \end{equation*}
\end{lemma}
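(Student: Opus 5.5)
The plan is a direct computation in the spirit of \cite{Dziuk2002}, using only the evolution laws of \Cref{lem:differential_properties} with tangential part $\varphi = 0$. Since $\partial_t\gamma = \xi\nu$, item \ref{lem:differential_properties_i} gives $\partial_t(ds) = -\kappa\xi\,ds$. Differentiating the integral then yields
\begin{equation*}
    \frac{1}{2}\frac{d}{dt}\int_{\mathbb{S}^1}\phi^2\,ds = \int_{\mathbb{S}^1}\phi\,\partial_t\phi\,ds - \frac12\int_{\mathbb{S}^1}\phi^2\kappa\xi\,ds.
\end{equation*}

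Next I substitute $\partial_t\phi = Y - 2\partial_s^4\phi$. This leaves the term $-2\int_{\mathbb{S}^1}\phi\,\partial_s^4\phi\,ds$ to handle. Because $\mathbb{S}^1$ is closed, there are no boundary terms, so I can integrate by parts twice with respect to arc length (that is, in $x$ with the weight $\abs{\partial_x\gamma}$ absorbed into $\partial_s$ and $ds$). This gives $\int\phi\,\partial_s^4\phi\,ds = \int(\partial_s^2\phi)^2\,ds$. Moving that term to the left produces exactly the first identity. The regularity $\phi\in C^5$ and $\gamma\in C^2$ is more than enough to justify both the differentiation under the integral sign and the integrations by parts.

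For the second identity I use the commutator rule \ref{lem:differential_properties_ii} with $\varphi=0$, namely $\partial_t\partial_s\phi = \partial_s\partial_t\phi + \kappa\xi\,\partial_s\phi$. Substituting $\partial_t\phi = Y - 2\partial_s^4\phi$ gives
\begin{equation*}
    \partial_t\partial_s\phi = \partial_s Y - 2\partial_s^5\phi + \xi\kappa\,\partial_s\phi,
\end{equation*}
which is the claim.

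The only point needing care is the sign and form of the commutator and of $\partial_t(ds)$. I will double-check these directly from $ds = \abs{\partial_x\gamma}\,dx$. Doing so, $\partial_t\abs{\partial_x\gamma} = \tau\cdot\partial_x(\xi\nu) = -\kappa\xi\abs{\partial_x\gamma}$, using the Frenet relation $\partial_s\nu=-\kappa\tau$. It follows that $\partial_t\partial_s = \partial_s\partial_t + \kappa\xi\,\partial_s$, consistent with \Cref{lem:differential_properties}. No genuine obstacle is expected.
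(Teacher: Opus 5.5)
Your proposal is correct and is the standard computation from \cite{Dziuk2002}, which the paper cites without reproducing: $\partial_t(ds)=-\kappa\xi\,ds$ and two integrations by parts give the first identity, and the commutator $\partial_t\partial_s=\partial_s\partial_t+\kappa\xi\,\partial_s$ gives the second. One small correction: $\gamma\in C^2$ is not ``more than enough''. The terms $\partial_s^4\phi$ and $\partial_s^5\phi$ contain $x$-derivatives of $\abs{\partial_x\gamma}^{-1}$ up to orders three and four, so spatial regularity $\gamma(t,\cdot)\in C^5$ is implicitly required; this is harmless here, since the lemma is only applied to smooth flows.
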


\begin{definition}[{\cite[Equation (2.13)]{Dziuk2002}}]
    For $\gamma \in C^\infty(\mathbb{S}^1, \mathbb{R}^2)$ immersed, we denote for $0\leq i \leq k$, $2 \leq p < \infty$
    \begin{equation*}
        \|\partial_s^i \kappa \|_p \coloneqq \mathcal{L}(\gamma)^{i+1-\frac{1}{p}} \left(\int_{\mathbb{S}^1} \vert \partial_s^i \kappa \vert^p ds\right)^\frac{1}{p}
    \end{equation*}
    and
    \begin{equation*}
        \norm{\partial_s^i \kappa}_{\infty} \coloneqq \mathcal{L}(\gamma)^{i+1} \norm{\partial_s^i \kappa}_{L^\infty}.
    \end{equation*}
    Define the norms for $2 \leq p < \infty$
    \begin{equation*}
        \|\kappa\|_{k,p} \coloneqq \sum_{i=0}^k \|\partial_s^i \kappa \|_p.
    \end{equation*}
\end{definition}

\begin{remark}\label{rem:def:norm:scaling}
    These norms are scale invariant since for $r>0$ it holds
    \begin{equation*}
    \begin{split}
        \|\partial_{s_{r\gamma}}^i \kappa_{r\gamma} \|_p &= \mathcal{L}(r\gamma)^{i+1-\frac{1}{p}} \left(\int_{\mathbb{S}^1} \vert \partial_{s_{r\gamma}}^i \frac{1}{r} \kappa_\gamma\vert^p ds_{r\gamma} \right)^\frac{1}{p}\\
        &= r^{i+1-\frac{1}{p}} \mathcal{L}(\gamma)^{i+1-\frac{1}{p}} \left(\int_{\mathbb{S}^1} r^{-ip-p+1} \vert \partial_{s_{\gamma}}^i \kappa_\gamma \vert^p ds_{\gamma} \right)^{\frac{1}{p}}\\
        &= \mathcal{L}(\gamma)^{i+1-\frac{1}{p}} \left(\int_{\mathbb{S}^1} \vert \partial_{s_{\gamma}}^i \kappa_\gamma \vert^p ds_{\gamma}\right)^\frac{1}{p}.
    \end{split}
    \end{equation*}
\end{remark}

\begin{lemma}[{\cite[Lemma~2.4]{Dziuk2002}}]
\label{lem:interpolation_inequality_1}
    Let $\gamma \in C^\infty(\mathbb{S}^1, \mathbb{R}^2)$ be immersed. Then, for any $k \in \mathbb{N}$, $p \geq 2$, $0 \leq i <k$ there is a constant $c=c(k,p)>0$ such that we have
    \begin{equation*}
        \|\partial_s^i \kappa \|_p \leq c \|\kappa\|_2^{1-\alpha} \|\kappa\|_{k,2}^\alpha,
    \end{equation*}
    where $\alpha = \frac{i+\frac{1}{2}-\frac{1}{p}}{k}$ and $c = c(p,k)$.
\end{lemma}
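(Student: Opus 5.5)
The interpolation inequality \Cref{lem:interpolation_inequality_1} is scale invariant, since all norms involved are invariant under $\gamma\mapsto r\gamma$ by \Cref{rem:def:norm:scaling}. My plan is therefore to normalize first: I will assume $\mathcal{L}(\gamma)=1$. After that I reparametrize by arc length, so that $\kappa$ becomes a smooth $1$-periodic function on $\mathbb{R}/\mathbb{Z}$. With this normalization the scaled norms are the ordinary ones, $\|\partial_s^i\kappa\|_p=\norm{\partial_s^i\kappa}_{L^p(0,1)}$, and $\|\kappa\|_{k,2}$ is the full $W^{k,2}$-norm on the unit circle. The claim then reduces to the classical Gagliardo--Nirenberg inequality on a one-dimensional compact interval:
\begin{equation*}
\norm{\partial^i u}_{L^p}\le c\,\norm{u}_{L^2}^{1-\alpha}\norm{u}_{W^{k,2}}^{\alpha},\qquad \alpha=\frac{i+\frac12-\frac1p}{k}.
\end{equation*}
This is exactly the scaling-consistent exponent in dimension one.

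I intend to prove this inequality directly rather than cite it, in three steps.

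First, I will show the $L^\infty$ bound $\norm{v}_{L^\infty}\le \norm{v}_{L^2}+\norm{v}_{L^2}^{1/2}\norm{\partial v}_{L^2}^{1/2}$ for periodic $v$ on the unit circle. To do this, I pick a point where $|v|^2$ equals its mean and integrate $\partial(v^2)=2v\,\partial v$ from that point, applying Cauchy--Schwarz.

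Second, interpolating between $L^2$ and $L^\infty$ via $\norm{v}_{L^p}\le\norm{v}_{L^2}^{2/p}\norm{v}_{L^\infty}^{1-2/p}$ gives
\begin{equation*}
\norm{v}_{L^p}\le c\,\norm{v}_{L^2}^{1-\theta}\norm{v}_{W^{1,2}}^{\theta},\qquad \theta=\tfrac12-\tfrac1p.
\end{equation*}

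Third, I will apply this with $v=\partial^i\kappa$. For the intermediate derivatives I use the standard $L^2$ interpolation
\begin{equation*}
\norm{\partial^j u}_{L^2}\le c\,\norm{u}_{L^2}^{1-j/k}\norm{u}_{W^{k,2}}^{j/k},\qquad 0\le j\le k.
\end{equation*}
On the circle this follows from Fourier series and Hölder's inequality on the coefficients $|n|^{2j}|\hat u_n|^2$; the zero mode is controlled by $\norm{u}_{L^2}$, which is why the full norm $\|\kappa\|_{k,2}$ appears on the right rather than only its top-order seminorm. Combining the three steps yields an exponent $\frac{i(1-\theta)+(i+1)\theta}{k}=\frac{i+\theta}{k}=\alpha$, as required.

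The hypothesis $i<k$ ensures $i+1\le k$, so $\norm{\partial^{i+1}\kappa}_{L^2}$ is indeed controlled by the $W^{k,2}$-norm. I also use that $\|\kappa\|_2\le\|\kappa\|_{k,2}$ to absorb lower-order terms: each term of the form $\norm{\kappa}_2^{a}\norm{\kappa}_{k,2}^{b}$ with $a+b=1$ and $b\le\alpha$ is bounded by $\norm{\kappa}_2^{1-\alpha}\norm{\kappa}_{k,2}^{\alpha}$. The constant depends only on $k$ and $p$, because the length has been normalized to $1$.

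The main obstacle is bookkeeping rather than analysis. In the $L^\infty$ estimate the additive term $\norm{v}_{L^2}$ produces mixed products, and all of them must be reduced to the single product $\norm{\kappa}_2^{1-\alpha}\norm{\kappa}_{k,2}^{\alpha}$ with the correct exponents. This reduction works precisely because every extra term carries a smaller power of $\|\kappa\|_{k,2}$, which can then be raised using $\|\kappa\|_2\le\|\kappa\|_{k,2}$. Alternatively, I would simply invoke the reference \cite[Lemma~2.4]{Dziuk2002} together with the scaling remark.
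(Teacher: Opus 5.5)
Your proof is correct and follows the same route as the cited argument of Dziuk--Kuwert--Sch\"atzle, which is also how the paper proves \Cref{lem:interpolation_inequality_infinity}: use scale and reparametrization invariance to reduce to length one in arc-length parametrization, then apply the one-dimensional Gagliardo--Nirenberg inequality, which you additionally derive by hand. One small correction to your commentary: the Fourier--H\"older step gives $\norm{\partial^j u}_{L^2}\le c\norm{u}_{L^2}^{1-j/k}\norm{\partial^k u}_{L^2}^{j/k}$ with no zero-mode issue, so in your argument the full norm $\|\kappa\|_{k,2}$ enters only through the additive $\norm{v}_{L^2}$ term of your $L^\infty$ bound (genuinely needed for $i=0$, e.g.\ the circle), and your final absorption step using $\|\kappa\|_2\le\|\kappa\|_{k,2}$ handles this correctly.
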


\begin{lemma}
\label{lem:interpolation_inequality_infinity}
    Let $\gamma \in C^\infty(\mathbb{S}^1, \mathbb{R}^2)$ be immersed. Then, for any $k \in \mathbb{N}$ there is a constant $c=c(k)>0$ such that we have
    \begin{equation*}
        \norm{\partial_s^k \kappa}_{\infty} \leq c \norm{\partial_s^k \kappa}_2^{\frac{1}{2}} (\norm{\partial_s^k \kappa}_{2} + \norm{\partial_s^{k+1} \kappa}_2 )^{\frac{1}{2}}.
    \end{equation*}
\end{lemma}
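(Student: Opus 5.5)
The plan is to reduce the estimate to a one-dimensional Sobolev/Agmon-type inequality for the function $f \coloneqq \partial_s^k \kappa$ on a circle of length $L \coloneqq \mathcal{L}(\gamma)$. Both sides are scale invariant by \Cref{rem:def:norm:scaling}. Indeed, $\norm{\partial_s^k\kappa}_\infty$ and $\norm{\partial_s^j\kappa}_2$ carry the factors $L^{k+1}$ and $L^{j+1/2}$ respectively, and the exponents match: $\frac{1}{2}(k+\frac12)+\frac12(k+\frac12)$ for the first term and $\frac12(k+\frac12)+\frac12(k+\frac32)=k+1$ for the second. Hence I will first rescale so that $L=1$ and then work with the arc-length parametrization, so that $f$ becomes a smooth $1$-periodic function with $\int_0^1 |f|^2\,ds = \norm{\partial_s^k\kappa}_2^2$ and $\int_0^1|\partial_s f|^2 ds=\norm{\partial_s^{k+1}\kappa}_2^2$.

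The key step is the elementary estimate
\begin{equation*}
\norm{f}_{L^\infty}^2 \le \int_0^1 f^2\,ds + 2\Bigl(\int_0^1 f^2ds\Bigr)^{\frac12}\Bigl(\int_0^1 (\partial_s f)^2ds\Bigr)^{\frac12}.
\end{equation*}
To prove it, I pick $s_0$ with $f(s_0)^2 \le \int_0^1 f^2\,ds$, which exists by the mean value theorem for integrals. Then for any $s$,
\begin{equation*}
f(s)^2 = f(s_0)^2 + \int_{s_0}^{s} 2 f\,\partial_s f \,d\sigma \le \int_0^1 f^2 + 2\norm{f}_{L^2}\norm{\partial_s f}_{L^2},
\end{equation*}
using Cauchy--Schwarz. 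In the rescaled setting this reads $\norm{\partial_s^k\kappa}_\infty^2 \le \norm{\partial_s^k\kappa}_2^2 + 2\norm{\partial_s^k\kappa}_2\norm{\partial_s^{k+1}\kappa}_2$.

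Finally, I bound the right-hand side by $2\norm{\partial_s^k\kappa}_2(\norm{\partial_s^k\kappa}_2+\norm{\partial_s^{k+1}\kappa}_2)$ and take square roots, which gives the claim with $c=\sqrt2$, independent even of $k$. Undoing the rescaling is legitimate by the scale invariance noted above; alternatively, one can run the same computation directly with the factors of $L$ kept, using $f(s_0)^2\le L^{-1}\int f^2 ds$. There is no real obstacle. The only points needing care are checking that the $L$-powers combine correctly, so that no extra constant depending on $L$ appears, and the existence of $s_0$. Smoothness of $\gamma$ ensures $f$ is $C^1$, so the fundamental theorem of calculus applies.
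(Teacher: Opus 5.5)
Your proof is correct and follows the paper's route: reduce by scale and reparametrization invariance to a unit-length, arc-length parametrized curve, then apply the one-dimensional interpolation inequality $\norm{f}_{L^\infty}\le c\norm{f}_{L^2}^{1/2}\norm{f}_{W^{1,2}}^{1/2}$. The only difference is that the paper cites this inequality from Brezis, while you prove it directly with the explicit constant $c=\sqrt2$, via a point $s_0$ with $f(s_0)^2\le\int f^2\,ds$ and the fundamental theorem of calculus.

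One small slip in a side remark: your $L$-power check for the first term gives $\frac12(k+\frac12)+\frac12(k+\frac12)=k+\frac12$, not $k+1$, so the powers do not ``match'' termwise. The scale invariance you need follows directly from \Cref{rem:def:norm:scaling}. It is also consistent with the extra factor $L^{-1}$ in front of $\int f^2\,ds$ in your direct computation.
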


\begin{proof}
    We will use the identification $[0,1]$ of $\mathbb{S}^1$ in this proof. In view of \Cref{rem:def:norm:scaling}, the inequality is invariant under both scaling and reparametrization. Thus, we may assume that $\gamma$ has length $1$ and is parametrized by arc length. Hence, $\partial_{{s}} = \partial_x$ and $d{s} = dx$. Using \cite[Comments on Chapter~8, 1.(iii)]{brezis2011functional}, we obtain
    \begin{equation*}
    \begin{split}
        \norm{\partial_s^k \kappa}_{\infty} &= \norm{\partial_x^k {\kappa}}_{L^\infty}\\
        &\leq c \norm{\partial_x^k {\kappa}}_{L^2(dx)}^{\frac{1}{2}} \norm{\partial_x^k {\kappa}}_{W^{1,2}(dx)}^{\frac{1}{2}}\\
        &= c \norm{\partial_x^k {\kappa}}_{L^2(dx)}^{\frac{1}{2}} (\norm{\partial_x^k {\kappa}}_{L^2(dx)} + \norm{\partial_x^{k+1} {\kappa}}_{L^2(dx)})^{\frac{1}{2}}\\
        &= c \norm{\partial_{{s}}^k {\kappa}}_{L^2(d{{s}})}^{\frac{1}{2}} (\norm{\partial_{{s}}^k {\kappa}}_{L^2(d{{s}})} + \norm{\partial_{{s}}^{k+1} {\kappa}}_{L^2(d{{s}})})^{\frac{1}{2}}\\
        &= c \norm{\partial_{{s}}^k {\kappa}}_2^{\frac{1}{2}} (\norm{\partial_{{s}}^k {\kappa}}_2 + \norm{\partial_{{s}}^{k+1} {\kappa}}_2)^{\frac{1}{2}}.
    \end{split}
    \end{equation*}
\end{proof}

\begin{lemma}[{\cite[Equation~2.17]{Dziuk2002}}]
\label{lem:interpolation_inequality_2}
    For $\gamma \in C^\infty(\mathbb{S}^1, \mathbb{R}^2)$ immersed and $k \in \mathbb{N}$ we have
    \begin{equation*}
        \|\kappa\|_{k,2}^2 \leq c(k)(\|\partial_s^k \kappa \|_2^2+\|\kappa\|_2^2).
    \end{equation*}
\end{lemma}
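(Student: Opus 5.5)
The estimate is scale and reparametrization invariant, so I will first reduce to a normalized situation. Both sides are built from the scale-invariant norms of \Cref{rem:def:norm:scaling}, and $\|\cdot\|_p$ does not see orientation-preserving reparametrizations. Hence, as in the proof of \Cref{lem:interpolation_inequality_infinity}, I may assume $\mathcal{L}(\gamma)=1$ and that $\gamma$ is parametrized by arc length on $[0,1]$. Then $\partial_s=\partial_x$, $ds=dx$, and $\|\partial_s^i\kappa\|_2=\norm{\partial_x^i\kappa}_{L^2(0,1)}$. The claim becomes the elementary estimate
\begin{equation*}
\sum_{i=0}^k \norm{\partial_x^i\kappa}_{L^2}^2 \le c(k)\bigl(\norm{\partial_x^k\kappa}_{L^2}^2+\norm{\kappa}_{L^2}^2\bigr)
\end{equation*}
for a smooth $1$-periodic function $\kappa$.

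I will prove this normalized inequality with Fourier series, which gives the cleanest route on the circle. Write $\kappa(x)=\sum_{n\in\mathbb{Z}}\hat\kappa_n e^{2\pi i n x}$. By Parseval,
\begin{equation*}
\norm{\partial_x^i\kappa}_{L^2}^2=\sum_n (2\pi|n|)^{2i}|\hat\kappa_n|^2.
\end{equation*}
For each $n$ and each $0\le i\le k$ we have the pointwise bound $(2\pi|n|)^{2i}\le 1+(2\pi|n|)^{2k}$: the first term dominates when $2\pi|n|\le1$ and the second otherwise. Summing over $n$ gives $\norm{\partial_x^i\kappa}_{L^2}^2\le\norm{\kappa}_{L^2}^2+\norm{\partial_x^k\kappa}_{L^2}^2$. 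Summing over $i$ then yields the inequality with $c(k)=k+1$.

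An alternative to Fourier series is to iterate integration by parts together with Cauchy--Schwarz and Young. On the closed curve we have $\int(\partial_s^i\kappa)^2ds=-\int\partial_s^{i-1}\kappa\,\partial_s^{i+1}\kappa\,ds$. This shows that $j\mapsto\log\|\partial_s^j\kappa\|_2$ is convex, so every intermediate norm is bounded by the maximum of the two endpoint norms.

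Finally, I will undo the normalization. Scaling back multiplies each term by the common factor $\mathcal{L}(\gamma)^{\cdot}$ that defines the scale-invariant norm, so the constant is unchanged. There is no real obstacle here. The only step that needs care is checking that the normalization is legitimate, namely that each $\|\partial_s^i\kappa\|_2$ carries exactly the scaling weight $\mathcal{L}^{i+1/2}$ recorded in \Cref{rem:def:norm:scaling}.
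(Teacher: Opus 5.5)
Your proof is correct. Note that the paper gives no argument of its own: it quotes the estimate from Dziuk--Kuwert--Sch\"atzle, where it is a consequence of interpolation between $\|\kappa\|_2$ and $\|\partial_s^k\kappa\|_2$. For instance, \Cref{lem:interpolation_inequality_1} with $p=2$ gives $\|\partial_s^i\kappa\|_2\le c\,\|\kappa\|_2^{1-i/k}\|\kappa\|_{k,2}^{i/k}$ for $i<k$. Young's inequality, followed by absorbing $k\varepsilon\|\kappa\|_{k,2}$ into the left-hand side, then yields the claim. Your integration-by-parts/log-convexity remark is the other standard way to obtain the same interpolation on a closed curve.

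Your main argument takes a genuinely different route. After the (legitimate) normalization to $\mathcal{L}(\gamma)=1$ with arc-length parametrization, Parseval reduces everything to the elementary multiplier bound $(2\pi|n|)^{2i}\le 1+(2\pi|n|)^{2k}$. This is shorter, needs no absorption, and gives an explicit constant. However, it relies on the planar situation, where $\kappa$ is a scalar periodic function and the $\partial_s^i\kappa$ are ordinary derivatives. In the higher-codimension setting of the cited source, with the normal connection $\nabla_s$, the Fourier argument does not apply verbatim, and the interpolation route is the one that carries over.

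One small slip: in this paper $\|\kappa\|_{k,2}=\sum_{i=0}^k\|\partial_s^i\kappa\|_2$ is a sum of norms, so $\|\kappa\|_{k,2}^2$ is not $\sum_i\|\partial_s^i\kappa\|_2^2$. You need one more Cauchy--Schwarz step, $\|\kappa\|_{k,2}^2\le (k+1)\sum_{i=0}^k\|\partial_s^i\kappa\|_2^2$, so your constant becomes $c(k)=(k+1)^2$ rather than $k+1$.
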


We introduce new notation. For a function $\phi: \mathbb{S}^1 \to \mathbb{R}$, we denote by $P_\eta^\mu(\phi)$ any linear combination of terms of the type $\partial_s^{i_1} \phi\cdots \partial_s^{i_\eta}\phi$ with constant coefficients and $\mu = i_1 + \ldots + i_\eta$ is the total number of derivatives. Observe the following properties
\begin{equation*}
\begin{split}
    P_\eta^\mu(\phi) P_\beta^\alpha(\phi) &= P_{\eta+\beta}^{\mu+\alpha}(\phi),\\
    \partial_s P_\eta^\mu (\phi) &= P_\eta^{\mu+1} (\phi).
\end{split}
\end{equation*}

\begin{lemma}
\label{lem:a_evolution_equality}
    Suppose $\gamma$ evolves by $\partial_t \gamma =(-2\partial_s^2\kappa - \kappa^3 -\lambda)\nu$ where $\lambda \in \mathbb{R}$. For $m \in \mathbb{N}$, the derivatives $\phi_m = \partial_s^m \kappa$ satisfy
    \begin{equation*}
        \partial_t \phi_m + 2\partial_s^4 \phi_{m} = \lambda P_2^m(\kappa)+P_5^m(\kappa) + P_3^{m+2}(\kappa).
    \end{equation*}
    This holds even if $\lambda$ is time dependent.
\end{lemma}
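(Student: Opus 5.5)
We argue by induction on $m$, starting from the evolution of the curvature itself. With $\xi \coloneqq -2\partial_s^2\kappa - \kappa^3 - \lambda$ and $\varphi = 0$, \Cref{lem:differential_properties} gives
\begin{equation*}
    \partial_t \kappa = \partial_s^2 \xi + \kappa^2 \xi = -2\partial_s^4\kappa - \partial_s^2(\kappa^3) - 2\kappa^2\partial_s^2\kappa - \kappa^5 - \lambda\kappa^2.
\end{equation*}
Here $\partial_s^2(\kappa^3)$ and $\kappa^2\partial_s^2\kappa$ are of type $P_3^2(\kappa)$, $\kappa^5$ is of type $P_5^0(\kappa)$, and $\lambda\kappa^2$ is $\lambda P_2^0(\kappa)$. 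This settles the case $m=0$. Since $\lambda$ depends only on $t$, no spatial derivative ever falls on it, so time dependence of $\lambda$ changes nothing.

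For the inductive step, suppose the identity holds for $\phi_m$. The commutator formula \Cref{lem:differential_properties}\ref{lem:differential_properties_ii} with $\varphi=0$ gives
\begin{equation*}
    \partial_t \phi_{m+1} = \partial_s \partial_t \phi_m + \kappa\xi\,\partial_s\phi_m.
\end{equation*}
This is the second identity of \Cref{lem:main_inequality}. It yields
\begin{equation*}
    \partial_t \phi_{m+1} + 2\partial_s^4\phi_{m+1} = \partial_s\bigl(\lambda P_2^m + P_5^m + P_3^{m+2}\bigr) + \kappa\xi\,\partial_s^{m+1}\kappa .
\end{equation*}
Using $\partial_s P_\eta^\mu = P_\eta^{\mu+1}$, the first group becomes $\lambda P_2^{m+1} + P_5^{m+1} + P_3^{m+3}$, which is the desired form.

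It remains to absorb the commutator term $\kappa\xi\,\partial_s^{m+1}\kappa$. We expand it according to the three pieces of $\xi$:
\begin{itemize}
    \item $-2\kappa\,\partial_s^2\kappa\,\partial_s^{m+1}\kappa$ has $3$ factors and $m+3$ derivatives, so it is $P_3^{m+3}$;
    \item $-\kappa^4\partial_s^{m+1}\kappa$ has $5$ factors and $m+1$ derivatives, so it is $P_5^{m+1}$;
    \item $-\lambda\kappa\,\partial_s^{m+1}\kappa$ has $2$ factors and $m+1$ derivatives, so it is $\lambda P_2^{m+1}$.
\end{itemize}
All three fit the claimed structure with $m$ replaced by $m+1$, which closes the induction.

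The only delicate point is the bookkeeping, namely checking that every term has the right number of factors and derivatives. In particular, each factor of $\lambda$ must always come with exactly two curvature factors, and the commutator term must not produce mismatched orders. Both are confirmed by the expansion above.
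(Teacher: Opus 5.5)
Your proof is correct and follows essentially the same route as the paper. Both argue by induction on $m$: the base case comes from $\partial_t\kappa = \partial_s^2\xi + \kappa^2\xi$, and the inductive step uses the commutator $\partial_t\partial_s = \partial_s\partial_t + \kappa\xi\,\partial_s$, with the extra term $\kappa\xi\,\partial_s^{m+1}\kappa$ sorted into $P_3^{m+3}$, $P_5^{m+1}$ and $\lambda P_2^{m+1}$, exactly as you do.
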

\begin{proof}
    For $m=0$, \Cref{lem:differential_properties} gives us
    \begin{equation*}
    \begin{split}
        \partial_t \kappa &= \partial_s^2(-2\partial_s^2\kappa - \kappa^3-\lambda)+\kappa^2(-2\partial_s^2 \kappa - \kappa^3 - \lambda)\\
        &= -2\partial_s^4 \kappa + \lambda P_2^0(\kappa)+ P_5^0(\kappa) + P_3^2(\kappa).
    \end{split}
    \end{equation*}
    We proceed by induction on $m$. Again with \Cref{lem:differential_properties}, we have
    \begin{equation*}
    \begin{split}
        \partial_t \phi_{m+1} &= \partial_t \partial_s \phi_m = \partial_s \partial_t \phi_m + \kappa (-2\partial_s^2 \kappa - \kappa^3 - \lambda) \partial_s \phi_m\\
        &=\partial_s(-2\partial_s^4 \phi_m + \lambda P_2^m(\kappa)+ P_5^m(\kappa)+ P_3^{m+2}(\kappa))\\
        &\quad+P_3^{m+3}(\kappa) + P_5^{m+1}(\kappa)+\lambda P_2^{m+1}(\kappa)\\
        &= -2\partial_s^4 \phi_{m+1} + \lambda P_2^{m+1}(\kappa)+ P_5^{m+1}(\kappa) + P_3^{m+3}(\kappa).
    \end{split}
    \end{equation*}
\end{proof}

\begin{proposition}[{\cite[Proposition~2.5]{Dziuk2002}}]
\label{prop:inequality_P}
    Let $\gamma \in C^\infty(\mathbb{S}^1, \mathbb{R}^2)$ be immersed and $k \in \mathbb{N}$. Furthermore, let $P_\eta^\mu(\kappa)$ be a term with $\eta \geq 2$, which only contains derivatives of $\kappa$ of order at most $k-1$, and denote $\sigma=\frac{\mu+\frac{1}{2}\eta-1}{k}$.
    If $\mu+\frac{1}{2} \eta < 2k+1$, we have $\sigma<2$ and there exists $c=c(k,\mu,\eta) > 0$ such that for any $\varepsilon>0$
    \begin{equation*}
    \begin{split}
        \int_{\mathbb{S}^1} \vert P_\eta^\mu(\kappa)\vert ds &\leq \varepsilon \int_{\mathbb{S}^1} \vert\partial_s^k \kappa \vert^2 ds + c \varepsilon^{-\frac{\sigma}{2-\sigma}}\left(\int_{\mathbb{S}^1} \vert \kappa \vert^2 ds \right)^{\frac{\eta-\sigma}{2-\sigma}}\\
        &\quad + c\left(\int_{\mathbb{S}^1} \vert \kappa \vert^2 ds \right)^{\mu+\eta-1}.
    \end{split}
    \end{equation*}
\end{proposition}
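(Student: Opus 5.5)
The plan is to reproduce the argument of \cite[Proposition~2.5]{Dziuk2002}, which combines Hölder's inequality with the interpolation inequality of \Cref{lem:interpolation_inequality_1} and Young's inequality. By \Cref{rem:def:norm:scaling} everything can be phrased in the scale-invariant norms. The first step is to consider a single monomial $\partial_s^{i_1}\kappa\cdots\partial_s^{i_\eta}\kappa$ with $i_j\le k-1$ and $\sum i_j=\mu$. Hölder's inequality with exponent $\eta$ for each factor gives
\begin{equation*}
\int_{\mathbb S^1}\abs{P^\mu_\eta(\kappa)}\,ds\le c\prod_{j=1}^\eta\Bigl(\int_{\mathbb S^1}\abs{\partial_s^{i_j}\kappa}^\eta ds\Bigr)^{1/\eta} = c\,\mathcal L^{-\sum_j(i_j+1-\frac1\eta)}\prod_{j=1}^\eta\norm{\partial_s^{i_j}\kappa}_\eta ,
\end{equation*}
and the power of $\mathcal L$ is $-(\mu+\eta-1)$. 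Since $\eta\ge2$, we may take $p=\eta$.

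Next, apply \Cref{lem:interpolation_inequality_1} to each factor. This is allowed because $i_j<k$. Each factor is bounded by $c\norm{\kappa}_2^{1-\alpha_j}\norm{\kappa}_{k,2}^{\alpha_j}$ with $\alpha_j=(i_j+\frac12-\frac1\eta)/k$. Summing the exponents gives $\sum_j\alpha_j=(\mu+\frac\eta2-1)/k=\sigma$, so
\begin{equation*}
\int\abs{P^\mu_\eta(\kappa)}ds\le c\,\mathcal L^{1-\mu-\eta}\norm{\kappa}_2^{\eta-\sigma}\norm{\kappa}_{k,2}^{\sigma}.
\end{equation*}
The hypothesis $\mu+\frac12\eta<2k+1$ is exactly $\sigma<2$. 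Then \Cref{lem:interpolation_inequality_2} replaces $\norm{\kappa}_{k,2}^\sigma$ by $c(\norm{\partial_s^k\kappa}_2^\sigma+\norm{\kappa}_2^\sigma)$.

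The remaining step, which is the only delicate one, is converting back to unscaled integrals with the correct powers of $\mathcal L$. We have $\norm{\partial_s^k\kappa}_2^2=\mathcal L^{2k+1}\int\abs{\partial_s^k\kappa}^2ds$ and $\norm{\kappa}_2^2=\mathcal L\int\kappa^2ds$.

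For the term containing $\norm{\partial_s^k\kappa}_2^\sigma$, use Young's inequality with exponents $\frac2\sigma$ and $\frac2{2-\sigma}$ to split off $\varepsilon\int\abs{\partial_s^k\kappa}^2ds$. The remainder is $c\varepsilon^{-\sigma/(2-\sigma)}$ times a power of $\mathcal L$ times $(\int\kappa^2)^{(\eta-\sigma)/(2-\sigma)}$. Exponent bookkeeping shows that this power of $\mathcal L$ vanishes: the total scaling of the inequality must be consistent, since both sides scale like $r^{1-\mu-\eta}$.

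For the pure term $\mathcal L^{1-\mu-\eta}\norm{\kappa}_2^{\eta}$, bound $\mathcal L^{-1}$ by means of Fenchel's inequality $2\pi\le\int\abs\kappa\,ds\le\mathcal L^{1/2}(\int\kappa^2ds)^{1/2}$, which gives $\mathcal L^{-1}\le c\int\kappa^2ds$. Then
\begin{equation*}
\mathcal L^{1-\mu-\eta}\norm{\kappa}_2^\eta=\mathcal L^{1-\mu-\eta/2}\Bigl(\int\kappa^2ds\Bigr)^{\eta/2},
\end{equation*}
and, provided $\mu+\eta/2-1\ge0$, this is at most $c(\int\kappa^2)^{\mu+\eta-1}$. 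That condition holds because $\eta\ge2$.

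I expect the main obstacle to be this final exponent check, that is, verifying that Young's inequality leaves no stray powers of $\mathcal L$ in the middle term, together with the use of Fenchel's bound for closed curves in the last term.
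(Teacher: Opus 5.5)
Your proposal is correct and follows the argument of \cite[Proposition~2.5]{Dziuk2002}, which the paper cites without reproducing. That argument runs: H\"older with exponent $\eta$; then \Cref{lem:interpolation_inequality_1} with $p=\eta$, so the interpolation exponents sum to $\sigma$; then \Cref{lem:interpolation_inequality_2} and Young with exponents $\frac{2}{\sigma},\frac{2}{2-\sigma}$; and finally $\mathcal{L}^{-1}\le c\int_{\mathbb{S}^1}\kappa^2\,ds$ (from $\int_{\mathbb{S}^1}|\kappa|\,ds\ge 2\pi$) for the last term. The one point to tighten is the middle term, which is cleanest checked directly: the leftover power of $\mathcal{L}$ is $\frac{2-2\mu-\eta+2k\sigma}{2-\sigma}$, and this vanishes because $k\sigma=\mu+\frac{\eta}{2}-1$. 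Your scaling remark only holds if $\varepsilon$ is rescaled together with the curve.
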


To estimate the absolute value of $\lambda(\gamma) = - \frac{\int_{\mathbb{S}^1} \kappa^3 ds}{\mathcal{L}(\gamma)}$, we first establish a uniform lower bound on the length of the curve.

\begin{proposition}
\label{prop:length_bound_below}
    Suppose $\gamma: [0,T) \times \mathbb{S}^1 \to \mathbb{R}^2$ smoothly evolves according to \cref{eq:evolution_gradient_flow}. Then, there is a constant $c(\gamma_0) > 0$ such that
    \begin{equation*}
         c(\gamma_0) \leq \mathcal{L}(\gamma_t)
    \end{equation*}
    for all $t \in [0,T)$.
\end{proposition}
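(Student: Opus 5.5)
Since the energy is non-increasing along the flow by the energy identity \cref{eq:energy_identity}, we have $\mathcal{E}(\gamma_t) \leq \mathcal{E}(\gamma_0)$ for all $t \in [0,T)$. The plan is to combine this upper energy bound with a scale-invariant lower bound on $\mathcal{L}\,\mathcal{E}$ for closed curves.

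For any closed immersed curve the total absolute curvature satisfies $\int_{\mathbb{S}^1} \abs{\kappa}\, ds \geq 2\pi$. This is Fenchel's theorem in the plane, and it also follows directly: the tangent $\tau$ must cover the whole unit circle, otherwise $\int \tau\, ds = 0$ would be impossible. Applying Cauchy--Schwarz gives
\begin{equation*}
    4\pi^2 \leq \Bigl(\int_{\mathbb{S}^1} \abs{\kappa}\, ds\Bigr)^2 \leq \mathcal{L}(\gamma_t)\, \mathcal{E}(\gamma_t) \leq \mathcal{L}(\gamma_t)\, \mathcal{E}(\gamma_0).
\end{equation*}
Hence $\mathcal{L}(\gamma_t) \geq 4\pi^2/\mathcal{E}(\gamma_0) =: c(\gamma_0)$. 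This constant is positive and finite, since $\mathcal{E}(\gamma_0)$ is finite for smooth $\gamma_0$ and positive for closed curves by the same inequality.

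An alternative route avoids Fenchel and uses the area constraint instead. The flow preserves $\mathcal{A}(\gamma_t) = \mathcal{A}(\gamma_0)$, and the isoperimetric-type bound $\abs{\mathcal{A}} \leq \mathcal{L}^2/(4\pi)$ for algebraic area, or even the crude bound $\abs{\mathcal{A}} \leq \tfrac12 \mathcal{L} \cdot \operatorname{diam} \leq \mathcal{L}^2/4$, gives a lower bound on the length. This route requires $\mathcal{A}(\gamma_0) \neq 0$, so the curvature argument is the robust choice.

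The only point needing care is the inequality $\int \abs{\kappa}\, ds \geq 2\pi$. I would justify it briefly by writing $\kappa = \partial_s \theta$ and noting that $\theta$ must vary by at least $\pi$ on each of two arcs: by the mean value argument on $\int \tau \, ds = 0$, the tangent must take antipodal directions and return. Alternatively, I would simply cite Fenchel's theorem.
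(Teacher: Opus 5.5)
Your proof is correct and follows the paper's argument. Both combine monotonicity of $\mathcal{E}$ from \cref{eq:energy_identity} with $4\pi^2\le\mathcal{L}(\gamma_t)\,\mathcal{E}(\gamma_t)$, and both obtain the same constant $c(\gamma_0)=4\pi^2/\mathcal{E}(\gamma_0)$.

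The only difference is where that inequality comes from. The paper quotes Langer--Singer: the circle minimizes $\mathcal{E}$ at fixed length, and a circle of length $\mathcal{L}$ has energy $4\pi^2/\mathcal{L}$. You derive it from Fenchel's theorem and Cauchy--Schwarz. Your route is elementary and shows plainly that no assumption on winding number or simplicity is needed. That matters here, because the proposition is used for flows from arbitrary immersed data such as the lemniscate.

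However, your first justification of $\int_{\mathbb{S}^1}\abs{\kappa}\,ds\ge 2\pi$ is false: the tangent of a closed curve need not cover the whole unit circle. Take the lemniscate $\gamma_0$ of \Cref{sec:numerical_simulations}. Its tangent is vertical only at $x\in\{0,\pi\}$, and it equals $(0,1)$ at both points. So the direction $(0,-1)$ is never attained; each lobe turns the tangent by only $3\pi/2$. Yet $\int\tau\,ds=0$, since the curve is closed.

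Delete that sentence and make the antipodal argument precise:
\begin{itemize}
\item Suppose the image of $\tau$ lay in a closed half-circle $\{w\cdot v\ge 0\}$. Then $\int_{\mathbb{S}^1}\tau\cdot v\,ds=0$ would force $\tau\cdot v\equiv 0$, hence $\tau$ constant, which is impossible for a closed curve.
\item So the connected image of $\tau$ is either an arc of length $>\pi$ or the whole circle. In both cases it contains antipodal values $\tau(x_1)=-\tau(x_2)$.
\item On each of the two arcs of $\mathbb{S}^1$ joining $x_1$ and $x_2$, a continuous lift of $\theta$ changes by an odd multiple of $\pi$. Hence $\int_{\mathbb{S}^1}\abs{\kappa}\,ds\ge 2\pi$.
\end{itemize}
A plain mean-value argument on $\int\tau\cdot v\,ds=0$ only gives a point where $\tau\perp v$, not an antipodal pair. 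The half-circle step is therefore the one actually needed; alternatively, simply cite Fenchel's theorem.
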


\begin{proof}
    As the circle minimizes the elastic energy among curves of fixed length by \cite[Corollary~1.8]{LANGER198575}, we obtain by \eqref{eq:energy_identity}
    \begin{equation*}
         \frac{4\pi^2}{\mathcal{L}(\gamma)} \leq \mathcal{E}(\gamma) \leq \mathcal{E}(\gamma_0).
    \end{equation*}
    Rearranging this inequality yields $\frac{4\pi^2}{\mathcal{E}(\gamma_0)} \leq \mathcal{L}(\gamma)$.
\end{proof}

\subsection{Uniform curvature bounds}

\begin{lemma}
\label{lem:a_bound_lambda}
    For a solution $\gamma$ to \cref{eq:evolution_gradient_flow}, $\lambda$ chosen as in \cref{eq:a_lagrange_multiplier}, and every $m \in \mathbb{N}$ there is a constant $c_m(\gamma_0)>0$ such that we have the bound
    \begin{equation*}
        \vert \lambda \vert \leq c_{m}(\gamma_0)(1+\|\partial_s^{m+2} \kappa \|_{L^2(ds)}^{\frac{1}{2m+4}}).
    \end{equation*}
\end{lemma}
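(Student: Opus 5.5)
We bound $\abs{\lambda} \le \mathcal{L}(\gamma)^{-1}\int \abs{\kappa}^3 ds \le \mathcal{L}(\gamma)^{-1}\norm{\kappa}_{L^\infty}\mathcal{E}(\gamma)$. By \Cref{prop:length_bound_below}, $\mathcal{L}(\gamma)^{-1}\le \mathcal{E}(\gamma_0)/(4\pi^2)$. By the energy identity \cref{eq:energy_identity}, $\mathcal{E}(\gamma)\le\mathcal{E}(\gamma_0)$. So it suffices to show
\begin{equation*}
\norm{\kappa}_{L^\infty}\le c(\gamma_0)\bigl(1+\norm{\partial_s^{m+2}\kappa}_{L^2(ds)}^{\frac{1}{2m+4}}\bigr).
\end{equation*}

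For this we interpolate in unscaled form, working on a curve normalized so that only the $L^2$ norm of $\kappa$ and the top derivative appear. Let $k=m+2$ and $L=\mathcal{L}(\gamma)$. Applying \Cref{lem:interpolation_inequality_1} with $i=0$ and the formal choice $p=\infty$ gives exponent $\alpha=\frac{1}{2k}$. Alternatively, combine \Cref{lem:interpolation_inequality_infinity} for $k=0$ with \Cref{lem:interpolation_inequality_1} for $\norm{\partial_s\kappa}_2$. Either way we get, in scale-invariant norms,
\begin{equation*}
L\norm{\kappa}_{L^\infty}\le c\,\norm{\kappa}_2^{1-\frac{1}{2k}}\norm{\kappa}_{k,2}^{\frac{1}{2k}}.
\end{equation*}
Then \Cref{lem:interpolation_inequality_2} gives $\norm{\kappa}_{k,2}\le c(\norm{\partial_s^k\kappa}_2+\norm{\kappa}_2)$.

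Next we unscale. Here $\norm{\kappa}_2=L^{1/2}\mathcal{E}^{1/2}$ and $\norm{\partial_s^k\kappa}_2=L^{k+1/2}\norm{\partial_s^k\kappa}_{L^2}$. Substituting,
\begin{equation*}
\norm{\kappa}_{L^\infty}\le c\,L^{-1}(L\mathcal{E})^{\frac12-\frac{1}{4k}}\bigl(L^{k+\frac12}\norm{\partial_s^k\kappa}_{L^2}+L^{\frac12}\mathcal{E}^{\frac12}\bigr)^{\frac{1}{2k}}.
\end{equation*}
Splitting the sum with $(a+b)^{1/(2k)}\le a^{1/(2k)}+b^{1/(2k)}$, the first term becomes $c\,\mathcal{E}^{\frac12-\frac1{4k}}\norm{\partial_s^k\kappa}_{L^2}^{\frac{1}{2k}}$, with the powers of $L$ cancelling exactly: $-1+\frac12-\frac1{4k}+\frac12+\frac1{4k}=0$. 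The second term becomes $c\,L^{-1/2}\mathcal{E}^{1/2}$, which is bounded by $c(\gamma_0)$ via the length lower bound. Since $\frac{1}{2k}=\frac{1}{2m+4}$, this gives the claim.

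The main point to check is the $L^\infty$ interpolation exponent. The cleanest way is to avoid $p=\infty$ in \Cref{lem:interpolation_inequality_1}. Use \Cref{lem:interpolation_inequality_infinity} with $k=0$: $\norm{\kappa}_\infty\le c\norm{\kappa}_2^{1/2}(\norm{\kappa}_2+\norm{\partial_s\kappa}_2)^{1/2}$. Then \Cref{lem:interpolation_inequality_1} with $i=1$, $p=2$ gives $\norm{\partial_s\kappa}_2\le c\norm{\kappa}_2^{1-1/k}\norm{\kappa}_{k,2}^{1/k}$. Combining, $\norm{\kappa}_\infty\le c\norm{\kappa}_2+c\norm{\kappa}_2^{1-\frac{1}{2k}}\norm{\kappa}_{k,2}^{\frac{1}{2k}}$, as required. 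The extra $\norm{\kappa}_2$ term unscales to $L^{-1/2}\mathcal{E}^{1/2}$ and is absorbed into the constant.
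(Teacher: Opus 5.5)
Your argument is correct and is essentially the paper's proof. Both interpolate in the scale-invariant norms between $\norm{\kappa}_2$ and $\norm{\kappa}_{m+2,2}$, apply \Cref{lem:interpolation_inequality_2}, unscale, and finish with $\mathcal{E}(\gamma)\le\mathcal{E}(\gamma_0)$ and \Cref{prop:length_bound_below}; the only difference is that you first split $\int\abs{\kappa}^3\,ds\le\norm{\kappa}_{L^\infty}\mathcal{E}(\gamma)$ and interpolate $\norm{\kappa}_{L^\infty}$ via \Cref{lem:interpolation_inequality_infinity} (with $k=0$) and \Cref{lem:interpolation_inequality_1} (with $i=1$), whereas the paper applies \Cref{lem:interpolation_inequality_1} directly with $i=0$, $p=3$, and both routes give the same exponent $\frac{1}{2m+4}$.
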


\begin{proof}
    Denote $\mathcal{L} \coloneqq \mathcal{L}(\gamma)$. Estimate $\abs{\lambda}$ with \Cref{lem:interpolation_inequality_1,lem:interpolation_inequality_2}
    \begin{equation*}
    \begin{split}
        \vert \lambda \vert &\leq \mathcal{L}^{-1} \norm{\kappa}_{L^3(ds)}^3 = \mathcal{L}^{-3} \norm{\kappa}_3^3\\
        &\leq c_m \mathcal{L}^{-3} \norm{\kappa}_2^{3-\frac{1}{2m+4}} \norm{\kappa}_{m+2,2}^{\frac{1}{2m+4}}\\
        &\leq c_m \mathcal{L}^{-3} \norm{\kappa}_2^{\frac{6m+11}{2m+4}} (\norm{\kappa}_2^2 + \norm{\partial_s^{m+2} \kappa}_2^2)^{\frac{1}{4m+8}}\\
        &\leq c_m \mathcal{L}^{-3} \big(\norm{\kappa}_2^{3}+ \norm{\kappa}_2^{\frac{6m+11}{2m+4}}\norm{\partial_s^{m+2}\kappa}_2^{\frac{1}{2m+4}}\big)\\
        &\leq c_m \mathcal{L}^{-3} \big(\mathcal{L}^{\frac{3}{2}} \norm{\kappa}_{L^2(ds)}^3 + \mathcal{L}^{\frac{6m+11}{4m+8}}\norm{\kappa}_{L^2(ds)}^{\frac{6m+11}{2m+4}}\mathcal{L}^{\frac{2m+5}{4m+8}}\norm{\partial_s^{m+2}\kappa}_{L^2(ds)}^{\frac{1}{2m+4}}\big)\\
        &\leq c_m(\gamma_0)(\mathcal{L}^{-\frac{3}{2}}+\mathcal{L}^{-1}\norm{\partial_s^{m+2}\kappa}_{L^2(ds)}^{\frac{1}{2m+4}}).
    \end{split}
    \end{equation*}
    Here, we used that the energy is non-increasing along the gradient flow, meaning $\norm{\kappa}_{L^2(ds)}^2 = \mathcal{E}(\gamma) \leq \mathcal{E}(\gamma_0)$. The length bound from \Cref{prop:length_bound_below} yields the result.
\end{proof}

We will apply the previous statements to obtain bounds on the curvature and its derivatives. These bounds form the basis of the global existence proof.

\begin{lemma}
\label{lem:a_pre_main_theorem}
    Suppose $\gamma \in C^\infty([0,T) \times \mathbb{S}^1, \mathbb{R}^2)$ evolves according to \cref{eq:evolution_gradient_flow}.
    For all $\varepsilon > 0$ and $m \in \mathbb{N}$ there exists a constant $c_m(\gamma_0, \varepsilon)>0$ such that
    \begin{equation*}
    \begin{split}
        \frac{1}{2} \frac{d}{dt} \int_{\mathbb{S}^1} (\partial_s^m \kappa)^2 ds+ 2\int_{\mathbb{S}^1} (\partial_s^{m+2} \kappa)^2 ds
        &\leq 4\varepsilon \int_{\mathbb{S}^1} \vert \partial_s^{m+2} \kappa \vert^2 ds+c_m(\gamma_0, \varepsilon).
    \end{split}
    \end{equation*}
\end{lemma}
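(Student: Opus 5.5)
We apply \Cref{lem:main_inequality} with $\phi=\phi_m=\partial_s^m\kappa$ and normal velocity $\xi=-(2\partial_s^2\kappa+\kappa^3+\lambda)$. By \Cref{lem:a_evolution_equality}, the quantity $Y=\partial_t\phi_m+2\partial_s^4\phi_m$ equals $\lambda P_2^m(\kappa)+P_5^m(\kappa)+P_3^{m+2}(\kappa)$. Together with the term $-\tfrac12\int\phi_m^2\kappa\xi\,ds$, which is of the form $\int (P_3^{2m+2}+P_5^{2m}+\lambda P_3^{2m})(\kappa)\,ds$, we obtain
\begin{equation*}
\frac12\frac{d}{dt}\int_{\mathbb S^1}(\partial_s^m\kappa)^2ds+2\int_{\mathbb S^1}(\partial_s^{m+2}\kappa)^2ds=\int_{\mathbb S^1}\big(P_4^{2m+2}(\kappa)+P_6^{2m}(\kappa)\big)ds+\lambda\int_{\mathbb S^1}P_3^{2m}(\kappa)\,ds.
\end{equation*}
Integration by parts lets us assume that no derivative of order exceeding $m+1$ appears. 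For $m=0$ some terms contain $\partial_s^2\kappa$; we move one derivative via integration by parts so that every factor has order at most $k-1=m+1$.

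The $\lambda$-free terms are handled by \Cref{prop:inequality_P} with $k=m+2$. For $P_4^{2m+2}$ we have $\mu+\tfrac12\eta=2m+4<2k+1=2m+5$, and for $P_6^{2m}$ we have $2m+3<2m+5$. Each of these terms is therefore bounded by $\varepsilon\int|\partial_s^{m+2}\kappa|^2ds+c(\varepsilon)\,(\text{powers of }\mathcal E(\gamma))$, and $\mathcal E(\gamma)\le\mathcal E(\gamma_0)$ by \cref{eq:energy_identity}.

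The main obstacle is the $\lambda$ term, since $\lambda$ itself depends on $\partial_s^{m+2}\kappa$. We use \Cref{lem:a_bound_lambda}: $|\lambda|\le c(1+\|\partial_s^{m+2}\kappa\|_{L^2}^{1/(2m+4)})$. For $\int|P_3^{2m}|ds$ we do not apply \Cref{prop:inequality_P} directly. Instead we bound it through \Cref{lem:interpolation_inequality_1} (Hölder on each factor, then interpolation with $k=m+2$), which gives a power $\|\kappa\|_{m+2,2}^{\beta}$ with
\begin{equation*}
\beta=\frac{2m+\tfrac32-1}{m+2}=\frac{2m+\tfrac12}{m+2}.
\end{equation*}
Scaling factors are powers of $\mathcal L$, controlled from below by \Cref{prop:length_bound_below}; negative powers of $\mathcal L$ are harmless, and positive powers can be avoided by working with the unscaled norms, as in \Cref{lem:a_bound_lambda}.

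Using \Cref{lem:interpolation_inequality_2}, the product $|\lambda|\int|P_3^{2m}|$ is then bounded by $c\big(1+\|\partial_s^{m+2}\kappa\|_{L^2}\big)^{\beta+\frac{1}{2m+4}}$, and the total exponent is
\begin{equation*}
\beta+\frac{1}{2m+4}=\frac{4m+2}{2m+4}<2.
\end{equation*}
Young's inequality then gives at most $\varepsilon\int|\partial_s^{m+2}\kappa|^2ds+c(\gamma_0,\varepsilon)$. Collecting the at most four such $\varepsilon$ contributions yields the claimed inequality with $4\varepsilon$.
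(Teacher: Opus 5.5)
Your proposal is correct and follows the paper's proof. You use \Cref{lem:main_inequality} with \Cref{lem:a_evolution_equality}, integration by parts and \Cref{prop:inequality_P} with $k=m+2$ for the $\lambda$-free terms, and \Cref{lem:a_bound_lambda} plus Young's inequality for the $\lambda$-term. The only difference is a minor repackaging of that last step: you bound $|\lambda|\int|P_3^{2m}(\kappa)|\,ds$ by multiplying the $\lambda$-bound with the Hölder--interpolation estimate underlying \Cref{prop:inequality_P}, and the power of $\mathcal{L}$ in its top-order part is indeed exactly zero. The paper instead applies \Cref{prop:inequality_P} with $\varepsilon$ replaced by $\varepsilon/|\lambda|$, which is the same absorption in disguise.

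One small slip: the reaction term $-\tfrac12\int\phi_m^2\kappa\xi\,ds$ consists of $P_4^{2m+2}$, $P_6^{2m}$ and $\lambda P_3^{2m}$, not $P_3^{2m+2}$ and $P_5^{2m}$. Your displayed identity already has the correct indices.
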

\begin{proof}
    From \Cref{lem:main_inequality} and \Cref{lem:a_evolution_equality} with $\phi_m = \partial_s^m \kappa$, it follows that
    \begin{equation}
    \begin{split}
    \label{eq:main_inequality}
        &\quad\frac{1}{2}\frac{d}{dt} \int_{\mathbb{S}^1} \phi_m^2 ds+ 2\int_{\mathbb{S}^1} \phi_{m+2}^2 ds\\
        &= \int_{\mathbb{S}^1} (\partial_t \phi_m+2\partial_s^4 \phi_m)\phi_m ds\\
        &- \int_{\mathbb{S}^1}\frac{1}{2} \phi_m^2 \kappa (-2\partial_s^2 \kappa-\kappa^3-\lambda)ds\\
        &= \int_{\mathbb{S}^1} \phi_m(\lambda P_2^m(\kappa)+P_5^m(\kappa) + P_3^{m+2}(\kappa))ds.
    \end{split}
    \end{equation}
    By using integration by parts, we can ensure that only derivatives of maximal order $m+1$ appear in the expression. Hence, we can use \Cref{prop:inequality_P} with $\mu=2m+2$, $\eta=4$, $k=m+2$, $\sigma=\frac{2m+3}{m+2}$:
    \begin{equation}
    \label{eq:inequality_P_1}
        \int_{\mathbb{S}^1} \phi_m P_3^{m+2}(\kappa)ds \leq \varepsilon \int_{\mathbb{S}^1} \vert \partial_s^{m+2} \kappa\vert^2 ds + c(\varepsilon)\left(\int_{\mathbb{S}^1} \vert \kappa \vert ^2 ds\right)^{2m+5}.
    \end{equation}
    Similarly, with $\mu=2m$, $\eta=3$, $k=m+2$, $\sigma=\frac{4m+1}{2m+4}$, one obtains:
    \begin{equation}
    \label{eq:inequality_P_2}
    \begin{split}
        &\int_{\mathbb{S}^1} \phi_m P_2^m(\kappa) ds \leq \varepsilon \int_{\mathbb{S}^1} \vert \partial_s^{m+2} \kappa \vert^2 ds + c \varepsilon^{-\frac{4m+1}{7}} \left(\int_{\mathbb{S}^1} \vert \kappa \vert^2 ds \right)^{\frac{2m+11}{7}}\\
        &\quad + c \left(\int_{\mathbb{S}^1} \vert \kappa \vert^2ds\right)^{2m+2};
    \end{split}
    \end{equation}
    and lastly with $\mu=2m$, $\eta=6$, $k=m+2$, $\sigma=\frac{2m+2}{m+2}$:
    \begin{equation}
    \label{eq:inequality_P_3}
        \int_{\mathbb{S}^1} \phi_m P_5^m(\kappa) ds \leq \varepsilon \int_{\mathbb{S}^1} \vert \partial_s^{m+2} \kappa \vert^2 ds + c(\varepsilon)\left(\int_{\mathbb{S}^1} \vert \kappa \vert^2 ds \right)^{2m+5}.
    \end{equation}
    Formally replacing $\varepsilon$ by $\frac{\varepsilon}{\abs{\lambda}}$ in \cref{eq:inequality_P_2} yields
    \begin{equation}
    \label{eq:inequality_P_4}
        \abs{\lambda}\Big\vert\int_{\mathbb{S}^1} \phi_m P_2^m(\kappa) ds\Big\vert \leq \varepsilon \|\partial_s^{m+2}\kappa\|_{L^2(ds)}^2 + c_m(\varepsilon, \gamma_0)(\abs{\lambda}^{\frac{4m+8}{7}}+\abs{\lambda}).
    \end{equation}
    If $\lambda = 0$, the term is already $0$.
    Combining \cref{eq:main_inequality,eq:inequality_P_1,eq:inequality_P_3,eq:inequality_P_4}, we have
    \begin{equation*}
    \begin{split}
        &\quad\frac{1}{2}\frac{d}{dt} \int_{\mathbb{S}^1} \phi_m^2 ds+ 2\int_{\mathbb{S}^1} \phi_{m+2}^2 ds\\
        &\leq 3\varepsilon \|\partial_s^{m+2}\kappa\|_{L^2(ds)}^2 + c_m(\varepsilon, \gamma_0)(\abs{\lambda}^{\frac{4m+8}{7}}+\abs{\lambda}+1).
    \end{split}
    \end{equation*}
    Recall \Cref{lem:a_bound_lambda} and observe for the exponents $\frac{4m+8}{7(2m+4)}=\frac{2}{7}<2$ and $\frac{1}{2m+4} < 2$. So we can use Young's inequality to arrive at
    \begin{equation*}
        \frac{1}{2}\frac{d}{dt} \int_{\mathbb{S}^1} \phi_m^2 ds+ 2\int_{\mathbb{S}^1} \phi_{m+2}^2 ds
        \leq 4\varepsilon \|\partial_s^{m+2}\kappa\|_{L^2(ds)}^2 + c_m(\varepsilon, \gamma_0).
    \end{equation*}
\end{proof}

\begin{lemma}
\label{lem:infinity_bound_kappa}
    For a smooth solution $\gamma: [0,T) \times \mathbb{S}^1 \to \mathbb{R}^2$ to \cref{eq:evolution_gradient_flow} and all $m \in \mathbb{N}$ we have bounds
    \begin{equation*}
        \| \partial_s^m \kappa \|_{L^\infty} \leq c_m(\gamma_0)
    \end{equation*}
    for some constant $c_m(\gamma_0) > 0$.
\end{lemma}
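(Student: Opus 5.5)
We first obtain uniform $L^2(ds)$ bounds on $\partial_s^m\kappa$ for every $m$ from the differential inequality of \Cref{lem:a_pre_main_theorem}, and then pass to $L^\infty$ bounds by interpolation.

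Fix $m\in\mathbb N$ and choose $\varepsilon=\tfrac18$ in \Cref{lem:a_pre_main_theorem}. This gives
\begin{equation*}
\frac{d}{dt}\int_{\mathbb S^1}(\partial_s^m\kappa)^2ds+2\int_{\mathbb S^1}(\partial_s^{m+2}\kappa)^2ds\le c_m(\gamma_0).
\end{equation*}
To close the estimate we need a lower bound of the dissipation term by $\int(\partial_s^m\kappa)^2ds$. By \Cref{lem:interpolation_inequality_1} with $k=m+2$, $i=m$, $p=2$, and \Cref{lem:interpolation_inequality_2},
\begin{equation*}
\norm{\partial_s^m\kappa}_2\le c\norm{\kappa}_2^{\frac{2}{m+2}}\bigl(\norm{\partial_s^{m+2}\kappa}_2+\norm{\kappa}_2\bigr)^{\frac{m}{m+2}}.
\end{equation*}
We convert the scale-invariant norms back to $L^2(ds)$-norms. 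This uses three facts: the energy is nonincreasing, i.e.\ $\norm{\kappa}_{L^2(ds)}^2\le\mathcal E(\gamma_0)$; the length is bounded below by \Cref{prop:length_bound_below}; and the length is bounded above on a finite time interval. The last point needs justification. From \Cref{lem:differential_properties}, $\frac{d}{dt}\mathcal L=-\int\kappa\xi\,ds$ with $\xi=-(2\partial_s^2\kappa+\kappa^3+\lambda)$. Since $\int\kappa\,ds=2\pi n_\gamma$ is constant, this gives $\frac{d}{dt}\mathcal L\le \norm{\kappa}_{L^2}\norm{\partial_t\gamma}_{L^2}$. Integrating in time and applying \eqref{eq:energy_identity} yields $\mathcal L(\gamma_t)\le\mathcal L(\gamma_0)+\sqrt{\mathcal E(\gamma_0)}\sqrt{t\,\mathcal E(\gamma_0)}$, which is bounded for $t<T<\infty$.

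A cleaner way to obtain the lower bound avoids the scale-invariant norms entirely. Integrate by parts, $\int(\partial_s^{m+1}\kappa)^2=-\int\partial_s^m\kappa\,\partial_s^{m+2}\kappa$, and iterate downward using Cauchy--Schwarz to get
\begin{equation*}
\int(\partial_s^m\kappa)^2ds\le \Bigl(\int\kappa^2ds\Bigr)^{\frac{2}{m+2}}\Bigl(\int(\partial_s^{m+2}\kappa)^2ds\Bigr)^{\frac{m}{m+2}}
\end{equation*}
for $m\ge1$. This requires no length bound at all. Young's inequality then gives $\int(\partial_s^{m+2}\kappa)^2\ge \int(\partial_s^m\kappa)^2-C(\gamma_0)$. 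Writing $y(t)=\int(\partial_s^m\kappa)^2ds$, we obtain $y'+2y\le C$. Hence $y(t)\le \max\{y(0),C/2\}$, uniformly in $t\in[0,T)$, for every $m$. The case $m=0$ is already covered by the energy bound.

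For the $L^\infty$ bound, we apply the one-dimensional Gagliardo--Nirenberg inequality on a closed curve of length $\mathcal L$:
\begin{equation*}
\norm{f}_{L^\infty}^2\le \frac{1}{\mathcal L}\norm f_{L^2}^2+2\norm f_{L^2}\norm{\partial_sf}_{L^2}.
\end{equation*}
With $f=\partial_s^m\kappa$, this is exactly \Cref{lem:interpolation_inequality_infinity} unscaled. Here only the lower length bound from \Cref{prop:length_bound_below} is needed. Combined with the uniform $L^2(ds)$ bounds on $\partial_s^m\kappa$ and $\partial_s^{m+1}\kappa$, this gives $\norm{\partial_s^m\kappa}_{L^\infty}\le c_m(\gamma_0)$.

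The main obstacle is the closing step. The constants in \Cref{lem:a_pre_main_theorem} are stated for the scale-invariant setting, and \Cref{lem:a_bound_lambda} already absorbs length factors using only the lower length bound. We need to check that no upper length bound is hidden in $c_m(\gamma_0,\varepsilon)$. If one is, we fall back on the finite-time length bound derived above, which suffices for the global existence argument, where $T<\infty$ is assumed.
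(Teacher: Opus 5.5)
Your proposal is correct and follows the paper's proof. The paper likewise obtains uniform $L^2(ds)$ bounds from \Cref{lem:a_pre_main_theorem}, closes them with an interpolation lower bound on the dissipation and a linear ODE comparison, and then gets the $L^\infty$ bound from the one-dimensional Gagliardo--Nirenberg inequality and \Cref{prop:length_bound_below}. The one difference is the interpolation step: you use the length-free integration-by-parts inequality, whereas the paper uses \Cref{lem:interpolation_inequality_1,lem:interpolation_inequality_2}. In the paper's version the powers of $\mathcal{L}$ cancel except for a factor $\mathcal{L}^{-m}$, which the lower length bound controls. So neither route needs an upper length bound, contrary to your first paragraph.

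Your worry about a hidden upper length bound in $c_m(\gamma_0,\varepsilon)$ is unfounded. In \Cref{lem:a_bound_lambda} and \Cref{prop:inequality_P} the length enters only through nonpositive powers. You should also drop the fallback to the $\sqrt{t}$-growing length bound: it would make the constant depend on $T$. The lemma asserts a time-independent $c_m(\gamma_0)$, and \Cref{sec:convergence} uses exactly that with $T=\infty$.
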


\begin{proof}
    Using \Cref{lem:interpolation_inequality_1}, \Cref{lem:interpolation_inequality_2}, and \Cref{prop:length_bound_below}, we find
    \begin{equation*}
    \begin{split}
        \norm{\partial_s^m \kappa}_{L^2(ds)} &= \mathcal{L}^{-m-\frac{1}{2}} \norm{\partial_s^m \kappa}_2\\
        &\leq c_m \mathcal{L}^{-m-\frac{1}{2}} \norm{\kappa}_2^{1-\frac{m}{m+2}} \norm{\kappa}_{m+2,2}^{\frac{m}{m+2}}\\
        &\leq c_m \mathcal{L}^{-m-\frac{1}{2}} \norm{\kappa}_2^{\frac{2}{m+2}} (\norm{\kappa}_2^{\frac{m}{m+2}}+\norm{\partial_s^{m+2}\kappa}_2^{\frac{m}{m+2}})\\
        &= c_m (\mathcal{L}^{-m} \norm{\kappa}_{L^2(ds)} + \mathcal{L}^{-m-\frac{1}{2}+\frac{1}{m+2}+\frac{(m+5/2)m}{m+2}} \norm{\partial_s^{m+2}\kappa}_{L^2(ds)}^{\frac{m}{m+2}}\|\kappa\|_{L^2(ds)}^{\frac{2}{m+2}})\\
        &\leq c_m(\gamma_0)(1+\norm{\partial_s^{m+2}\kappa}_{L^2(ds)}^{\frac{m}{m+2}})\\
        &\leq c_m(\gamma_0)(1+\norm{\partial_s^{m+2}\kappa}_{L^2(ds)}).
    \end{split}
    \end{equation*}
    This implies
    \begin{equation*}
    \begin{split}
        &\quad\frac{1}{2}\frac{d}{dt} \int_{\mathbb{S}^1} \vert \partial_s^{m} \kappa\vert^2 ds + \frac{1}{c_m(\gamma_0)}\int_{\mathbb{S}^1} \vert\partial_s^{m} \kappa\vert^2ds\\
        &\leq \frac{1}{2}\frac{d}{dt} \int_{\mathbb{S}^1} \vert \partial_s^{m} \kappa\vert^2 ds + \int_{\mathbb{S}^1} \vert\partial_s^{m+2} \kappa\vert^2ds + 1.
    \end{split}
    \end{equation*}
    Thus, from \Cref{lem:a_pre_main_theorem} with $\varepsilon=\frac{1}{4}$ it follows
    \begin{equation}
    \label{eq:ode_comparison}
    \begin{split}
        \frac{1}{2}\frac{d}{dt} \int_{\mathbb{S}^1} \vert \partial_s^{m} \kappa\vert^2ds + \frac{1}{c_m(\gamma_0)}\int_{\mathbb{S}^1} \vert\partial_s^{m} \kappa\vert^2ds &\leq c_m(\gamma_0).
    \end{split}
    \end{equation}
    Denote $f(t) \coloneqq \int_{\mathbb{S}^1} \vert \partial_s^{m} \kappa\vert^2ds$. Multiplying \cref{eq:ode_comparison} by $2e^{\frac{2}{c_m(\gamma_0)}t}$ gives an equation of the form
    \begin{equation*}
        \frac{d}{dt} \big(e^{\frac{2}{c_m(\gamma_0)}t}f(t)\big) \leq 2c_m(\gamma_0)e^{\frac{2}{c_m(\gamma_0)}t}.
    \end{equation*}
    By integrating in time and absorbing constants, we obtain
    \begin{equation}
    \label{eq:2_norm_kappa}
        \|\partial_s^{m} \kappa\|_{L^2(ds)} \leq c_m(\gamma_0).
    \end{equation}
    Using \Cref{lem:interpolation_inequality_infinity} and \Cref{prop:length_bound_below} gives
    \begin{equation*}
    \begin{split}
        \|\partial_s^m\kappa \|_{L^\infty} &= \mathcal{L}^{-m-1} \norm{\partial_s^m \kappa}_{\infty}\\
        &\leq c \mathcal{L}^{-m-1} \norm{\partial_s^m \kappa}_2^{\frac{1}{2}} (\norm{\partial_s^m \kappa}_2 + \norm{\partial_s^{m+1}\kappa}_2)^{\frac{1}{2}}\\
        &\leq c \mathcal{L}^{-m-1} (\norm{\partial_s^m \kappa}_2 + \norm{\partial_s^m \kappa}_2^{\frac{1}{2}} \norm{\partial_s^{m+1}\kappa}_2^{\frac{1}{2}})\\
        &= c \mathcal{L}^{-m-1}(\mathcal{L}^{m+\frac{1}{2}}\norm{\partial_s^m \kappa}_{L^2(ds)}+\mathcal{L}^{m+1} \norm{\partial_s^m \kappa}_{L^2(ds)}^{\frac{1}{2}}\norm{\partial_s^{m+1}\kappa}_{L^2(ds)}^{\frac{1}{2}})\\
        &\leq c(\gamma_0) (\norm{\partial_s^m \kappa}_{L^2(ds)}+\norm{\partial_s^m \kappa}_{L^2(ds)}^{\frac{1}{2}}\norm{\partial_s^{m+1} \kappa}_{L^2(ds)}^{\frac{1}{2}}).
    \end{split}
    \end{equation*}
    Because \cref{eq:2_norm_kappa} holds for all $m \in \mathbb{N}$, both $L^2$-norms on the right-hand side are uniformly bounded by a constant depending only on $\gamma_0$ and $m$, which completes the proof.
\end{proof}

\subsection{Proof of global existence}

\begin{proof}[Proof of \Cref{thm:global_existence}]
    Suppose $\gamma: [0,T) \times \mathbb{S}^1 \to \mathbb{R}^2$ is a maximal solution to \cref{eq:evolution_gradient_flow}. We seek bounds for all derivatives of $\gamma$, and start with $\norm{\gamma}_{L^\infty}$. From \cref{eq:a_lagrange_multiplier} and \Cref{lem:infinity_bound_kappa}, we obtain for $\xi(\gamma) \coloneqq -2\partial_s^2 \kappa - \kappa^3 - \lambda(\gamma)$,
    \begin{equation}
    \begin{split}
    \label{eq:bound_xi}
        \abs{\xi} &\leq 2 \abs{\partial_s^2 \kappa} + \abs{\kappa^3} + \abs{\lambda}\\
        &\leq c(\gamma_0).
    \end{split}
    \end{equation}
    Then, \cref{eq:bound_xi} shows
    \begin{equation*}
        \abs{\gamma} \leq \abs{\gamma_0} + \int_0^t \abs{\partial_t \gamma_u} du = \abs{\gamma_0} + \int_{0}^{t} \abs{\xi(\gamma_u)} du \leq c(\gamma_0, T).
    \end{equation*}
    For the first derivative $\norm{\partial_x\gamma}_{L^\infty}$, we deduce with \Cref{lem:differential_properties}
    \begin{equation*}
        \partial_t\vert\partial_x \gamma\vert = -\kappa \xi \abs{\partial_x \gamma} \leq c(\gamma_0)\vert \partial_x \gamma\vert.
    \end{equation*}
    Gr\"onwall's lemma gives $\vert \partial_x \gamma \vert \leq c(\gamma_0,T)$.
    Similarly,
    \begin{equation*}
        \partial_t\vert\partial_x \gamma\vert \geq -c(\gamma_0)\vert \partial_x \gamma\vert.
    \end{equation*}
    Again, Gr\"onwall's lemma yields
    \begin{equation}
    \label{eq:lower_bound_derivative}
        \vert\partial_x\gamma\vert \geq c^{-1}(\gamma_0, T).
    \end{equation}
    \Cref{lem:infinity_bound_kappa} together with $\partial_s^2 \gamma = \kappa \nu$ leads to bounds $\|\partial_s^m \gamma\|_{L^\infty} \leq c_m(\gamma_0)$ for $m \geq 2$, using the following observation
    \begin{equation}
    \label{eq:polynomial_identity_1}
        \partial_s^{m} \gamma = \partial_s^{m-2}\kappa \nu + P_1(\kappa,\ldots,\partial_s^{m-3} \kappa)\nu+P_2(\kappa,\ldots,\partial_s^{m-3} \kappa)\tau.
    \end{equation}
    This follows from the Leibniz rule and the Frenet equations. The polynomials $P_1, P_2$ have at most degree $m-1$.
    
    So far, we have established bounds for $\|\gamma\|_{L^\infty}$, $\|\partial_x \gamma\|_{L^\infty}$ and, using \Cref{lem:infinity_bound_kappa}, $\|\partial_s^m \gamma\|_{L^\infty}$ for $m\geq 2$. From this, we want to show bounds on $\partial_x^m\gamma$ for $m \geq 2$.
    First, note that for any function $h: \mathbb{S}^1 \to \mathbb{R}$, we have the identity
    \begin{equation}
    \label{eq:polynomial_identity_2}
        \partial_x^m h-\vert \partial_x \gamma\vert^m \partial_s^mh = P_m(|\partial_x\gamma|^{-1},\vert \partial_x \gamma\vert,\ldots,\partial_x^{m-1}\vert \partial_x \gamma\vert,h,\ldots,\partial_s^{m-1}h),
    \end{equation}
    where $P_m$ is a polynomial. This identity can be proven by induction.
    Assume inductively that $\|\partial_x^j \vert \partial_x \gamma\vert \|_{L^\infty} \leq c(j, \gamma_0, T)$ for $0\leq j \leq m-1$. We apply \cref{eq:polynomial_identity_2} to $h=\partial_s^i\kappa$ for all $i \in \mathbb{N}$. This gives bounds $\|\partial_x^m \partial_s^i\kappa\|_{L^\infty} \leq c(m,i, \gamma_0, T)$.
    From \Cref{lem:differential_properties}, we have
    \begin{equation*}
    \begin{split}
        \partial_t\vert\partial_x \gamma\vert &= \kappa(2\partial_s^2 \kappa+\kappa^3+\lambda)\vert \partial_x \gamma \vert,\\
        \partial_t \partial_x^m \vert\partial_x \gamma\vert &= \partial_x^m(\kappa(2\partial_s^2 \kappa+\kappa^3+\lambda)\vert \partial_x \gamma \vert)\\
        &= Q_m(\lambda, \kappa, \ldots, \partial_x^m \kappa, \partial_s^2 \kappa, \ldots, \partial_x^m \partial_s^2 \kappa, \abs{\partial_x \gamma}, \ldots, \partial_x^{m-1}\abs{\partial_x \gamma})\\
        &\quad + \kappa(2\partial_s^2 \kappa + \kappa^3 + \lambda) \partial_x^m \abs{\partial_x \gamma},
    \end{split}
    \end{equation*}
    where $Q_m$ is a polynomial. Taking the absolute value gives
    \begin{equation*}
        \abs{\partial_t \partial_x^m \abs{\partial_x \gamma}} \leq c_1(m, \gamma_0, T) + c_2(\gamma_0) \abs{\partial_x^m \abs{\partial_x \gamma}}.
    \end{equation*}
    Observe
    \begin{equation}
    \label{eq:abs_continuity}
    \begin{split}
        \abs{\partial_x^m \abs{\partial_x \gamma}} &= \Bigl|\partial_x^m \abs{\partial_x \gamma_0} + \int_0^t \partial_t \partial_x^m \abs{\partial_x \gamma_u} du\Bigr|\\
        &\leq \abs{\partial_x^m \abs{\partial_x \gamma_0}} + \int_0^t \abs{\partial_t \partial_x^m \abs{\partial_x \gamma_u}} du\\
        &\leq c_m(\gamma_0) + c_1(m, \gamma_0, T)t+c_2(\gamma_0) \int_0^t \abs{\partial_x^m \abs{\partial_x \gamma_u}} du.
    \end{split}
    \end{equation}
    Applying Gr\"onwall's inequality yields $\|\partial_x^m \vert \partial_x \gamma\vert \|_{L^\infty} \leq c(m, \gamma_0, T)$.
    Again, we apply identity \cref{eq:polynomial_identity_2} to $h=\gamma$ for $m \geq 2$ and obtain bounds $\|\partial_x^m\gamma\|_{L^\infty} \leq c_m(\gamma_0,T)$.

    Because of the bounded velocity $\|\partial_t \gamma\|_{L^\infty} = \norm{\xi}_{L^\infty} \leq c(\gamma_0,T)$, $\gamma_{t_i}$ is a Cauchy sequence for an arbitrary sequence $t_i \to T$ in $C^0(\mathbb{S}^1, \mathbb{R}^2)$. Therefore, it converges uniformly to $\gamma_T \in C^0(\mathbb{S}^1, \mathbb{R}^2)$. By Arzel\`a--Ascoli and the bounds on all derivatives of $\gamma$, we obtain smooth convergence and $\gamma_T \in C^\infty(\mathbb{S}^1, \mathbb{R}^2)$. The bound \cref{eq:lower_bound_derivative} ensures that $\gamma_T$ is immersed.
    
    Lastly, we need to show that the derivative from the left satisfies the gradient flow equation \cref{eq:evolution_gradient_flow}
    \begin{equation*}
    \begin{split}
        \lim_{t \to T^-} \frac{\gamma_T-\gamma_t}{T-t} &= \lim_{t \to T^-} \frac{\int_{t}^T \partial_t \gamma_u du}{T-t}\\
        &= \lim_{t \to T^-} \frac{\int_t^T \xi(\gamma_u) \nu_u du}{T-t} = \xi(\gamma_T) \nu_T,
    \end{split}
    \end{equation*}
    where the last limit is in the $C^\infty(\mathbb{S}^1, \mathbb{R}^2)$-topology.
    Thus, we can extend $\gamma$ smoothly to $[0,T]$ and even beyond by local existence. 
    Therefore, $\gamma: [0,T) \to \mathbb{R}^2$ is not a maximal solution, which contradicts our initial assumption.
\end{proof}
\section{Convergence}
\label{sec:convergence}
\subsection{Embeddedness}
As indicated in the numerical simulations in \Cref{sec:numerical_simulations} later on, we expect that if the initial curve is sufficiently close to the circle, the gradient flow will converge to the circle. Importantly, the gradient flow is expected to remain simple, i.e.\ without developing self-intersections. In the following, we give explicit conditions on the initial data under which the gradient flow stays simple for all time, leveraging a  known result on the optimal drop \cite{bucur2014newisoperimetricinequalityelasticae}. A priori, self-intersections might occur. If this is the case, we show that the first one is tangential.

For $[x],[y] \in \mathbb{R} / 2\pi\mathbb{Z} = \mathbb{S}^1$, we define the interval $[x,y] \subset \mathbb{R} / 2\pi\mathbb{Z} = \mathbb{S}^1$ by
\begin{equation*}
    [x,y] \coloneqq \{[z] \in \mathbb{R} / 2\pi\mathbb{Z} \mid z \in [x,y+2\pi k] \subset \mathbb{R}\}
\end{equation*}
with $k \in \mathbb{Z}$ minimal such that $y+2 \pi k \geq x$. For simplicity, from now on we write $x$ instead of $[x]$.

\begin{lemma}
\label{lem:a_tangential_self_intersections}
    Let $\gamma$ be the global smooth solution to \cref{eq:evolution_gradient_flow} for simple immersed initial data $\gamma_0 \in C^\infty(\mathbb{S}^1, \mathbb{R}^2)$. Suppose $\gamma$ develops self-intersections
    and denote $t_s = \inf \{t > 0 \mid \gamma \text{ is not simple}\}$. Then, $\gamma(t_s)$ is not simple and every self-intersection is tangential, i.e. for a self-intersection $\gamma(t_s,x) = \gamma(t_s,y)$, we have $\tau_{t_s}(x) = \pm \tau_{t_s}(y)$.
\end{lemma}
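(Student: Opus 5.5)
We will prove the two claims in turn: first that $\gamma(t_s)$ is not simple, then that every self-intersection of $\gamma(t_s)$ is tangential. Throughout we use that $\gamma\in C^\infty([0,\infty)\times\mathbb S^1,\mathbb R^2)$ by \Cref{thm:global_existence}, and that $\gamma_t$ is an immersion for every $t$.

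\emph{Step 1: $t_s>0$ and the infimum is attained.} Since $\gamma_0$ is a simple immersion, there are $\delta>0$ and $c>0$ with $\abs{\gamma_0(x)-\gamma_0(y)}\geq c\operatorname{dist}(x,y)$ for all $x,y$.

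For nearby points this comes from the immersion property: a uniform lower bound on $\abs{\partial_x\gamma}$ together with the $C^2$ bound gives the estimate locally. For separated points it comes from simplicity and compactness.

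Smooth dependence on $t$ keeps such a bi-Lipschitz bound for $t$ near $0$, so $t_s>0$. Next pick $t_k\downarrow t_s$ with $\gamma(t_k)$ not simple, and points $x_k\neq y_k$ with $\gamma(t_k,x_k)=\gamma(t_k,y_k)$.

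The immersion estimate is uniform on compact time intervals, since $\abs{\partial_x\gamma}\geq c^{-1}$ and $\partial_x^2\gamma$ is bounded on $[0,t_s+1]$. Hence $\operatorname{dist}(x_k,y_k)\geq\eta>0$ uniformly: two points at distance below $\eta$ cannot have the same image. Passing to a subsequence, $x_k\to x$ and $y_k\to y$ with $x\neq y$. By continuity $\gamma(t_s,x)=\gamma(t_s,y)$, so $\gamma(t_s)$ is not simple.

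\emph{Step 2: tangentiality.} Let $\gamma(t_s,x)=\gamma(t_s,y)$ with $x\neq y$, and suppose for contradiction that $\tau_{t_s}(x)\neq\pm\tau_{t_s}(y)$, i.e.\ the intersection is transversal. Consider
\begin{equation*}
  F(t,a,b)\coloneqq\gamma(t,a)-\gamma(t,b),
\end{equation*}
defined near $(t_s,x,y)$. Its partial derivative in $(a,b)$ is the matrix with columns $\partial_x\gamma(t_s,x)$ and $-\partial_x\gamma(t_s,y)$. Transversality says these columns are linearly independent, so the matrix is invertible.

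The implicit function theorem then gives smooth $a(t),b(t)$ for $t$ near $t_s$ with $F(t,a(t),b(t))=0$, $a(t_s)=x$, $b(t_s)=y$. In particular $a(t)\neq b(t)$ for $t$ slightly less than $t_s$. Hence $\gamma(t)$ is not simple for some $t<t_s$, contradicting the definition of $t_s$.

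\emph{Main obstacle.} The delicate point is Step 1: we must exclude $x_k$ and $y_k$ collapsing together, which would produce a spurious limit $x=y$. The uniform local injectivity coming from the immersion bound on compact time intervals handles this. The remaining steps are routine applications of continuity and the implicit function theorem.
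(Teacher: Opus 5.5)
Your proof is correct and follows the paper's route. Step 2 is exactly the paper's implicit function theorem argument for $\gamma(t,a)-\gamma(t,b)$. The only difference is in Step 1: the paper cites openness of simple immersed curves in $C^1$ (M\"uller--Rupp) to conclude that $\gamma(t_s)$ is not simple, whereas you prove the needed closedness directly via uniform local injectivity and compactness, and you also state explicitly that $t_s>0$, which the paper leaves implicit when it chooses $t_0<t_s$.
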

\begin{proof}
    Note that by \cite[Lemma~4.1, Lemma~4.3]{Mueller_2021} the set of simple and immersed $C^1(\mathbb{S}^1, \mathbb{R}^2)$-curves is open in $C^1(\mathbb{S}^1, \mathbb{R}^2)$. If $\gamma(t_s, \cdot)$ were simple, then $\gamma(t,\cdot)$ would remain simple for a small time interval $[t_s, t_s+\delta]$. This contradicts the definition of $t_s$. Hence, $\gamma(t_s, \cdot)$ is not simple.
    Let $x_0,y_0 \in \mathbb{S}^1$ be an arbitrary self-intersection, meaning $x_0 \neq y_0$ and $\gamma(t_s, x_0)=\gamma(t_s, y_0)$. Suppose that it is not tangential, i.e. $\tau_{t_s}(x_0) \neq \pm \tau_{t_s}(y_0)$. By continuous differentiability in time, we obtain a contradiction
    to $t_s = \inf \{t > 0 \mid \gamma \text{ is not simple}\}$. Indeed, consider the function
    \begin{equation*}
    \begin{split}
        &f:[t_s-\varepsilon, t_s+\varepsilon] \times [x_0-\varepsilon, x_0+\varepsilon] \times [y_0-\varepsilon, y_0+\varepsilon] \to \mathbb{R}^2,\\
        &f(t,x,y) = \gamma(t,x)-\gamma(t,y).
    \end{split}
    \end{equation*}
    Compute
    \begin{equation*}
        Df(t,x,y) = \begin{pmatrix}
            \partial_t \gamma(t,x)-\partial_t \gamma(t,y) & \partial_x \gamma(t,x) & -\partial_y \gamma(t,y)
        \end{pmatrix}.
    \end{equation*}
    The differential $D_{(x,y)}f$ with respect to the spatial variables is invertible if and only if $\partial_x \gamma(t,x)$ and $\partial_y \gamma(t,y)$ are linearly independent. This is the case if the tangents are not parallel. Then, we obtain by the Implicit Function Theorem a continuous map $\varphi$ that maps a neighborhood of $t_s$ to a neighborhood of $(x_0,y_0)$ with $f(t,\varphi(t))=0$. In particular, there has to be $t_0 < t_s$ such that $\gamma(t_0, \cdot)$ has a self-intersection, a contradiction.
\end{proof}

We check the necessary conditions to apply the proof of \cite[Theorem~3.1]{bucur2014newisoperimetricinequalityelasticae}. The following Lemma is required to compare the energy of a curve to the energies of two drops with smaller combined area.

\begin{lemma}
\label{lem:a_geometric_information}
    Let $\gamma \in W^{2,2}(\mathbb{S}^1, \mathbb{R}^2)$ be immersed and simple. Let $x,y \in \mathbb{S}^1$. If one of the following conditions
    \begin{enumerate}[label=(\arabic*)]
        \item $0 \notin [x,y]$ and $\theta(y) \leq \theta(x)-\pi$,
        \item $0 \in [x,y]$ and $\theta(y) \leq \theta(x)-3\pi$
    \end{enumerate}
    holds, then there exist drops $\gamma_1,\gamma_2$ with
    \begin{equation*}
    \begin{split}
        \mathcal{E}(\gamma_1)+\mathcal{E}(\gamma_2) &\leq \mathcal{E}(\gamma),\\
        \mathcal{A}(\gamma_1)+\mathcal{A}(\gamma_2) &\leq \mathcal{A}(\gamma).
    \end{split}
    \end{equation*}
\end{lemma}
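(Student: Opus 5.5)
The plan is to reduce both cases to a single statement about the turning of the tangent angle, and then to construct the drops geometrically as in the proof of \cite[Theorem~3.1]{bucur2014newisoperimetricinequalityelasticae}.

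\textbf{Step 1: reduction to one case.} Since $\gamma$ is simple and positively oriented, the rotation index theorem gives $\theta(2\pi^-)=\theta(0)+2\pi$. In case (2) the arc $[x,y]$ passes through the base point $0$, where our normalisation of $\theta$ jumps by $-2\pi$. Hence the actual net turning along $[x,y]$, computed with a continuous lift, is $\theta(y)+2\pi-\theta(x)\le -\pi$. This is exactly the quantity controlled in case (1). So in both cases there is an arc $[x,y]$ along which a continuous lift $\tilde\theta$ of the angle decreases by at least $\pi$. Consequently, along the complementary arc $[y,x]$ the continuous lift increases by at least $2\pi+\pi=3\pi$.

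\textbf{Step 2: two disjoint arcs each turning by $\pi$.} By the intermediate value theorem applied to $\tilde\theta$ on $[y,x]$, I find points $y\le a_1<b_1\le a_2<b_2\le x$ such that:
\begin{itemize}
\item $\tilde\theta(b_i)=\tilde\theta(a_i)+\pi$;
\item on each $[a_i,b_i]$ the lift stays in $[\tilde\theta(a_i),\tilde\theta(a_i)+\pi]$, taking for instance the last time the lower value is attained before the first time the upper value is attained.
\end{itemize}
The remaining turning of at least $\pi$, including the negative arc $[x,y]$, is what I expect to keep the two pieces geometrically separated.

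\textbf{Step 3: closing each arc into a drop.} On each arc $[a_i,b_i]$ the tangent turns monotonically "on average" by exactly $\pi$, so the tangents at the endpoints are antiparallel. I would close this arc into a drop as follows:
\begin{itemize}
\item move along the arc until the chord between the two endpoints cuts off a region $\Omega_i$ of the interior of $\gamma$;
\item alternatively, use Bucur--Henrot's device of cutting along a line orthogonal to the common tangent direction, then reflecting or truncating so that the tangent angles differ by exactly $\pi$ at a single corner point.
\end{itemize}
The resulting curve $\gamma_i$ uses only a subarc of $\gamma$, possibly together with straight segments, which carry zero curvature. This gives $\mathcal{E}(\gamma_i)\le \int_{[a_i,b_i]}\kappa^2\,ds$. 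Its enclosed region is contained in $\Omega_i$, which is contained in the region bounded by $\gamma$. Since the arcs are disjoint, the energies add to at most $\mathcal{E}(\gamma)$. If the regions $\Omega_1,\Omega_2$ are disjoint, the areas add to at most $\mathcal{A}(\gamma)$.

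\textbf{Main obstacle.} The hard part is Step 3. One must guarantee that each closed-up curve is simple and of class $W^{2,2}$ away from the corner, so that it is genuinely a drop. One must also show that the two cut-off regions do not overlap. Without disjointness one gets only each area $\le\mathcal{A}(\gamma)$, not the sum. To prove disjointness, I would first try to show that the chords of the two caps cannot cross. The argument would use the Jordan curve theorem together with the fact that between the two arcs the tangent must turn back by the negative amount on $[x,y]$. If that fails, I would instead choose the two caps nested or separated by a supporting line in the direction $\tilde\theta(b_1)$, and compare with Bucur--Henrot's original argument.
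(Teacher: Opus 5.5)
\textbf{There is a genuine gap in Step~3.} Your Step~1 is correct, but Step~3 fails, and the failure is structural rather than technical. A drop is $W^{2,2}$, hence $C^1$, everywhere except at the single point where the tangent reverses.

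Try to close a U-turn arc $\gamma|_{[a_i,b_i]}$, whose end tangents are $e$ and $-e$, using straight pieces only. $C^1$-matching forces the added piece to lie both on the tangent line through $\gamma(b_i)$ and on the tangent line through $\gamma(a_i)$. These lines are parallel, and they coincide only if $\gamma(b_i)-\gamma(a_i)$ is parallel to $e$, which generically fails. The alternatives do not rescue this:
\begin{itemize}
\item a chord introduces two extra corners;
\item reflecting or truncating changes the turning and the area in an uncontrolled way;
\item any curved closing piece costs energy, so the bound $\mathcal{E}(\gamma_i)\le\int_{[a_i,b_i]}\kappa^2\,ds$ is lost.
\end{itemize}
Independently, the area inequality needs the two enclosed regions to be disjoint and to lie inside the region bounded by $\gamma$. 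You leave this open, and it can fail, since a closing segment may leave the domain. Splitting the positively turning complementary arc into two pieces also discards the negatively turning arc, which is the object the proof actually needs.

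\textbf{The missing idea.} The paper, following \cite[Lemma~3.3]{bucur2014newisoperimetricinequalityelasticae}, works with the negatively turning arc itself. It extends $\gamma([x,y])$ by straight segments until it touches the rest of $\gamma$, and reads off the two drops from the resulting pinched curve. Concretely:
\begin{itemize}
\item Choose $y'$ as the first point of $[x,y]$ with $\tilde\theta(y')=\tilde\theta(x)-\pi$, and $x'$ as the last point before $y'$ with $\tilde\theta(x')=\tilde\theta(x)$. Then $\tilde\theta\in(\tilde\theta(x')-\pi,\tilde\theta(x'))$ on $(x',y')$.
\item Consequently $\tau(y')=-\tau(x')$, the arc is a graph over the $\nu(x')$-direction, and it bounds a bay of the exterior.
\item Insert straight segments of length $d$ at $\gamma(x')$ and $\gamma(y')$, i.e.\ translate the arc by $d\,\tau(x')$. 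This keeps the curve $C^1$ and leaves the energy unchanged. It lowers the enclosed area by $d\,(\gamma(x')-\gamma(y'))\cdot\nu(x')>0$.
\item Take the first $d$ at which the translated arc touches the remaining part of $\gamma$; it is finite because the domain is bounded. The contact is tangential with opposite tangents, so the domain is pinched.
\item The two loops of this pinched curve are drops. Their energies sum to $\mathcal{E}(\gamma)$ and their areas sum to at most $\mathcal{A}(\gamma)$.
\end{itemize}
Disjointness of the regions and the drop property both come for free from the pinch. That is exactly what closing two separate arcs cannot provide.
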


\begin{proof}
    The proof is essentially the same as the proof of \cite[Lemma~3.3]{bucur2014newisoperimetricinequalityelasticae}. The idea is to extend the segment $\gamma([x,y])$ by straight line segments such that it touches the remaining curve. In the newly constructed curve, we can identify two drops with the desired properties. The second condition accounts for the $2\pi$ shift of $\theta$ if $0 \in [x,y]$.
\end{proof}

\begin{lemma}
\label{lem:min_two_drops}
    Let $\gamma_1, \gamma_2$ be two arbitrary drops and denote  $a_1 = \mathcal{A}(\gamma_1), a_2 = \mathcal{A}(\gamma_2)$. If $a_1 + a_2 \leq a$ for some $a > 0$, then 
    \begin{equation*}
        \mathcal{E}(\gamma_1) + \mathcal{E}(\gamma_2) \geq 2 \mathcal{E}(\gamma^*_{\frac{a}{2}}).
    \end{equation*}
\end{lemma}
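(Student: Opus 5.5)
The argument uses the scaling behaviour of the energy together with the minimality of the optimal drop $\gamma^*_b$ among drops of area $b$. A drop is a simple curve, so its area is positive and $\gamma^*_b$ is defined for every $b>0$.

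First, I will make the dependence of $\mathcal{E}(\gamma^*_b)$ on $b$ explicit. Since $\mathcal{A}(r\gamma)=r^2\mathcal{A}(\gamma)$ and $\mathcal{E}(r\gamma)=r^{-1}\mathcal{E}(\gamma)$, we get $\gamma^*_b = \sqrt{b/\mathcal{A}(\gamma^*)}\,\gamma^*$. Hence
\begin{equation*}
\mathcal{E}(\gamma^*_b)=C\,b^{-1/2},\qquad C\coloneqq \mathcal{E}(\gamma^*)\mathcal{A}(\gamma^*)^{1/2}>0.
\end{equation*}
Because $\gamma^*_b$ minimizes the energy among drops of area $b$ (as stated before the lemma), each drop satisfies $\mathcal{E}(\gamma_i)\ge \mathcal{E}(\gamma^*_{a_i}) = C a_i^{-1/2}$ for $i=1,2$.

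Next, I will reduce the claim to a one-variable inequality. It suffices to show $a_1^{-1/2}+a_2^{-1/2}\ge 2(a/2)^{-1/2}$ whenever $a_1,a_2>0$ and $a_1+a_2\le a$. The map $b\mapsto b^{-1/2}$ is decreasing, so I may first enlarge $a_2$ to $a-a_1$ and thus assume $a_1+a_2=a$. The function is also convex on $(0,\infty)$, so Jensen's inequality gives
\begin{equation*}
\tfrac12\bigl(a_1^{-1/2}+a_2^{-1/2}\bigr)\ge \Bigl(\tfrac{a_1+a_2}{2}\Bigr)^{-1/2} = (a/2)^{-1/2}.
\end{equation*}
Multiplying by $2C$ yields $\mathcal{E}(\gamma_1)+\mathcal{E}(\gamma_2)\ge 2C(a/2)^{-1/2}=2\mathcal{E}(\gamma^*_{\frac a2})$, which is the claim.

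The only step that needs care is the positivity of $a_1$ and $a_2$, since it is required both for $\gamma^*_{a_i}$ to be defined and for the scaling formula to apply. For a drop traced with the orientation given by $\theta(\pi)=\theta(0)+\pi$, the closed simple curve bounds a region, and I will argue that it has positive algebraic area. If a drop could have nonpositive area, I would reflect it so that its area becomes positive, which leaves the energy unchanged. If $a_i$ were nonpositive, the condition $a_1+a_2\le a$ would only get weaker, so I expect this point to be harmless given the paper's conventions.
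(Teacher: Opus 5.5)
Your proof is correct and is essentially the paper's. Both bound each $\mathcal{E}(\gamma_i)$ below by $\mathcal{E}(\gamma^*_{a_i})$, use the scaling $\mathcal{E}(\gamma^*_b)\propto b^{-1/2}$, and finish with an elementary one-variable estimate; the paper minimizes $a_1^{-1/2}+(\tilde a-a_1)^{-1/2}$ at fixed $\tilde a=a_1+a_2$ and then uses $\tilde a\le a$, whereas you apply monotonicity first and then Jensen.

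For positivity of $a_i$, rely only on your first observation: the condition $\theta(\pi)=\theta(0)+\pi$ forces a drop to be traced counterclockwise, so its area is positive. Drop the reflection fallback, because its justification is backwards. A nonpositive $a_1$ would loosen the constraint on $a_2$, and after reflecting one could have $|a_1|+a_2>a$, so the conclusion would not follow.
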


\begin{proof}
    Define $\tilde{a} \coloneqq a_1 + a_2$ and obtain
    \begin{equation*}
    \begin{split}
        \mathcal{E}(\gamma_1) + \mathcal{E}(\gamma_2) &\geq \mathcal{E}(\gamma^*_{a_1})+\mathcal{E}(\gamma^*_{a_2})\\
        &= \sqrt{\frac{\mathcal{A}(\gamma^*)}{a_1}} \mathcal{E}(\gamma^*) + \sqrt{\frac{\mathcal{A}(\gamma^*)}{a_2}} \mathcal{E}(\gamma^*)\\
        &= \sqrt{\mathcal{A}(\gamma^*)}\mathcal{E}(\gamma^*)\Big(\frac{1}{\sqrt{a_1}}+\frac{1}{\sqrt{\tilde{a}-a_1}}\Big).
    \end{split}
    \end{equation*}
    The minimum of this function is attained at $a_1 = \frac{\tilde{a}}{2}$. This shows
    \begin{equation*}
    \begin{split}
        \mathcal{E}(\gamma_1) + \mathcal{E}(\gamma_2) &\geq \sqrt{\mathcal{A}(\gamma^*)}\mathcal{E}(\gamma^*)\frac{2}{\sqrt{\frac{\tilde{a}}{2}}}\\
        &\geq \sqrt{\mathcal{A}(\gamma^*)}\mathcal{E}(\gamma^*)\frac{2}{\sqrt{\frac{a}{2}}}\\
        &= 2 \mathcal{E}(\gamma^*_{\frac{a}{2}}).
    \end{split}
    \end{equation*}
\end{proof}

We have gathered all the necessary results to prove simplicity of the flow for all time under certain conditions on the energy.

\begin{theorem}
\label{thm:a_embeddedness}
    Let $\gamma_0 \in C^\infty(\mathbb{S}^1, \mathbb{R}^2)$ be simple and immersed with $\mathcal{A}(\gamma_0)= a$ and $\mathcal{E}(\gamma_0) < 2 \mathcal{E}(\gamma^*_{\frac{a}{2}})$. Then, the global smooth solution $\gamma$ to \cref{eq:evolution_gradient_flow} will be simple for all time.
\end{theorem}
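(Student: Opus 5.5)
We argue by contradiction. Suppose the flow does not stay simple, and let $t_s = \inf\{t>0 \mid \gamma_t \text{ is not simple}\}$. By \Cref{lem:a_tangential_self_intersections}, $\gamma_{t_s}$ is not simple, and every self-intersection of $\gamma_{t_s}$ is tangential. The curves $\gamma_t$ are simple for $t<t_s$. The area is preserved along the flow, so $\mathcal{A}(\gamma_t)=a$. By \cref{eq:energy_identity} the energy is non-increasing, so $\mathcal{E}(\gamma_t)\le \mathcal{E}(\gamma_0) < 2\mathcal{E}(\gamma^*_{\frac a2})$ for all $t$. The goal is to show that for $t<t_s$ close to $t_s$, $\gamma_t$ satisfies one of the conditions of \Cref{lem:a_geometric_information}. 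Then there would exist drops $\gamma_1,\gamma_2$ with $\mathcal{A}(\gamma_1)+\mathcal{A}(\gamma_2)\le a$ and $\mathcal{E}(\gamma_1)+\mathcal{E}(\gamma_2)\le \mathcal{E}(\gamma_t)$. Combined with \Cref{lem:min_two_drops}, this gives $\mathcal{E}(\gamma_0)\ge 2\mathcal{E}(\gamma^*_{\frac a2})$, which is a contradiction.

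The core step is to analyse the tangent angle at the tangential touching point of $\gamma_{t_s}$. Pick a self-intersection $\gamma_{t_s}(x_0)=\gamma_{t_s}(y_0)$ with $\tau_{t_s}(x_0)=\pm\tau_{t_s}(y_0)$. Since $\gamma_t$ is simple and positively oriented for $t<t_s$, the limit curve $\gamma_{t_s}$ is a limit of Jordan curves, each bounding a region lying to its left. I expect that the two branches touching at the point must then be antiparallel, $\tau_{t_s}(x_0)=-\tau_{t_s}(y_0)$. In that case the interior near the touching point is pinched into a neck, so the two arcs $\gamma_{t_s}([x_0,y_0])$ and $\gamma_{t_s}([y_0,x_0])$ each form a closed loop. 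The parallel case should be excluded, for instance by a local graph argument: two ordered graphs with equal tangents moving smoothly cannot produce a positively oriented simple curve just before touching without the region lying on the wrong side. Alternatively, the parallel case might directly admit the drop construction as well; I would check that first.

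In the antiparallel case, the arc from $x_0$ to $y_0$ turns by an odd multiple of $\pi$. The goal is to obtain a condition of \Cref{lem:a_geometric_information}, i.e. $\theta(y)\le\theta(x)-\pi$, or $\theta(y)\le\theta(x)-3\pi$ when $0\in[x,y]$, after relabelling $x$ and $y$ if necessary. Both arcs cannot turn by $+\pi$ only, since the total rotation of $\gamma_{t_s}$ is $2\pi$ (winding number $1$ is inherited from the simple curves $\gamma_t$). The two turning amounts sum to $2\pi$ plus the jumps at the corners, and each is $\equiv \pi \pmod{2\pi}$. Hence one of them is $\le -\pi$ or the other $\ge 3\pi$, and either case should convert into the required inequality after a suitable choice of ordering and a correction for the $2\pi$ shift of $\theta$ at $0$. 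A cleaner alternative would be to apply \Cref{lem:a_geometric_information} to $\gamma_t$ for $t<t_s$ near $t_s$. There, points $x_t\to x_0$, $y_t\to y_0$ with nearly antiparallel tangents converge in $C^1$, so $\theta_t(y_t)-\theta_t(x_t)$ is close to an odd multiple of $\pi$. The non-strict inequality can then be obtained in the limit, or by using the curve $\gamma_{t_s}$ itself. This works if \Cref{lem:a_geometric_information} extends to a curve that is only touching rather than simple, which follows by inspecting its construction in \cite{bucur2014newisoperimetricinequalityelasticae}. Alternatively, one can pass to the limit $t\uparrow t_s$ in the energies of the drops, using continuity of $\mathcal{E}$ and $\mathcal{A}$ along the smooth flow.

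The main obstacle is this geometric step: showing rigorously that the first touching is antiparallel, and that the resulting tangent angle bookkeeping meets exactly the hypotheses of \Cref{lem:a_geometric_information}. I would handle it by working with $\gamma_t$ for $t<t_s$, which is simple, and passing to the limit. The final contradiction is then immediate from \Cref{lem:min_two_drops} with the strict inequality $\mathcal{E}(\gamma_0)<2\mathcal{E}(\gamma^*_{\frac a2})$.
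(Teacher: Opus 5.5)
There is a genuine gap in your tangent-angle bookkeeping, and it falls exactly in the main case. At an antiparallel contact $\gamma_{t_s}(x_0)=\gamma_{t_s}(y_0)$ the curve $\gamma_{t_s}$ is still a smooth immersion, so there are no corners. The turnings of the arcs $[x_0,y_0]$ and $[y_0,x_0]$ are each $\equiv\pi \pmod{2\pi}$ and add up to exactly $2\pi$. Hence the pair $(\pi,\pi)$ is perfectly consistent. It is in fact the generic first touching: a peanut or dumbbell whose neck pinches off. Your claim that one of the two turnings must be $\le-\pi$ is therefore false.

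In this case your strategy cannot be repaired by passing to $t<t_s$ either. Take a pinching peanut. Each dent is a graph over the neck axis, so its slope angles stay below some $\beta<\pi/2$ in absolute value. Between the two dents the tangent turns counterclockwise. Consequently $\theta$ decreases along any arc by at most $2\beta<\pi$. So neither condition of \Cref{lem:a_geometric_information} holds, whether for $\gamma_t$ with $t<t_s$ or for $\gamma_{t_s}$.

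The missing observation is that in this case \Cref{lem:a_geometric_information} is not needed at all. The two loops $\gamma_{t_s}|_{[x_0,y_0]}$ and $\gamma_{t_s}|_{[y_0,x_0]}$ are themselves drops: each is closed at the contact point and $\theta$ increases along it by exactly $\pi$. If there are several contact points, you choose innermost contacts so that the loops are simple. The loops are disjoint arcs bounding disjoint subregions of the region enclosed by the limit of the Jordan curves $\gamma_t$. Hence $\mathcal{A}(\gamma_1)+\mathcal{A}(\gamma_2)\le a$ and $\mathcal{E}(\gamma_1)+\mathcal{E}(\gamma_2)\le\mathcal{E}(\gamma_{t_s})\le\mathcal{E}(\gamma_0)$, and \Cref{lem:min_two_drops} gives the contradiction at once.

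This is what the paper does: following the proof of \cite[Theorem~3.1]{bucur2014newisoperimetricinequalityelasticae}, it extracts two drops directly from the touching curve $\gamma_{t_s}$. Your route via \Cref{lem:a_geometric_information} is only appropriate for the remaining configurations such as $(3\pi,-\pi)$, e.g.\ a C-shaped curve whose tips close up. Even there you still need to settle the issue you flagged yourself: the lemma is stated for simple curves, while the non-strict inequality may only be available in the limit $t\uparrow t_s$.
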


\begin{proof}
    Suppose $\gamma$ develops a self-intersection and denote by
    \begin{equation*}
        t_s = \inf \{t > 0 \mid \gamma(t) \text{ is not simple}\}
    \end{equation*}
    the first time, where a self-intersection occurs. Following the argument in the proof of \cite[Theorem~3.1]{bucur2014newisoperimetricinequalityelasticae}, we can identify two self-intersections (which are potentially the same)
    \begin{equation*}
        \gamma_{t_s}(x_1) = \gamma_{t_s}(y_1), \quad \gamma_{t_s}(x_2) = \gamma_{t_s}(y_2),
    \end{equation*}    
    such that $\gamma_1 = \gamma_{t_s}|_{[x_1,y_1]}, \gamma_2 = \gamma_{t_s}|_{[x_2,y_2]}$ are drops. They satisfy $\mathcal{A}(\gamma_1) + \mathcal{A}(\gamma_2) \leq \mathcal{A}(\gamma(t_s))$ and necessarily $\mathcal{E}(\gamma_1) + \mathcal{E}(\gamma_2) \leq \mathcal{E}(\gamma(t_s))$. Applying \Cref{lem:min_two_drops} leads to a contradiction, since the energy is non-increasing along the flow.
\end{proof}

\subsection{Subconvergence}

To show convergence, we first want to obtain subconvergence by using a compactness argument. A key requirement for this strategy is a bound on the length of the curve. As we shall see, the curve cannot have arbitrarily narrow pieces, if the flow starts with an energy below twice that of the optimal drop for a given area. However, if the length of the curve tends to infinity, the curve must develop arbitrarily narrow pieces. Therefore, the length has to be bounded. In the following, we formalize this argument.

\begin{definition}
    Let $\gamma \in C^\infty(\mathbb{S}^1, \mathbb{R}^2)$ be simple and immersed. Define
    \begin{equation*}
    \begin{aligned}
        &d: \mathbb{S}^1 \times \mathbb{S}^1 \to [0, \infty],\\
        &d(x,y) = \inf\{t>0 \mid \exists\, 0\le r_1,r_2\le t\colon \gamma(x)+r_1 \nu(x) = \gamma(y) + r_2 \nu(y)\},
    \end{aligned}
    \end{equation*}
    where the infimum is $\infty$, if the set is empty. This is shown in \Cref{fig:a_distance}.
\end{definition}

\begin{remark}
    Even though this map is denoted by $d$, it is not a metric, since it does not satisfy the triangle inequality.
\end{remark}

\begin{figure}[htbp]
    \centering
    \begin{tikzpicture}
        % Draw the curve gamma
        \draw[thick, black, domain=0:2*pi, samples=100, smooth]
            plot ({2*cos(\x r) + 0.4*cos(3*\x r)}, {2*sin(\x r) + 0.4*sin(3*\x r)});
        % Choose a point gamma(x) on the curve gamma
        \coordinate (gamma_x) at (0,1.6);
        \coordinate (gamma_y) at (-2.4,0);
        \coordinate (gamma_intersect) at (0, 0);
        \filldraw (gamma_x) circle (1pt) node[above] {$\gamma(x)$};
        \filldraw (gamma_y) circle (1pt) node[left] {$\gamma(y)$};
        % Draw the normal line at gamma(x)
        \draw[dashed] (gamma_x) -- (gamma_intersect) node[midway, right] {$r_1$};
        \draw[dashed] (gamma_y) -- (gamma_intersect) node[midway, below] {$d(x,y)$};
        \draw[dashed] (gamma_y) -- (gamma_intersect) node[midway, above] {$r_2$};
        \draw[dashed] (gamma_x) -- (gamma_intersect);
    \end{tikzpicture}
    \caption{A sketch illustrating the distance $d(x,y)$.}
\label{fig:a_distance}
\end{figure}

\begin{lemma}
\label{lem:a_thin_piece}
    Let $\gamma \in C^\infty(\mathbb{S}^1, \mathbb{R}^2)$ be simple and immersed and $\delta > 0$ with
    \begin{equation*}
        3 \frac{\mathcal{A}(\gamma)}{\mathcal{L}(\gamma)} \leq \delta \leq \frac{1}{\norm{\kappa}_{L^\infty}}.
    \end{equation*}
    Then, there are $x \neq y \in \mathbb{S}^1$ with $d(x,y) \leq \delta$.
\end{lemma}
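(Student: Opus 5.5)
We argue by contradiction and compare the enclosed area with the area of an inner tubular neighbourhood. Suppose $d(x,y) > \delta$ for all $x \neq y$. Let $\Omega$ be the bounded region enclosed by $\gamma$; since $\gamma$ is positively oriented, $\nu$ points into $\Omega$. Consider the map
\begin{equation*}
    \Psi\colon \mathbb{S}^1 \times [0,\delta) \to \mathbb{R}^2,\qquad \Psi(x,r) = \gamma(x) + r\nu(x).
\end{equation*}
The plan is to show that $\Psi$ is injective with image in $\Omega$, and then to compute the area of its image. This area will exceed $\mathcal{A}(\gamma)$, giving the contradiction.

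\textbf{Injectivity.} Suppose $\Psi(x,r_1)=\Psi(y,r_2)$ with $x\neq y$ and $r_1,r_2<\delta$. Then by definition $d(x,y) \le \max\{r_1,r_2\} < \delta$, which contradicts the assumption. If $x=y$, then $r_1=r_2$ trivially.

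\textbf{Image lies in $\Omega$.} For small $r>0$, the point $\Psi(x,r)$ lies in $\Omega$, since $\nu$ is the inner normal. The segment $r\mapsto\Psi(x,r)$, $r\in(0,\delta)$, cannot cross $\gamma$. A crossing would give $\gamma(x)+r\nu(x)=\gamma(y)=\gamma(y)+0\cdot\nu(y)$ with $y\neq x$ (the case $y=x$ is excluded for $r>0$), and hence $d(x,y)\le r<\delta$. So the whole segment stays in $\Omega$.

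\textbf{Jacobian.} We have $\partial_x\Psi = |\partial_x\gamma|(1-r\kappa)\tau$ and $\partial_r\Psi=\nu$, so the Jacobian equals $|\partial_x\gamma|(1-r\kappa)$. It is positive on $[0,\delta)$ because $r<\delta\le 1/\norm{\kappa}_{L^\infty}$. Since $\Psi$ is an injective immersion, the area formula applies:
\begin{equation*}
    \mathcal{A}(\gamma) \ge \int_{\mathbb{S}^1}\int_0^\delta (1-r\kappa)\,dr\,ds = \delta\mathcal{L}(\gamma) - \frac{\delta^2}{2}\int_{\mathbb{S}^1}\kappa\,ds = \delta\mathcal{L}(\gamma) - \pi\delta^2.
\end{equation*}
Here we used $\int\kappa\,ds = 2\pi$, which holds because $\gamma$ is simple.

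\textbf{Closing the contradiction.} We need to bound $\pi\delta^2$ against $\delta\mathcal{L}$, and this is the delicate step. The first attempt uses the pointwise bound $1-r\kappa\ge 1-r/\delta$, which gives $\mathcal{A}\ge \delta\mathcal{L}/2$. This contradicts $3\mathcal{A}/\mathcal{L}\le\delta$ directly, since it yields $\mathcal{A}\ge 3\mathcal{A}/2$, impossible for $\mathcal{A}>0$.

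The simpler route therefore suffices: integrating $(1-r/\delta)$ over $r\in[0,\delta)$ gives $\delta/2$ per unit length. Hence $\mathcal{A}(\gamma)\ge\tfrac{\delta}{2}\mathcal{L}(\gamma)\ge\tfrac32\mathcal{A}(\gamma)$, which is a contradiction.

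\textbf{Main obstacle.} The hardest part is checking carefully that the tube stays inside $\Omega$ and that $\Psi$ is injective up to $r=\delta$, including the endpoint and infimum subtleties in the definition of $d$. This is handled by working on $[0,\delta')$ for every $\delta'<\delta$ and then letting $\delta'\to\delta$.
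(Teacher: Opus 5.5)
Your proof is correct and follows the paper's argument. You argue by contradiction with the inner tube $\gamma+r\nu$, get injectivity from $d>\delta$, apply the area formula with Jacobian $1-r\kappa$, and use containment in the enclosed region to reach $\mathcal{A}\ge\frac{\delta}{2}\mathcal{L}\ge\frac32\mathcal{A}$. The only differences are cosmetic: you close with the pointwise bound $1-r\kappa\ge 1-r/\delta$ where the paper uses $\delta\le 1/\norm{\kappa}_{L^\infty}\le\mathcal{L}/(2\pi)$, and your argument that the tube lies inside the enclosed region (each normal segment starts inside and never meets $\gamma$) is more explicit than the paper's.
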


\begin{proof}
    Suppose that for all $x \neq y \in \mathbb{S}^1$ it holds $d(x,y) > \delta$. Denote by $\tilde{\gamma}: [0,\mathcal{L}(\gamma)] \to \mathbb{R}^2$ the constant speed reparametrization of $\gamma$ with speed $1$. Then, define the map
    \begin{equation*}
    \begin{split}
        &f: [0,\mathcal L(\gamma)] \times [0, \delta] \to A \coloneqq \{p \in \mathbb{R}^2 \mid p = \gamma(x)+r \nu(x), x \in \mathbb{S}^1, r \in [0, \delta]\},\\
        &f(l,r) = \tilde{\gamma}(l) + r \tilde{\nu}(l).
    \end{split}
    \end{equation*}
    A sketch of the set $A$ is given in \Cref{fig:a_delta_hose}. Observe that $f$ is injective (up to a set of measure zero) by the assumption $d(x,y) > \delta$ for all $x,y \in \mathbb{S}^1$. Hence, by the area formula we have
    \begin{equation*}
        \mathcal{H}^2(A) = \int_{[0, \mathcal{L}(\gamma)] \times [0, \delta]} J(Df(l,r)) d\mathcal{L}^2(l,r)
    \end{equation*}
    for the two-dimensional Hausdorff measure $\mathcal{H}^2$ and Lebesgue measure $\mathcal{L}^2$.
    Compute
    \begin{equation*}
    \begin{split}
        Df(l,r) &= \begin{pmatrix}
            \tilde{\tau}(l)-r \tilde{\kappa}(l) \tilde{\tau}(l) & \tilde{\nu}(l)
        \end{pmatrix},\\
        J(Df(l,r)) &= \sqrt{\det(Df(l,r)^\top Df(l,r))} = \sqrt{\det\begin{pmatrix}
            (1-r \tilde{\kappa}(l))^2 & 0\\
            0 & 1
        \end{pmatrix}}\\
        &= 1 - r \tilde{\kappa}.
    \end{split}
    \end{equation*}
    This yields
    \begin{equation}
    \label{eq:area_contradiction}
    \begin{split}
        \mathcal{H}^2(A) &= \int_0^\delta \int_0^{\mathcal{L}(\gamma)} (1-r\tilde{\kappa}) dl dr\\
        &= \int_0^\delta \mathcal{L}(\gamma) - r 2\pi dr\\
        &= \delta \mathcal{L}(\gamma) - \delta^2 \pi \geq \frac{\delta}{2} \mathcal{L}(\gamma) \geq \frac{3}{2} \mathcal{A}(\gamma),
    \end{split}
    \end{equation}
    where we used $\delta \leq \frac{1}{\norm{\kappa}_{L^\infty}} \leq \frac{\mathcal{L}(\gamma)}{2\pi}$. We claim that $A$ is contained in the set that is enclosed by $\gamma$, which leads to a contradiction to \cref{eq:area_contradiction}. If $A$ is not contained in the set that is enclosed by $\gamma$, then there are $x,y \in \mathbb{S}^1$ and $r \leq \delta$ such that $\gamma(x) + r \nu(x) = \gamma(y)$. This is impossible by the assumption $d(x,y) > \delta$.
\end{proof}

\begin{figure}[htbp]
    \centering
    \begin{tikzpicture}[
        dot/.style={circle,fill,inner sep=1pt},]
        \draw[thick, black, domain=0:2*pi, samples=100, smooth, variable=\t]
            plot ({2*cos(\t r)}, {2*sin(\t r)});

        % delta hose
        \draw[dashed, blue, thick, domain=0:2*pi, samples=100, smooth, variable=\t]
            plot ({2*0.8*cos(\t r)}, {2*0.8*sin(\t r)});

        % Area A between gamma and delta hose
        \begin{scope}
            \clip[domain=0:2*pi, samples=100, smooth, variable=\t]
                plot ({2*cos(\t r)}, {2*sin(\t r)}) -- plot[domain=2*pi:0, samples=100, smooth, variable=\t]
                ({2*0.8*cos(\t r)}, {2*0.8*sin(\t r)}) -- cycle;
            \fill[blue!10] (0,0) circle (2.5);
        \end{scope}
        \node at (1.8,0) {$A$};
    \end{tikzpicture}
    \caption{A sketch illustrating the set $A$ which is the tubular neighborhood inside of $\gamma$.}
\label{fig:a_delta_hose}
\end{figure}

\begin{lemma}
\label{lem:a_thin_piece_tangential}
    Let $\gamma \in C^\infty(\mathbb{S}^1, \mathbb{R}^2)$ be simple and immersed. Suppose that there are $\tilde{x}, \tilde{y} \in \mathbb{S}^1$ with $d(\tilde{x}, \tilde{y}) \leq \frac{\delta}{2}$ for $\delta < \frac{1}{\norm{\kappa}_{L^\infty}}$. Then, there are $x_0, y_0$ such that
    \begin{equation*}
    \begin{split}
        \abs{\gamma(x_0)-\gamma(y_0)} &\leq \delta,\\
         \tau(x_0) &= - \tau(y_0),\\
         (\gamma(x_0)-\gamma(y_0))\cdot \tau(x_0) &= 0,
    \end{split}
    \end{equation*}
    and the straight line segment connecting $\gamma(x_0), \gamma(y_0)$ does not intersect $\gamma$.
\end{lemma}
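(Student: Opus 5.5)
A natural route is to minimize the Euclidean distance between points of the curve over a suitable compact set of pairs. A critical pair of $(x,y)\mapsto\abs{\gamma(x)-\gamma(y)}^2$ automatically has the chord orthogonal to both tangents; we then upgrade this to antiparallel tangents and to the chord meeting $\gamma$ only at its endpoints.

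\textbf{Step 1: a short chord away from the diagonal.}
From $d(\tilde x,\tilde y)\le\frac{\delta}{2}$ we get $0\le r_1,r_2\le\frac{\delta}{2}$ with $\gamma(\tilde x)+r_1\nu(\tilde x)=\gamma(\tilde y)+r_2\nu(\tilde y)$. Hence $\abs{\gamma(\tilde x)-\gamma(\tilde y)}\le\delta$.

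\textbf{Step 2: set up the minimization.}
The curvature bound $\abs{\kappa}\le\norm{\kappa}_{L^\infty}<\frac{1}{\delta}$ gives control near the diagonal. On any arc of length at most $\pi\delta$, the tangent turns by less than $\pi$. A standard comparison (Schur-type chord estimate) then shows that points joined by an arc of length $\ell\le\pi\delta$ satisfy $\abs{\gamma(x)-\gamma(y)}\ge 2\delta\sin\bigl(\ell/(2\delta)\bigr)$. Next, we must check that $\tilde x,\tilde y$ are not such close-by points with a small chord.

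If $\tilde x,\tilde y$ lie on a short arc, the normal lines $\gamma(x)+r\nu(x)$ with $r\le\frac{\delta}{2}<\frac{1}{\norm{\kappa}_{L^\infty}}$ do not cross. This is the injectivity of the normal exponential map below the curvature radius, locally as in the proof of \Cref{lem:a_thin_piece}. So $\tilde x,\tilde y$ are separated by arc length at least some $\ell_0>0$ depending only on $\delta$ and $\norm{\kappa}_{L^\infty}$.

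Now minimize $\abs{\gamma(x)-\gamma(y)}$ over the compact set $K$ of pairs whose intrinsic arc distance is at least $\ell_0$. We arrange $\ell_0$ so that the chord bound on the boundary $\partial K$ exceeds $\abs{\gamma(\tilde x)-\gamma(\tilde y)}$. By simplicity the minimum is positive, and it is at most $\delta$ by Step 1.

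\textbf{Step 3: interior minimizer and its properties.}
Because of the boundary comparison, the minimizer $(x_0,y_0)$ is interior to $K$. Differentiating in $x$ and $y$ gives
\[
(\gamma(x_0)-\gamma(y_0))\cdot\tau(x_0)=0=(\gamma(x_0)-\gamma(y_0))\cdot\tau(y_0),
\]
so $\tau(x_0)=\pm\tau(y_0)$.

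The segment $S=[\gamma(x_0),\gamma(y_0)]$ meets $\gamma$ only at its endpoints. If some $\gamma(z)$ lay in the interior of $S$, one of the pairs $(x_0,z)$ or $(z,y_0)$ would give a strictly smaller distance. That pair lies in $K$ unless $z$ is arc-close to $x_0$ or $y_0$. But near $x_0$ the curve leaves $\gamma(x_0)$ orthogonally to $S$, with curvature radius greater than $\delta\ge\abs{S}$, so it cannot return to $S$ within arc length $\ell_0$.

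It remains to exclude $\tau(x_0)=\tau(y_0)$. Since $S$ is a chord of the simple curve with interior either inside or outside the enclosed region, the two arcs of $\gamma$ together with $S$ form two Jordan curves. For parallel tangents, the curve would cross the segment direction on the same side at both endpoints. By orientation (positive orientation, inner normal) this forces the tangents to be opposite, with $S$ along $\pm\nu$ at both endpoints, matching the picture in \Cref{fig:a_distance}.

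\textbf{Main obstacle.}
I expect the hardest part to be the quantitative near-diagonal exclusion in Steps 2 and 3. Concretely, one must show that a pair produced via $d\le\frac{\delta}{2}$ cannot be arc-close when $\delta\norm{\kappa}_{L^\infty}<1$, and choose $\ell_0$ consistently. I would first try the clean comparison with a circle of radius $\frac{1}{\norm{\kappa}_{L^\infty}}$: nearby normals within that radius do not intersect, and chords of short arcs are bounded below by those of the circle. The orientation argument giving $\tau(x_0)=-\tau(y_0)$ rather than $+$ is a secondary, topological point, and is handled with the Jordan curve theorem.
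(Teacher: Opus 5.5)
You are taking a different route from the paper, and most of it works. The paper minimizes $\abs{\gamma(x)-\gamma(y)}$ over the awkward set $\{y\in[\varphi_1(x),\varphi_2(x)]\}$, cut out by the first points where the tangent becomes orthogonal to $\tau(x)$. That choice forces it to prove closedness of the set, to argue separately that it may differentiate in $x$, and to run a case analysis for the segment. Your constraint, arc separation at least $\ell_0$, makes Step 3 cleaner: both first-order conditions hold at an interior point, the segment needs only a no-return argument, and the antiparallel tangents follow from a Jordan-side argument.

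\textbf{The gap.} The crux of the lemma is exactly what you leave open in Step 2. You need that $d(\tilde x,\tilde y)\le\frac{\delta}{2}$ forces $\tilde x,\tilde y$ apart in arc length by a \emph{specific} amount, namely enough that $2\delta\sin(\ell_0/(2\delta))>\abs{\gamma(\tilde x)-\gamma(\tilde y)}$. Knowing only $\abs{\gamma(\tilde x)-\gamma(\tilde y)}\le\delta$, this means $\ell_0>\frac{\pi\delta}{3}$. Otherwise the minimizer may lie on $\partial K$ and the first-order conditions are lost, so ``some $\ell_0>0$'' is not enough. Your justification does not supply this. $J=1-r\kappa>0$ only makes the normal map a local diffeomorphism, with no uniform injectivity scale. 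And \Cref{lem:a_thin_piece} assumes injectivity (from $d>\delta$) rather than proving it.

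\textbf{The missing estimate.} It is the paper's key inequality. Suppose $y\neq x$ and $\tau(z)\cdot\tau(x)\ge 0$ for all $z\in[x,y]$. Then $b\coloneqq(\gamma(y)-\gamma(x))\cdot\tau(x)>0$ satisfies $\norm{\kappa}_{L^\infty}b\ge\int_{[x,y]}\abs{\kappa}\cos(\theta-\theta(x))\,ds\ge\abs{\sin(\theta(y)-\theta(x))}$. If the normal rays meet at a point $P$, then $(P-\gamma(x))\cdot\tau(x)=0$, so $r_2\abs{\sin(\theta(y)-\theta(x))}=b$. Hence $d(x,y)\ge r_2\ge 1/\norm{\kappa}_{L^\infty}$; if the rays do not meet, $d=\infty$.

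\textbf{How it closes your argument.} An arc of length $<\pi/(2\norm{\kappa}_{L^\infty})$ turns by less than $\frac{\pi}{2}$. The estimate therefore gives arc separation of $\tilde x,\tilde y$ at least $\pi/(2\norm{\kappa}_{L^\infty})>\frac{\pi\delta}{2}$, and $\ell_0=\frac{\pi\delta}{2}$ works:
\begin{itemize}
\item Boundary chords are at least $2\delta\sin(\pi/4)=\sqrt{2}\,\delta>\delta$.
\item Your no-return claim holds because the tangential displacement satisfies $\int_0^s\cos(\theta-\theta(x_0))\,du\ge\delta\sin(s/\delta)>0$ for $0<s\le\ell_0$. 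The comparison $\delta\ge\abs{S}$ plays no role.
\end{itemize}

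\textbf{Tightening the antiparallel step.} As written it is garbled; state it as follows. The open segment lies in one component of $\mathbb{R}^2\setminus\gamma(\mathbb{S}^1)$. For small $\varepsilon>0$, $\gamma(p)+\varepsilon\nu(p)$ is interior and $\gamma(p)-\varepsilon\nu(p)$ is exterior. So the unit chord direction $e$ from $\gamma(x_0)$ to $\gamma(y_0)$ satisfies $e=\sigma\nu(x_0)$ and $-e=\sigma\nu(y_0)$ with the same sign $\sigma$. Hence $\nu(y_0)=-\nu(x_0)$.
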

\begin{proof}
    In this proof, we will use the identification $\mathbb{S}^1 = \mathbb{R}/2\pi\mathbb{Z}$. Define the function
    \begin{equation*}
    \begin{split}
        \varphi_1: \mathbb{S}^1 &\to \mathbb{S}^1,\\
        \varphi_1([x]) &= [\min\{z \in \mathbb{R} \mid z > x, \tau([z]) \cdot \tau([x]) = 0\}].
    \end{split}
    \end{equation*}
    Similarly, we define
    \begin{equation*}
    \begin{split}
        \varphi_2: \mathbb{S}^1 &\to \mathbb{S}^1,\\
        \varphi_2([x]) &= [\max\{z \in \mathbb{R} \mid z < x, \tau([z]) \cdot \tau([x]) = 0\}].
    \end{split}
    \end{equation*}
    The functions $\varphi_1, \varphi_2$ are not continuous in general, but for a sequence $x_n \in \mathbb{S}^1$ with $x_n \to x_0$ we claim that
    \begin{equation}
    \begin{split}
    \label{eq:a_prop_x1_x2}
        \varphi_1(x_0) &\in [x_0, \liminf_{n \to \infty} \varphi_1(x_n)],\\
        \varphi_2(x_0) &\in [\limsup_{n \to \infty} \varphi_2(x_n), x_0],
    \end{split}
    \end{equation}
    where $\liminf$, $\limsup$ are taken with respect to the identification $[x_0,x_0+2\pi)$ of $\mathbb{S}^1$. To see this, choose a subsequence $x_{n_k}$ such that $\varphi_1(x_{n_k}) \to x_\infty \coloneqq \liminf_{n \to \infty} \varphi_1(x_n)$ as $k \to \infty$. It holds $\tau(x_{n_k}) \cdot \tau (\varphi_1(x_{n_k})) = 0$. This gives $\tau(x_0) \cdot \tau(x_\infty) = 0$, which shows the claim. The result for the supremum follows in a similar manner.

    Let $x,y \in \mathbb{S}^1$ be such that $y \in [\varphi_2(x), \varphi_1(x)]$. Assume $y \in [x, \varphi_1(x)]$. The case $y \in [\varphi_2(x), x]$ can be treated by a similar calculation.
    
    We claim that $d(x,y) \geq \frac{1}{\norm{\kappa}_{L^\infty}}$. We may assume $d(x,y) < \infty$. By the choice of $y$ it holds $\tau(z) \cdot \tau(x) \geq 0$ for every $z \in [x,y]$.
    Define $b = \int_{[x,y]} \tau(s)\cdot \tau(x)ds$. Since $\partial_s \theta(x) = \kappa(x)$, we estimate
    \begin{equation}
    \begin{split}
    \label{eq:a_delta_min}
        \norm{\kappa}_{L^\infty} b &= \norm{\kappa}_{L^\infty}\int_{[x,y]} \tau(s)\cdot \tau(x)ds\\
        &\geq \int_{[x,y]} \abs{\kappa(s)}\tau(z) \cdot \tau(x)ds\\
        &= \int_{[x,y]} \abs{\kappa(s)} \cos(\theta(z)-\theta(x))ds\\
        &\geq \Big\vert\int_{\theta(x)}^{\theta(y)} \cos(\alpha-\theta(x))d\alpha\Big\vert\\
        &= \Big\vert\int_0^{\theta(y)-\theta(x)} \cos(\alpha)d\alpha\Big\vert = \sin(\abs{\theta(y)-\theta(x)}).
    \end{split}
    \end{equation}

     Since $b = (\gamma(y) - \gamma(x))\cdot\tau(x)$, it holds $\sin(\abs{\theta(y)-\theta(x)}) = \frac{b}{\abs{\gamma(y)-S}}$ for the intersection $S$ of the rays from $x,y$ in direction of their normals, which exists by $d(x,y) < \infty$, see \Cref{fig:a_sin_equation}. This shows the claim
    \begin{equation}
    \label{eq:lower_bound_d}
        d(x,y) \geq \abs{\gamma(y)-S} = \frac{b}{\sin(\abs{\theta(y)-\theta(x)})} \geq \frac{1}{\norm{\kappa}_{L^\infty}}.
    \end{equation}
    By assumption, there are $\tilde{x}, \tilde{y} \in \mathbb{S}^1$ such that $d(\tilde{x}, \tilde{y}) \leq \frac{\delta}{2} < \frac{1}{2\norm{\kappa}_{L^\infty}}$. \Cref{eq:lower_bound_d} then yields $\tilde{y} \in [\varphi_1(\tilde{x}),\varphi_2(\tilde{x})]$. The triangle inequality gives
    \begin{equation*}
        \abs{\gamma(\tilde{y})-\gamma(\tilde{x})} \leq 2 d(\tilde{x}, \tilde{y}) \leq \delta < \frac{1}{\norm{\kappa}_{L^\infty}}.
    \end{equation*}
    We want to find a minimizer of $\abs{\gamma(y)-\gamma(x)}$ for $x,y \in \{(x,y) \in \mathbb{S}^1 \times \mathbb{S}^1 \mid y \in [\varphi_1(x),\varphi_2(x)]\}$. Choose a minimizing sequence $(x_n,y_n)$. Compactness of $\mathbb{S}^1 \times \mathbb{S}^1$ shows that up to a subsequence, $(x_n, y_n) \to (x_0,y_0)$.
    We have $y_n \in [\varphi_1(x_n),x_n]$ and using \cref{eq:a_prop_x1_x2} gives $y_0 \in [\liminf_{n \to \infty}\varphi_1(x_n), x_0] \subset [\varphi_1(x_0), x_0]$. Similarly, we see $y_0 \in [x_0, \varphi_2(x_0)]$ and thus $y_0 \in [\varphi_1(x_0), \varphi_2(x_0)]$.
    Therefore, $(x_0, y_0)$ is a minimizer, and it holds
    \begin{equation*}
        \abs{\gamma(y_0)-\gamma(x_0)} \leq \abs{\gamma(\tilde{y})-\gamma(\tilde{x})} \leq \delta < \frac{1}{\norm{\kappa}_{L^\infty}}.
    \end{equation*}
    Also, by \cref{eq:a_delta_min} $y_0 \in (\varphi_1(x), \varphi_2(x))$, because if $y_0 = \varphi_1(x_0)$ we have
    \begin{equation}
    \label{eq:lower_estimate_distance}
        \abs{\gamma(y_0)-\gamma(x_0)} \geq \int_{[x_0,y_0]} \tau(z) \cdot \tau(x_0) ds \geq \frac{1}{\norm{\kappa}_{L^\infty}}\sin(\abs{\theta(y_0)-\theta(x_0)}) = \frac{1}{\norm{\kappa}_{L^\infty}}.
    \end{equation}
    A similar argument works for $y=\varphi_2(x_0)$. Therefore, we can take the derivative and deduce
    \begin{equation*}
        0 = \frac{d}{d\varepsilon}\Big|_{\varepsilon=0} \abs{\gamma(x_0)-\gamma(y_0+\varepsilon)}^2 = 2 \partial_x \gamma(y_0) \cdot (\gamma(y_0)-\gamma(x_0)).
    \end{equation*}
    The tangent is just a dilation of $\partial_x \gamma(y_0)$.
    We want to apply a similar argument for the tangent at $x_0$. We have to show that there is $r > 0$ such that $y_0 \in [\varphi_1(x),\varphi_2(x)]$ for all $x \in [x_0-r, x_0 + r]$ to be able to take the derivative. If for all $z \in [x_0,y_0]$ we have $\tau(z)\cdot \tau(x_0) \geq 0$, it holds 
    \begin{equation*}
        \abs{\gamma(y_0)-\gamma(x_0)} \geq \int_{[x_0,y_0]} \tau(z)\cdot\tau(x_0)ds \geq \int_{[x_0,\varphi_1(x_0)]} \tau(z) \cdot \tau(x_0)ds
    \end{equation*}
    and by \cref{eq:lower_estimate_distance} we arrive at a contradiction.
    Thus, there is $z \in [x_0, y_0]$ such that $\tau(z) \cdot \tau(x_0) < 0$. By continuity, there is $r>0$ such that $\tau(z) \cdot \tau(x) < 0$ for all $x \in [x_0-r,x_0+r]$. We conclude $y_0 \in [z,x] \subset [\varphi_1(x),x]$ for all $x \in [x_0-r,x_0+r]$. 
    Similarly, we see $y_0 \in [x, \varphi_2(x)]$ and hence $y_0 \in [\varphi_1(x), \varphi_2(x)]$ for all $x \in [x_0-r,x_0+r]$ for sufficiently small $r>0$. As before, taking the derivative yields $\tau(x_0)\cdot(\gamma(y_0)-\gamma(x_0))=0$.

    Lastly, assume that the line segment from $\gamma(x_0)$ to $\gamma(y_0)$ intersects $\gamma$ at a point $\gamma(z_0)$. We make the following case distinction:\\\\
    \textbf{Case 1: $\tau(z_0) \cdot \tau(x_0) \leq 0$}\\
    Then $z_0 \in [\varphi_1(x_0),\varphi_2(x_0)]$, which is a contradiction to $(x_0,y_0)$ being a minimizer of $\abs{\gamma(x)-\gamma(y)}$ on the set $\{(x,y) \in \mathbb{S}^1 \times \mathbb{S}^1 \mid y \in [\varphi_1(x),\varphi_2(x)]\}$.\\\\
    \textbf{Case 2.1: $\tau(z_0) \cdot \tau(x_0) > 0$ and $\tau(y_0) = -\tau(x_0)$}\\
    It follows that $\tau(z_0) \cdot \tau(y_0) < 0$, which again leads to case 1, since $z_0 \in [\varphi_1(y_0), \varphi_2(y_0)]$.\\\\
    \textbf{Case 2.2: $\tau(z_0) \cdot \tau(x_0) > 0$ and $\tau(y_0) = \tau(x_0)$}\\
    If $z_0 \in [x_0, y_0]$, assume that $\gamma(z_0)$ is the geometrically closest (in the Euclidean distance) intersection to $\gamma(x_0)$ of $\gamma$ with the straight line connecting $\gamma(x_0)$ and $\gamma(y_0)$. We are in one of the two situations illustrated in \Cref{fig:a_thin_piece_nothing_between_case_2.2}. By the Jordan curve theorem, the curve $\tilde{\gamma}$ consisting of $\gamma|_{[x_0,z_0]}$ and the straight line segment from $\gamma(z_0)$ to $\gamma(x_0)$ separates the plane in two open connected components $A,B$. There is $\varepsilon > 0$ such that for all $0 < \delta \leq \varepsilon$ it holds $\gamma(x_0-\delta) \in A$ and $\gamma(z_0+\delta) \in B$. Hence, the segment $\gamma|_{[z_0+\varepsilon, x_0-\varepsilon]}$ has to intersect with $\tilde{\gamma}$, which can only happen at the straight line from $\gamma(z_0)$ to $\gamma(x_0)$. However, then there is another intersection of $\gamma$ with the straight line from $\gamma(y_0)$ to $\gamma(x_0)$, which is geometrically closer to $\gamma(x_0)$ than $\gamma(z_0)$, a contradiction. If $z_0 \in [y_0, x_0]$, we can swap the roles of $x_0$ and $y_0$ in the argument before.

    Using the same argument as for case 2.2, one can show that the tangents have to be opposed to one another, i.e. $\tau(x_0) = -\tau(y_0)$.
\end{proof}

\begin{figure}[htbp]
    \centering
    \begin{tikzpicture}
        % Define radius
        \def\r{4}

        \coordinate (S) at (0,0);
        \coordinate (gamma_x) at (\r,0);
        \coordinate (gamma_y) at ({\r/sqrt(2)},{\r/sqrt(2)});
        \coordinate (A) at (0, {\r/sqrt(2)});

        % Draw part of circle
        \draw (S) -- (gamma_x) arc (0:45:\r) -- cycle;

        % Draw perpendicular line
        \draw (S) -- (A) node[midway,left] {$b$};
        \draw (A) -- (gamma_y) node[midway,above] {};

        % Label points
        \filldraw (gamma_x) circle (1.5pt) node[below] {$\gamma(x)$};
        \filldraw (gamma_y) circle (1.5pt) node[above] {$\gamma(y)$};
        \filldraw (S) circle (1.5pt) node[below] {$S$};

        \pic[draw,angle radius=1cm,angle eccentricity=0.7, "$\beta$"] {angle = A--gamma_y--S};
    \end{tikzpicture}
    \caption{Relation between $\beta \coloneqq \theta(y)-\theta(x)$ and $b$.}
\label{fig:a_sin_equation}
\end{figure}

\begin{figure}[htbp]
    \begin{subfigure}[b]{0.48\textwidth}
        \centering
        \begin{tikzpicture}
            \coordinate (gamma_x) at (0,0);
            \coordinate (gamma_y) at (0, 2);
            \coordinate (gamma_z) at (0, 1);

            \filldraw[fill=blue!10] 
                (gamma_x) 
                to[out=0, in=90] (1,0)
                to[out=-90, in=0] (0,-1)
                to[out=180, in=-90] (-1, 0)
                to[out=90, in=180] (gamma_z)
                to[out=-90, in=90]   (gamma_x) -- cycle;
            \draw (gamma_x) -- (gamma_y);

            \filldraw (gamma_x) circle (1.5pt) node[below] {$\gamma(x_0)$};
            \filldraw (gamma_y) circle (1.5pt) node[above] {$\gamma(y_0)$};
            \filldraw (gamma_z) circle (1.5pt) node[right] {$\gamma(z_0)$};
            \filldraw (-0.5,0.2) node[] {$A$};
            \filldraw (1,0.2) node[right] {$B$};
        \end{tikzpicture}
    \end{subfigure}
    \hfill
    \begin{subfigure}[b]{0.48\textwidth}
        \centering
        \begin{tikzpicture}
            \coordinate (gamma_x) at (0,0);
            \coordinate (gamma_y) at (0, 2);
            \coordinate (gamma_z) at (0, 1);

            \filldraw[fill=blue!10] 
                (gamma_x) 
                to[out=0, in=-90] (1.5,1.2)
                to[out=90, in=0] (0,3)
                to[out=180, in=90] (-1, 2)
                to[out=-90, in=180] (gamma_z)
                to[out=-90, in=90]   (gamma_x) -- cycle;
            \draw (gamma_x) -- (gamma_y);

            \filldraw (gamma_x) circle (1.5pt) node[below] {$\gamma(x_0)$};
            \filldraw (gamma_y) circle (1.5pt) node[above] {$\gamma(y_0)$};
            \filldraw (gamma_z) circle (1.5pt) node[right] {$\gamma(z_0)$};
            \filldraw (-0.5,0.4) node[] {$A$};
            \filldraw (0.5,1.6) node[] {$B$};
        \end{tikzpicture}
    \end{subfigure}
    \caption{Case 2.2: A straight line connecting $\gamma(x_0)$ and $\gamma(y_0)$ intersects $\gamma$ at $\gamma(z_0)$.}
\label{fig:a_thin_piece_nothing_between_case_2.2}
\end{figure}

\begin{lemma}
\label{lem:a_gamma_w_touch}
    Suppose $\gamma \in C^\infty(\mathbb{S}^1, \mathbb{R}^2)$ is simple and immersed with $\mathcal{A}(\gamma)=a$, $\mathcal{E}(\gamma) < 2 \mathcal{E}(\gamma_{\frac{a}{2}}^*)$. Moreover, suppose there are $x_0,y_0 \in \mathbb{S}^1$ with
    \begin{equation*}
    \begin{split}
        (\gamma(x_0)-\gamma(y_0))\cdot \tau(x_0) &= 0,\\
        \tau(x_0)&=\ - \tau(y_0),
    \end{split}
    \end{equation*}
    the straight line segment connecting $\gamma(x_0), \gamma(y_0)$ does not intersect $\gamma$ and $\abs{\gamma(y_0)-\gamma(x_0)} \leq \delta$ for some $\delta > 0$. Then, for every $l > 0$ there is a curve $\tilde{\gamma} \in W^{2,2}(\mathbb{S}^1, \mathbb{R}^2)$ with a self-intersection $(x,y)$ such that
    \begin{equation*}
    \begin{split}
        \tilde{\gamma}(x) &= \tilde{\gamma}(y),\\
        \tilde{\tau}(x) &= - \tilde{\tau}(y),
    \end{split}
    \end{equation*}
    which separates $\tilde{\gamma}$ in two drops $\gamma_1=\tilde{\gamma}|_{[x,y]}, \gamma_2=\tilde{\gamma}|_{[y,x]}$.
    Furthermore, it holds
    \begin{equation*}
    \begin{split}
        \mathcal{L}(\tilde{\gamma}) &\geq \mathcal{L}(\gamma),\\
        \mathcal{A}(\tilde{\gamma}) &\leq \mathcal{A}(\gamma) + l \delta,\\
        \mathcal{E}(\tilde{\gamma}) &\leq \mathcal{E}(\gamma) + 512 \sqrt{l^2 + 4\delta^2}\frac{\delta^2}{l^4}.
    \end{split}
    \end{equation*}
\end{lemma}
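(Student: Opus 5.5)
The construction is a local surgery. Near the two antiparallel, facing points $\gamma(x_0)$ and $\gamma(y_0)$, I would bend the two arcs of $\gamma$ towards each other until they touch tangentially at one point. The modified curve then has a single tangential self-intersection with opposite tangents, which splits it into two closed loops.

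Set $p=\gamma(x_0)$, $q=\gamma(y_0)$ and $h=\abs{p-q}\le\delta$. Choose coordinates with $\tau(x_0)=e_1$, $p=(0,0)$ and $q=(0,h)$, so that $\tau(y_0)=-e_1$. Near $p$ the arc is a graph $u_1$ over the $e_1$-axis, and near $q$ the arc is a graph $u_2$ over the same axis. On $[-l,l]$ I replace these graphs by
\begin{equation*}
u_1+\tfrac{h}{2}\,\phi(\cdot/l)\qquad\text{and}\qquad u_2-\tfrac{h}{2}\,\phi(\cdot/l),
\end{equation*}
where $\phi\in C^{1,1}$ is a fixed bump with $\phi(0)=1$, $\phi'(0)=0$ and $\phi=0$ outside $[-1,1]$, for instance $\phi(z)=(1-z^2)^2$. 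Since $u_1'(0)=u_2'(0)=0$, both graphs reach the midpoint $(0,h/2)$ with horizontal tangents, so the curves touch there with $\tilde\tau(x)=-\tilde\tau(y)$.

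Cutting $\tilde\gamma$ at this point produces two closed loops, each of which turns through total angle $\pi$ relative to its endpoint tangent. This is exactly the structure used in \Cref{lem:a_geometric_information} and in \cite[Lemma~3.3]{bucur2014newisoperimetricinequalityelasticae}, so both loops are drops. To see that they are simple, I would use three facts: $\gamma$ itself is simple, the connecting segment $[p,q]$ does not meet $\gamma$, and the perturbation moves points a distance of at most $h/2$ towards the segment. Together these should give that the two pieces only meet at the touching point.

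The estimates would then go as follows.

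\textbf{Area.} The modification sweeps a region of width at most $h/2$ on each side over a horizontal extent of length $2l$. This changes the area by at most about $lh\le l\delta$, with the sign chosen so that the region removed from the enclosed set is counted.

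\textbf{Length.} Pushing a graph inwards does not decrease its length when the deformation is a normal bump. To be safe, I would instead choose the parametrization of $\tilde\gamma$ so that the inequality $\mathcal L(\tilde\gamma)\ge\mathcal L(\gamma)$ follows from comparing the graph lengths $\int\sqrt{1+(u_i')^2}$.

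\textbf{Energy.} The extra curvature is $\frac{h}{2l^2}\phi''(\cdot/l)$, up to nonlinear corrections involving the slope. Its square, integrated over an arc of length at most $\sqrt{l^2+4\delta^2}$, gives a term of order $\sqrt{l^2+4\delta^2}\,\delta^2/l^4$. Using $\norm{\phi''}_\infty\le 8$ and the expansion $(a+b)^2\le a^2+2ab+b^2$ for the cross term would produce the constant $512$. The cross term $2\int \kappa\,\frac{h}{2l^2}\phi''$ is, however, not of this form.

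The main obstacle is controlling the original curvature over the window $[-l,l]$: the graph representation need only be valid for $l\lesssim 1/\norm{\kappa}_{L^\infty}$, and the cross term must not spoil the estimate. The first thing I would try is to replace the arcs of $\gamma$ over $[-l,l]$ entirely by explicit curves, namely straight segments at heights $0$ and $h$ joined through a quartic bump. Then no cross term arises and the energy of the new pieces is bounded by $512\sqrt{l^2+4\delta^2}\,\delta^2/l^4$, while the removed pieces only lower the energy. If the graph representation fails for large $l$, the explicit-replacement version still gives the required bound.
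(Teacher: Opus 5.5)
There is a genuine gap: you never use the hypothesis $\mathcal{E}(\gamma)<2\mathcal{E}(\gamma^*_{\frac{a}{2}})$. Without it, your claim that the two pinched loops each turn through $\pi$, and hence are drops, is false. From $\tau(x_0)=-\tau(y_0)$ you only know that the arc of $\gamma$ from $y_0$ to $x_0$ turns by $\pi+2\pi k$ for some $k\in\mathbb{Z}$, and the complementary arc turns by $\pi-2\pi k$. Simplicity, the free segment and $\abs{\gamma(x_0)-\gamma(y_0)}\le\delta$ do not force $k=0$.

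For a counterexample, take a horseshoe-shaped simple curve whose two flat tips face each other across a thin gap. All geometric hypotheses hold; the segment now lies outside the enclosed region. After pinching, the loop around the hole turns by $-\pi$ and the outer loop turns by $3\pi$, so the outer loop is not a drop.

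This is exactly what the first step of the paper's proof excludes. Since $\theta(x_0)-\theta(y_0)\in\pi+2\pi\mathbb{Z}$, the paper rules out the cases $\theta(x_0)\ge\theta(y_0)+3\pi$ and $\theta(x_0)\le\theta(y_0)-\pi$. In either case \Cref{lem:a_geometric_information} would give two drops with total area at most $a$ and total energy at most $\mathcal{E}(\gamma)$, contradicting \Cref{lem:min_two_drops} and the energy bound. So \Cref{lem:a_geometric_information} enters contrapositively, to force the correct rotation. It does not certify that the pinched loops are drops.

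Your construction also does not yield the three estimates for every $l>0$.
\begin{itemize}
\item Deforming $\gamma$ in place over $[-l,l]$ needs a graph representation, which exists only for $l$ small compared to $1/\norm{\kappa}_{L^\infty}$.
\item It leaves the cross term $2\int\kappa\,\frac{h}{2l^2}\phi''$ uncontrolled.
\item It can decrease the length. Since $\mathcal{L}$ is invariant under reparametrization, no choice of parametrization repairs this.
\item Your fallback of replacing the arcs by straight segments at heights $0$ and $h$ joined by a bump does not connect in a $C^1$ way to $\gamma$ at $x=\pm l$, where $\gamma$ has heights $u_i(\pm l)$ and slopes $u_i'(\pm l)$. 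It may also shorten the curve.
\end{itemize}

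The paper avoids all of this by leaving $\gamma$ untouched. It cuts $\gamma$ at $\gamma(x_0)$ and $\gamma(y_0)$, where the tangents are $\pm e_1$, and pulls the pieces apart by $l$ in direction $e_1$. It then inserts the explicit arc $x\mapsto(x,\psi(x))$ and its reflection, with $\operatorname{supp}\psi\subset(0,l)$, $\psi(\frac{l}{2})=\frac{\delta}{2}$, $\abs{\psi'}\le\frac{2\delta}{l}$ and $\abs{\psi''}\le\frac{16\delta}{l^2}$. The length then only increases, and the added area lies in an $l\times\delta$ rectangle. The added energy is at most $2\,(16\delta/l^2)^2\sqrt{l^2+4\delta^2}=512\sqrt{l^2+4\delta^2}\,\delta^2/l^4$, with no cross term, and this holds for every $l>0$.
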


\begin{proof}
    The idea is to add arcs at $\gamma(x_0)$ and $\gamma(y_0)$ such that the resulting curve has a self-intersection. To ensure that the resulting curve can actually be separated in two drops, we have to show $\theta(x_0) = \theta(y_0) + \pi$ if $0 \notin [y_0, x_0]$ or $\theta(x_0) = \theta(y_0) - \pi$ if $0 \in [y_0,x_0]$. Without loss of generality assume $0 \notin [y_0, x_0]$. Suppose $\theta(x_0) \geq \theta(y_0) + 3 \pi$. However, then $0 \in [x_0,y_0]$ and $\theta(y_0) \leq \theta(x_0) - 3\pi$. By \Cref{lem:a_geometric_information} and \Cref{lem:min_two_drops} we arrive at a contradiction to $\mathcal{E}(\gamma) < 2 \mathcal{E}(\gamma_{\frac{a}{2}}^*)$. If $\theta(x_0) \leq \theta(y_0) - \pi$, we again have a contradiction with \Cref{lem:a_geometric_information} and \Cref{lem:min_two_drops}.
    
    Without loss of generality we may assume equality $\abs{\gamma(y_0)-\gamma(x_0)} = \delta$. Consider a bump function $\psi \in C^\infty(\mathbb{R}, [0, \infty))$ with
    \begin{equation*}
    \begin{aligned}
        \operatorname{supp} \psi &\subset (0, l), \quad \psi\Big(\frac{l}{2}\Big) &&= \frac{\delta}{2},\\
        \abs{\psi'(x)} &\leq \frac{2\delta}{l}, \quad \abs{\psi''(x)} &&\leq 16\frac{\delta}{l^2}.
    \end{aligned}
    \end{equation*}
    Define $\tilde{\psi}: [0,l] \to \mathbb{R}^2, \tilde{\psi}(x) \coloneqq (x, \psi(x))$. It holds
    \begin{equation*}
    \begin{split}
        \tilde{\psi}'(x) &= \begin{pmatrix}
            1\\
            \psi'(x)
        \end{pmatrix},\\
        \abs{\tilde{\psi}'(x)} &= \sqrt{1+\psi'(x)^2},\\
        \partial_s \tilde{\psi}(x) &= \frac{1}{\sqrt{1+\psi'(x)^2}} \begin{pmatrix}
            1\\
            \psi'(x)
        \end{pmatrix}.
    \end{split}
    \end{equation*}
    We compute
    \begin{equation*}
    \begin{split}
        \partial_s^2 \tilde{\psi}(x) &= \frac{1}{\sqrt{1+\psi'(x)^2}} \Bigg(- \frac{\psi'(x)\psi''(x)}{(1+\psi'(x)^2)^{\frac{3}{2}}}\begin{pmatrix}
            1\\
            \psi'(x)
        \end{pmatrix}\\
        &\quad+\frac{1}{\sqrt{1+\psi'(x)^2}}\begin{pmatrix}
            0\\
            \psi''(x)
        \end{pmatrix}\Bigg)\\
        &= \frac{\psi''(x)}{(1+\psi'(x)^2)^2} \begin{pmatrix}
            -\psi'(x)\\
            1
        \end{pmatrix},\\
        \abs{\kappa_{\tilde{\psi}}} &= \abs{\partial_s^2 \tilde{\psi}(x)} \leq \abs{\psi''(x)} \leq 16 \frac{\delta}{l^2}.
    \end{split}
    \end{equation*}
    The tangents at $x_0, y_0$ are parallel and we may assume $\tau(x_0)=(1,0)$. So we can cut the curve $\gamma$ at $\gamma(x_0)$ and $\gamma(y_0)$ and insert the arc $\tilde{\psi}$ at $\gamma(x_0)$ respectively $\gamma(y_0)$ to retrieve a curve $\tilde{\gamma}$ as illustrated in \Cref{fig:a_dumbell_with_touch}. The resulting curve $\tilde{\gamma}$ is $C^1$ and except for $x_0, y_0$ it is $C^\infty$. In particular, it is in $W^{2,2}(\mathbb{S}^1, \mathbb{R}^2)$. Moreover, it has a self-intersection with the desired properties by construction. The bound on the absolute value of the curvature of $\tilde{\psi}$ directly yields the energy bound for $\tilde{\gamma}$, if we account for the additional length $\int_0^l \abs{\tilde{\psi}'(x)}dx \leq l \sqrt{1+4\frac{\delta^2}{l^2}}$. Also, the added area is contained in a rectangle with length $l$ and width $\delta$.
\end{proof}

\begin{figure}[htbp] 
    \begin{subfigure}[b]{0.48\textwidth}
        \centering
        \begin{tikzpicture}
            \coordinate (gamma_x) at (0, -0.5);
            \coordinate (gamma_y) at (0, 0.5);

            \draw[smooth] 
                (gamma_x) 
                to[out=0, in=-90]   (1, 0)
                to[out=90, in=0]    (gamma_y) 
                to[out=180, in=90]  (-1, 0)
                to[out=-90, in=180] (gamma_x);

            \filldraw (gamma_x) circle (1.5pt) node[below] {$\gamma(x_0)$};
            \filldraw (gamma_y) circle (1.5pt) node[above] {$\gamma(y_0)$};
                
            \node at (1.2,0) {$\gamma$};
        \end{tikzpicture}
    \end{subfigure}
    \hfill
    \begin{subfigure}[b]{0.48\textwidth}
        \centering
        \begin{tikzpicture}
            \coordinate (Center) at (0,0);
            \coordinate (gamma_x1) at (1, -0.5);
            \coordinate (gamma_x2) at (-1, -0.5);

            \coordinate (gamma_y1) at (1, 0.5);
            \coordinate (gamma_y2) at (-1, 0.5);
            
            \draw[smooth] 
                (Center)
                to[out=0, in=180] (gamma_x1)
                to[out=0, in=-90]   (2, 0)
                to[out=90, in=0]    (gamma_y1) 
                to[out=180, in=0]  (Center)
                to[out=180, in=0]   (gamma_y2)
                to[out=180, in=90]  (-2, 0)
                to[out=-90, in=180] (gamma_x2)
                to[out=0, in=180]   (Center);
            
            \draw[dashed] (gamma_y1) -- (gamma_x1);
            \draw[dashed] (gamma_x2) -- (gamma_x1);

            \filldraw (gamma_x1) circle (1.5pt) node[below] {$\gamma(x_0)$};
            \filldraw (gamma_x2) circle (1.5pt) node[below] {$\gamma(x_0)$};
            \filldraw (gamma_y1) circle (1.5pt) node[above] {$\gamma(y_0)$};
            \filldraw (gamma_y2) circle (1.5pt) node[above] {$\gamma(y_0)$};
            
            \node at (2.2,0) {$\tilde{\gamma}$};
            \node[right] at (1,0) {$\delta$};
            \node[below] at (0,-0.5) {$l$};
        \end{tikzpicture}
    \end{subfigure}
    \caption{We add a smooth segment at $x_0$ and $y_0$ such that the resulting curve $\tilde{\gamma}$ has a self-intersection.}
\label{fig:a_dumbell_with_touch}   
\end{figure}

\begin{theorem}
\label{thm:a_length_bound_above}
    Let $\gamma_0 \in C^\infty(\mathbb{S}^1, \mathbb{R}^2)$ be simple and immersed with $\mathcal{A}(\gamma_0)= a$ and $\mathcal{E}(\gamma_0) < 2 \mathcal{E}(\gamma^*_{\frac{a}{2}})$. Then, the global smooth and simple
    solution $\gamma$ to \cref{eq:evolution_gradient_flow} has bounded length $\mathcal{L}(\gamma) \leq c(\gamma_0)$.
\end{theorem}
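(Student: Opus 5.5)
We argue by contradiction, combining the preceding lemmas into an energy-barrier argument. By \Cref{thm:a_embeddedness}, $\gamma_t$ is simple for all $t$. By \Cref{lem:infinity_bound_kappa}, $\norm{\kappa(t)}_{L^\infty}\le K:=c_0(\gamma_0)$ uniformly in time. The area $\mathcal{A}(\gamma_t)=a$ is preserved, and the energy $\mathcal{E}(\gamma_t)\le\mathcal{E}(\gamma_0)<2\mathcal{E}(\gamma^*_{\frac a2})$. Set the energy gap
\begin{equation*}
\eta:=2\mathcal{E}(\gamma^*_{\frac a2})-\mathcal{E}(\gamma_0)>0.
\end{equation*}
Suppose the length is unbounded, so there are times $t_k$ with $\mathcal{L}(\gamma_{t_k})\to\infty$.

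\textbf{Step 1: find a thin neck.} Fix a small $\delta>0$, to be chosen below, with $2\delta<1/K$. For $k$ large we have $3a/\mathcal{L}(\gamma_{t_k})\le\delta/2$. \Cref{lem:a_thin_piece}, applied with $\delta/2\le 1/K\le 1/\norm{\kappa}_{L^\infty}$, gives points $\tilde x\ne\tilde y$ with $d(\tilde x,\tilde y)\le\delta/2$. Since $\delta<1/\norm{\kappa}_{L^\infty}$, \Cref{lem:a_thin_piece_tangential} then yields points $x_0,y_0$ with:
\begin{itemize}
\item $\abs{\gamma(x_0)-\gamma(y_0)}\le\delta$;
\item opposite tangents, $\tau(x_0)=-\tau(y_0)$;
\item the connecting segment orthogonal to $\tau(x_0)$;
\item the segment not meeting $\gamma$ elsewhere.
\end{itemize}

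\textbf{Step 2: build a two-drop competitor.} We now apply \Cref{lem:a_gamma_w_touch} to $\gamma_{t_k}$, with a free parameter $l>0$. Its hypotheses hold because of the energy bound. It gives a $W^{2,2}$ curve $\tilde\gamma$ that splits into two drops $\gamma_1,\gamma_2$ at a tangential self-intersection, and satisfies
\begin{equation*}
\mathcal{A}(\gamma_1)+\mathcal{A}(\gamma_2)=\mathcal{A}(\tilde\gamma)\le a+l\delta,
\end{equation*}
\begin{equation*}
\mathcal{E}(\gamma_1)+\mathcal{E}(\gamma_2)\le\mathcal{E}(\gamma_0)+512\sqrt{l^2+4\delta^2}\,\frac{\delta^2}{l^4}.
\end{equation*}
We must check that the area of $\tilde\gamma$ splits additively into those of the two drops. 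Both drops are positively oriented loops of the figure-eight-type configuration, so this should follow from the construction.

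\textbf{Step 3: compare with optimal drops.} \Cref{lem:min_two_drops} with total area $a+l\delta$, together with the scaling $\mathcal{E}(\gamma^*_b)=\sqrt{a/(2b)}\,\mathcal{E}(\gamma^*_{\frac a2})$, gives
\begin{equation*}
2\mathcal{E}(\gamma^*_{\frac a2})\sqrt{\frac{a}{a+l\delta}}\le\mathcal{E}(\gamma_0)+512\sqrt{l^2+4\delta^2}\,\frac{\delta^2}{l^4}.
\end{equation*}
Choose the parameter $l=\delta^{1/4}$. Then $l\delta=\delta^{5/4}\to0$, and the error term is of order $\delta^{1/4}\delta^2\delta^{-1}=\delta^{5/4}\to0$. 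Letting $\delta\to0$, the left side tends to $2\mathcal{E}(\gamma^*_{\frac a2})$ and the right side tends to $\mathcal{E}(\gamma_0)$. So for $\delta$ small enough (depending only on $a$, $K$, $\eta$) we get $2\mathcal{E}(\gamma^*_{\frac a2})-\eta/2\le\mathcal{E}(\gamma_0)+\eta/4$, contradicting the definition of $\eta$. Hence
\begin{equation*}
\mathcal{L}(\gamma_t)\le\frac{6a}{\delta}
\end{equation*}
for all $t$, which is a bound depending only on $\gamma_0$. In fact we do not need a sequence: whenever $\mathcal{L}(\gamma_t)>6a/\delta$, Steps 1–3 apply directly at time $t$.

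\textbf{Main obstacle.} The delicate point is Step 2: verifying that \Cref{lem:a_gamma_w_touch} really produces two genuine drops whose areas sum to at most $\mathcal{A}(\tilde\gamma)$. This relies on the $\theta$-bookkeeping from \Cref{lem:a_geometric_information} inside that lemma. The other point to check is that the inserted bumps, of height $\delta/2$ over length $l\gg\delta$, stay disjoint from the rest of $\gamma$. I would check this using that the segment between $\gamma(x_0)$ and $\gamma(y_0)$ avoids $\gamma$, together with the curvature bound $K$: near $x_0$ and $y_0$ the curve is a graph over the tangent line on a scale comparable to $1/K\gg l$.
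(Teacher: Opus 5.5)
Your proposal is correct and follows the paper's proof essentially step for step: the paper likewise chains \Cref{lem:a_thin_piece}, \Cref{lem:a_thin_piece_tangential}, \Cref{lem:a_gamma_w_touch} and \Cref{lem:min_two_drops} against the energy gap. The only differences are that the paper takes $l=1$ (area error $\delta$, energy error $512\sqrt{1+4\delta^2}\,\delta^2$) instead of your $l=\delta^{1/4}$, argues along a sequence $t_n\to\infty$ rather than stating your explicit bound $6a/\delta$, and treats as immediate the additivity $\mathcal{A}(\gamma_1)+\mathcal{A}(\gamma_2)=\mathcal{A}(\tilde\gamma)$ that you flag (it follows because the line-integral formula for the algebraic area splits at the self-intersection point).
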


\begin{proof}
    Suppose that there is a sequence of times $t_n \to \infty$ such that $\mathcal{L}(\gamma_{t_n}) \to \infty$. Define the energy gap $\Delta \coloneqq 2 \mathcal{E}(\gamma^*_{\frac{a}{2}})-\mathcal{E}(\gamma_0)$. From \Cref{lem:infinity_bound_kappa} we have $\norm{\kappa}_{L^\infty} \leq c(\gamma_0)$. Choose $\delta > 0$ such that it holds
    \begin{equation*}
    \begin{split}
        \delta &< \frac{1}{c(\gamma_0)},\\
        512\sqrt{1+4\delta^2}\delta^2 &\leq \frac{\Delta}{4},\\
        2 \sqrt{\frac{a}{a+\delta}} \mathcal{E}(\gamma_{\frac{a}{2}}^*) &> 2 \mathcal{E}(\gamma_{\frac{a}{2}}^*) - \frac{\Delta}{4}.
    \end{split}
    \end{equation*}
    By assumption there exists $n \in \mathbb{N}$ such that $3\frac{\mathcal{A}(\gamma_{t_n})}{\mathcal{L}(\gamma_{t_n})} = 3\frac{a}{\mathcal{L}(\gamma_{t_n})} \leq \frac{\delta}{2}$. Hence, we can apply \Cref{lem:a_thin_piece} to $\gamma_{t_n}$ and $\frac{\delta}{2}$. Then, we apply \Cref{lem:a_thin_piece_tangential}. This yields the necessary conditions for \Cref{lem:a_gamma_w_touch} with $\delta$. So apply \Cref{lem:a_gamma_w_touch} with $l=1$ to find $\tilde{\gamma}_{\delta} \in W^{2,2}(\mathbb{S}^1, \mathbb{R}^2)$ with
    \begin{equation}
    \begin{split}
    \label{eq:a_area_energy_gamma_touch}
        \mathcal{A}(\tilde{\gamma}_{\delta}) &\leq \mathcal{A}(\gamma_{t_n})+\delta,\\
        \mathcal{E}(\tilde{\gamma}_{\delta}) &\leq \mathcal{E}(\gamma_{t_n}) + 512 \sqrt{1+4\delta^2}\delta^2 \leq \mathcal{E}(\gamma_{t_n}) + \frac{\Delta}{4}.
    \end{split}
    \end{equation}
    We know that $\tilde{\gamma}$ has a self-intersection, which separates two drops $\gamma_1, \gamma_2$ of $\tilde{\gamma}_{\delta}$ with area $a_1 \coloneqq \mathcal{A}(\gamma_1), a_2 \coloneqq \mathcal{A}(\gamma_2)$. By \cref{eq:a_area_energy_gamma_touch} we obtain $a_\delta \coloneqq a_1 + a_2 \leq a + \delta$. \Cref{lem:min_two_drops} yields
    \begin{equation*}
    \begin{split}
        \mathcal{E}(\gamma_{t_n}) &\geq \mathcal{E}(\tilde{\gamma}_{\delta}) - \frac{\Delta}{4} = \mathcal{E}(\gamma_1) + \mathcal{E}(\gamma_2) - \frac{\Delta}{4}\\
        &\geq 2 \mathcal{E}(\gamma^*_{\frac{a_{\delta}}{2}}) - \frac{\Delta}{4} = 2 \sqrt{\frac{a}{a_\delta}}\mathcal{E}(\gamma^*_{\frac{a}{2}}) - \frac{\Delta}{4} \\
        &\geq 2 \sqrt{\frac{a}{a+\delta}} \mathcal{E}(\gamma_{\frac{a}{2}}^*) - \frac{\Delta}{4} > 2 \mathcal{E}(\gamma_{\frac{a}{2}}^*) - \frac{\Delta}{2}\\
        &> \mathcal{E}(\gamma_0).
    \end{split}
    \end{equation*}
    This is a contradiction because the energy is non increasing along the flow.
\end{proof}

Now, we will use a compactness argument to obtain subconvergence of $\gamma$ and the energy identity to show that the limit is an area-constrained elastica.

\begin{theorem}
\label{thm:subconvergence}
    Let $\gamma_0 \in C^\infty(\mathbb{S}^1, \mathbb{R}^2)$ be simple and immersed with $\mathcal{A}(\gamma_0)=a$ and $\mathcal{E}(\gamma_0) < 2 \mathcal{E}(\gamma^*_{\frac{a}{2}})$. The global smooth and simple solution $\gamma$ to \cref{eq:evolution_gradient_flow} for initial data $\gamma_0$ subconverges up to translation and reparametrization to an area-constrained elastica $\gamma_\infty$.
\end{theorem}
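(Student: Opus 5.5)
The plan is a standard compactness argument built on the uniform bounds already available, with the energy identity used to identify the limit.

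\textbf{Uniform bounds.} By \Cref{thm:a_embeddedness}, the solution stays simple for all time. \Cref{thm:a_length_bound_above} gives $\mathcal{L}(\gamma_t)\le c(\gamma_0)$, and \Cref{prop:length_bound_below} gives $\mathcal{L}(\gamma_t)\ge c(\gamma_0)>0$. \Cref{lem:infinity_bound_kappa} gives $\|\partial_s^m\kappa\|_{L^\infty}\le c_m(\gamma_0)$ for every $m$, uniformly in $t\in[0,\infty)$.

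\textbf{Normalisation.} Since the constants in the global existence proof depend on $T$, I would not use the $x$-parametrisation. Instead, I reparametrise each $\gamma_t$ proportionally to arc length, $\hat\gamma_t:\mathbb S^1\to\mathbb R^2$ with $|\partial_x\hat\gamma_t|=\mathcal{L}(\gamma_t)/2\pi$. I then translate so that $\hat\gamma_t(0)=0$. Because $|\partial_x\hat\gamma_t|$ is constant and bounded above and below, $\partial_x^m=(\mathcal L/2\pi)^m\partial_s^m$. The identity \cref{eq:polynomial_identity_1} then bounds $\|\partial_x^m\hat\gamma_t\|_{L^\infty}$ uniformly in $t$ for $m\ge 2$. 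The bound $|\hat\gamma_t|\le \mathcal{L}(\gamma_t)$ controls the $C^0$ norm, and the constant speed controls $\partial_x\hat\gamma_t$.

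\textbf{Extracting a limit.} By Arzel\`a--Ascoli, for any $t_n\to\infty$ a subsequence of $\hat\gamma_{t_n}$ converges in $C^\infty$ to some $\gamma_\infty$. The limit is immersed with constant speed $\lim\mathcal L/2\pi>0$. Area and energy pass to the limit, so $\mathcal{A}(\gamma_\infty)=a$.

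\textbf{Identifying the limit.} This is the main step. Integrating \cref{eq:energy_identity} gives
\[
\int_0^\infty\!\!\int_{\mathbb S^1}|\partial_t\gamma|^2\,ds\,dt\le\mathcal{E}(\gamma_0),
\]
so $\|\partial_t\gamma\|_{L^2(ds)}^2$ is integrable in time. I also need it to tend to zero, not merely along some sequence. Write $\xi=-(2\partial_s^2\kappa+\kappa^3+\lambda)$. By the uniform bounds on $\partial_s^m\kappa$, the length bounds, and the evolution formulas of \Cref{lem:differential_properties} and \Cref{lem:a_evolution_equality}, the quantity $\frac{d}{dt}\int\xi^2\,ds$ is uniformly bounded. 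Here $\lambda$ is bounded because $|\lambda|\le\|\kappa\|_{L^\infty}^3$, and $\partial_t\lambda$ is bounded by the same reasoning. A nonnegative, integrable, uniformly Lipschitz function of $t$ tends to $0$, so $\|\xi\|_{L^2(ds)}\to0$.

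Taking the limit along the subsequence, $\lambda(\gamma_{t_n})\to\lambda(\gamma_\infty)$ by smooth convergence and invariance of $\lambda$ under reparametrisation. Hence
\[
2\partial_s^2\kappa_\infty+\kappa_\infty^3+\lambda_\infty=0,\qquad \lambda_\infty\coloneqq-\frac{\int\kappa_\infty^3\,ds}{\mathcal{L}(\gamma_\infty)},
\]
so $\gamma_\infty$ is an area-constrained elastica.

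The obstacle I expect is the uniformity: the spatial bounds in \Cref{sec:global_existence} were $T$-dependent, which is why I reparametrise by arc length rather than following the flow parametrisation. The rest is routine. If the Lipschitz-in-time argument turns out to be messy, an alternative is to select $t_n$ in intervals where $\|\partial_t\gamma\|_{L^2}$ is small, which integrability guarantees. Only subconvergence is claimed, so this weaker choice suffices.
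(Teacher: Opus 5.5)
Your proposal is correct, and its skeleton is the paper's: uniform bounds from \Cref{thm:a_length_bound_above}, \Cref{prop:length_bound_below} and \Cref{lem:infinity_bound_kappa}, then constant-speed reparametrization plus translation, then Arzel\`a--Ascoli, then the energy identity to identify the limit.

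The one real difference is in how you identify the limit.
\begin{itemize}
\item The paper only uses $u(t)=\|\partial_t\gamma\|_{L^2(ds)}^2\in L^1((0,\infty))$ to pick times $t_i\to\infty$ with $u(t_i)\to 0$. It then extracts the convergent subsequence from those particular times. No estimate on $\partial_t u$ is needed, and this is exactly your fallback.
\item Your main route bounds $\partial_t u$ uniformly to get $u(t)\to 0$ along the whole flow. This is the computation the paper carries out later in the proof of \Cref{cor:length_divergence}. There it is genuinely needed, because the admissible times are dictated by where the length stays bounded.
\end{itemize}

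Here, with the length bounded above and below, that derivative estimate is routine. It gives a slightly stronger conclusion than the theorem states: every sequence $t_n\to\infty$ has a subsequence converging to an area-constrained elastica, so the whole $\omega$-limit set consists of elasticae. The paper's shorter argument produces only one such limit, which is all the subsequent \L{}ojasiewicz--Simon step requires.
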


\begin{proof}
    Define $u(t) \coloneqq \|\partial_t \gamma \|_{L^2(ds)}^2$. The energy identity \cref{eq:energy_identity} shows
    \begin{equation}
        u(t) = -\partial_t \mathcal{E}(\gamma).
    \end{equation}
    Hence, for $0 \leq T < \infty$ it holds $\int_0^T u(t) dt = \mathcal{E}(\gamma_0)-\mathcal{E}(\gamma_T)$ and $0 \leq \mathcal{E}(\gamma_T) \leq \mathcal{E}(\gamma_0)$, since the energy is non-increasing along the flow. We deduce $u \in L^1((0,\infty))$ by taking the limit. Consequently, there is a subsequence $t_i \to \infty$ with $u(t_i) \to 0$.

    From \Cref{thm:a_length_bound_above}, we obtain a length bound $\mathcal{L}(\gamma) \leq c(\gamma_0)$. By translating $\gamma(t_i)$ for each $i\in \mathbb N$ to a new curve $\tilde{\gamma}_i$, e.g. such that $\tilde{\gamma}_i(0)=0$, we arrive at $\norm{\tilde{\gamma}_i}_{L^\infty} \leq c(\gamma_0)$. Moreover, we reparametrize $\tilde{\gamma}_i$ to a constant speed curve. \Cref{prop:length_bound_below} demonstrates 
    \begin{equation}
    \label{eq:bound_first_derivative}
        c(\gamma_0) \leq \vert\partial_x \tilde{\gamma}_i\vert = \frac{\mathcal{L}(\gamma)}{2\pi} \leq c(\gamma_0).
    \end{equation}
    \Cref{lem:infinity_bound_kappa,eq:polynomial_identity_1} yield bounds $\|\partial_{s}^{m} \tilde{\gamma}_i\|_{L^\infty} = \|\partial_{s}^{m} \gamma(t_i)\|_{L^\infty} \leq c_m(\gamma_0)$ for $m \geq 2$. Thus, 
    \Cref{eq:bound_first_derivative} implies $\|\partial_x^m\tilde{\gamma}_i \|_{L^\infty} \le c_m(\gamma_0)$. Therefore, we have uniform bounds on all derivatives of $\tilde{\gamma}_i$. Compactness, which is obtained by Arzel\`a--Ascoli, and taking a subsequence proves that  $\tilde{\gamma}_i$ converges smoothly to an immersed curve $\gamma_\infty$. Observe
    \begin{equation}
    \label{eq:a_constrained_elasticae}
        u(t_i) = \int_{\mathbb{S}^1} (-2\partial_{s}^2 \tilde{\kappa}_i-\tilde{\kappa}_i^3-\lambda(\tilde{\gamma}_i))^2ds_{\tilde{\gamma}_i}
    \end{equation}
    as the curvature and $\lambda$ are invariant under reparametrizations. Since $u(t_i) \to 0$, we have that $2\partial_{s}^2 \tilde{\kappa}_i+\tilde{\kappa}_i^3+\lambda(\tilde{\gamma}_i) \to 0$ in $L^2(ds)$ as $i \to \infty$. Recall \cref{eq:a_lagrange_multiplier} and note that $\lambda$ is continuous with respect to the $C^\infty(\mathbb{S}^1, \mathbb{R}^2)$-topology. By smooth convergence, we conclude that $\gamma_\infty$ is an area-constrained elastica.
\end{proof}

\subsection{\L{}ojasiewicz--Simon inequality}
In order to prove convergence of gradient flows from subconvergence, we will apply a \L{}ojasiewicz--Simon inequality. In the derivation of said inequality we need to analyze analyticity of the functionals $\mathcal{E}$ and $\mathcal{A}$.

For Banach spaces $X,Y$ and an open set $D \subset X$, a map $f: D \to Y$ is analytic in $x_0 \in D$, if in a neighborhood of $x_0$ it holds
\begin{equation*}
    \sum_{k=0}^\infty \norm{a_k}\norm{x-x_0}^k \text{ converges and } f(x) = \sum_{k=0}^\infty a_k(x-x_0)^k.
\end{equation*}
Each $a_k$ is a $k$-linear, symmetric and continuous map $a_k: X^k \to Y$ and by $a_k(x-x_0)^k$ we mean $a_k(\underbrace{x-x_0,\ldots,x-x_0}_{k \text{ entries}})$. By $\norm{a_k}$ we denote the multilinear operator norm.
For a more detailed discussion on analyticity, we refer to \cite[Section~3.1]{Dall_Acqua_2016}.

First of all, we determine the suitable spaces and functionals. The following definitions and arguments are very similar to the ones in \cite[Article B, Section 4]{Rupp_Phd}.
\begin{definition}
\label{def:sobolev_space_immersion}
    Define 
    \begin{equation*}W^{4,2}_\text{Imm}(\mathbb{S}^1, \mathbb{R}^2) = \{\gamma \in W^{4,2}(\mathbb{S}^1, \mathbb{R}^2) \mid \partial_x \gamma(x) \neq 0 \quad \forall x \in \mathbb{S}^1\}.
    \end{equation*}
    We will use this subindex for other Sobolev spaces as well.
\end{definition}
By the Sobolev embedding theorem, the following inclusion holds
\begin{equation*}
    W^{4,2}(\mathbb{S}^1, \mathbb{R}^2) \hookrightarrow C^3(\mathbb{S}^1,\mathbb{R}^2).
\end{equation*}
Therefore, it makes sense to evaluate the derivatives $\gamma \in W^{4,2}(\mathbb{S}^1, \mathbb{R}^2)$ pointwise and for $i \in \{0,1,2,3\}$ we have $\partial_x^i\gamma(0)=\partial_x^i\gamma(2\pi)$.
\begin{definition}
    Fix $\bar{\gamma} \in W_\text{Imm}^{5,2}(\mathbb{S}^1, \mathbb{R}^2)$. We define
    \begin{equation*}
        V = W^{4,2}(\mathbb{S}^1,\mathbb{R}^2).
    \end{equation*}
    Then define the space of vector fields normal to $\bar{\gamma}$
    \begin{equation*}
        V^\perp = \{\gamma \in W^{4,2}(\mathbb{S}^1,\mathbb{R}^2)\mid \forall x \in \mathbb{S}^1 \quad \gamma(x) \cdot \partial_x \bar{\gamma}(x) = 0\}
    \end{equation*}
    and
    \begin{equation*}
        H^\perp = L^{2,\perp}(\mathbb{S}^1,\mathbb{R}^2) = \{u \in L^2(\mathbb{S}^1, \mathbb{R}^2)\mid u \cdot \partial_x \bar{\gamma} = 0 \text{ a.e.}\}.
    \end{equation*}
    Both of these spaces are Hilbert spaces with $\langle .,.\rangle_{W^{4,2}}$, $\langle .,.\rangle_{L^2}$ respectively.
    Lastly, for $\varepsilon > 0$ define
    \begin{equation*}
        U_\varepsilon = \{u \in V^\perp \mid \|u\|_{W^{4,2}} < \varepsilon\}.
    \end{equation*}
\end{definition}
The $L^2$-orthogonal projection on $H^\perp$ is given by
\begin{equation*}
    P^\perp(\gamma) = \gamma - (\gamma \cdot \partial_s \bar{\gamma}) \partial_s \bar{\gamma}.
\end{equation*}
Using the Sobolev embedding theorem again, we have for $u \in U_\varepsilon$ that $\|\partial_x u\|_\infty < c\varepsilon$ for some constant $c > 0$ independent of $u$. So, for $\varepsilon$ sufficiently small, $\bar{\gamma}+u$ is still immersed.

\begin{definition}
    Fix $\bar{\gamma} \in W^{5,2}_\text{Imm}(\mathbb{S}^1, \mathbb{R}^2)$ and define
    \begin{equation*}
    \begin{split}
        &A_\varepsilon: U_\varepsilon \to \mathbb{R}, u \mapsto \mathcal{A}(\bar{\gamma}+u),\\
        %&L:U_\varepsilon \to \mathbb{R},u\mapsto \mathcal{L}(\bar{\gamma}+u),\\
        &E_\varepsilon:U_\varepsilon \to [0,\infty), u \mapsto \mathcal{E}(\bar{\gamma}+u).
    \end{split}
    \end{equation*}
\end{definition}

\begin{proposition}[{\cite[Proof of Theorem~3.1, Remark~3.3, Corollary 3.13]{Dall_Acqua_2016}}]
\label{prop:energy_properties}
    The energy $E_\varepsilon$ satisfies for $\bar{\gamma} \in W^{5,2}_{\text{Imm}}(\mathbb{S}^1, \mathbb{R}^2)$
    \begin{enumerate}[label=(\arabic*)]
        \item $E_\varepsilon:U_\varepsilon \to [0,\infty)$ is analytic,
        \item The gradient $\nabla_{L^2} E_\varepsilon:U_\varepsilon \to H^\perp$ is analytic,
        \item The Fréchet derivative $(\nabla_{L^2} E_\varepsilon)'(0): V^\perp \to H^\perp$ is Fredholm with index zero.
    \end{enumerate}
\end{proposition}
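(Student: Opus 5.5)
The proposition is cited from Dall'Acqua--Pozzi--Spener, so the plan is to check that their argument carries over to the present setting of closed planar curves with normal perturbations. The energy $E_\varepsilon(u)=\int_{\mathbb S^1}\kappa_{\bar\gamma+u}^2\,ds_{\bar\gamma+u}$ is handled in local coordinates.

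For (1), I would write everything explicitly with \Cref{prop:a_explicit_formulation}. We have $\kappa=\partial_x^2\gamma\cdot(\partial_x\gamma)^{\perp_{\mathrm{rot}}}/|\partial_x\gamma|^3$ and $ds=|\partial_x\gamma|\,dx$, so the integrand is a polynomial in $\partial_x\gamma,\partial_x^2\gamma$ times $|\partial_x\gamma|^{-5}$. The map $u\mapsto\partial_x(\bar\gamma+u)$ is affine and bounded from $V^\perp$ into $W^{3,2}\hookrightarrow C^2$. The map $p\mapsto|p|^{-k}$ is real analytic on $\mathbb{R}^2\setminus\{0\}$, and for $\varepsilon$ small the range of $\partial_x(\bar\gamma+u)$ stays in a compact subset away from $0$, uniformly in $u$. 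Hence composition with $|\cdot|^{-k}$ gives an analytic map into $C^2$, or into $W^{3,2}$, which is a Banach algebra. Products in the algebra $W^{3,2}$ (or $W^{2,2}$) and integration are bounded multilinear operations, and compositions and products of analytic maps are analytic. This gives (1).

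For (2), compute the first variation in direction $v\in V^\perp$:
\[
E_\varepsilon'(u)v=\int\nabla\mathcal E(\bar\gamma+u)\cdot v\,|\partial_x(\bar\gamma+u)|\,dx .
\]
So the $L^2(dx)$-gradient on $H^\perp$ is
\[
\nabla_{L^2}E_\varepsilon(u)=P^\perp\bigl(|\partial_x(\bar\gamma+u)|\,\mathcal T(\bar\gamma+u)^\perp\bigr).
\]
By \Cref{prop:a_explicit_formulation}\ref{prop:a_explicit_formulation_4}, $\mathcal T$ is again polynomial in $\partial_x\gamma,\dots,\partial_x^4\gamma$ times negative powers of $|\partial_x\gamma|$. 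Since $\partial_x^4\gamma\in L^2$ and the remaining factors lie in $C^0$ (from the $W^{4,2}\hookrightarrow C^3$ embedding), the same composition argument gives analyticity into $L^2$. The projection $P^\perp$ is fixed and linear and involves only $\partial_s\bar\gamma\in W^{4,2}\subset C^3$, so it preserves analyticity. One point needs care: $\bar\gamma\in W^{5,2}$ guarantees that $\partial_s\bar\gamma$ is smooth enough that $P^\perp$ maps into $H^\perp$ and that $V^\perp$ is well behaved.

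For (3), which I expect to be the main obstacle, linearize at $u=0$. The leading term is
\[
(\nabla E_\varepsilon)'(0)v=P^\perp\Bigl(\frac{2}{|\partial_x\bar\gamma|^{3}}\,\partial_x^4 v\Bigr)+B v ,
\]
where $B$ involves at most three derivatives of $v$ with coefficients in $L^2$ or better; this uses $\bar\gamma\in W^{5,2}$ to control the coefficients. Write $v=\varphi\,\bar\nu$ with $\varphi\in W^{4,2}(\mathbb S^1,\mathbb R)$, which identifies $V^\perp\cong W^{4,2}$ and $H^\perp\cong L^2$. The principal part then becomes $\varphi\mapsto 2|\partial_x\bar\gamma|^{-3}\partial_x^4\varphi$, which is an isomorphism $W^{4,2}\to L^2$ after adding a constant multiple of the identity. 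The remaining terms are compact perturbations by the compact embedding $W^{4,2}\hookrightarrow W^{3,2}$. Since Fredholm index is stable under compact perturbations, the index is zero. The delicate step is verifying that $B$ maps $W^{4,2}\to L^2$ compactly given only $W^{5,2}$ regularity of $\bar\gamma$, in particular the terms coming from differentiating $P^\perp$ and $\bar\nu$. This is exactly where \cite{Dall_Acqua_2016} is invoked, and their proof applies verbatim since the area term does not enter $E_\varepsilon$.
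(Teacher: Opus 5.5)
Your proposal is correct. It follows essentially the argument the paper relies on: the paper gives no proof of its own and cites Dall'Acqua--Pozzi--Spener \cite{Dall_Acqua_2016}, whose reasoning is yours (analyticity from the explicit formulas via composition with $p\mapsto|p|^{-k}$ and Banach-algebra products, and index-zero Fredholmness from the principal part $2|\partial_x\bar\gamma|^{-3}\partial_x^4$ in the normal coordinate $v=\varphi\bar\nu$ plus a compact remainder), transferred from clamped open curves to closed ones, which only removes the boundary conditions. The step you flag as delicate is routine: since $\bar\gamma\in W^{5,2}$, every coefficient of $B$ lies in $W^{1,2}$ (including those from varying the normal projection, which contain $\partial_x^4\bar\gamma$), so $B$ maps $W^{4,2}$ boundedly into $W^{1,2}$ and hence compactly into $L^2$, while the extra terms $\partial_x^{4-k}\varphi\,\partial_x^k\bar\nu$ in $\partial_x^4(\varphi\bar\nu)$ are compact via the compact embedding $W^{4,2}\hookrightarrow W^{3,2}$.
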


% Todo: Check if this is needed
% This is not needed anymore, I think
% \begin{proposition}[{\cite[Proposition~4.5]{Rupp2024}}]
% \label{prop:length_properties}
%     The length functional satisfies the following properties for $\bar{\gamma} \in W^{4,2}_{\text{Imm}}(\mathbb{S}^1, \mathbb{R}^2)$
%     \begin{enumerate}[label=(\arabic*)]
%         \item $L:U_\varepsilon \to \mathbb{R}$ is analytic,
%         \item The gradient $\nabla_{L^2} L:U_\varepsilon \to H^\perp$ is analytic,
%         \item The Fréchet derivative $(\nabla_{L^2} L)'(0):V^\perp \to H^\perp$ is compact,
%         \item $L(0)=\ell$ and $\nabla L(0) \neq 0$.
%     \end{enumerate}
% \end{proposition}

% \begin{remark}
%     The proposition above is proven in the reference only for open curves. However, the canonical embedding $W^{k,2}(\mathbb{S}^1, \mathbb{R}^2) \to W^{k,2}([0,2\pi], \mathbb{R}^2)$ shows that $E$ is also analytic on $W^{k,2}(\mathbb{S}^1, \mathbb{R}^2)$. The results for the gradient and the Fréchet derivative also hold. Here, have to prove well-definedness, which is clear from $H^\perp = L^{2,\perp}(\mathbb{S}^1, \mathbb{R}^2) = L^{2,\perp}([0,2\pi], \mathbb{R}^2)$.
% \end{remark}

\begin{proposition}
\label{prop:area_properties}
    The area functional satisfies the following properties for $\bar{\gamma} \in W^{5,2}_{\text{Imm}}(\mathbb{S}^1, \mathbb{R}^2)$
    \begin{enumerate}[label=(\arabic*)]
        \item $A_\varepsilon: U_\varepsilon \to \mathbb{R}$ is analytic,
        \item The gradient $\nabla_{L^2} A_\varepsilon:U_\varepsilon \to H^\perp$ is analytic,
        \item The Fréchet derivative $(\nabla_{L^2} A_\varepsilon)'(0):V^\perp \to H^\perp$ is compact,
        \item $A_\varepsilon(0)=\mathcal{A}(\bar\gamma)$ and $\nabla_{L^2} A_\varepsilon(0)\neq 0$.
    \end{enumerate}
\end{proposition}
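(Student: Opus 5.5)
The plan is to work from the explicit formula $\mathcal{A}(\gamma) = -\frac12\int_{\mathbb{S}^1}\nu\cdot\gamma\,ds$. Writing $\nu\,|\partial_x\gamma| = (\partial_x\gamma)^{\perp_{\mathrm{rot}}} = J\partial_x\gamma$ with $J$ the rotation by $\pi/2$, we get
\begin{equation*}
A_\varepsilon(u) = -\frac12\int_{\mathbb{S}^1} J\partial_x(\bar\gamma+u)\cdot(\bar\gamma+u)\,dx .
\end{equation*}
This is a continuous quadratic polynomial in $u\in V^\perp\subset W^{4,2}$: a constant, plus a bounded linear term, plus a bounded symmetric bilinear form evaluated on the diagonal. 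Hence it is analytic with a finite power series, which proves (1) directly. There is no need for the $|\partial_x\gamma|^{-1}$ factors that complicate the energy.

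For (2), we compute the first variation. Integrating by parts and using that $J$ is skew, one gets for $\varphi\in V^\perp$
\begin{equation*}
A_\varepsilon'(u)\varphi = -\int_{\mathbb{S}^1} J\partial_x(\bar\gamma+u)\cdot\varphi\,dx .
\end{equation*}
This is consistent with $\nabla\mathcal A(\gamma)=-\nu$ in $L^2(ds)$. We take the convention used for the energy in \cite{Dall_Acqua_2016}, namely the $L^2(dx)$ gradient projected onto $H^\perp$ (or the $L^2(ds)$ version, in which case we follow exactly the framework from \cite[Article B, Section 4]{Rupp_Phd}). This gives
\begin{equation*}
\nabla_{L^2}A_\varepsilon(u) = -P^\perp\bigl(J\partial_x(\bar\gamma+u)\bigr),
\end{equation*}
possibly multiplied by a weight $|\partial_x(\bar\gamma+u)|^{-1}$ if the $ds$ convention is used. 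The map $P^\perp$ is a fixed bounded linear operator determined by $\bar\gamma\in W^{5,2}$, with $\partial_s\bar\gamma\in W^{4,2}\subset C^3$. So the gradient is an affine continuous map $U_\varepsilon\to W^{3,2}\subset H^\perp$, and hence analytic. If the weight is present, analyticity follows because $v\mapsto|\partial_x(\bar\gamma+v)|^{-1}$ is analytic on $U_\varepsilon$: $|\partial_x\bar\gamma+\partial_x v|^2$ is bounded below for small $\varepsilon$, and $x\mapsto x^{-1/2}$ is analytic away from $0$. This is the same argument as for the energy in \cite{Dall_Acqua_2016}.

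For (3), the derivative at $0$ is $\varphi\mapsto -P^\perp(J\partial_x\varphi)$, plus lower-order terms if the weight is present. This is bounded $V^\perp=W^{4,2}\to W^{3,2}$. Composing with the compact embedding $W^{3,2}\hookrightarrow L^2$ (Rellich on $\mathbb{S}^1$) shows that it is compact into $H^\perp$.

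For (4), $A_\varepsilon(0)=\mathcal A(\bar\gamma)$ holds by definition. For the nonvanishing gradient, note that $J\partial_x\bar\gamma$ is already normal to $\bar\gamma$, so $P^\perp$ fixes it and $\nabla A_\varepsilon(0) = -J\partial_x\bar\gamma$. Its pointwise norm is $|\partial_x\bar\gamma|\neq0$ because $\bar\gamma$ is immersed, so the gradient is nonzero.

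The main (mild) obstacle is matching the precise definition of the $L^2$ gradient used in the \L{}ojasiewicz--Simon framework ($dx$ versus $ds$, and the projection onto $H^\perp$) so that it is consistent with Proposition~\ref{prop:energy_properties}. I would fix the same convention as \cite{Dall_Acqua_2016} and \cite{Rupp_Phd} and verify each claim in that convention.
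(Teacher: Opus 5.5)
Your proof is correct and follows the paper's argument. You use the same explicit quadratic formula for (1), and the same gradient $-P^\perp(\nu_{\bar\gamma+u}\abs{\partial_x(\bar\gamma+u)}) = -P^\perp(J\partial_x(\bar\gamma+u))$ in the $L^2(dx)$ convention, which is the one the paper uses, so your hedge about a $\abs{\partial_x(\bar\gamma+u)}^{-1}$ weight is unnecessary. For (3) you use the same $W^{4,2}\to W^{3,2}$ bound plus Rellich--Kondrachov, and for (4) the immersion of $\bar\gamma$.

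The only difference is a small simplification. You observe that $\nu\abs{\partial_x\gamma}=J\partial_x\gamma$ is linear in $\gamma$, so the gradient is affine and (2) is immediate. The paper instead deduces (2) from the analyticity of $u\mapsto\abs{\partial_x(\bar\gamma+u)}$ and $u\mapsto\tau_{\bar\gamma+u}$ via Dall'Acqua--Pozzi--Spener.
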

\begin{proof}
    \begin{enumerate}[label=(\arabic*)]
        \item The functional $A_\varepsilon$ is given by
        \begin{equation*}
        \begin{split}
            A_\varepsilon(u) = \mathcal{A}(\bar{\gamma}+u) = \int_{\mathbb{S}^1} (\bar{\gamma}_1+u_1)\partial_x (\bar{\gamma}_2+u_2)dx,
        \end{split}
        \end{equation*}
        where $\bar{\gamma} = (\bar{\gamma}_1, \bar{\gamma_2})$ and $u = (u_1, u_2)$.
        The projection on coordinates is linear and continuous, thus analytic. Similarly, the derivative $\partial_x$ is analytic. As the multiplication is analytic as well, and so is the integral, analyticity of $\mathcal{A}$ follows. $A_\varepsilon$ is just the concatenation of an affine function and $\mathcal{A}$ and therefore also analytic.
        \item The $H^\perp$-gradient is given by $\nabla_{L^2} A_\varepsilon(u) = P^\perp(-\nu_{\bar{\gamma}+u}\vert \partial_x (\bar{\gamma}+u)\vert)$. The map $u \mapsto \vert \partial_x (\bar{\gamma}+u)\vert$ is analytic by \cite[Lemma~3.4, 1.]{Dall_Acqua_2016}. Also from \cite[Lemma~3.4, 2.]{Dall_Acqua_2016} it follows that $\tau_{\bar{\gamma}+u} = \partial_s(\bar{\gamma}+u)$ is analytic in $u$. The normal field $\nu_{\bar{\gamma}+u}$ is just a rotation of $\tau_{\bar{\gamma}+u}$, which is a linear bounded map and hence analytic. By the same argument, the projection is analytic as well.
        \item We compute with \Cref{lem:differential_properties}
        \begin{equation*}
        \begin{split}
            (\nabla_{L^2} A_\varepsilon)'(0)u &= \partial_h\vert_{h=0} \nabla_{L^2} A_\varepsilon(hu) = \partial_h \vert_{h=0} P^\perp(-\nu_{\bar{\gamma}+hu}\vert \partial_x (\bar{\gamma}+hu)\vert)\\
            &= - P^\perp (\partial_h \vert_{h=0} \nu_{\bar{\gamma}+hu} \vert \partial_x\bar{\gamma}\vert)- P^\perp(\nu_{\bar{\gamma}} \partial_h\vert_{h=0}\vert \partial_x\bar{\gamma}+h \partial_x u\vert)\\
            &= - \partial_s u \cdot \tau_{\bar{\gamma}} \abs{\partial_x \bar{\gamma}} \nu_{\bar{\gamma}}.
        \end{split}
        \end{equation*}
        The Sobolev embedding gives $\bar{\gamma} \in C^4(\mathbb{S}^1, \mathbb{R}^2)$. Therefore, we can bound the $W^{3,2}$-norm of the term $- \partial_s u \cdot \tau_{\bar{\gamma}} \abs{\partial_x \bar{\gamma}} \nu_{\bar{\gamma}}$ by a constant depending on $\bar{\gamma}$ and the $W^{4,2}$-norm of $u$. Hence, by Rellich-Kondrachov $u \mapsto - \partial_s u \cdot \tau_{\bar{\gamma}} \abs{\partial_x \bar{\gamma}} \nu_{\bar{\gamma}}$ is compact as a map to $L^2(\mathbb{S}^1, \mathbb{R}^2)$.
        \item By definition, we have $A_\varepsilon(0)=\mathcal{A}(\bar{\gamma})$ and $\nabla_{L^2} A_\varepsilon(0) = -\nu_{\bar{\gamma}}\vert\partial_x\bar{\gamma}\vert\neq 0$, since $\bar{\gamma}$ is immersed.
    \end{enumerate}
\end{proof}

The main result we want to use is the following corollary. The symbol $C^\omega$ denotes the space of analytic functions between Banach spaces.
\begin{corollary}[{\cite[Corollary~5.2]{RUPP2020108708}}]
\label{cor:lojasiewicz_simon_manifolds}
    Let $V$ be a Hilbert space, $U \subset V$ be an open set, $m \in \mathbb{N}$ and let $\mathcal{E} \in C^\omega(U, \mathbb{R})$, $\mathcal{G} \in C^\omega(U,\mathbb{R}^m)$. Let $\bar{u} \in U$ and suppose
    \begin{enumerate}[label=(\arabic*)]
        \item there exists a Hilbert space $(H, \langle \cdot, \cdot \rangle)$ with $V \hookrightarrow H$ densely,
        \item $\mathcal{E}$ possesses an $H$-gradient $\nabla \mathcal{E}(u)$ at each $u \in U$ and the map $u \mapsto \nabla \mathcal{E}(u): U \to H$ is analytic,
        \item the second derivative $\mathcal{E}''(\bar{u}) = (\nabla \mathcal{E})'(\bar{u}): V \to H$ is Fredholm of index zero,
        \item for any $u \in U$, the components $\mathcal{G}_k: U \to \mathbb{R}$ of $\mathcal{G}$ possess $H$-gradients $\nabla \mathcal{G}_k$ such that $U \ni u \mapsto \nabla \mathcal{G}_k(u) \in H$ is analytic for all $k = 1, \ldots, m$,
        \item the Fréchet derivatives $(\nabla \mathcal{G}_k)'(\bar{u}): V \to H$ are compact for all $k = 1, \ldots, m$,
        \item $\mathcal{G}(\bar{u}) = 0$ and the $H$-gradients $\nabla \mathcal{G}_1(\bar{u}), \ldots, \nabla \mathcal{G}_m(\bar{u})$ are linearly independent.
    \end{enumerate}
    
    Then, $\mathcal{M} \coloneqq \{u \in U \mid \mathcal{G}(u) = 0\}$ is locally an analytic submanifold of $V$ of codimension $m$ near $\bar{u}$.
    
    If $\bar{u}$ is a critical point of $\mathcal{E}|_\mathcal{M}$, then the restriction satisfies a refined \L{}ojasiewicz--Simon gradient inequality at $\bar{u}$, i.e. there exist $C, \sigma > 0$ and $\theta \in (0, \frac{1}{2}]$ such that for any $u \in \mathcal{M}$ with $\|u - \bar{u}\|_V \leq \sigma$, we have
    \begin{equation*}
    |\mathcal{E}(u) - \mathcal{E}(\bar{u})|^{1-\theta} \leq C \|P(u)\nabla \mathcal{E}(u)\|_H,
    \end{equation*}
    where $P(u): H \to H$ is the orthogonal projection onto $\overline{\mathcal{T}_u \mathcal{M}} \coloneqq \overline{\mathcal{T}_u \mathcal{M}}^{\|\cdot\|_H}$, which denotes the closure of the tangent space at $u$ with respect to the $H$-norm.
\end{corollary}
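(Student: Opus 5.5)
The plan is to reduce the constrained statement to the classical (unconstrained) \L{}ojasiewicz--Simon inequality for analytic functionals with Fredholm Hessian, applied in an analytic chart of $\mathcal{M}$ around $\bar u$.

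\textbf{Step 1: $\mathcal{M}$ is an analytic submanifold.} Since $\mathcal{G}_k$ has the $H$-gradient $\nabla\mathcal{G}_k$, we have $\mathcal{G}_k'(\bar u)v=\langle\nabla\mathcal{G}_k(\bar u),v\rangle$ for $v\in V$. The vectors $\nabla\mathcal{G}_1(\bar u),\ldots,\nabla\mathcal{G}_m(\bar u)$ are linearly independent in $H$ and $V$ is dense in $H$. Hence we can choose $v_1,\ldots,v_m\in V$ such that the matrix $(\langle\nabla\mathcal{G}_k(\bar u),v_j\rangle)_{k,j}$ is invertible, and $\mathcal{G}'(\bar u)\colon V\to\mathbb{R}^m$ is onto. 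Set $K\coloneqq\ker\mathcal{G}'(\bar u)$, a closed subspace of codimension $m$ with complement $\operatorname{span}\{v_j\}$. The analytic implicit function theorem then gives an analytic chart
\begin{equation*}
	\varphi(w)=\bar u+w+\psi(w),\qquad \psi(w)=\sum_{j=1}^m a_j(w)v_j,
\end{equation*}
defined for $w$ in a small ball of $K$, with $a_j$ analytic, $\psi(0)=0$ and $\psi'(0)=0$. This is the first claim.

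\textbf{Step 2: Lagrange multipliers and the Fredholm property.} Since $\bar u$ is critical for $\mathcal{E}|_{\mathcal{M}}$, there are $\lambda_k\in\mathbb{R}$ with $\nabla\mathcal{E}(\bar u)=\sum_k\lambda_k\nabla\mathcal{G}_k(\bar u)$. Put $\mathcal{E}_\lambda\coloneqq\mathcal{E}-\sum_k\lambda_k\mathcal{G}_k$. It agrees with $\mathcal{E}$ on $\mathcal{M}$, since $\mathcal{G}=0$ there. Its gradient is analytic by (2) and (4). Its Hessian $\mathcal{E}''(\bar u)-\sum_k\lambda_k(\nabla\mathcal{G}_k)'(\bar u)$ is a compact perturbation of a Fredholm operator of index zero, by (3) and (5), hence is itself Fredholm of index zero.

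\textbf{Step 3: the unconstrained inequality in the chart.} Define $f\coloneqq\mathcal{E}_\lambda\circ\varphi$ on the ball in $K$. Take as gradient space $H_K\coloneqq\overline{K}^{H}$, which equals the $H$-orthogonal complement of $\operatorname{span}\{\nabla\mathcal{G}_k(\bar u)\}$. Then $f$ is analytic with an analytic $H_K$-gradient. Its Hessian at $0$ is $Q\,\mathcal{E}_\lambda''(\bar u)|_K$, where $Q$ is the orthogonal projection onto $H_K$; the term coming from $\psi''(0)$ drops out because $\nabla\mathcal{E}_\lambda(\bar u)=0$. Restricting to a finite-codimension subspace and projecting onto a finite-codimension subspace preserves the Fredholm property and, by the dimension count $m-m$, keeps the index zero. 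The classical \L{}ojasiewicz--Simon inequality (Chill's version) then yields
\begin{equation*}
	|f(w)-f(0)|^{1-\theta}\le C\|\nabla_{H_K}f(w)\|_H
\end{equation*}
for small $w$. Since $f(w)-f(0)=\mathcal{E}(u)-\mathcal{E}(\bar u)$ for $u=\varphi(w)$, it remains to bound the right-hand side. Points $u\in\mathcal{M}$ with $\|u-\bar u\|_V\le\sigma$ are all of the form $\varphi(w)$ with $w$ small, by the local inversion in Step 1.

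\textbf{Step 4: the main obstacle.} We must show $\|\nabla_{H_K}f(w)\|_H\le C\|P(u)\nabla\mathcal{E}(u)\|_H$ uniformly in small $w$. First, $f'(w)h=\langle\nabla\mathcal{E}_\lambda(u),\varphi'(w)h\rangle$ and $\varphi'(w)h\in T_u\mathcal{M}$. Moreover $\langle\nabla\mathcal{G}_k(u),\varphi'(w)h\rangle=0$, so the $\lambda$-terms vanish and $f'(w)h=\langle P(u)\nabla\mathcal{E}(u),\varphi'(w)h\rangle$. The difficulty is to control $\varphi'(w)$ in the $H$-norm rather than the $V$-norm. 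The plan is to use the explicit formula $\varphi'(w)h=h-\sum_j b_j(w,h)v_j$, where the $b_j$ solve the linear system $\sum_j b_j\langle\nabla\mathcal{G}_k(u),v_j\rangle=\langle\nabla\mathcal{G}_k(u),h\rangle$. The matrix of this system is uniformly invertible near $\bar u$, and its right-hand side is $H$-continuous in $h$. Hence $\|\varphi'(w)h\|_H\le C\|h\|_H$, and $\varphi'(w)$ extends to a uniformly bounded map $H_K\to H$. Duality then gives the desired bound, which completes the proof.
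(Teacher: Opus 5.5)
The paper contains no proof of this statement. It is quoted verbatim from Rupp's Corollary~5.2 and used as a black box in the proof of Theorem~\ref{thm:a_lojasiewicz_simon_normal}, so there is no in-paper argument to compare against.

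Your proposal is a correct, self-contained proof by the standard reduction to Chill's unconstrained \L{}ojasiewicz--Simon inequality. The following steps all check out:
\begin{itemize}
\item the implicit-function chart over $K=\ker\mathcal G'(\bar u)$;
\item the multipliers at $\bar u$, which make the pulled-back Hessian $Q(\nabla\mathcal E_\lambda)'(\bar u)|_K$ a Fredholm operator of index $-m+m=0$;
\item the uniform bound $\|\varphi'(w)h\|_H\le C\|h\|_H$, which yields $\nabla_{H_K}f(w)=\overline{\varphi'(w)}^{\,*}P(u)\nabla\mathcal E(u)$.
\end{itemize}

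Two points should be made explicit when you write it up.

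First, the existence and analyticity of the $H_K$-gradient of $f$, asserted in Step~3, actually rest on the formula for $\varphi'(w)$ from Step~4. That formula is what shows $f'(w)$ is $H$-continuous on $K$. It also shows that $w\mapsto\overline{\varphi'(w)}$ is analytic as a map into the bounded operators $H_K\to H$.

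Second, you use the identity $\overline{K}^{H}=\{\nabla\mathcal G_1(\bar u),\dots,\nabla\mathcal G_m(\bar u)\}^{\perp}$ both to produce the multipliers and in the index count, and it needs a short density argument. Approximate $h$ in this orthogonal complement by $v_n\in V$. Then correct each $v_n$ by the combination of the $v_j$ that puts it into $K$; its coefficients $M^{-1}\bigl(\langle\nabla\mathcal G_k(\bar u),v_n\rangle\bigr)_k$ tend to zero.
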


First, we will only consider normal variations. To generalize the result to arbitrary variations, we use the following reparametrization argument.

\begin{lemma}[{\cite[Lemma~3.26]{POZZETTA2022112581}}]
\label{lem:normal_reparametrization}
    Let $\bar{\gamma} \in C^\infty(\mathbb{S}^1, \mathbb{R}^2)$ be an immersed curve. There is $\rho =\rho(\bar{\gamma})>0$ such that for any $0 < \sigma < \rho$ there exists $\tilde{\sigma}=\tilde{\sigma}(\sigma, \bar{\gamma}) > 0$ with the property that for any $\psi \in W^{4,2}(\mathbb{S}^1, \mathbb{R}^2)$ with $\|\psi\|_{W^{4,2}} \leq \tilde{\sigma}$, there exists a $W^{4,2}$-diffeomorphism $\Phi: \mathbb{S}^1 \to \mathbb{S}^1$ such that
    \begin{equation}
        (\bar{\gamma}+\psi) \circ \Phi = \bar{\gamma}+\phi
    \end{equation}
    for some $\phi \in V^\perp$ with $\|\phi\|_{W^{4,2}} \leq \sigma$.
\end{lemma}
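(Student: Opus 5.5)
We will construct $\Phi$ of the form $\Phi(x)=x+h(x)$ with a small periodic function $h$. Instead of applying an inverse function theorem on Banach spaces, we will solve the normality condition pointwise in $x$. The reason is that composition $(\psi,h)\mapsto\psi\circ(\mathrm{id}+h)$ is not $C^1$ as a map into $W^{4,2}$, since it loses one derivative in $h$; this rules out a direct Banach-space implicit function theorem. For $x\in\mathbb S^1$ and $|h|\le r$ set
\begin{equation*}
g_\psi(x,h)\coloneqq\bigl((\bar\gamma+\psi)(x+h)-\bar\gamma(x)\bigr)\cdot\partial_x\bar\gamma(x).
\end{equation*}
The condition $(\bar\gamma+\psi)\circ\Phi-\bar\gamma\in V^\perp$ is exactly $g_\psi(x,h(x))=0$ for all $x$. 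Let $c_0\coloneqq\min|\partial_x\bar\gamma|^2>0$. We compute $\partial_h g_\psi(x,h)=(\partial_x\bar\gamma+\partial_x\psi)(x+h)\cdot\partial_x\bar\gamma(x)$. By smoothness of $\bar\gamma$ and the embedding $W^{4,2}\hookrightarrow C^3$, this satisfies $\partial_h g_\psi\ge c_0/2$ on $\mathbb S^1\times[-r,r]$, provided $r$ is small (depending only on $\bar\gamma$) and $\|\psi\|_{W^{4,2}}\le\tilde\sigma$ is small. Moreover $|g_\psi(x,0)|=|\psi(x)\cdot\partial_x\bar\gamma(x)|\le C\|\psi\|_{W^{4,2}}$. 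Hence for $\tilde\sigma$ small, $h\mapsto g_\psi(x,h)$ is strictly increasing on $[-r,r]$ with a sign change. There is therefore a unique zero $h(x)$, and it satisfies $|h(x)|\le C\|\psi\|_{W^{4,2}}$. Uniqueness and the periodicity of $g_\psi$ in $x$ make $h$ well defined on $\mathbb S^1$. Since $g_\psi\in C^3$, the classical implicit function theorem gives $h\in C^3(\mathbb S^1)$.

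Next we would establish the $W^{4,2}$ bounds. Differentiating $g_\psi(x,h(x))=0$ gives
\begin{equation*}
h'=-\frac{\partial_xg_\psi(x,h)}{\partial_hg_\psi(x,h)},\qquad \partial_xg_\psi(x,h)=(\partial_x\bar\gamma+\partial_x\psi)(x+h)\cdot\partial_x\bar\gamma(x)-|\partial_x\bar\gamma(x)|^2+\bigl((\bar\gamma+\psi)(x+h)-\bar\gamma(x)\bigr)\cdot\partial_x^2\bar\gamma(x).
\end{equation*}
The numerator vanishes when $\psi=0,h=0$, and it is bounded in $C^0$ by $C(\|\psi\|_{C^1}+\|h\|_{C^0})\le C\|\psi\|_{W^{4,2}}$. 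Differentiating three more times, $h''''$ is a polynomial expression in $h',h'',h'''$, in $\partial_x^k\psi\circ(\mathrm{id}+h)$ for $k\le4$, in derivatives of $\bar\gamma$, and in $1/\partial_hg_\psi$. The only term that is not bounded pointwise is $(\partial_x^4\psi)\circ(\mathrm{id}+h)$. It is estimated in $L^2$ by a change of variables, using $1+h'\ge\frac12$. Every term contains a factor that is $O(\|\psi\|_{W^{4,2}})$, so inductively we obtain $\|h\|_{W^{4,2}}\le C(\bar\gamma)\|\psi\|_{W^{4,2}}$. In particular $\Phi'=1+h'>0$ for small $\tilde\sigma$. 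Hence $\Phi$ is an orientation preserving $W^{4,2}$-diffeomorphism of $\mathbb S^1$; it has degree one because $h$ is periodic.

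Finally we set $\phi\coloneqq(\bar\gamma+\psi)\circ\Phi-\bar\gamma$. This lies in $V^\perp$ by construction. We write $\phi=\bigl(\bar\gamma(\cdot+h)-\bar\gamma\bigr)+\psi\circ\Phi$. The first term is bounded in $W^{4,2}$ by $C\|h\|_{W^{4,2}}$, using smoothness of $\bar\gamma$ and Faà di Bruno. The second is bounded by $C\|\psi\|_{W^{4,2}}$, by the chain rule and the same change of variables. Altogether $\|\phi\|_{W^{4,2}}\le C(\bar\gamma)\|\psi\|_{W^{4,2}}$. We choose $\rho$ as the threshold below which the monotonicity in the first step holds, and for given $\sigma<\rho$ take $\tilde\sigma\le\sigma/C(\bar\gamma)$.

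The main obstacle is the second step: getting the $W^{4,2}$ estimate on $h$ and on $\psi\circ\Phi$ with only $L^2$ control of the top derivative $\partial_x^4\psi$. This requires a careful Faà di Bruno bookkeeping in which the single top-order term is composed with a bi-Lipschitz map, and all remaining terms are controlled in $C^0$ via the Sobolev embedding.
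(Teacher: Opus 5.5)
Your proof is correct. Note that the paper gives no proof of this lemma; it simply imports it from \cite{POZZETTA2022112581}, so the comparison below is with the natural alternative argument rather than with anything in the text.

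You solve the normality condition for the shift $h=\Phi-\mathrm{id}$, so the unknown sits inside the rough map $\bar\gamma+\psi$. This is what forces the implicit differentiation and Fa\`a di Bruno bookkeeping, and you handle it correctly. The only top-order term is $(\partial_x^4\psi)\circ\Phi$, which is controlled in $L^2$ by change of variables since $\Phi'\ge\frac12$. The $\psi$-free part of $\partial_x^k g_\psi(x,h)$ vanishes at $h=0$, hence is $O(\abs{h})$.

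A shorter route solves for $\chi=\Phi^{-1}$ instead, from
$\bigl((\bar\gamma+\psi)(t)-\bar\gamma(\chi(t))\bigr)\cdot\partial_x\bar\gamma(\chi(t))=0$.
Now the unknown enters only through the smooth curve $\bar\gamma$. Apply the finite-dimensional inverse function theorem to the normal-coordinate map $(x,w)\mapsto\bar\gamma(x)+w\,\nu_{\bar\gamma}(x)$ near $(t,0)$, uniformly in $t$ by compactness. This writes $(\bar\gamma+\psi)(t)=\bar\gamma(\chi(t))+w(t)\,\nu_{\bar\gamma}(\chi(t))$, with $(\chi,w)$ locally a smooth function of $(\bar\gamma+\psi)(t)$. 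The bound $\norm{\chi-\mathrm{id}}_{W^{4,2}}\le C\norm{\psi}_{W^{4,2}}$ is then immediate, and the normal part is $\phi=(w\circ\Phi)\,\nu_{\bar\gamma}$. What remains is inverting a one-dimensional $W^{4,2}$-diffeomorphism, plus the same composition estimate you already use for $\psi\circ\Phi$.

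This also shows that your remark about the Banach-space implicit function theorem being unavailable applies only to your formulation. The map $\chi\mapsto\bar\gamma\circ\chi$ is smooth on $W^{4,2}$. The linearization in $\chi$ at $(\psi,\chi)=(0,\mathrm{id})$ is multiplication by $-\abs{\partial_x\bar\gamma}^2$, which is invertible.

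Your route gives $\Phi$ explicitly, together with the linear bound $\norm{\phi}_{W^{4,2}}\le C(\bar\gamma)\norm{\psi}_{W^{4,2}}$. The inverse route is shorter, because all the regularity comes from composing $W^{4,2}$ maps with smooth ones.
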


As we shall see, we will show convergence of reparametrized versions of the gradient flows. Hence, we need the following relations between the original curve and its reparametrized version.

\begin{definition}[{\cite[Definition~4.9]{Rupp2024}}]
\label{def:const_speed_reparametrization}
    Let $T \in (0, \infty]$ and let $\gamma: [0, T) \times \mathbb{S}^1 \to \mathbb{R}^2$ be a family of immersed curves. The reparametrization $\tilde{\gamma}(t)$ of $\gamma(t)$ with constant speed $\frac{\mathcal{L}(\gamma(t))}{2\pi}$ is given by
    \begin{equation*}
    \tilde{\gamma}(t, x) \coloneqq \gamma\left(t, \psi(t, x)\right)
    \end{equation*}
    where $\psi(t, \cdot): \mathbb{S}^1 \to \mathbb{S}^1$ is the inverse of $\phi(t, \cdot): \mathbb{S}^1 \to \mathbb{S}^1$ given by
    \begin{equation*}
    \phi(t, x) \coloneqq \frac{2\pi}{\mathcal{L}(\gamma(t))} \int_0^x |\partial_x \gamma(t, z)| dz = \frac{2\pi}{\mathcal{L}(\gamma(t))} \int_0^x ds_{\gamma(t)}.
    \end{equation*}
    The inverse exists since $\gamma$ is immersed.
\end{definition}

\begin{lemma}[{\cite[Lemma~4.10]{Rupp2024}}]
\label{lem:const_speed_reparametrization_bounds}
    Suppose $T \in (0, \infty]$ and $\gamma: [0, T) \times \mathbb{S}^1 \to \mathbb{R}^2$ is a family of immersed curves in $\mathbb{R}^2$ and $\mathcal{L}(\gamma(t)) > 0$ for all $t \in [0, T)$. Then, if $\tilde{\gamma}(t)$ is the constant speed $\frac{\mathcal{L}(\gamma(t))}{2\pi}$ reparametrization of $\gamma(t)$, for all $t \in [0, T)$ we have
    \begin{equation*}
    \|\partial_t \tilde{\gamma}(t)\|_{L^2(dx)} \leq \sqrt{2\pi\Big(\frac{2}{\mathcal{L}(\gamma(t))} + 16 \mathcal{E}(\gamma(t))\Big)} \|\partial_t \gamma\|_{L^2(ds_{\gamma(t)})}.
    \end{equation*}
    In particular, if the length is uniformly bounded in time from below, i.e. $\mathcal{L}(\gamma) \geq c_1 > 0$, and the energy from above, i.e. $\mathcal{E}(\gamma) \leq c_2$, we have
    \begin{equation*}
    \|\partial_t \tilde{\gamma}(t)\|_{L^2(dx)} \leq C \|\partial_t \gamma\|_{L^2(ds_{\gamma})},
    \end{equation*}
    for all $t \in (0, T)$, where $C = \sqrt{2\pi(\frac{2}{c_1} + 16 c_2)}$.
\end{lemma}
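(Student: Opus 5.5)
The approach is to write down $\partial_t\tilde\gamma$ explicitly from the definition of $\tilde\gamma$. Then I split it into a normal part, controlled directly by $\|\partial_t\gamma\|_{L^2(ds)}$, and a tangential part, controlled via the energy. Throughout I write $V\coloneqq\partial_t\gamma$, $\ell(t,y)\coloneqq\int_0^y ds_{\gamma(t)}$, $\mathcal L=\mathcal L(\gamma(t))$, $\mathcal E=\mathcal E(\gamma(t))$, and I regard $\psi(t,x)$ as a point of $[0,2\pi]$.

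\textbf{Step 1 (formula for the velocity).} Differentiating $\tilde\gamma(t,x)=\gamma(t,\psi(t,x))$ gives $\partial_t\tilde\gamma=V(\psi)+\partial_x\gamma(\psi)\,\partial_t\psi$. Differentiating $\phi(t,\psi(t,x))=x$ in $t$, and using $\partial_x\phi=\frac{2\pi}{\mathcal L}\abs{\partial_x\gamma}$, I get $\partial_x\gamma(\psi)\,\partial_t\psi=-\frac{\mathcal L}{2\pi}\,\tau(\psi)\,\partial_t\phi(t,\psi)$.

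\textbf{Step 2 (time derivative of $\phi$).} Next I compute $\partial_t\phi(t,y)=-\frac{2\pi\,\partial_t\mathcal L}{\mathcal L^2}\ell(t,y)+\frac{2\pi}{\mathcal L}\int_0^y\partial_t(ds)$. For a general velocity, $\partial_t(ds)=\tau\cdot\partial_sV\,ds$ (this is \Cref{lem:differential_properties}\ref{lem:differential_properties_i} without assuming $V$ normal). Integrating by parts with $\partial_s\tau=\kappa\nu$ yields
\begin{equation*}
\int_0^y\partial_t(ds)=(\tau\cdot V)(y)-(\tau\cdot V)(0)-\int_0^y\kappa\,\nu\cdot V\,ds ,
\end{equation*}
and the closed-curve case gives $\partial_t\mathcal L=-\int_{\mathbb S^1}\kappa\,\nu\cdot V\,ds$. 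Inserting this into Step 1, the term $(\tau\cdot V)(\psi)\tau(\psi)$ cancels the tangential part of $V(\psi)$. What remains is
\begin{equation*}
\partial_t\tilde\gamma=(V\cdot\nu)\nu\big|_{\psi}+\Big(\tfrac{\ell(\psi)}{\mathcal L}\,\partial_t\mathcal L+\int_0^{\psi}\kappa\,\nu\cdot V\,ds+(\tau\cdot V)(0)\Big)\tau(\psi).
\end{equation*}

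\textbf{Step 3 (estimates).} For the normal term, the change of variables $dx=\frac{2\pi}{\mathcal L}ds$ gives $\int\abs{(V\cdot\nu)(\psi)}^2dx\le\frac{2\pi}{\mathcal L}\|V\|^2_{L^2(ds)}$. For the first two tangential terms I use $\ell\le\mathcal L$ and Cauchy--Schwarz, $\abs{\int\kappa\,\nu\cdot V\,ds}\le\mathcal E^{1/2}\|V\|_{L^2(ds)}$. Their sum is therefore pointwise bounded by $2\mathcal E^{1/2}\|V\|_{L^2(ds)}$, and integrating its square over $dx\in[0,2\pi]$ gives at most $2\pi\cdot4\mathcal E\|V\|^2$. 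Finally, $(a+b)^2\le2a^2+2b^2$ yields $\|\partial_t\tilde\gamma\|^2_{L^2(dx)}\le2\pi\big(\frac{2}{\mathcal L}+8\mathcal E\big)\|V\|^2_{L^2(ds)}$, which is within the stated constant. The ``in particular'' statement is then immediate from $\mathcal L\ge c_1$ and $\mathcal E\le c_2$.

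\textbf{Main obstacle.} The delicate point is the boundary term $(\tau\cdot V)(t,0)$, i.e.\ the tangential motion of the base point $\gamma(t,0)$. This term is not controlled by an $L^2(ds)$-norm of $V$. It vanishes whenever the velocity is normal at the base point, as for the flow \cref{eq:evolution_gradient_flow} to which the lemma is applied; as stated for arbitrary immersed families, however, the bound cannot hold without this (a pure tangential reparametrization $V=\varphi\tau$ with $\varphi(0)\neq0$ gives $\partial_t\tilde\gamma\ne0$ but $\|V\|$ may be tiny relative to it). I would first try to remove the term by choosing the base point of the arclength parametrization to move with the normal projection of the flow. Failing that, I would state and prove the lemma under the hypothesis $V(t,0)\cdot\tau(t,0)=0$, which covers all uses in the paper.
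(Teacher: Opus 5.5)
Your computation is correct. The paper gives no proof of this statement and only cites \cite[Lemma~4.10]{Rupp2024}, and your route is the standard one behind that result. You write $\partial_t\tilde\gamma=\partial_t\gamma(\psi)+\partial_x\gamma(\psi)\,\partial_t\psi$ with $\partial_t\psi=-\partial_t\phi(\psi)/\partial_x\phi(\psi)$, and then control $\partial_t\phi$ through $\partial_t(ds)$ and Cauchy--Schwarz. In the clamped setting of that reference the base point $\gamma(t,0)$ does not move and the flow is normal. There one can integrate $\partial_t(ds)=-\kappa\,\nu\cdot\partial_t\gamma\,ds$ directly, and no boundary term appears.

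What your version adds is the general case. By allowing a tangential velocity and integrating by parts, you isolate exactly the term $(\tau\cdot\partial_t\gamma)(t,0)\,\tau(\psi)$, and your objection is right. Take a pure reparametrization $\gamma(t,\cdot)=\gamma_0\circ\Phi_t$ driven by a tangential field concentrated near $0$. Then length and energy are unchanged and $\|\partial_t\tilde\gamma\|_{L^2(dx)}=\sqrt{2\pi}\,\abs{\varphi(t,0)}$, while $\|\partial_t\gamma\|_{L^2(ds)}$ can be made arbitrarily small. So the lemma as stated for arbitrary immersed families is false. Adding the hypothesis $\partial_t\gamma(t,0)\cdot\tau(t,0)=0$ (e.g.\ normal velocity) is the correct repair. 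It holds for the solution of \Cref{thm:global_existence}, which is normal by construction.

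Two minor points. First, since $\nu(\psi)\perp\tau(\psi)$, your splitting is orthogonal. You therefore do not need $(a+b)^2\le 2a^2+2b^2$, and you get the constant $2\pi\big(\frac{1}{\mathcal L}+4\mathcal E\big)$ directly. Second, your claim that this ``covers all uses in the paper'' is slightly too generous. The proof of \Cref{thm:convergence} applies the bound to the constant-speed parametrization of the translated curve $\gamma(t,\cdot)-\gamma(t,x_0)$. Its extra velocity $-\partial_t\gamma(t,x_0)$ is not controlled by $\|\partial_t\gamma\|_{L^2(ds)}$, for the same pointwise reason as your base-point term. Your corrected lemma covers the untranslated reparametrization, so that argument should avoid the time-dependent translation, for instance by freezing the translation on each interval $[t_i,s_i]$.
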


As we work in the space of curves with fixed area, consider the following definition.

\begin{definition}
    Fix $a \in \mathbb{R}$ and define
    \begin{equation*}
        \mathcal{X} = \{\gamma \in W_{\text{Imm}}^{4,2}(\mathbb{S}^1,\mathbb{R}^2)\mid \mathcal{A}(\gamma)=a\}.
    \end{equation*}
\end{definition}
From the Sobolev embedding theorem it follows that we can embed $\mathcal{X} \hookrightarrow C^3(\mathbb{S}^1,\mathbb{R}^2)$.

\begin{theorem}
\label{thm:a_lojasiewicz_simon_normal}
    Let $\bar{\gamma} \in \mathcal{X} \cap W^{5,2}(\mathbb{S}^1, \mathbb{R}^2)$ be an area-constrained elastica. Then, there exist $C, \sigma > 0$ and $\theta \in (0,\frac{1}{2}]$ such that for all $\gamma = \bar{\gamma}+u \in \mathcal{X}$ with $u \in V^\perp$ and $\|u\|_{W^{4,2}} \leq \sigma$ we have
    \begin{equation*}
        \vert \mathcal{E}(\gamma)-\mathcal{E}(\bar{\gamma})\vert^{1-\theta} \leq C \|-\nabla \mathcal{E}(\gamma)+\lambda(\gamma)\nabla \mathcal{A}(\gamma)\|_{L^2(ds_\gamma)}.
    \end{equation*}
\end{theorem}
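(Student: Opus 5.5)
We apply \Cref{cor:lojasiewicz_simon_manifolds} with $V = V^\perp$, $H = H^\perp$, $U = U_\varepsilon$ for $\varepsilon$ small, $\bar u = 0$, the energy $E_\varepsilon$ and the single constraint $\mathcal{G} = A_\varepsilon - a$ (so $m=1$). Hypotheses (1)--(3) follow from \Cref{prop:energy_properties}; density of $V^\perp$ in $H^\perp$ is obtained by approximating $u\in H^\perp$ by smooth fields and projecting with $P^\perp$, which preserves smoothness since $\bar\gamma\in W^{5,2}\hookrightarrow C^4$. Hypotheses (4)--(6) follow from \Cref{prop:area_properties}, together with $\mathcal{G}(0)=\mathcal{A}(\bar\gamma)-a=0$ and the fact that a single nonzero vector is linearly independent. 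The manifold $\mathcal{M}=\{u\in U_\varepsilon\mid A_\varepsilon(u)=a\}$ consists exactly of those $u$ with $\bar\gamma+u\in\mathcal{X}$.

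Next we check that $0$ is a critical point of $E_\varepsilon|_{\mathcal{M}}$. Here $T_0\mathcal{M}=\{v\in V^\perp\mid \langle\nabla A_\varepsilon(0),v\rangle_{L^2(dx)}=0\}$. The $L^2(dx)$-gradients are
\[
\nabla E_\varepsilon(u)=P^\perp\bigl(\nabla\mathcal{E}(\bar\gamma+u)\,|\partial_x(\bar\gamma+u)|\bigr),\qquad
\nabla A_\varepsilon(u)=P^\perp\bigl(\nabla\mathcal{A}(\bar\gamma+u)\,|\partial_x(\bar\gamma+u)|\bigr).
\]
The elastica equation gives $\nabla E_\varepsilon(0)=\lambda(\bar\gamma)\nabla A_\varepsilon(0)$, which is orthogonal to $T_0\mathcal{M}$. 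The corollary then yields
\[
|\mathcal{E}(\gamma)-\mathcal{E}(\bar\gamma)|^{1-\theta}\le C\,\|P(u)\nabla E_\varepsilon(u)\|_{L^2(dx)}.
\]

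It remains to bound the right-hand side by the flow velocity. Since $P(u)$ is the orthogonal projection onto the closure of $T_u\mathcal{M}=\nabla A_\varepsilon(u)^{\perp}$, it minimises distance to the line spanned by $\nabla A_\varepsilon(u)$. Hence
\[
\|P(u)\nabla E_\varepsilon(u)\|_{L^2(dx)}\le \|\nabla E_\varepsilon(u)-\lambda(\gamma)\nabla A_\varepsilon(u)\|_{L^2(dx)}=\bigl\|P^\perp\bigl((\nabla\mathcal{E}(\gamma)-\lambda(\gamma)\nabla\mathcal{A}(\gamma))|\partial_x\gamma|\bigr)\bigr\|_{L^2(dx)}.
\]
The projection $P^\perp$ is an $L^2$-contraction. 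Since $\|\partial_x u\|_\infty\le c\varepsilon$, we have $|\partial_x\gamma|\le c(\bar\gamma)$, so
\[
\|w\,|\partial_x\gamma|\|_{L^2(dx)}\le c\,\|w\|_{L^2(ds_\gamma)}.
\]
This gives the claimed inequality after shrinking $\sigma\le\varepsilon$ and enlarging $C$.

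The main obstacle is the correct identification of the gradients and of $P(u)$ with respect to $L^2(dx)$ on normal fields rather than $L^2(ds)$. A secondary point is that the normal space is taken relative to $\bar\gamma$ and not $\gamma$, which is why the projection $P^\perp$ and the weight $|\partial_x\gamma|$ appear. The inequality $\|P(u)\nabla E\|\le\|\nabla E-\lambda\nabla A\|$ sidesteps any need to compare $\lambda(\gamma)$ with the exact projection coefficient, so the argument only requires care in those identifications.
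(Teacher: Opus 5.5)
Your proposal is correct and follows essentially the same route as the paper: apply the constrained \L{}ojasiewicz--Simon corollary on $V^\perp$, $H^\perp$ with $\mathcal{G}=A_\varepsilon-a$, use $P(u)\nabla A_\varepsilon(u)=0$ to bound $\|P(u)\nabla E_\varepsilon(u)\|_{L^2(dx)}$ by $\|\nabla E_\varepsilon(u)-\lambda(\gamma)\nabla A_\varepsilon(u)\|_{L^2(dx)}$, and pass to $L^2(ds_\gamma)$ via a bound on $\|\partial_x\gamma\|_\infty$. Your explicit use of the fact that $P^\perp$ is an $L^2$-contraction is slightly more careful than the paper, which writes an equality there. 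One small correction on density: $P^\perp\phi\in W^{4,2}$ for smooth $\phi$ holds because $\partial_s\bar\gamma\in W^{4,2}$, which follows from $\bar\gamma\in W^{5,2}$ directly, not from the embedding into $C^4$.
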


\begin{proof}
    We need to verify the conditions of \Cref{cor:lojasiewicz_simon_manifolds} for the energy $\mathcal{E}=E_\varepsilon$ and the constraint $\mathcal{G}(u)=\mathcal{A}(\bar{\gamma}+u)-a$
    on the spaces $V=V^\perp$, $H=H^\perp$.
    \begin{enumerate}[label=(\arabic*)]
        \item We have $V^\perp \hookrightarrow H^\perp$ densely.
        \item By \Cref{prop:energy_properties}.
        \item By \Cref{prop:energy_properties}.
        \item Follows from \Cref{prop:area_properties}.
        \item Also follows from \Cref{prop:area_properties}.
        \item The gradient is $P^\perp(\nabla_{L^2(dx)} \mathcal{A}(\bar{\gamma})) = P^\perp(-\nu_{\bar{\gamma}} \abs{\partial_x \bar{\gamma}}) = -\nu_{\bar{\gamma}} \abs{\partial_x \bar{\gamma}}$. This does not vanish.
    \end{enumerate}
    Since we assumed that $\bar{\gamma}$ is an area-constrained elastica, we know that $0$ is a critical point of $E_\varepsilon$ on $\mathcal{M}=\mathcal{G}^{-1}(\{0\})$.
    Then, by \Cref{cor:lojasiewicz_simon_manifolds} there exist $C, \sigma > 0$ and $\theta \in (0,\frac{1}{2}]$ such that for any $u\in \mathcal{M}$ with $\|u\|_{W^{4,2}}\leq \sigma$ we have
    \begin{equation*}
        \vert E_\varepsilon(u)-E_\varepsilon(0)\vert^{1-\theta} \leq C \|P(u) \nabla_{L^2} E_\varepsilon(u)\|_{L^2(dx)},
    \end{equation*}
    where $P(u):H^\perp \to H^\perp$ denotes the orthogonal projection on $\overline{T_u\mathcal{M}}=\{y \in H^\perp\mid \langle \nabla_{L^2(dx)}A_\varepsilon(u),y\rangle_{L^2(dx)}=0\}$. Thus, for $\lambda = \lambda(\gamma)$ from \cref{eq:a_lagrange_multiplier} with $\gamma = \bar{\gamma}+u$ we have
    \begin{equation*}
    \begin{split}
        &\quad\|P(u)\nabla_{L^2(dx)}E_\varepsilon(u)\|_{L^2(dx)}\\
        &= \|P(u)(-\nabla_{L^2(dx)}E_\varepsilon(u)+\lambda \nabla_{L^2(dx)} A_\varepsilon(u))\|_{L^2(dx)}\\
        &\leq \|-\nabla_{L^2(dx)}E_\varepsilon(u)+\lambda\nabla_{L^2(dx)}A_\varepsilon(u)\|_{L^2(dx)}\\
        &= \|-\nabla_{L^2(ds_\gamma)}E_\varepsilon(u)\vert \partial_x \gamma \vert+\lambda\nabla_{L^2(ds_{\gamma})}A_\varepsilon(u)\vert \partial_x \gamma \vert\|_{L^2(dx)}\\
        &\leq \| \partial_x \gamma \|_{L^\infty}^\frac{1}{2}\|-\nabla_{L^2(ds_\gamma)}E_\varepsilon(u)+\lambda\nabla_{L^2(ds_{\gamma})}A_\varepsilon(u)\|_{L^2(ds_\gamma)}.
    \end{split}
    \end{equation*}
    By the Sobolev embedding we may assume $\|\partial_x \gamma \|_{L^\infty}$ is bounded independent of $u$.
\end{proof}

Using \Cref{lem:normal_reparametrization}, we demonstrate that a \L{}ojasiewicz--Simon inequality also holds for nonnormal deviations which are sufficiently small.

\begin{theorem}
    Let $\bar{\gamma} \in \mathcal{X} \cap C^\infty(\mathbb{S}^1,\mathbb{R}^2)$ be an area-constrained elastica. Then, there exist $C, \tilde{\sigma} > 0$ and $\theta \in (0,\frac{1}{2}]$ such that
    \begin{equation*}
        \abs{\mathcal{E}(\gamma)-\mathcal{E}(\bar{\gamma})}^{1-\theta} \leq C \|-\nabla\mathcal{E}(\gamma)+\lambda \nabla\mathcal{A}(\gamma)\|_{L^2(ds_\gamma)}
    \end{equation*}
    for $\gamma \in \mathcal{X}$ with $\|\gamma-\bar{\gamma}\|_{W^{4,2}} \leq \tilde{\sigma}$.
\end{theorem}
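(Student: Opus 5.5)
The plan is to reduce to \Cref{thm:a_lojasiewicz_simon_normal} by reparametrizing $\gamma$ into a normal graph over $\bar{\gamma}$ with \Cref{lem:normal_reparametrization}. Every quantity in the inequality is invariant under orientation preserving reparametrizations: $\mathcal{E}$, $\mathcal{A}$, $\lambda$ from \cref{eq:a_lagrange_multiplier}, and the $L^2(ds_\gamma)$-norm of the geometric normal vector field $-\nabla\mathcal{E}(\gamma)+\lambda\nabla\mathcal{A}(\gamma) = -(2\partial_s^2\kappa+\kappa^3+\lambda)\nu$.

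\textbf{Step 1 (constants).} Since $\bar{\gamma}$ is smooth, it lies in $\mathcal{X}\cap W^{5,2}$. \Cref{thm:a_lojasiewicz_simon_normal} therefore gives $C,\sigma_0>0$ and $\theta\in(0,\frac12]$ such that the inequality holds for $\bar{\gamma}+\phi\in\mathcal{X}$ with $\phi\in V^\perp$ and $\|\phi\|_{W^{4,2}}\le\sigma_0$. Let $\rho=\rho(\bar{\gamma})$ be as in \Cref{lem:normal_reparametrization}, and fix $\sigma<\min\{\sigma_0,\rho\}$. That lemma then yields $\tilde{\sigma}=\tilde{\sigma}(\sigma,\bar{\gamma})$; we shrink $\tilde{\sigma}$ further if needed so that every $\gamma$ with $\|\gamma-\bar{\gamma}\|_{W^{4,2}}\le\tilde{\sigma}$ is immersed. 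This is possible by the Sobolev embedding into $C^1$.

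\textbf{Step 2 (reparametrize).} Given $\gamma\in\mathcal{X}$ with $\|\gamma-\bar{\gamma}\|_{W^{4,2}}\le\tilde{\sigma}$, set $\psi=\gamma-\bar{\gamma}$. \Cref{lem:normal_reparametrization} provides a $W^{4,2}$-diffeomorphism $\Phi$ and $\phi\in V^\perp$ with $\|\phi\|_{W^{4,2}}\le\sigma$ such that $\gamma\circ\Phi=\bar{\gamma}+\phi$.

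\textbf{Step 3 (orientation).} The main point to check is that $\Phi$ is orientation preserving. If $\Phi$ reversed orientation, the area would change sign, giving $\mathcal{A}(\bar{\gamma}+\phi)=-a$. But $\mathcal{A}$ is continuous in $W^{4,2}$, so $\mathcal{A}(\bar{\gamma}+\phi)$ is close to $\mathcal{A}(\bar{\gamma})=a$ for $\sigma$ small. Since the flow preserves a nonzero area $a$, this is a contradiction. Alternatively, one can argue by continuity of $\Phi$ in $\psi$ (it is the identity at $\psi=0$) and the fact that $\mathbb{S}^1$ diffeomorphisms are either orientation preserving or reversing. Consequently $\bar{\gamma}+\phi$ has the same area $a$, so $\bar{\gamma}+\phi\in\mathcal{X}$.

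\textbf{Step 4 (conclude).} Applying \Cref{thm:a_lojasiewicz_simon_normal} to $\bar{\gamma}+\phi$ and using invariance,
\begin{equation*}
|\mathcal{E}(\gamma)-\mathcal{E}(\bar{\gamma})|^{1-\theta}=|\mathcal{E}(\bar{\gamma}+\phi)-\mathcal{E}(\bar{\gamma})|^{1-\theta}\le C\|-\nabla\mathcal{E}(\bar{\gamma}+\phi)+\lambda\nabla\mathcal{A}(\bar{\gamma}+\phi)\|_{L^2(ds)}.
\end{equation*}
The right-hand side equals the same expression for $\gamma$ by the change of variables $x\mapsto\Phi(x)$ in the $ds$-integral.

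\textbf{Main obstacle.} The remaining subtlety is regularity. The diffeomorphism is only $W^{4,2}$, so the invariance of $\partial_s^2\kappa$ under reparametrization has to be justified at this regularity. This is fine because $\partial_s^2\kappa$ only requires four derivatives of the curve in $L^2$, and the chain rule holds for $W^{4,2}\circ W^{4,2}$ compositions on $\mathbb{S}^1$ (using $W^{4,2}\hookrightarrow C^3$).
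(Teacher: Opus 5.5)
Your proposal is correct and follows the paper's proof essentially step for step. You reparametrize $\gamma$ into a normal graph $\bar{\gamma}+\phi$ using the normal-reparametrization lemma, invoke invariance of $\mathcal{E}$, $\mathcal{A}$, $\lambda$ and the $L^2(ds)$-norm of the geometric gradient, and then apply the inequality for normal variations. The extra points you check are ones the paper passes over silently: taking $\sigma<\min\{\sigma_0,\rho\}$, verifying that $\Phi$ preserves orientation so that $\bar{\gamma}+\phi\in\mathcal{X}$, and the $W^{4,2}$ chain rule. One small correction: your orientation argument appeals to "the flow preserving a nonzero area", but this statement is static and $a$ may be $0$. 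That case needs no orientation argument at all, since then $-a=a$ and the right-hand side is invariant even under orientation-reversing reparametrizations.
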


\begin{proof}
    Let $C, \sigma > 0$, $\theta \in (0,\frac{1}{2}]$ be as in \Cref{thm:a_lojasiewicz_simon_normal}. Since $\bar{\gamma} \in C^\infty(\mathbb{S}^1,\mathbb{R}^2)$, we can use \Cref{lem:normal_reparametrization}. This gives us $\tilde{\sigma}>0$ such that for all $\gamma \in \mathcal{X} \subset W^{4,2}(\mathbb{S}^1,\mathbb{R}^2)$ with $\|\gamma-\bar{\gamma}\|_{W^{4,2}} \leq \tilde{\sigma}$ there exist a reparametrization $\Phi$ and a normal vector field $\phi \in V^\perp$ such that
    \begin{equation*}
        \gamma \circ \Phi = \bar{\gamma} + \phi
    \end{equation*}
    and with $\| \phi \|_{W^{4,2}} \leq \sigma$. Since the area $\mathcal{A}$ is invariant under orientation-preserving diffeomorphisms, we have $\mathcal{A}(\bar{\gamma}+\phi)=\mathcal{A}(\gamma \circ \Phi) = \mathcal{A}(\gamma)=a$ and thus $\bar{\gamma}+\phi \in \mathcal{X}$. By observing that the elastic energy is invariant under reparametrization and using \Cref{thm:a_lojasiewicz_simon_normal}, we have
    \begin{equation*}
    \begin{split}
        \vert \mathcal{E}(\gamma)-\mathcal{E}(\bar{\gamma})\vert^{1-\theta} &= \vert \mathcal{E}(\bar{\gamma}+\phi)-\mathcal{E}(\bar{\gamma})\vert^{1-\theta}\\
        &\leq C \norm{-\nabla \mathcal{E}(\bar{\gamma}+\phi)+\lambda(\bar{\gamma}+\phi)\nabla \mathcal{A}(\bar{\gamma}+\phi)}_{L^2(ds_{\bar{\gamma}+\phi})}.
    \end{split}
    \end{equation*}
    Observe that the Lagrange multiplier from \cref{eq:a_lagrange_multiplier}, the gradients as well as the $L^2(ds)$-norm are geometric, i.e. are invariant under reparametrization. We obtain
    \begin{equation*}
        \vert \mathcal{E}(\gamma) -\mathcal{E}(\bar{\gamma})\vert^{1-\theta} \leq C\norm{-\nabla \mathcal{E}(\gamma)+\lambda(\gamma)\nabla \mathcal{A}(\gamma)}_{L^2(ds_{\gamma})}.
    \end{equation*}
\end{proof}

This yields the result below since $\gamma_\infty$ is smooth and an area-constrained elastica.

\begin{corollary}
\label{cor:lojasiewicz_simon}
    Let $\gamma_0 \in C^\infty(\mathbb{S}^1, \mathbb{R}^2)$ be simple with $\mathcal{A}(\gamma_0)=a$ and $\mathcal{E}(\gamma_0) < 2 \mathcal{E}(\gamma^*_{\frac{a}{2}})$. For the reparametrized and translated version $\tilde{\gamma}$ of the global smooth and simple solution $\gamma$ to \cref{eq:evolution_gradient_flow} for initial data $\gamma_0$ from \Cref{thm:subconvergence} there exist $\theta \in (0,\frac{1}{2}]$, $C, \tilde{\sigma} > 0$ such that
    \begin{equation*}
        \abs{\mathcal{E}(\tilde{\gamma})-\mathcal{E}(\gamma_\infty)}^{1-\theta} \leq C\|\partial_t \gamma \|_{L^2(ds_\gamma)}
    \end{equation*}
    holds for all $t$ such that $\|\tilde{\gamma}_t-\gamma_\infty\|_{W^{4,2}} \leq \tilde{\sigma}$, where $\gamma_\infty$ is the limit from \Cref{thm:subconvergence}.
\end{corollary}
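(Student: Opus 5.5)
The plan is to apply the preceding theorem (the \L{}ojasiewicz--Simon inequality for non-normal perturbations) to $\bar\gamma=\gamma_\infty$ and $\gamma=\tilde\gamma_t$. Then we translate the right-hand side into the velocity of the original flow, using that all quantities involved are geometric.

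First, we check the hypotheses on $\gamma_\infty$. By \Cref{thm:subconvergence}, $\gamma_\infty$ is a smooth immersed area-constrained elastica. Since $\tilde\gamma_{t_i}\to\gamma_\infty$ smoothly and the area is continuous and preserved along the flow, we get $\mathcal A(\gamma_\infty)=a$. Hence $\gamma_\infty\in\mathcal X\cap C^\infty(\mathbb S^1,\mathbb R^2)$, and the theorem yields constants $C,\tilde\sigma>0$ and $\theta\in(0,\frac12]$. We also note that each $\tilde\gamma_t$ is a translation and reparametrization of $\gamma_t$, so it is smooth, immersed and has area $a$. The orientation is preserved because the reparametrization in \Cref{def:const_speed_reparametrization} is orientation preserving, and area is translation invariant. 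Thus $\tilde\gamma_t\in\mathcal X$ for every $t$.

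Next, fix $t$ with $\|\tilde\gamma_t-\gamma_\infty\|_{W^{4,2}}\le\tilde\sigma$. The theorem gives
\[
|\mathcal E(\tilde\gamma_t)-\mathcal E(\gamma_\infty)|^{1-\theta}\le C\|-\nabla\mathcal E(\tilde\gamma_t)+\lambda(\tilde\gamma_t)\nabla\mathcal A(\tilde\gamma_t)\|_{L^2(ds_{\tilde\gamma_t})}.
\]
The following quantities are invariant under translations and orientation-preserving reparametrizations: the curvature, $\nu$ up to composition with the reparametrization, $\lambda$ from \cref{eq:a_lagrange_multiplier}, and the $L^2(ds)$-norm. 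So the right-hand side equals $C\|-\nabla\mathcal E(\gamma_t)+\lambda(\gamma_t)\nabla\mathcal A(\gamma_t)\|_{L^2(ds_{\gamma_t})}$. By \cref{eq:evolution_gradient_flow}, this is exactly $C\|\partial_t\gamma\|_{L^2(ds_\gamma)}$, because the original flow is purely normal and hence has no tangential term. Finally, $\mathcal E(\tilde\gamma_t)=\mathcal E(\gamma_t)$, which gives the claim.

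The only point needing care is to work with $\partial_t\gamma$ of the original normal flow rather than with $\partial_t\tilde\gamma$. The latter contains tangential and translational components, so it is not equal to the constrained gradient. This is why the statement is phrased with $\|\partial_t\gamma\|_{L^2(ds_\gamma)}$. Relating it to $\partial_t\tilde\gamma$ is deferred to the later convergence argument via \Cref{lem:const_speed_reparametrization_bounds}.
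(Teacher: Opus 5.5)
Your proposal is correct and follows the paper's argument. The paper applies the preceding non-normal \L{}ojasiewicz--Simon theorem with $\bar\gamma=\gamma_\infty$, using that $\gamma_\infty$ is a smooth area-constrained elastica, and identifies the right-hand side with $\|\partial_t\gamma\|_{L^2(ds_\gamma)}$ via the normal flow equation and invariance under translation and orientation-preserving reparametrization. Your explicit checks that $\gamma_\infty$ and each $\tilde\gamma_t$ lie in $\mathcal{X}$ (area $a$, orientation preserved) fill in details the paper leaves implicit.
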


\subsection{Full convergence}

\Cref{cor:lojasiewicz_simon} is essentially enough to prove convergence of $\tilde{\gamma}$. However, in the proof of \Cref{thm:convergence}, we need to bound the full derivative $\|\partial_t \tilde{\gamma} \|_{L^2(dx)}$ by its normal component $\|\partial_t \gamma\|_{L^2(ds)}$, for which we use \Cref{lem:const_speed_reparametrization_bounds}.

\begin{proof}[Proof of \Cref{thm:convergence}]
    Let $x_0=0\in\mathbb S^1$ and for each $t\ge0$, let $\tilde \gamma_t$ be the constant speed parametrization of $\gamma(t,\cdot)-\gamma(t,x_0)$. Let $t_i \to \infty$ be the times such that $\tilde\gamma_{t_i}\to\gamma_\infty$ as $i\to\infty$ in the $C^\infty$-topology, where $\gamma_\infty$ is the area-constrained elastica from \Cref{thm:subconvergence}.

    Define the function $H(t)=(\mathcal{E}(\tilde{\gamma}_t)-\mathcal{E}(\gamma_\infty))^\theta$ with $\theta$ as in \Cref{cor:lojasiewicz_simon}. Notice $H$ is 
    decreasing. 
    
    So if $H(t)=0$ for some $t_0 \in [0,\infty)$ it stays zero for all time $t \geq t_0$. Therefore, by the energy identity we have for $t \geq t_0$
    \begin{equation*}
        0 = \partial_t \mathcal{E}(\tilde{\gamma}_t) = \partial_t \mathcal{E}(\gamma_t) =  -\|\partial_t \gamma_t\|_{L^2(ds_{\gamma_t})}^2.
    \end{equation*}
    It follows from subconvergence that $\tilde{\gamma}_t = \gamma_\infty$ for $t \geq t_0$.
    
    From now on assume $H(t) > 0$ for all $t \in [0,\infty)$. Since $H$ is decreasing and $H(t_i) \to 0$ as $i \to \infty$, we have $H(t) \to 0$ as $t \to \infty$.
    Compute for all $t$ such that $\|\tilde{\gamma}_t - \gamma_\infty\|_{W^{4,2}(dx)} \leq \delta$ for $\delta \leq \tilde{\sigma}$ with $\tilde{\sigma}$ as in \Cref{cor:lojasiewicz_simon}
    \begin{equation}
    \label{eq:L2_derivative_gamma}
    \begin{split}
        -C\partial_t H(t) &= -C\theta(H(t))^{\frac{\theta-1}{\theta}}\partial_t \mathcal{E}(\tilde{\gamma}_t)\\
        &= C\theta(H(t))^{\frac{\theta-1}{\theta}} \norm{\partial_t \gamma}_{L^2(ds_{\gamma})}^2\\
        &\geq C\theta(H(t))^{\frac{\theta-1}{\theta}} \|\partial_t \gamma\|_{L^2(ds_{\gamma})} (H(t))^{\frac{1-\theta}{\theta}}\\
        &= C\theta\|\partial_t \gamma \|_{L^2(ds_{\gamma})}\\
        &\geq \theta \|\partial_t \tilde{\gamma}_t \|_{L^2(dx)},
    \end{split}
    \end{equation}
    where in the above chain of inequalities, the constant $C>0$ absorbs the constants from the \L{}ojasiewicz--Simon inequality (\Cref{cor:lojasiewicz_simon}) and the reparametrization bound (\Cref{lem:const_speed_reparametrization_bounds}), and may change its value from line to line.

    Our goal is to show that $\|\tilde{\gamma}_t - \gamma_\infty\|_{W^{4,2}} \leq \tilde{\sigma}$ for all $t \in [t_0, \infty)$ for some $t_0 \in [0,\infty)$. Define $s_i \coloneqq \sup \{s \geq t_i \mid \forall t \in [t_i,s]\quad \|\tilde{\gamma}_t - \gamma_\infty\|_{W^{4,2}} \leq \tilde{\sigma} \}$. Then, \cref{eq:L2_derivative_gamma} holds for $t \in [t_i, s_i)$, and if $i$ is sufficiently large, the set $[t_i, s_i)$ is nonempty. From Minkowski's inequality and \cref{eq:L2_derivative_gamma} it follows
    \begin{equation}
    \label{eq:L2_difference_gamma}
        \|\tilde{\gamma}_t-\tilde{\gamma}_{t_i}\|_{L^2(dx)} \leq \int_{t_i}^t \|\partial_t \tilde{\gamma}_r\|_{L^2(dx)}dr \leq C(H(t_i)-H(t)) \to 0
    \end{equation}
    as $i \to \infty$. Assume that all $s_i$ are finite. By continuity, we get \cref{eq:L2_difference_gamma} also for $t=s_i$. From a similar argument as in the subconvergence result \Cref{thm:subconvergence}, we obtain $\tilde{\gamma}_{s_i} \to \psi$ in $C^\infty$ up to a subsequence. Due to continuity and the definition of $s_i$ we have $\|\psi - \gamma_\infty \|_{W^{4,2}} = \tilde{\sigma}$. However, \cref{eq:L2_difference_gamma} yields
    \begin{equation*}
        \|\psi - \gamma_\infty \|_{L^2(dx)} = \lim_{i \to \infty} \|\tilde{\gamma}_{s_i}-\tilde{\gamma}_{t_i}\|_{L^2(dx)} = 0.
    \end{equation*}
    Thus, $\psi = \gamma_\infty$ in $L^2$ and as they are both smooth we have $\psi = \gamma_\infty$ in $W^{4,2}$, a contradiction. So there is $i \in \mathbb{N}$ such that $s_i=\infty$. Hence, for all $t \geq t_i$ it holds $\|\tilde{\gamma}_t-\gamma_\infty \|_{W^{4,2}} \leq \tilde{\sigma}$.

    As before in \cref{eq:L2_difference_gamma} we deduce
    \begin{equation*}
        \|\tilde{\gamma}_{t} - \tilde{\gamma}_{\tilde{t}}\|_{L^2(dx)} \leq \int_{\tilde{t}}^t \|\partial_t \tilde{\gamma}_r\|_{L^2(dx)} dr \leq C(H(\tilde{t})-H(t)) \to 0
    \end{equation*}
    as $t,\tilde{t} \to \infty$. So every subsequence $\tilde{\gamma}_{t_i}$ with $t_i \to \infty$ of $(\tilde{\gamma}(t,\cdot))_{t \in [0,\infty)}$ is a Cauchy sequence in $L^2(\mathbb{S}^1, \mathbb{R}^2)$ and converges to $\gamma_\infty$. Then it already has to converge to $\gamma_\infty$ in $C^\infty$ by the compactness argument from the proof of \Cref{thm:subconvergence}.
\end{proof}

\section{Energy profile curve}
\label{sec:energy_profile}
In order to find a length bound for the area-constrained elastic flow, one can also analyze the function that maps a given length to the minimal energy of a curve with said length and fixed area. Here, we fix the area to $\pi$. In this section, we will only consider curves with winding number $n_\gamma=1$.

\subsection{Continuity}

\begin{proposition}
\label{prop:weak_lower_semicontinuity}
    The function $\mathcal{E}: W^{2,2}_{\text{Imm}}(\mathbb{S}^1, \mathbb{R}^2) \to [0,\infty)$ is weakly lower semicontinuous.
\end{proposition}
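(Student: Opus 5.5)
The plan is to write the energy as the squared $L^2(dx)$-norm of a quantity that is linear in $\partial_x^2\gamma$, with coefficients depending only on $\partial_x\gamma$. Then weak convergence in $W^{2,2}$ combined with the compact embedding into $C^1$ reduces the statement to weak lower semicontinuity of the $L^2$-norm. Throughout, weak lower semicontinuity is understood for sequences $\gamma_n \rightharpoonup \gamma$ in $W^{2,2}(\mathbb{S}^1,\mathbb{R}^2)$ with all $\gamma_n$ and the limit $\gamma$ in $W^{2,2}_{\text{Imm}}(\mathbb{S}^1,\mathbb{R}^2)$.

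\emph{Step 1 (pointwise formula).} Let $R$ denote rotation by $\frac{\pi}{2}$, so that $\nu = R\tau$. By \Cref{prop:a_explicit_formulation}(1) we have $\kappa = \frac{\partial_x^2\gamma \cdot R\partial_x\gamma}{\abs{\partial_x\gamma}^3}$. Since $ds = \abs{\partial_x\gamma}\,dx$, this gives
\begin{equation*}
    \mathcal{E}(\gamma) = \int_{\mathbb{S}^1} \bigl(b_\gamma \cdot \partial_x^2\gamma\bigr)^2\,dx, \qquad b_\gamma \coloneqq \frac{R\partial_x\gamma}{\abs{\partial_x\gamma}^{5/2}}.
\end{equation*}
This identity holds for every $W^{2,2}$ immersion, because such curves are $C^1$ with $\partial_x\gamma \neq 0$ and $\partial_x^2\gamma \in L^2$.

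\emph{Step 2 (convergence of the coefficients).} Fix a subsequence along which $\mathcal{E}(\gamma_n)$ converges to $\liminf_n \mathcal{E}(\gamma_n)$; it suffices to bound this limit. By Rellich--Kondrachov, $W^{2,2}(\mathbb{S}^1)\hookrightarrow C^1(\mathbb{S}^1)$ is compact, so $\partial_x\gamma_n \to \partial_x\gamma$ uniformly. Since $\gamma$ is immersed and $\mathbb{S}^1$ is compact, $\abs{\partial_x\gamma}\ge 2c>0$. Hence $\abs{\partial_x\gamma_n}\ge c$ for all large $n$, and $b_{\gamma_n}\to b_\gamma$ uniformly on $\mathbb{S}^1$, because the map $v\mapsto Rv/\abs{v}^{5/2}$ is smooth on $\{\abs{v}\ge c\}$.

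\emph{Step 3 (weak convergence and conclusion).} Set $h_n \coloneqq b_{\gamma_n}\cdot\partial_x^2\gamma_n$ and $h \coloneqq b_\gamma\cdot\partial_x^2\gamma$. Split
\begin{equation*}
    h_n - h = (b_{\gamma_n}-b_\gamma)\cdot\partial_x^2\gamma_n + b_\gamma\cdot(\partial_x^2\gamma_n-\partial_x^2\gamma).
\end{equation*}
The first term tends to $0$ strongly in $L^2$: $\partial_x^2\gamma_n$ is bounded in $L^2$, as weakly convergent sequences are bounded, and $b_{\gamma_n}-b_\gamma\to 0$ uniformly. The second term converges weakly to $0$ in $L^2$, since multiplication by the bounded function $b_\gamma$ is a bounded linear map on $L^2$ and $\partial_x^2\gamma_n \rightharpoonup \partial_x^2\gamma$ in $L^2$. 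Therefore $h_n\rightharpoonup h$ in $L^2(dx)$. Weak lower semicontinuity of the $L^2$-norm then yields
\begin{equation*}
    \mathcal{E}(\gamma) = \norm{h}_{L^2}^2 \le \liminf_{n\to\infty} \norm{h_n}_{L^2}^2 = \lim_{n\to\infty} \mathcal{E}(\gamma_n).
\end{equation*}

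The only delicate point is Step 2: the integrand is not convex in $\partial_x\gamma$, and without control of the speed the coefficients $b_{\gamma_n}$ could blow up. This is resolved by the uniform $C^1$ convergence together with the assumption that the weak limit is itself an immersion, which gives a uniform lower bound on $\abs{\partial_x\gamma_n}$ for large $n$. Once that bound is available, the energy is a norm of a functional that is linear in $\partial_x^2\gamma$, and the argument is routine.
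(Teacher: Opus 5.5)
Your proof is correct, but it takes a different route from the paper. The paper's setup matches yours. It passes to a subsequence realizing the $\liminf$ and uses $W^{2,2}\hookrightarrow C^{1,\frac12}$ with Arzel\`a--Ascoli to get $C^1$ convergence and a uniform lower bound on $\abs{\partial_x\gamma_n}$. It then writes $\mathcal{E}(w)=\int_{\mathbb{S}^1} f(\partial_x w,\partial_x^2 w)\,dx$ with $f(u,v)=\bigl\vert\frac{v}{\abs{u}^2}-\frac{(v\cdot u)}{\abs{u}^4}u\bigr\vert^2\abs{u}$. At this point it invokes Ioffe's general lower semicontinuity theorem, which covers integrands continuous in the strongly converging variable and convex in the weakly converging one; this requires verifying the abstract hypotheses (H1) and (H2). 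You instead use the exact structure of the integrand. It is the square of an expression linear in $\partial_x^2\gamma$, with coefficients depending continuously on $\partial_x\gamma$ away from $0$. Uniform convergence of $b_{\gamma_n}$ combined with weak convergence of $\partial_x^2\gamma_n$ gives $h_n\rightharpoonup h$ in $L^2$, and weak lower semicontinuity of the norm finishes the argument. Your route is self-contained and needs no external theorem. It also does not depend on planarity: the paper's $f$ is itself of the form $\abs{M(u)v}^2$ with $M(u)=\abs{u}^{1/2}\bigl(\abs{u}^{-2}I-\abs{u}^{-4}u u^{\top}\bigr)$, which is continuous on $u\neq 0$, so the same weak--strong product argument applies in any codimension. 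What Ioffe's theorem buys is generality for convex integrands that are not squares of linear expressions, and that generality is not needed here.
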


\begin{proof}
    Let $(\gamma_n)_{n \in \mathbb{N}} \subset W^{2,2}_{\text{Imm}}(\mathbb{S}^1, \mathbb{R}^2)$ be a sequence such that $\gamma_n \rightharpoonup \gamma$ weakly for some $\gamma \in W^{2,2}_{\text{Imm}}(\mathbb{S}^1, \mathbb{R}^2)$. Without loss of generality, we may pass to a subsequence, which will be still denoted by $\gamma_n$, such that $\lim_{n \to \infty}\mathcal{E}(\gamma_n) = \liminf_{n \to \infty} \mathcal{E}(\gamma_n)$. We know from the Sobolev embedding that $W^{2,2}_{\text{Imm}}(\mathbb{S}^1, \mathbb{R}^2)$ continuously embeds in $C^{1,\frac{1}{2}}(\mathbb{S}^1, \mathbb{R}^2)$. The weak convergence implies that $(\gamma_n)$ is bounded in $C^{1,\frac{1}{2}}(\mathbb{S}^1, \mathbb{R}^2)$. By Arzel\`a--Ascoli, after passing to a subsequence, $\gamma_n \to \gamma$ in $C^1(\mathbb{S}^1, \mathbb{R}^2)$. Hence, $\abs{\partial_x \gamma_n} \geq c > 0$ uniformly, because the limit $\gamma$ is an immersion. Moreover, the weak convergence in $W^{2,2}(\mathbb{S}^1, \mathbb{R}^2)$ implies $\partial_x^2 \gamma_n$ weakly converges to $\partial_x^2 \gamma$ in $L^2(\mathbb{S}^1, \mathbb{R}^2)$. Let us write the energy in a more general formulation with \Cref{prop:a_explicit_formulation}
    \begin{equation*}
        \mathcal{E}(w) = I(\partial_x w, \partial_x^2 w) = \int_{\mathbb{S}^1} f(\partial_x w, \partial_x^2 w) dx
    \end{equation*}
    for $f(u,v) = \big\vert\frac{v}{\abs{u}^2}-\frac{(v\cdot u)}{\abs{u}^4}u\big\vert^2 \abs{u}$ and $I(g,h) = \int_{\mathbb{S}^1} f(g,h) dx$. If we can show lower semicontinuity with respect to $L^\infty(\mathbb{S}^1, \mathbb{R}^2) \times L^2(\mathbb{S}^1, \mathbb{R}^2)$ where $L^\infty$ is equipped with the standard topology and $L^2$ with the weak topology, we are done. For this, we want to apply \cite[Theorem~1]{ioffe}. First of all, $f$ is continuous away from $(0,v)$, which is sufficient for us, since our sequence $\gamma_n$ has a first derivative bounded away from zero. Moreover, for fixed $u$ the map $v \mapsto \frac{v}{\abs{u}^2}-\frac{(v\cdot u)}{\abs{u}^4}u$ is linear. Consequently, $v \mapsto \big\vert\frac{v}{\abs{u}^2}-\frac{(v\cdot u)}{\abs{u}^4}u\big\vert^2 \abs{u}$ is convex. Because $f \geq 0$, it trivially satisfies the lower compactness property. We are left with verifying the following conditions:
    \begin{description}
        \item[(H1)] Let $g_k \to 0$ in $L^\infty(\mathbb{S}^1, \mathbb{R}^2)$ and $h_k \rightharpoonup 0$ weakly in $L^2(\mathbb{S}^1, \mathbb{R}^2)$. Moreover, let $T_k \subset \mathbb{S}^1$ such that $\int_{T_k}dx \to 0$. It trivially holds $\mathbbm{1}_{T_k} g_k \to 0$ in $L^\infty(\mathbb{S}^1, \mathbb{R}^2)$. For $\phi \in (L^2(\mathbb{S}^1, \mathbb{R}^2))^* = L^2(\mathbb{S}^1, \mathbb{R}^2)$ it holds
        \begin{equation*}
            \Bigg(\int_{\mathbb{S}^1} \phi \cdot (\mathbbm{1}_{T_k}h_k) dx\Bigg)^2 \leq \int_{T_k} \abs{\phi}^2 dx \int_{\mathbb{S}^1} \abs{h_k}^2 dx \to 0
        \end{equation*}
        by dominated convergence and the fact that weakly converging sequences are bounded.
        \item[(H2)] The topology of $L^\infty(\mathbb{S}^1, \mathbb{R}^2)$ is not weaker than the topology of convergence in measure. Since
        \begin{equation*}
            (L^1(\mathbb{S}^1, \mathbb{R}^2))^* = L^\infty(\mathbb{S}^1, \mathbb{R}^2) \subset L^2(\mathbb{S}^1, \mathbb{R}^2) = (L^2(\mathbb{S}^1, \mathbb{R}^2))^*,
        \end{equation*}
        the weak topology of $L^2(\mathbb{S}^1, \mathbb{R}^2)$ is not weaker than the weak topology of $L^1(\mathbb{S}^1, \mathbb{R}^2)$.
    \end{description}
    \cite[Theorem~1]{ioffe} yields
    \begin{equation*}
        \liminf_{n \to \infty} I(\partial_x \gamma_n, \partial_x^2 \gamma_n) \geq I(\partial_x \gamma, \partial_x^2 \gamma).
    \end{equation*}
\end{proof}

\begin{remark}
\label{rem:existence_minimizer}
    This proposition shows that the infimum in \cref{eq:energy_profile_curve} is actually a minimum. Indeed, choose a minimizing sequence $\gamma_n \in W^{2,2}_{\text{Imm}}(\mathbb{S}^1, \mathbb{R}^2)$ with $\mathcal{A}(\gamma_n)=\pi$ and $\mathcal{L}(\gamma_n) = \ell$. Without loss of generality we may assume that $\abs{\partial_x \gamma_n} = \frac{\ell}{2\pi}$. Moreover, since length, area, and energy are translation invariant, fixing $\gamma_n(0)=0$ by translating the curves ensures $\norm{\gamma_n}_{L^\infty}$ is uniformly bounded. It holds $\norm{\partial_x^2 \gamma_n}_{L^2(dx)}^2 = \frac{\ell}{2\pi} \mathcal{E}(\gamma_n)$. Thus, $\gamma_n$ is bounded in $W^{2,2}_{\text{Imm}}(\mathbb{S}^1, \mathbb{R}^2)$ and consequently $\gamma_n \rightharpoonup \gamma$ weakly up to a subsequence for some $\gamma \in W_{\text{Imm}}^{2,2}(\mathbb{S}^1, \mathbb{R}^2)$. As in the proof of \Cref{prop:weak_lower_semicontinuity}, we have $\gamma_n \to \gamma$ in $C^1(\mathbb{S}^1, \mathbb{R}^2)$ by potentially reducing to a further subsequence. As the length and area are continuous with respect to the $C^1$-topology, we conclude that $\mathcal{L}(\gamma) = \ell$ and $\mathcal{A}(\gamma) = \pi$. By the same reason, we have $\abs{\partial_x \gamma} = \frac{\ell}{2 \pi} > 0$. \Cref{prop:weak_lower_semicontinuity} yields the claim.
\end{remark}

\begin{theorem}
    For given length $\ell$ and area $a$, the constant speed minimizer $\gamma$ of $\mathcal{E}: W^{2,2}_{\text{Imm}}(\mathbb{S}^1, \mathbb{R}^2) \to [0, \infty)$ subject to fixed area $a$ and length $\ell$ is in $C^\infty(\mathbb{S}^1, \mathbb{R}^2)$.
\end{theorem}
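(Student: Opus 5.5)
We will derive the Euler--Lagrange equation of the doubly constrained problem in weak form and then bootstrap its regularity. The cleanest route is to work with the tangent angle. Since $\gamma$ has constant speed $\frac{\ell}{2\pi}$, after rescaling arclength to $[0,\ell]$ we can write
\begin{equation*}
\gamma(s)=\gamma(0)+\int_0^s(\cos\theta,\sin\theta)\,d\sigma,
\end{equation*}
with $\theta\in W^{1,2}$, $\theta(\ell)=\theta(0)+2\pi$ (because $n_\gamma=1$), and $\mathcal{E}=\int_0^\ell\theta'^2ds$. The length is then fixed automatically. The remaining constraints are the closing condition and the area condition:
\begin{equation*}
\int_0^\ell\cos\theta\,ds=\int_0^\ell\sin\theta\,ds=0,\qquad \mathcal{A}=\pi .
\end{equation*}
Each of these is a $C^1$ functional of $\theta\in W^{1,2}$. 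Writing the area via $-\tfrac12\int\nu\cdot\gamma\,ds$ expresses it as a double integral of trigonometric functions of $\theta$.

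First we will apply the Lagrange multiplier rule in the Banach space of admissible periodic perturbations $\varphi\in W^{1,2}$. This yields multipliers $\alpha_1,\alpha_2,\beta$, not all zero together with the energy multiplier $\lambda_0\in\{0,1\}$, such that
\begin{equation*}
\lambda_0\int 2\theta'\varphi'\,ds=\int\bigl(\alpha_1\sin\theta-\alpha_2\cos\theta\bigr)\varphi\,ds+\beta\,D\mathcal{A}[\theta]\varphi .
\end{equation*}
Here $D\mathcal{A}[\theta]\varphi=\int g\,\varphi\,ds$ with $g$ continuous. Concretely, $g$ is built from $\gamma\cdot\nu$, a linear expression in $\gamma$, $\cos\theta$, $\sin\theta$.

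The main obstacle is the degenerate case $\lambda_0=0$, i.e.\ linear dependence of the constraint derivatives. In that case $\alpha_1\sin\theta-\alpha_2\cos\theta+\beta g\equiv0$. Since $g$ is essentially $\beta'$-multiples of $(\gamma-p)\times\tau$ for some point $p$, this says that a fixed direction dotted with $\nu$ equals a multiple of $(\gamma-p)\cdot\nu$. We would show this forces $\gamma$ to be a straight line ($\beta=0$), which is impossible for a closed curve, or a circle ($\beta\ne0$). The circle case gives $\ell=2\pi$, where the admissible class is only the circle and smoothness is trivial. Alternatively, we can avoid the theory of degenerate multipliers by exhibiting explicit variations $\varphi_1,\varphi_2,\varphi_3$ whose Jacobian matrix against the three constraints is invertible whenever $\gamma$ is not a circle. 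We would try this first: take $\varphi$ localized near points where $\theta$ takes three suitably different values.

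With $\lambda_0=1$, the weak equation says that $\theta'$ has a weak derivative equal to a continuous function, namely $-\tfrac12(\alpha_1\sin\theta-\alpha_2\cos\theta+\beta g)$. Hence $\theta\in C^2$. Since $g$ depends on $\gamma$ and $\theta$, and $\gamma'$ depends on $\theta$, we bootstrap: if $\theta\in C^k$ then $\gamma\in C^{k+1}$ and $g\in C^{k}$, so $\theta''\in C^k$ and $\theta\in C^{k+2}$. Inductively $\theta$, and therefore the constant-speed parametrization $\gamma$, is $C^\infty$. Finally we translate back to the $\mathbb S^1$ parametrization, which is an affine reparametrization and so preserves smoothness.
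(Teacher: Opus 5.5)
Your argument is correct, and it takes a genuinely different route from the paper.

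\textbf{The paper's route.} The paper works directly with the constant-speed curve $\gamma$. It assumes, only ``formally'', that multipliers $\lambda_1,\lambda_2$ exist. It then writes the fourth-order Euler--Lagrange equation weakly as $\int\partial_s^2\gamma\cdot\partial_s^2\varphi\,ds=\frac12\int f\cdot\partial_s\varphi\,ds$, where $f=3\abs{\partial_s^2\gamma}^2\partial_s\gamma+\lambda_1Q\gamma+\lambda_2\partial_s\gamma$ is a priori only in $L^1$. From this it concludes $\partial_s^3\gamma=-\frac12 f+c$, hence $\partial_s^2\gamma\in W^{1,1}\subset C^0$, and iterates.

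\textbf{What your route buys.} Your tangent-angle formulation turns the energy into the Dirichlet integral $\int\theta'^2$ and absorbs the length constraint into the parameter interval. The Euler--Lagrange equation becomes $\theta''=-\frac12 h$, with a right-hand side that is continuous from the start, so the bootstrap is immediate. More importantly, your Fritz John alternative actually justifies the multiplier step. It also identifies the degenerate case as the circle, which is exactly where $\nabla\mathcal{A}$ and $\nabla\mathcal{L}$ are parallel and which the paper's ``formally'' passes over. The price is an extra pair of closing constraints, and an area functional that becomes a double integral in $\theta$ whose derivative you must compute by hand.

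\textbf{A slip in that computation.} Use $\delta\tau=\varphi\nu$, $\delta\nu=-\varphi\tau$, $\delta\gamma(s)=\int_0^s\varphi\nu$ in $-\frac12\int\nu\cdot\gamma\,ds$, apply Fubini, and use $\gamma(\ell)=\gamma(0)$. This gives $D\mathcal{A}[\theta]\varphi=\int\varphi\,\tau\cdot\gamma\,ds$, up to a combination of $\sin\theta,\cos\theta$ that you can absorb into $\alpha_1,\alpha_2$. So $g=\tau\cdot\gamma$, not something built from $\gamma\cdot\nu$.

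This matters in the degenerate case with $\beta\neq0$. The relation there reads $\tau\cdot(\gamma-p)=0$, i.e.\ $\frac{d}{ds}\abs{\gamma-p}^2=0$, and that is what places $\gamma$ on a circle. With $(\gamma-p)\cdot\nu=0$ as you wrote it, you would instead get $\gamma$ on a line through $p$, and the circle conclusion would not follow.

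The bootstrap is unaffected, since $\tau\cdot\gamma\in C^k$ whenever $\theta\in C^k$. Also note that testing with periodic $\varphi$ forces $\theta'(0)=\theta'(\ell)$. Hence $\theta''=-\frac12 h$ holds on the circle, and smoothness carries across the seam.
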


\begin{proof}
    In this proof we will use $\partial_s$ instead of $\partial_x$ for convenience, which is justified because they only differ by a multiplicative constant. Since $\gamma$ is a minimizer, it formally satisfies the Euler-Lagrange equation
    \begin{equation*}
        0 = 2\partial_s^2 \kappa + \kappa^3 + \lambda_1 + \lambda_2 \kappa
    \end{equation*}
    with respect to $ds$ for some $\lambda_1, \lambda_2 \in \mathbb{R}$. Rewriting this in terms of $\gamma$ gives
    \begin{equation*}
        0 = 2 \partial_s^4 \gamma + 3 \partial_s (\abs{\partial_s^2 \gamma}^2 \partial_s \gamma) + \lambda_1 Q(\partial_s \gamma) + \lambda_2 \partial_s^2 \gamma
    \end{equation*}
    for the rotation $Q = \begin{pmatrix}
        0 & -1\\
        1 & 0
    \end{pmatrix}$. So for arbitrary $\varphi \in C^\infty(\mathbb{S}^1, \mathbb{R}^2)$ we have
    \begin{equation}
    \label{eq:bootstrapping}
        \int_{\mathbb{S}^1} \partial_s^2 \gamma \cdot (\partial_s^2 \varphi) ds = \frac{1}{2} \int_{\mathbb{S}^1} \big(3 \abs{\partial_s^2 \gamma}^2 \partial_s \gamma + \lambda_1 Q\gamma + \lambda_2 \partial_s \gamma\big) \cdot (\partial_s \varphi) ds.
    \end{equation}
    Define $f \coloneqq 3 \abs{\partial_s^2 \gamma}^2 \partial_s \gamma + \lambda_1 Q\gamma + \lambda_2 \partial_s \gamma$. By the Sobolev embedding we have the continuous embedding $W^{2,2}(\mathbb{S}^1, \mathbb{R}^2) \hookrightarrow C^1(\mathbb{S}^1, \mathbb{R}^2)$. Hence, $f \in L^1(\mathbb{S}^1, \mathbb{R}^2)$. \Cref{eq:bootstrapping} shows that $\partial_s^2 \gamma \in W^{1,1}(\mathbb{S}^1, \mathbb{R}^2)$ with $\partial_s^3 \gamma = -\frac{1}{2}f + c$ where $c \in \mathbb{R}^2$ is constant. It arises because we only test against functions ($\partial_s \varphi$) with zero mean in \cref{eq:bootstrapping}. We have the embedding $W^{1,1}(\mathbb{S}^1, \mathbb{R}^2) \hookrightarrow C^0(\mathbb{S}^1, \mathbb{R}^2)$ and hence $\partial_s^2 \gamma \in C^0(\mathbb{S}^1, \mathbb{R}^2)$. By definition, $f \in C^0(\mathbb{S}^1, \mathbb{R}^2)$ and from $\partial_s^3 \gamma = -\frac{1}{2}f+c$ it follows $\gamma \in C^3(\mathbb{S}^1, \mathbb{R}^2)$. Iterating this argument allows us to conclude $\gamma \in C^\infty(\mathbb{S}^1, \mathbb{R}^2)$.
\end{proof}

Let us show continuity of $E: [2\pi, \infty) \to [0,\infty)$ defined in \cref{eq:energy_profile_curve}.

\begin{lemma}
\label{lem:lower_semicontinuity}
    The function $E: [2\pi, \infty) \to [0,\infty)$ is lower semicontinuous.
\end{lemma}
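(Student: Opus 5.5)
We argue by compactness, in the same way as the existence argument in \Cref{rem:existence_minimizer}. Fix $\ell \in [2\pi,\infty)$ and a sequence $\ell_n \to \ell$ with $\ell_n \in [2\pi,\infty)$. After passing to a subsequence we may assume $\liminf_{n\to\infty} E(\ell_n) = \lim_{n\to\infty} E(\ell_n)$, and that this limit is finite. Otherwise there is nothing to prove; in fact $E$ is finite everywhere, as witnessed by suitable curves. By \Cref{rem:existence_minimizer}, for each $n$ there is a minimizer $\gamma_n \in W^{2,2}_{\text{Imm}}(\mathbb{S}^1,\mathbb{R}^2)$ with
\begin{equation*}
n_{\gamma_n}=1,\quad \mathcal{A}(\gamma_n)=\pi,\quad \mathcal{L}(\gamma_n)=\ell_n,\quad \mathcal{E}(\gamma_n)=E(\ell_n).
\end{equation*}
We normalize it to have constant speed $\abs{\partial_x\gamma_n} = \frac{\ell_n}{2\pi}$ and translate it so that $\gamma_n(0)=0$.

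The next step is a uniform $W^{2,2}$ bound. Since $\ell_n$ converges, the lengths are bounded above and below by $2\pi$. Hence $\norm{\gamma_n}_{L^\infty}\le \ell_n$ and $\abs{\partial_x\gamma_n}$ is bounded. For the second derivatives, at constant speed we have
\begin{equation*}
\norm{\partial_x^2\gamma_n}_{L^2(dx)}^2 = \Bigl(\frac{\ell_n}{2\pi}\Bigr)^3\frac{2\pi}{\ell_n}\,\mathcal{E}(\gamma_n) \le C\,\mathcal{E}(\gamma_n),
\end{equation*}
and the right-hand side is bounded because $E(\ell_n)$ converges. So $(\gamma_n)$ is bounded in $W^{2,2}$. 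Passing to a subsequence, $\gamma_n \rightharpoonup \gamma$ weakly in $W^{2,2}$. As in the proof of \Cref{prop:weak_lower_semicontinuity}, by Arzel\`a--Ascoli the convergence $\gamma_n\to\gamma$ also holds in $C^1$.

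We then identify the limit as a competitor for $E(\ell)$. Uniform convergence of $\partial_x\gamma_n$ gives $\abs{\partial_x\gamma} = \frac{\ell}{2\pi} > 0$, so $\gamma$ is immersed. Length and area are continuous in the $C^1$ topology, so $\mathcal{L}(\gamma)=\ell$ and $\mathcal{A}(\gamma)=\pi$. The winding number $n_\gamma$ is the degree of the tangent map $\partial_x\gamma/\abs{\partial_x\gamma}:\mathbb{S}^1\to\mathbb{S}^1$. This map converges uniformly along the sequence, and the degree is locally constant in $C^0$, so $n_\gamma = 1$. Thus $\gamma$ is admissible in \cref{eq:energy_profile_curve} at length $\ell$.

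Finally, \Cref{prop:weak_lower_semicontinuity} gives
\begin{equation*}
E(\ell) \le \mathcal{E}(\gamma) \le \liminf_{n\to\infty}\mathcal{E}(\gamma_n) = \lim_{n\to\infty} E(\ell_n),
\end{equation*}
which is lower semicontinuity. The only delicate points are preserving the constraints in the limit: the immersion property and the winding number, which both depend on the $C^1$ convergence (the winding number via the degree argument above).
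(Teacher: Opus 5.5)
Your proof is correct and follows the paper's route. Like the paper, you take minimizers at lengths $\ell_n$, normalize them to constant speed with $\gamma_n(0)=0$, extract a subsequence that converges weakly in $W^{2,2}$ and in $C^1$, pass the constraints to the limit, and conclude with \Cref{prop:weak_lower_semicontinuity}. Your explicit check that the limit is immersed and still has winding number $1$ (via the degree of the tangent map) fills in a point the paper leaves implicit. The constant in your $L^2$ identity is slightly off, since it should read $\|\partial_x^2\gamma_n\|_{L^2(dx)}^2=(\ell_n/2\pi)^3\,\mathcal{E}(\gamma_n)$, but this does not affect the argument.
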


\begin{proof}
    Suppose $(\ell_n)_{n \in \mathbb{N}} \subset [2\pi, \infty)$ with $\ell_n \to \ell \neq \infty$. Without loss of generality, we may assume that $\liminf_{n \to \infty} E(\ell_n) < \infty$ (otherwise the claim holds trivially). Let us pass to a subsequence, still denoted by $\ell_n$, such that $\lim_{n \to \infty} E(\ell_n) = \liminf_{n \to \infty} E(\ell_n)$. Let $\gamma_n \in W^{2,2}_{\text{Imm}}(\mathbb{S}^1, \mathbb{R}^2)$ be curves realizing these energies with length $\ell_n$ and area $\pi$, i.e. $E(\ell_n) = \mathcal{E}(\gamma_n)$. Since the energies and lengths are bounded, we find by the same method as in the proof of \Cref{rem:existence_minimizer} that $\gamma_n \rightharpoonup \tilde{\gamma} \in W^{2,2}_{\text{Imm}}(\mathbb{S}^1, \mathbb{R}^2)$ weakly up to a subsequence (and translation and reparametrization) with $\mathcal{A}(\tilde{\gamma})=\pi$, $\mathcal{L}(\tilde{\gamma})=\ell$. \Cref{prop:weak_lower_semicontinuity} gives
    \begin{equation*}
        E(\ell) \leq \mathcal E(\tilde{\gamma}) \leq \liminf_{n \to \infty} \mathcal{E}(\gamma_n) = \liminf_{n \to \infty} E(\ell_n).
    \end{equation*}
\end{proof}

\begin{lemma}
\label{lem:length_increasing_variation}
    Let $\gamma \in C^\infty(\mathbb{S}^1, \mathbb{R}^2)$ be an immersed curve whose image is not a circle. Define $\gamma_t \coloneqq \gamma + t(\kappa- \frac{2 \pi n_\gamma}{\mathcal{L}(\gamma)})\nu$. It holds
    \begin{equation*}
        \left. \frac{d}{dt} \right|_{t=0} \mathcal{L}(\gamma_t) \neq 0, \quad \left. \frac{d}{dt} \right|_{t=0} \mathcal{A}(\gamma_t) = 0,
    \end{equation*}
    and $t \mapsto \mathcal{E}(\gamma_t)$ is differentiable at $t=0$.
\end{lemma}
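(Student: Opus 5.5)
We compute the two first variations directly from the general normal-variation formulas already used in the introduction. For a variation $\partial_t\gamma_t = \xi\nu$ with $\xi \coloneqq \kappa - \frac{2\pi n_\gamma}{\mathcal{L}(\gamma)}$, we have $\partial_t \mathcal{L}(\gamma_t)|_{t=0} = \int_{\mathbb{S}^1} \nabla\mathcal{L}(\gamma)\cdot \xi\nu\, ds = -\int_{\mathbb{S}^1}\kappa\xi\, ds$ and $\partial_t \mathcal{A}(\gamma_t)|_{t=0} = -\int_{\mathbb{S}^1}\xi\, ds$. Both follow from \Cref{lem:differential_properties}~\ref{lem:differential_properties_i} for the length and from differentiating $\mathcal{A}(\gamma)=-\frac12\int\nu\cdot\gamma\,ds$ for the area, where the tangential terms integrate to zero on the closed curve.

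For the area, $\int_{\mathbb{S}^1}\kappa\,ds = 2\pi n_\gamma$ by the definition of the winding number. Hence $\int\xi\,ds = 2\pi n_\gamma - \frac{2\pi n_\gamma}{\mathcal{L}(\gamma)}\mathcal{L}(\gamma)=0$, so the area derivative vanishes.

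For the length, set $\bar\kappa \coloneqq \frac{2\pi n_\gamma}{\mathcal{L}(\gamma)}$, the mean of $\kappa$ with respect to $ds$. Since $\int\bar\kappa\,\xi\,ds = 0$, we get
\begin{equation*}
\left.\frac{d}{dt}\right|_{t=0}\mathcal{L}(\gamma_t) = -\int_{\mathbb{S}^1}\kappa(\kappa-\bar\kappa)\,ds = -\int_{\mathbb{S}^1}(\kappa-\bar\kappa)^2\,ds .
\end{equation*}
This is nonzero unless $\kappa\equiv\bar\kappa$ is constant. The main point is to exclude constant curvature. If $\kappa\equiv c\neq 0$, the Frenet equations show that $\gamma + c^{-1}\nu$ is constant, since $\partial_s(\gamma+c^{-1}\nu)=\tau - \tau = 0$. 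Thus the image lies on a circle of radius $1/\abs{c}$, and being a closed immersed curve, its image is that circle, contradicting the hypothesis. If $c=0$, then $\tau$ is constant and $\gamma$ is a straight line, which cannot be closed; this is impossible. Hence the derivative is strictly negative, in particular nonzero.

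Finally, $\gamma$ is smooth and $\xi\nu$ is smooth, so $\gamma_t = \gamma + t\xi\nu$ is a smooth family. It is immersed for $\abs{t}$ small, because $\partial_x\gamma_t\to\partial_x\gamma$ uniformly. The integrand of $\mathcal{E}(\gamma_t)$, given via \Cref{prop:a_explicit_formulation} as a smooth function of $\partial_x\gamma_t,\partial_x^2\gamma_t$ away from $\partial_x\gamma_t=0$, is therefore smooth in $(t,x)$. Differentiation under the integral sign on the compact $\mathbb{S}^1$ gives differentiability of $t\mapsto\mathcal{E}(\gamma_t)$ at $t=0$. The only step that needs genuine care is the rigidity statement that constant curvature forces a circle; everything else is routine.
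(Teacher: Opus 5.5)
Your proposal is correct and follows essentially the paper's route. You use the same first-variation formulas, and your identity $-\int_{\mathbb{S}^1}(\kappa-\bar\kappa)^2\,ds$ is just a rewriting of the paper's $-\mathcal{E}(\gamma)+\frac{4\pi^2 n_\gamma^2}{\mathcal{L}(\gamma)}<0$, obtained there via the Cauchy--Schwarz inequality. You also supply two points the paper leaves implicit: that constant curvature forces the image to be a circle, and that $t\mapsto\mathcal{E}(\gamma_t)$ is differentiable.
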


\begin{proof}
    Compute
    \begin{equation*}
        \left. \frac{d}{dt} \right|_{t=0} \mathcal{L}(\gamma_t) = -\int_{\mathbb{S}^1} \kappa \Big(\kappa- \frac{2 \pi n_\gamma}{\mathcal{L}(\gamma)}\Big)ds = - \mathcal{E}(\gamma) + \frac{4 \pi ^2 n_\gamma^2}{\mathcal{L}(\gamma)} < 0.
    \end{equation*}
    The last inequality is due to Cauchy-Schwarz and the nonconstant curvature by assumption
    \begin{equation}
    \label{eq:energy_length_inequality}
        4 \pi^2 n_\gamma^2 = \Big(\int_{\mathbb{S}^1} \kappa ds\Big)^2 < \int_{\mathbb{S}^1} \kappa^2 ds \int_{\mathbb{S}^1} ds.
    \end{equation}
    The derivative of the area is given by
    \begin{equation*}
        \left. \frac{d}{dt} \right|_{t=0} \mathcal{A}(\gamma_t) = -\int_{\mathbb{S}^1} \kappa- \frac{2 \pi n_\gamma}{\mathcal{L}(\gamma)} ds = 0.
    \end{equation*}
\end{proof}

\begin{lemma}
\label{lem:upper_semicontinuity}
    The function $E$ is upper semicontinuous on $(2\pi, \infty)$.
\end{lemma}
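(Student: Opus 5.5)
Fix $\ell \in (2\pi,\infty)$; we must show $\limsup_{n\to\infty} E(\ell_n) \le E(\ell)$ for every sequence $\ell_n \to \ell$. The plan is to take a minimizer $\gamma$ for $E(\ell)$, which exists and is smooth by \Cref{rem:existence_minimizer} and the regularity theorem. We then deform it within the admissible class (area $\pi$, winding number $1$) so that its length becomes exactly $\ell_n$, while changing the energy only by $o(1)$. Then $E(\ell_n) \le \mathcal{E}(\text{deformed curve}) \to \mathcal{E}(\gamma) = E(\ell)$.

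\textbf{Step 1 (the family is non-degenerate).} Since $\ell > 2\pi$ and $\mathcal{A}(\gamma) = \pi$, the isoperimetric inequality shows that $\gamma$ is not a round circle traversed once. Could its image be a circle traversed in some other way? An immersed closed curve with circular image and winding number $1$ is the circle traversed once, so this is excluded as well. Hence the curvature of $\gamma$ is nonconstant, and \Cref{lem:length_increasing_variation} applies to the variation $\gamma_t = \gamma + t(\kappa - \frac{2\pi}{\ell})\nu$: $\frac{d}{dt}\big|_{t=0}\mathcal{L}(\gamma_t) \ne 0$ and $\frac{d}{dt}\big|_{t=0}\mathcal{A}(\gamma_t) = 0$.

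\textbf{Step 2 (restoring the area by scaling).} The area is only preserved to first order, so I would correct it by rescaling. Set
\[
\hat\gamma_t \coloneqq \sqrt{\pi/\mathcal{A}(\gamma_t)}\,\gamma_t,
\]
which is defined for small $|t|$ because $\mathcal{A}(\gamma_t) \to \pi$. Then $\mathcal{A}(\hat\gamma_t) = \pi$. Since $\gamma$ is smooth, $t \mapsto \mathcal{L}(\gamma_t)$ and $t \mapsto \mathcal{A}(\gamma_t)$ are smooth. For small $t$, $\gamma_t$ is a $C^1$-small perturbation of $\gamma$, so it remains immersed and its winding number stays $1$; being an integer varying continuously, it cannot jump. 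Writing $\mathcal{L}(\hat\gamma_t) = \sqrt{\pi/\mathcal{A}(\gamma_t)}\,\mathcal{L}(\gamma_t)$ and using $\mathcal{A}'(0) = 0$, we get
\[
\frac{d}{dt}\Big|_{t=0}\mathcal{L}(\hat\gamma_t) = \frac{d}{dt}\Big|_{t=0}\mathcal{L}(\gamma_t) \ne 0.
\]
By the inverse function theorem, $t \mapsto \mathcal{L}(\hat\gamma_t)$ is therefore a homeomorphism from a neighbourhood of $0$ onto a neighbourhood of $\ell$.

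\textbf{Step 3 (conclusion).} For $n$ large we can choose $t_n \to 0$ with $\mathcal{L}(\hat\gamma_{t_n}) = \ell_n$, so $\hat\gamma_{t_n}$ is admissible for $E(\ell_n)$. By scaling,
\[
E(\ell_n) \le \mathcal{E}(\hat\gamma_{t_n}) = \sqrt{\mathcal{A}(\gamma_{t_n})/\pi}\;\mathcal{E}(\gamma_{t_n}).
\]
The right-hand side tends to $\mathcal{E}(\gamma) = E(\ell)$, because $\mathcal{E}(\gamma_t)$ is continuous in $t$ (it is even differentiable, by \Cref{lem:length_increasing_variation}). This gives the upper semicontinuity.

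The main obstacle is Step 1, namely verifying that the minimizer is not a circle so that the derivative of the length is nonzero. This relies on $\ell > 2\pi$ together with the isoperimetric inequality, and explains why the endpoint $2\pi$ is excluded from the statement. Beyond that, the only care needed is checking that immersion and winding number persist along the family for small $t$.
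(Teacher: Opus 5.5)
Your proposal is correct and follows essentially the same route as the paper: take a smooth minimizer, use the variation $\gamma+t(\kappa-\frac{2\pi}{\ell})\nu$, rescale by $\sqrt{\pi/\mathcal{A}(\gamma_t)}$ to restore the area, invert $t\mapsto\mathcal{L}(\hat\gamma_t)$ via the inverse function theorem, and conclude by continuity of the energy along the family. Your additional checks that immersion and winding number $1$ persist for small $t$, and that constant curvature is excluded, fill in details the paper leaves implicit.
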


\begin{proof}
    Let $\ell \in (2\pi, \infty)$ and $\gamma \in C^\infty(\mathbb{S}^1, \mathbb{R}^2)$ be a minimizer of the energy $\mathcal{E}: W^{2,2}_{\text{Imm}}(\mathbb{S}^1, \mathbb{R}^2) \to [0,\infty)$ subject to fixed length $\ell$ and area $\pi$. Since $\gamma$ cannot be a circle by the isoperimetric inequality and the winding number is fixed $n_\gamma=1$, the curvature is nonconstant. Consider the variation $\gamma_t$ from \Cref{lem:length_increasing_variation} and define $\tilde{\gamma}_t \coloneqq \sqrt{\frac{\pi}{\mathcal{A}(\gamma_t)}} \gamma_t$. We have
    \begin{equation}
    \label{eq:length_increasing_variation}
    \begin{split}
        \left. \frac{d}{dt} \right|_{t=0} \mathcal{L}(\tilde{\gamma}_t) &= \left. \frac{d}{dt} \right|_{t=0} \Big(\sqrt{\frac{\pi}{\mathcal{A}(\gamma_t)}} \mathcal{L}(\gamma_t)\Big) = \left. \frac{d}{dt} \right|_{t=0} \mathcal{L}(\gamma_t) \neq 0,\\
        \left. \frac{d}{dt} \right|_{t=0} \mathcal{E}(\tilde{\gamma}_t) &= \left. \frac{d}{dt} \right|_{t=0} \Big(\sqrt{\frac{\mathcal{A}(\gamma_t)}{\pi}} \mathcal{E}(\gamma_t)\Big) = \left. \frac{d}{dt} \right|_{t=0} \mathcal{E}(\gamma_t),
    \end{split}
    \end{equation}
    and $\tilde{\gamma}_t$ has constant area $\pi$. Since $\left. \frac{d}{dt} \right|_{t=0}\mathcal{L}(\tilde{\gamma}_t) \neq 0$, by the Inverse Function Theorem, there is a continuously differentiable inverse $\psi: (\ell-\varepsilon, \ell+\varepsilon) \to \mathbb{R}$ of $t \mapsto \mathcal{L}(\tilde{\gamma}_t)$ for $\varepsilon > 0$ onto a neighborhood of $0$. By continuity of $t \mapsto \mathcal{E}(\tilde{\gamma}_t)$ it holds
    \begin{equation*}
        \limsup_{l \to \ell}E(l) \leq \limsup_{l \to \ell} \mathcal{E}(\tilde{\gamma}_{\psi(l)}) = \limsup_{t \to 0} \mathcal{E}(\tilde{\gamma}_t) = E(\ell).
    \end{equation*}
\end{proof}

\begin{lemma}
\label{lem:upper_semicontinuity_2pi}
    The function $E$ is upper semicontinuous at $2\pi$.
\end{lemma}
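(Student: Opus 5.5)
The plan is to construct, for every length $\ell$ slightly larger than $2\pi$, an explicit competitor in \cref{eq:energy_profile_curve} whose energy is close to that of the unit circle. The upper semicontinuity at $2\pi$ then follows by comparison, since the unit circle is the only admissible curve of length $2\pi$.

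First, identify $E(2\pi)$. By the isoperimetric inequality $\mathcal{A}(\gamma)\le \mathcal{L}(\gamma)^2/(4\pi)$ quoted in the introduction, an admissible curve with $\mathcal{A}=\pi$ and $\mathcal{L}=2\pi$ attains equality, so its image is a circle. Together with $n_\gamma=1$, it is the once-traversed circle of radius $1$, possibly up to translation and parametrization. Hence $E(2\pi)=\mathcal{E}(\partial B_1)=2\pi$. The equality case for immersed, possibly non-simple, $W^{2,2}$ curves should be covered by the cited reference \cite{osserman1978isoperimetric}. If not, one only needs $E(2\pi)\ge 2\pi$, since the circle is itself a competitor. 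That lower bound follows from \Cref{prop:length_bound_below}'s argument $\mathcal{E}\ge 4\pi^2 n_\gamma^2/\mathcal{L}$, via Cauchy--Schwarz as in \cref{eq:energy_length_inequality}. Since the circle is admissible, we get $E(2\pi)=2\pi$ in any case.

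Next, take the ellipse family $\gamma_b(x)=(b\cos x,\, b^{-1}\sin x)$ for $b\ge 1$. Each $\gamma_b$ is smooth, embedded, and positively oriented, with $n_{\gamma_b}=1$ and $\mathcal{A}(\gamma_b)=\pi$. The maps $b\mapsto \mathcal{L}(\gamma_b)$ and $b\mapsto\mathcal{E}(\gamma_b)$ are continuous, because $\gamma_b$ depends smoothly on $b$ in $C^2$ and stays uniformly immersed. Moreover $\mathcal{L}(\gamma_1)=2\pi$, and $\mathcal{L}(\gamma_b)>2\pi$ for $b>1$ by the strict isoperimetric inequality. Also $\mathcal{L}(\gamma_b)\ge 4b\to\infty$.

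For $\ell>2\pi$, the intermediate value theorem gives a value $b(\ell):=\min\{b\ge1 : \mathcal{L}(\gamma_b)=\ell\}$, and this minimum exists by continuity. The key point is to show $b(\ell)\to 1$ as $\ell\downarrow 2\pi$. Fix $\eta>0$ and set $m_\eta:=\min_{b\in[1+\eta,\,B]}\mathcal{L}(\gamma_b)$. This is strictly greater than $2\pi$, where $B$ is large enough that $\mathcal{L}(\gamma_b)>m_\eta$ for $b\ge B$. Then $\ell<m_\eta$ forces $b(\ell)<1+\eta$.

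It follows that
\begin{equation*}
\limsup_{\ell\downarrow 2\pi}E(\ell)\le \limsup_{\ell\downarrow 2\pi}\mathcal{E}(\gamma_{b(\ell)})=\mathcal{E}(\gamma_1)=2\pi=E(2\pi).
\end{equation*}
Since $E$ is only defined on $[2\pi,\infty)$, this one-sided limit is exactly upper semicontinuity at $2\pi$. The main (mild) obstacle is the rigidity step $E(2\pi)=2\pi$ for non-simple immersed curves. As noted above, this is sidestepped by the Cauchy--Schwarz lower bound, which is all the argument needs.
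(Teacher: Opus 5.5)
Your proof is correct and follows essentially the same route as the paper: the same ellipse family $(b\cos x, b^{-1}\sin x)$ with area $\pi$, the intermediate value theorem to realize each length $\ell$ slightly above $2\pi$, and the comparison $E(\ell)\le\mathcal{E}(\gamma_{b(\ell)})\to 2\pi$. Beyond the paper, you justify two points it leaves implicit: $E(2\pi)=2\pi$, via the Cauchy--Schwarz bound $\mathcal{E}\ge 4\pi^2/\mathcal{L}$, and $b(\ell)\to 1$ as $\ell\downarrow 2\pi$; in the latter, fix $B$ first (for instance with $4B>\mathcal{L}(\gamma_{1+\eta})$) so that $m_\eta$ and $B$ are not defined in terms of each other.
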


\begin{proof}
    The idea is to construct an explicit sequence of curves with greater length than the circle which converge to the circle and whose energy also converges to the energy of the circle. Specifically, let $\gamma_a(x) = (a \cos(x), \frac{1}{a} \sin(x))$ for $a \geq 1$. 
    For $a \to 1$ this function converges smoothly to the circle. Observe
    \begin{equation*}
        \mathcal{A}(\gamma_a) = \int_{\mathbb{S}^1} \cos^2(x) dx = \pi.
    \end{equation*}
    The isoperimetric inequality proves that $\gamma_a$ has length greater than the circle for $a > 1$. Moreover, the map $a \mapsto \mathcal{L}(\gamma_a)$ is continuous with $\mathcal{L}(\gamma_1) = 2\pi$. Thus, for any length $\ell$ sufficiently close to $2\pi$, there is $a(\ell) \geq 1$ such that $\mathcal{L}(\gamma_{a(\ell)})=\ell$. Because $\gamma_{a(\ell)}$ has area $\pi$ and length $\ell$, it is a valid test curve for the infimum $E(\ell)$, meaning $E(\ell) \leq E(\gamma_{a(\ell)})$. Taking the $\limsup$ yields
    \begin{equation*}
        \limsup_{\ell \to 2\pi} E(\ell) \leq \lim_{a \to 1} \mathcal{E}(\gamma_a) = 2\pi = E(2\pi).
    \end{equation*}
\end{proof}

\begin{theorem}
\label{thm:energy_continuity}
    The function $E$ is continuous on $[2\pi, \infty)$ and for $\ell \to \infty$ we have $E(\ell) \to 0$.
\end{theorem}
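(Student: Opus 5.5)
Continuity on $[2\pi,\infty)$ is obtained by combining the semicontinuity results already proven. \Cref{lem:lower_semicontinuity} gives lower semicontinuity of $E$ on all of $[2\pi,\infty)$. \Cref{lem:upper_semicontinuity} gives upper semicontinuity on $(2\pi,\infty)$, and \Cref{lem:upper_semicontinuity_2pi} gives it at $2\pi$. A function that is both lower and upper semicontinuous at every point is continuous. We also note that $E$ is finite everywhere, since for each $\ell\ge 2\pi$ there is an admissible curve with area $\pi$ and length $\ell$; this is also implicit in \Cref{rem:existence_minimizer}. This settles the first claim.

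For the decay $E(\ell)\to0$ we construct explicit competitors modelled on \Cref{fig:triple_eight_energy_to_zero}. Fix a parameter $R\to\infty$ and build a closed $C^{1,1}$ (hence $W^{2,2}$) immersed curve out of four pieces:
\begin{itemize}
\item two outer circular arcs of radius $R$;
\item two inner arcs of radius $R$ forming the middle belly;
\item four straight crossing segments.
\end{itemize}
This is just the figure scaled by $R/\sqrt2$. By symmetry, the two outer bellies are traversed with one orientation and the middle belly with the opposite orientation. The algebraic area is then $A_{\mathrm{out}}(R)-A_{\mathrm{mid}}(R)$, and each term scales like $R^2$. To fix the area at exactly $\pi$, we introduce a free parameter, for instance the radius $r$ of the middle belly arcs or the opening angle of its arcs. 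We then use the intermediate value theorem to choose it so that the algebraic area equals $\pi$, while all radii stay comparable to $R$.

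We must check that the winding number is $1$. The tangent angle turns by $+2\pi$ roughly on each outer lobe and by $-2\pi$ on the middle one, giving a net of $2\pi$. Equivalently, $n_\gamma=1$ follows from the orientation pattern of the figure (outer lobes counterclockwise, middle one clockwise). The energy is $\sum \theta_i/r_i$ over the arcs, where $\theta_i$ are bounded total turning angles and $r_i\gtrsim R$, so $\mathcal{E}=O(1/R)$. The length is $\asymp R$ and depends continuously on $R$.

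The remaining issue is that these competitors produce only a continuous one-parameter family of lengths $\ell(R)\to\infty$, not every length. Continuity handles this: the map $R\mapsto\ell(R)$ is continuous and unbounded, so for all large $\ell$ there is an $R$ with $\ell(R)=\ell$. That curve is admissible for $E(\ell)$, so $E(\ell)\le C/R(\ell)\to 0$.

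The main obstacle is the area-balancing step. We need the algebraic area to be exactly $\pi$ while the curve stays immersed with $n_\gamma=1$ and all curvatures are $O(1/R)$. The plan is to keep the outer lobes as near-full circles of radius $R$ and take the middle lobe to be a slightly adjusted lens whose area differs from the outer area by exactly $\pi$. We then verify by a one-dimensional continuity argument in the adjustment parameter that the difference can be tuned to $\pi$ without the radii degenerating. If the geometry is awkward, an alternative is to start from any fixed admissible configuration and reach the remaining lengths through the continuous family $\tilde\gamma_t$ of \Cref{lem:upper_semicontinuity}.
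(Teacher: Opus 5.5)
Your proposal is correct and follows the paper's proof: continuity comes from combining \Cref{lem:lower_semicontinuity}, \Cref{lem:upper_semicontinuity} and \Cref{lem:upper_semicontinuity_2pi}, and the decay comes from the triple-eight competitors of \Cref{fig:triple_eight_energy_to_zero}, with radii tuned so that the algebraic area stays $\pi$ and the energy is bounded by that of circles whose radii tend to infinity. Your intermediate value argument for attaining every large length is a detail the paper leaves implicit; note only that the middle radius is not a slight perturbation of $R$: with all radii equal to $R$ the algebraic area is $\pi R^2$, and balancing forces a middle radius of about $1.37R$, which still keeps all radii comparable to $R$ and does not affect the argument.
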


\begin{proof}
    Continuity follows from \Cref{lem:lower_semicontinuity}, \Cref{lem:upper_semicontinuity} and \Cref{lem:upper_semicontinuity_2pi}. For the second part, we construct an explicit sequence of curves with area $\pi$, length that goes to $\infty$ and energy that goes to zero. These curves are of the form shown in \Cref{fig:triple_eight_energy_to_zero}.

    The left 'belly' is formed by three-quarters of a circle, connected to the central 'belly' via straight line segments. By matching the tangents, the resulting curve is smooth except for the 'glueing points', where it is $C^1$. Hence, it is a valid test curve in $W^{2,2}(\mathbb{S}^1, \mathbb{R}^2)$. The upper and lower parts of the central 'belly' each consist of a quarter circle. The right half of the curve is symmetric to the left half.
    By scaling the radii of the circles appropriately, we can ensure that the enclosed area of the curve remains constant, since the central 'belly' contributes negatively to the total area. Consequently, we can increase the overall size of the curve such that its length tends to infinity while the enclosed area remains fixed. The elastic energy of the curve can then be bounded from above by the energy of three circles, whose radii tend to infinity. This implies that the energy of the curve converges to zero.
\end{proof}

\Cref{thm:energy_continuity} ensures that $E: [2\pi, \infty) \to [0,\infty)$ actually attains its supremum. In order to find a noteworthy regime of convergence later on, we demonstrate that the maximum is not attained by the circle.

\begin{theorem}
\label{thm:energy_maximum}
    Denote by $\ell^*$ a global maximum of the function $E: [2\pi, \infty) \to [0, \infty)$. Then $E(\ell^*)>2\pi$, proving that the maximum is not achieved by the circle.
\end{theorem}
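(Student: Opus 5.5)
The plan is to reduce the claim to a quantitative comparison near the circle and then exhibit one length $\ell$ slightly larger than $2\pi$ with $E(\ell)>2\pi$. Since $E(2\pi)=2\pi$ (the unit circle is the only admissible curve there, by the equality case of the isoperimetric inequality), this forces the global maximum, which exists by \Cref{thm:energy_continuity}, to satisfy $E(\ell^*)\ge E(\ell)>2\pi$.

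First, for a curve $\gamma$ with $n_\gamma=1$, length $\ell$ and area $\pi$, I would use the splitting already implicit in \cref{eq:energy_length_inequality}:
\begin{equation*}
\mathcal{E}(\gamma)=\int_{\mathbb{S}^1}\Bigl(\kappa-\frac{2\pi}{\ell}\Bigr)^2 ds+\frac{4\pi^2}{\ell}.
\end{equation*}
The goal becomes the estimate
\begin{equation*}
\int_{\mathbb{S}^1}\Bigl(\kappa-\frac{2\pi}{\ell}\Bigr)^2 ds>2\pi-\frac{4\pi^2}{\ell}=\frac{2\pi(\ell-2\pi)}{\ell}
\end{equation*}
for every such curve, at least for $\ell$ close to $2\pi$. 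Since $\ell^2-4\pi\mathcal{A}=\ell^2-4\pi^2\approx 4\pi(\ell-2\pi)$, the right-hand side is, to leading order, $\frac{1}{2\pi}$ times the isoperimetric deficit. So I need a reverse, stability-type inequality bounding the isoperimetric deficit by a constant strictly smaller than $2\pi$ times the curvature oscillation $\int(\kappa-\bar\kappa)^2ds$.

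Second, I would argue by contradiction and localize near the circle. Suppose $E(\ell)\le 2\pi$ for all $\ell\in(2\pi,2\pi+\varepsilon)$, and take the smooth constant-speed minimizers $\gamma_\ell$ from \Cref{rem:existence_minimizer}. Since their energies are bounded, the argument of \Cref{lem:lower_semicontinuity} gives a subsequence converging weakly in $W^{2,2}$ to a curve of length $2\pi$ and area $\pi$, that is, to the unit circle. I also want strong $W^{2,2}$ convergence, so that $\gamma_\ell$ is a small perturbation of the circle. This should follow from $\mathcal{E}(\gamma_\ell)\le 2\pi=\mathcal{E}(\text{circle})$ together with weak lower semicontinuity, which gives convergence of the $L^2$-norms of the curvature.

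Third, I would parametrize $\gamma_\ell$ by arclength after scaling, write the tangent angle as $\theta(s)=\frac{2\pi}{\ell}s+\psi(s)$ with $\psi$ periodic and small in $W^{1,2}$, and Fourier expand $\psi$. The curvature oscillation is $\int\psi'^2$, which equals $\sum k^2|\hat\psi_k|^2$ up to normalization. The closure condition $\int e^{i\theta}=0$ kills, to first order, the modes $k=\pm1$ of $\psi$. The area expands to second order as a quadratic form in $\hat\psi$, and the Wirtinger-type computation gives the isoperimetric deficit to leading order as a weighted sum of $|\hat\psi_k|^2$ with weights comparable to $1-1/k^2$. Comparing weights mode by mode, the worst ratio occurs at $k=2$, where the curvature term beats the deficit term by a fixed factor greater than one. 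This yields $\int(\kappa-\frac{2\pi}{\ell})^2ds\ge(1+c)\,\frac{2\pi(\ell-2\pi)}{\ell}-o(\ell-2\pi)$ with $c>0$, contradicting $E(\ell)\le2\pi$ for $\ell$ close to $2\pi$.

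The main obstacle is this third step: doing the second-order expansion of area and closure rigorously with controlled cubic remainders, and tracking the constants exactly, since the conclusion depends on the sharp comparison at the mode $k=2$. If the bookkeeping becomes unwieldy, I would fall back on an explicit check. Compute $\mathcal{E}$ and $\mathcal{L}$ along the family of area-$\pi$ ellipses from \Cref{lem:upper_semicontinuity_2pi} to second order in $a-1$; this shows the circle is not a local maximum along that family. Then combine it with the spectral lower bound above, which is needed anyway, because an upper bound from test curves alone cannot give $E(\ell)>2\pi$.
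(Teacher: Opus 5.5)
Your argument is correct, but it takes a different route from the paper. The paper's proof rests on two citations. Watanabe's Lemma~7 says minimizers for $\ell\in(2\pi,\frac52\pi]$ are simple. The Bucur--Henrot theorem says the circle is the unique minimizer of $\mathcal{E}$ among simple curves of given area. Together they give $E(\ell)>2\pi$ on that whole explicit interval.

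You replace the Watanabe step by compactness: the minimizers $\gamma_\ell$ converge to the unit circle as $\ell\downarrow 2\pi$, which needs only the equality case of the isoperimetric inequality. You replace the Bucur--Henrot step by a local second-order expansion showing the circle is a nondegenerate strict local minimizer of $\mathcal{E}$ at fixed area, modulo symmetries. What this buys is self-containedness: you avoid the global optimal-drop theory and Watanabe's lemma. What it costs is that you only get a non-explicit interval $(2\pi,2\pi+\varepsilon)$, which is all the theorem needs.

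There is also a hybrid shortcut. Your Step 2 already puts $\gamma_\ell$ $C^1$-close to the circle, and simple immersed curves form a $C^1$-open set, so $\gamma_\ell$ is simple. You could invoke Bucur--Henrot right there and skip the Fourier analysis; that is essentially the paper's proof with Watanabe's lemma replaced by compactness.

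Step 3 is much less delicate than you fear. Normalize the length to $2\pi$ and write $\theta=s+\psi$. Closure and Parseval give the exact identity $\pi-\mathcal{A}=\pi\sum_{m\neq -1}\frac{m}{m+1}\bigl|\widehat{e^{i\psi}}_m\bigr|^2$. To leading order this is $\pi\sum_{k\ge 2}\frac{2k^2}{k^2-1}|\hat\psi_k|^2$, to be compared with $\int\psi'^2\,ds=4\pi\sum_{k\ge 1}k^2|\hat\psi_k|^2$. What you need is $\int\psi'^2\,ds>\pi-\mathcal{A}$, and the per-mode ratio is $2(k^2-1)\ge 6$. So you do not need exact constants, and the cubic remainders are controlled by $\|\psi-\hat\psi_0\|_{L^\infty}$ times the quadratic form.

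One small slip: $\frac{2\pi(\ell-2\pi)}{\ell}$ is, to leading order, $\frac{1}{4\pi}$ times the deficit $\ell^2-4\pi^2$, not $\frac{1}{2\pi}$. This is harmless given the margin.
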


\begin{proof}
    From \cite[Lemma~7]{Watanabe2002} it follows that a minimizer $\gamma$ of all curves in $W^{2,2}_{\text{Imm}}(\mathbb{S}^1, \mathbb{R}^2)$ with fixed area $\pi$ and length $\ell$ with $2\pi < \ell \leq \frac{5}{2} \pi$ is simple. \cite[Theorem~1.1]{bucur2014newisoperimetricinequalityelasticae} yields $\mathcal{E}(\gamma) > 2\pi$. Since $\ell^*$ is a global maximizer, it follows that $E(\ell^*) \geq E(\ell) = \mathcal{E}(\gamma) > 2 \pi$.
\end{proof}

\begin{proof}[Proof of \Cref{cor:energy_profile_convergence}]
    If there is $t \in (0, \infty)$ with $\mathcal{L}(\gamma_t) \geq \ell^*$, by continuity there exists $t_0 \leq t$ with $\mathcal{L}(\gamma_{t_0}) = \ell^*$. This implies $\mathcal{E}(\gamma_{t_0}) \geq \mathcal{E}(\ell^*) > \mathcal{E}(\gamma_0)$, which is a contradiction since the energy is non-increasing along the flow.

    The convergence result can be proven in the same way as \Cref{thm:convergence}.
\end{proof}

Using the same barrier argument, one can also establish a lower bound for the length.

\begin{corollary}
    Let $\gamma_0 \in C^\infty(\mathbb{S}^1, \mathbb{R}^2)$ with $\mathcal{A}(\gamma_0) = \pi$, $\mathcal{L}(\gamma_0) > \ell^*$ and $\mathcal{E}(\gamma_0) < E(\ell^*)$ for the maximum as in \Cref{thm:energy_maximum}. Then, the solution $\gamma$ to \cref{eq:evolution_gradient_flow} has length strictly bounded from below by $\ell^*$.
\end{corollary}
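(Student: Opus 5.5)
The plan is to mirror the proof of \Cref{cor:energy_profile_convergence}, with the inequality reversed: the maximum value $E(\ell^*)$ acts as an energy barrier that the length cannot cross from above.

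We argue by contradiction. Let $\gamma$ be the global smooth solution from \Cref{thm:global_existence}. Suppose there is a time $t\in(0,\infty)$ with $\mathcal{L}(\gamma_t)\le \ell^*$.

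First, we use continuity of the length in time. Since the flow is smooth on $[0,\infty)\times\mathbb{S}^1$, the map $t\mapsto\mathcal{L}(\gamma_t)$ is continuous. Together with $\mathcal{L}(\gamma_0)>\ell^*$, the intermediate value theorem gives some $t_0\in(0,t]$ with $\mathcal{L}(\gamma_{t_0})=\ell^*$.

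Second, we check that $\gamma_{t_0}$ is an admissible competitor in the definition \cref{eq:energy_profile_curve} of $E(\ell^*)$.
\begin{enumerate}[label=(\alph*)]
\item The area is preserved by construction of $\lambda$, so $\mathcal{A}(\gamma_{t_0})=\pi$.
\item The curve $\gamma_{t_0}$ is a smooth immersion, hence it lies in $W^{2,2}_{\text{Imm}}$.
\item The winding number is $n_{\gamma_{t_0}}=1$. Since $n_\gamma=\frac{1}{2\pi}\int\kappa\,ds$ is integer-valued and depends continuously on $t$ along a smooth family of immersions, it is constant in time.
\end{enumerate}
This requires $n_{\gamma_0}=1$. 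The corollary as stated does not impose this explicitly, so I would add it as a hypothesis, matching \Cref{cor:energy_profile_convergence}, since $E$ is only defined for curves with winding number $1$.

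With admissibility in hand, the definition of $E$ gives $\mathcal{E}(\gamma_{t_0})\ge E(\ell^*)>\mathcal{E}(\gamma_0)$. On the other hand, the energy identity \cref{eq:energy_identity} says the energy is non-increasing along the flow, so $\mathcal{E}(\gamma_{t_0})\le\mathcal{E}(\gamma_0)$. This is a contradiction. Hence $\mathcal{L}(\gamma_t)>\ell^*$ for all $t\ge0$, which is the strict lower bound claimed.

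I expect the only genuinely delicate step to be the admissibility check, specifically the preservation of the winding number (equivalently, the missing hypothesis $n_{\gamma_0}=1$). Everything else follows directly from the intermediate value theorem and the monotonicity of the energy.
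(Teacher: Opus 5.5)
Your proof is correct and is exactly the barrier argument the paper intends: it is the proof of \Cref{cor:energy_profile_convergence} with the inequality reversed, namely the intermediate value theorem to reach length $\ell^*$, then $\mathcal{E}(\gamma_{t_0})\ge E(\ell^*)>\mathcal{E}(\gamma_0)$ contradicting the monotonicity of the energy. Your remark on the winding number is also right: the paper handles $n_{\gamma_0}=1$ (and immersedness of $\gamma_0$) only implicitly, through the section-wide convention that all curves there have $n_\gamma=1$, and $n_\gamma$ is preserved along the flow by integrality and continuity, as you say.
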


\subsection{Critical points}

Numerical simulations reveal curves whose lengths diverge along the gradient flow \cref{eq:evolution_gradient_flow}, as seen in the triple-eight configuration (\Cref{fig:triple-eight}). Our goal is to derive explicit conditions for this behavior. However, the flow could also converge to a local minimum of $E: [2\pi, \infty) \to [0,\infty)$ or an area-constrained elastica. We show that local minima are area-constrained elasticae, so we can focus on the latter to understand halting mechanisms.

\begin{lemma}
\label{lem:diff_minimum}
    Let $\ell \in (2\pi, \infty)$ be a local minimum of $E$. Then, $E$ is differentiable at $\ell$ with $\left. \frac{d}{dl} \right|_{l=\ell}E(l) = 0$.
\end{lemma}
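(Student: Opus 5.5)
The plan is to squeeze $E$ near $\ell$ between two functions: $E$ itself, and a differentiable upper envelope that touches $E$ at $\ell$. This is the variation already used in the proof of \Cref{lem:upper_semicontinuity}.

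First, by \Cref{rem:existence_minimizer} and the regularity theorem, choose a smooth constant speed minimizer $\gamma$ with $\mathcal{A}(\gamma)=\pi$, $\mathcal{L}(\gamma)=\ell$ and $\mathcal{E}(\gamma)=E(\ell)$. Since $\ell>2\pi$, the isoperimetric inequality shows that $\gamma$ is not a circle. As in the proof of \Cref{lem:upper_semicontinuity}, form $\tilde\gamma_t=\sqrt{\pi/\mathcal{A}(\gamma_t)}\,\gamma_t$, where $\gamma_t$ is the variation from \Cref{lem:length_increasing_variation}.

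Then $t\mapsto\mathcal{L}(\tilde\gamma_t)$ is $C^1$ near $0$ with nonzero derivative, by \cref{eq:length_increasing_variation}. The inverse function theorem gives a $C^1$ inverse $\psi$ on $(\ell-\varepsilon,\ell+\varepsilon)$ with $\psi(\ell)=0$. The variation is smooth in $t$, so $t\mapsto\mathcal{E}(\tilde\gamma_t)$ is also $C^1$. Hence the function $g(l)\coloneqq\mathcal{E}(\tilde\gamma_{\psi(l)})$ is differentiable at $\ell$. Since $\tilde\gamma_{\psi(l)}$ is a competitor for $E(l)$ (area $\pi$, length $l$, winding number unchanged for small $t$), we have
\begin{equation*}
    E(l)\le g(l)\quad\text{for } |l-\ell|<\varepsilon,\qquad E(\ell)=g(\ell).
\end{equation*}

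Next, combine this with the local minimum property. For $l$ near $\ell$,
\begin{equation*}
    E(\ell)\le E(l)\le g(l),
\end{equation*}
so $g$ has a local minimum at $\ell$, and therefore $g'(\ell)=0$. It follows that
\begin{equation*}
    0\le E(l)-E(\ell)\le g(l)-g(\ell)=o(|l-\ell|).
\end{equation*}
Dividing by $l-\ell$ and letting $l\to\ell$ from both sides shows that $E$ is differentiable at $\ell$ with derivative $0$.

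The step that needs care is the regularity of the variation: differentiability of $t\mapsto\mathcal{E}(\tilde\gamma_t)$ and $t\mapsto\mathcal{L}(\tilde\gamma_t)$. Since $\gamma$ is smooth, $\gamma_t$ is smooth jointly in $(t,x)$ and immersed for small $t$, and $\mathcal{A}(\gamma_t)$ stays close to $\pi>0$, so the rescaling is smooth in $t$. The rest is elementary squeezing. Note that $g'(\ell)=0$ also means $\gamma$ is critical for $\mathcal{E}$ along this length-changing direction. This is what will later show that $\gamma$ is an area-constrained elastica, with the length multiplier vanishing.
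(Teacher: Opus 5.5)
Your proposal is correct and follows essentially the same route as the paper. Both arguments take the area-normalized variation $\tilde\gamma_t$ from \Cref{lem:length_increasing_variation}, reparametrize it by length via the inverse function theorem, and use the squeeze $E(\ell)\le E(l)\le \mathcal{E}(\tilde\gamma_{\psi(l)})$ to show that the one-sided difference quotients of $E$ tend to $0$. Your explicit check that the winding number stays $1$ along the variation is a detail the paper leaves implicit.
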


\begin{proof}
    Let $\gamma \in C^\infty(\mathbb{S}^1, \mathbb{R}^2)$ be a minimizer of $\mathcal{E}: W^{2,2}_{\text{Imm}}(\mathbb{S}^1, \mathbb{R}^2) \to [0,\infty)$ subject to fixed length $\ell$ and area $\pi$. Since $\gamma$ cannot be a circle by the isoperimetric inequality and the winding number is fixed $n_\gamma=1$, the curvature is nonconstant. 
    Consider the variation $\gamma_t$ from \Cref{lem:length_increasing_variation} and define $\tilde{\gamma}_t \coloneqq \sqrt{\frac{\pi}{\mathcal{A}(\gamma_t)}} \gamma_t$. We have \cref{eq:length_increasing_variation}, hence $\left. \frac{d}{dt} \right|_{t=0} \mathcal{L}(\tilde{\gamma}_t) \neq 0$,
    and $\tilde{\gamma}_t$ has constant area $\pi$. By the Inverse Function Theorem, there is a continuously differentiable inverse $\psi: (\ell-\varepsilon, \ell+\varepsilon) \to \mathbb{R}$ of $t \mapsto \mathcal{L}(\tilde{\gamma}_t)$ for $\varepsilon > 0$ onto a neighborhood of $0$. Since $\ell$ is a local minimum of $E$, there is $0 < \delta \leq \varepsilon$ with $E(l) \geq E(\ell)$ for $l \in (\ell-\delta, \ell+\delta)$. Hence, it holds
    \begin{equation*}
        \mathcal{E}(\tilde{\gamma}_{\psi(l)}) \geq E(l) \geq E(\ell) = \mathcal{E}(\tilde{\gamma}_{\psi(\ell)})
    \end{equation*}
    for $l \in (\ell-\delta, \ell+\delta)$. To show differentiability, we consider the one-sided limits of the difference quotient. For $l > \ell$, dividing by $l-\ell > 0$ preserves the inequality
    \begin{equation*}
        0 \leq \frac{E(l)-E(\ell)}{l-\ell} \leq \frac{\mathcal{E}(\tilde{\gamma}_{\psi(l)})-\mathcal{E}(\tilde{\gamma}_{\psi(\ell)})}{l-\ell}.
    \end{equation*}
    Taking the limit as $l \searrow \ell$, the right-hand side goes to $\left. \frac{d}{dl} \right|_{l=\ell} \mathcal{E}(\tilde{\gamma}_{\psi(l)}) = 0$, since $l \mapsto \mathcal{E}(\tilde{\gamma}_{\psi(l)})$ is differentiable and attains a local minimum at $\ell$. Thus, the right derivative of $E$ is $0$.

    For $l < \ell$, dividing by $l-\ell < 0$ reverses the inequalities
    \begin{equation*}
        0 \geq \frac{E(l)-E(\ell)}{l-\ell} \geq \frac{\mathcal{E}(\tilde{\gamma}_{\psi(l)})-\mathcal{E}(\tilde{\gamma}_{\psi(\ell)})}{l-\ell}.
    \end{equation*}
    Taking the limit as $l \nearrow \ell$, the right-hand side again goes to $0$, meaning the left derivative of $E$ is also $0$. 

    Since both one-sided limits exist and equal $0$, $E$ is differentiable at $\ell$ with $\left. \frac{d}{dl} \right|_{l=\ell} E(l) = 0$.
\end{proof}

\begin{theorem}
\label{thm:critical_point_area_constrained_elastica}
    If $E$ is differentiable at some $\ell \in (2\pi, \infty)$ with $\left. \frac{d}{dl} \right|_{l=\ell} E(l) = 0$, then every minimizer $\gamma$ of $\mathcal{E}$ among all curves with length $\ell$ and area $\pi$ is an area-constrained elastica.
\end{theorem}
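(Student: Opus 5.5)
The minimizer $\gamma$ is smooth. We know it satisfies the Euler--Lagrange equation with two multipliers,
\begin{equation*}
    2\partial_s^2\kappa + \kappa^3 + \lambda_1 + \lambda_2\kappa = 0,
\end{equation*}
from the regularity theorem above. Strictly speaking, to derive this equation we need the two constraints to be independent at $\gamma$. The variation of \Cref{lem:length_increasing_variation} provides exactly this: $\gamma$ is not a circle, since $\ell>2\pi$ and $\mathcal{A}(\gamma)=\pi$, so that variation changes length while preserving area to first order. Our goal is therefore to show $\lambda_2=0$, which makes $\gamma$ an area-constrained elastica.

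To do so, we test the Euler--Lagrange equation against a one-parameter family that moves along the profile curve. Take the variation $\tilde\gamma_t=\sqrt{\pi/\mathcal{A}(\gamma_t)}\,\gamma_t$ built from $\gamma_t=\gamma+t\xi\nu$ with $\xi=\kappa-\frac{2\pi}{\ell}$, as in the proof of \Cref{lem:upper_semicontinuity}. It has constant area $\pi$, and $L'(0)\coloneqq\frac{d}{dt}|_{t=0}\mathcal{L}(\tilde\gamma_t)\neq0$. We then have $E(\mathcal{L}(\tilde\gamma_t))\le\mathcal{E}(\tilde\gamma_t)$ with equality at $t=0$. Since $E$ is differentiable at $\ell$ with zero derivative and $t\mapsto\mathcal{L}(\tilde\gamma_t)$ is $C^1$, the composition $t\mapsto E(\mathcal{L}(\tilde\gamma_t))$ is differentiable at $0$ with derivative $0$. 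The difference $\mathcal{E}(\tilde\gamma_t)-E(\mathcal{L}(\tilde\gamma_t))$ is nonnegative, vanishes at $t=0$, and is differentiable there, so its derivative at $0$ is zero. Hence $\frac{d}{dt}|_{t=0}\mathcal{E}(\tilde\gamma_t)=0$, which by \cref{eq:length_increasing_variation} means $\frac{d}{dt}|_{t=0}\mathcal{E}(\gamma_t)=0$.

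Next we evaluate this derivative using the Euler--Lagrange equation. For a normal variation $\xi\nu$ we have $\frac{d}{dt}\mathcal{E}=\int(2\partial_s^2\kappa+\kappa^3)\xi\,ds=-\int(\lambda_1+\lambda_2\kappa)\xi\,ds$. The term $\lambda_1\int\xi\,ds$ vanishes because the variation preserves area, i.e.\ $\int\xi\,ds=0$. The remaining term is $-\lambda_2\int\kappa\xi\,ds=\lambda_2\frac{d}{dt}|_{t=0}\mathcal{L}(\gamma_t)$, and this length derivative is nonzero by \Cref{lem:length_increasing_variation}. So $0=\lambda_2\cdot(\text{nonzero})$, giving $\lambda_2=0$.

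The main obstacle is the rigorous justification of the Lagrange multiplier rule for the two constraints in the $W^{2,2}$ setting. The earlier regularity proof took this equation only formally. The plan is to apply the standard multiplier theorem for $C^1$ functionals on a Banach space, since $\mathcal{E},\mathcal{L},\mathcal{A}$ are $C^1$ on $W^{2,2}_{\text{Imm}}$. This requires the derivatives of $\mathcal{L}$ and $\mathcal{A}$ to be linearly independent at $\gamma$, i.e.\ $\kappa$ is not constant, which holds because $\gamma$ is not a circle. A further subtlety is that the equation is written with respect to $ds$ for a constant-speed minimizer; we would handle it by working with normal variations only.
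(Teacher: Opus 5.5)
Your proposal is correct and follows essentially the paper's proof. It uses the same two-multiplier Euler--Lagrange equation, the same area-normalized variation $\tilde\gamma_t$, and the same comparison $E(\mathcal{L}(\tilde\gamma_t))\le\mathcal{E}(\tilde\gamma_t)$ to get $\left.\frac{d}{dt}\right|_{t=0}\mathcal{E}(\gamma_t)=0$ and hence $\lambda_2=0$. Your chain-rule formulation replaces the paper's inverse-function/one-sided difference quotients, and plugging directly into the Euler--Lagrange equation replaces the paper's $2\times2$ linear system; these are streamlinings of the same argument, and your justification of the multiplier rule via nonconstancy of $\kappa$ supplies a step the paper only asserts.
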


\begin{proof}
    Since $\gamma$ is a minimizer among all curves with area $a$ and length $\ell$, there are $\lambda_1, \lambda_2 \in \mathbb{R}$ with
    \begin{equation*}
        \nabla \mathcal{E}(\gamma) + \lambda_1 \nabla \mathcal{A}(\gamma) + \lambda_2 \nabla \mathcal{L}(\gamma) = 0.
    \end{equation*}
    Testing the Euler-Lagrange equation with the normal variations $\nu, \kappa \nu$ gives
    \begin{equation*}
    \begin{split}
        \int_{\mathbb{S}^1} 2 \partial_s^2 \kappa + \kappa^3ds &= \mathcal{L}(\gamma) \lambda_1 + 2 \pi n_\gamma \lambda_2,\\
        \int_{\mathbb{S}^1} (2 \partial_s^2 \kappa + \kappa^3) \kappa ds &= 2 \pi n_\gamma \lambda_1 + \mathcal{E}(\gamma) \lambda_2.
    \end{split}
    \end{equation*}
    Hence, we look for a solution of the linear system of equations
    \begin{equation}
    \label{eq:linear_system}
        \begin{pmatrix}
            \mathcal{L}(\gamma) & 2 \pi n_\gamma\\
            2 \pi n_\gamma & \mathcal{E}(\gamma)
        \end{pmatrix}
        \begin{pmatrix}
            \lambda_1\\
            \lambda_2
        \end{pmatrix}
        = \begin{pmatrix}
            \int_{\mathbb{S}^1} 2 \partial_s^2 \kappa + \kappa^3 ds\\
            \int_{\mathbb{S}^1} (2 \partial_s^2 \kappa + \kappa^3) \kappa ds
        \end{pmatrix}.
    \end{equation}
    We only consider curves with $n_\gamma=1$. Hence, from $\ell \neq 2\pi$ it follows that the curvature is nonconstant.
    Compute
    \begin{equation*}
        \det \begin{pmatrix}
            \mathcal{L}(\gamma) & 2 \pi n_\gamma\\
            2 \pi n_\gamma & \mathcal{E}(\gamma)
        \end{pmatrix}
        = \mathcal{L}(\gamma) \mathcal{E}(\gamma) - 4 \pi^2 n_\gamma^2 > 0
    \end{equation*}
    by \cref{eq:energy_length_inequality}. So there is a unique solution, which we compute now.
    Consider the variation $\gamma_t$ from \Cref{lem:length_increasing_variation} and define $\tilde{\gamma}_t \coloneqq \sqrt{\frac{\pi}{\mathcal{A}(\gamma_t)}} \gamma_t$. It holds \cref{eq:length_increasing_variation} and $\tilde{\gamma}_t$ has constant area $\pi$. The Inverse Function Theorem yields a continuously differentiable inverse of $t \mapsto \mathcal{L}(\tilde{\gamma}_t)$ named $\psi: (\ell -\varepsilon, \ell + \varepsilon) \to \mathbb{R}$ for some $\varepsilon > 0$ onto a neighborhood of $0$. Compute
    \begin{equation*}
        \left. \frac{d}{dl} \right|_{l=\ell} \mathcal{E}(\tilde{\gamma}_{\psi(l)}) = \psi'(\ell) \left. \frac{d}{dt} \right|_{t=0} \mathcal{E}(\tilde{\gamma}_t) = \frac{1}{\left. \frac{d}{dt} \right|_{t=0} \mathcal{L}(\tilde{\gamma}_t)} \left. \frac{d}{dt} \right|_{t=0} \mathcal{E}(\tilde{\gamma}_t).
    \end{equation*}
    Observe for $l \in (\ell, \ell+\varepsilon)$
    \begin{equation*}
        \frac{\mathcal{E}(l)-\mathcal{E}(\ell)}{l-\ell} \leq \frac{\mathcal{E}(\tilde{\gamma}_{\psi(l)})-\mathcal{E}(\tilde{\gamma}_{\psi(\ell)})}{l-\ell}.
    \end{equation*}
    Taking the limit $l \searrow \ell$ shows $0 \leq \lim_{l \searrow \ell} \frac{\mathcal{E}(\tilde{\gamma}_{\psi(l)})-\mathcal{E}(\tilde{\gamma}_{\psi(\ell)})}{l-\ell}$. Similarly, the inequality reverses for $l \nearrow \ell$ due to the division by a negative number $0 \geq \lim_{l \nearrow \ell} \frac{\mathcal{E}(\tilde{\gamma}_{\psi(l)})-\mathcal{E}(\tilde{\gamma}_{\psi(\ell)})}{l-\ell}$. Therefore, it holds $\left. \frac{d}{dt} \right|_{t=0} \mathcal{E}(\tilde{\gamma}_t) = \left. \frac{d}{dl} \right|_{l=\ell} \mathcal{E}(\tilde{\gamma}_{\psi(l)}) = 0$. We obtain together with \cref{eq:length_increasing_variation} and the definition of the variation from \Cref{lem:length_increasing_variation}
    \begin{equation*}
    \begin{split}
        0 = \left. \frac{d}{dt} \right|_{t=0} \mathcal{E}(\tilde{\gamma}_t) = \left. \frac{d}{dt} \right|_{t=0} \mathcal{E}(\gamma_t) &= \int_{\mathbb{S}^1} (2 \partial_s^2 \kappa + \kappa^3) \Big(\kappa- \frac{2 \pi n_\gamma}{\mathcal{L}(\gamma)}\Big)ds,\\
        \frac{2 \pi n_\gamma}{\mathcal{L}(\gamma)}\int_{\mathbb{S}^1} 2 \partial_s^2 \kappa + \kappa^3 ds &= \int_{\mathbb{S}^1} (2 \partial_s^2 \kappa + \kappa^3) \kappa ds.
    \end{split}
    \end{equation*}
    Consequently, the unique solution to \cref{eq:linear_system} is given by $\lambda_1 = \frac{1}{\mathcal{L}(\gamma)} \int_{\mathbb{S}^1} 2 \partial_s^2 \kappa + \kappa^3 ds, \lambda_2 = 0$.
\end{proof}

Let us analyze area-constrained elastica. Suppose $\gamma \in C^\infty(\mathbb{S}^1, \mathbb{R}^2)$ is an area-constrained elastica with constant speed and area $\pi$, i.e.
\begin{equation}
\label{eq:area_constrained_elastica}
    2 \partial_s^2 \kappa + \kappa^3 + \lambda = 0
\end{equation}
for $\lambda \in \mathbb{R}$. Rescaling $\gamma$ by $\big(-\frac{\lambda}{2}\big)^{\frac{1}{3}}$ and considering the parametrization $[0,\mathcal{L}(\gamma)]$ of $\mathbb{S}^1$, we obtain
\begin{equation}
\label{eq:area_constrained_elastica_normalized}
    -\kappa'' - \frac{1}{2}\kappa^3 + 1 = 0.
\end{equation}
Since solutions to \cref{eq:area_constrained_elastica} are rescalings of solutions to \cref{eq:area_constrained_elastica_normalized}, it suffices to analyze the latter. This has already been done in \cite{bucur2014newisoperimetricinequalityelasticae} for the analysis of the optimal drop. \cite{bucur2014newisoperimetricinequalityelasticae} derived the following optimality conditions, where (3) and (4) are obtained using that a solution $\kappa$ to the equation \cref{eq:area_constrained_elastica_normalized} has to correspond to a closed curve $\gamma$.

\begin{theorem}[{\cite[Theorem~2.5]{bucur2014newisoperimetricinequalityelasticae}}]
\label{thm:optimality_conditions}
    Let $\ell>0$ and $\gamma \in W^{2,2}_{\text{Imm}}((0,\ell), \mathbb{R}^2)$ be a weak solution of \cref{eq:area_constrained_elastica_normalized} parametrized by arc-length. Then, $\gamma \in C^\infty((0,\ell), \mathbb{R}^2)$ and 
    \begin{enumerate}[label=(\arabic*)]
        \item $\kappa'' = -\frac{1}{2} \kappa^3 + 1$,
        \item $\kappa'^2 = -\frac{1}{4} \kappa^4 + 2 \kappa + 2C$ for some constant $C \in \mathbb{R}$,
        \item there is $p \in \mathbb{R}^2$ such that for all $x \in (0, \ell)$ it holds $\abs{\gamma(x)-p}^2 = 2 \kappa(x) + 2C$ for some constant $C$,
        \item there is $p \in \mathbb{R}^2$ such that for all $x \in (0, \ell)$ it holds $(p-\gamma(x))\cdot \nu(x) = \frac{1}{2} \kappa^2(x)$.
    \end{enumerate}
\end{theorem}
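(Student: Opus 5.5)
The plan is to treat the four assertions in the order listed: first establish smoothness, then obtain the first integral (2) by a standard energy-type argument, and finally derive (3) and (4) by exhibiting an explicit conserved vector $p$ built from $\gamma$, $\tau$, $\nu$, $\kappa$ and $\kappa'$.

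\emph{Regularity.} I would reuse the bootstrapping argument from the regularity theorem for constrained minimizers above. In arc-length parametrization, the weak form of \cref{eq:area_constrained_elastica_normalized} can be written, as in \cref{eq:bootstrapping}, as
\begin{equation*}
\int \partial_s^2\gamma\cdot\partial_s^2\varphi\,ds=\int f\cdot\partial_s\varphi\,ds,
\end{equation*}
where $f$ is built from $\abs{\partial_s^2\gamma}^2\partial_s\gamma$ and $Q\gamma$, now tested against $\varphi$ compactly supported in $(0,\ell)$. Then $f\in L^1$ gives $\partial_s^3\gamma\in L^1$, hence $\partial_s^2\gamma$ is continuous. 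Consequently $f$ is continuous and $\gamma\in C^3$, and iterating gives $\gamma\in C^\infty((0,\ell),\mathbb{R}^2)$. Once $\gamma$ is smooth, the weak equation holds pointwise, which is (1).

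\emph{First integral.} For (2), I multiply (1) by $2\kappa'$ and get
\begin{equation*}
(\kappa'^2)'=-\kappa^3\kappa'+2\kappa'=\Big(-\tfrac14\kappa^4+2\kappa\Big)'.
\end{equation*}
Integrating over the connected interval $(0,\ell)$ yields $\kappa'^2=-\frac14\kappa^4+2\kappa+2C$ for some constant $C$.

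\emph{Conserved vector.} For (3) and (4), I look for a vector of the form $p(x)=\gamma+a\nu+b\tau$ with $p'\equiv0$. Using the Frenet equations $\tau'=\kappa\nu$ and $\nu'=-\kappa\tau$, I compute
\begin{equation*}
p'=(1-a\kappa+b')\tau+(a'+b\kappa)\nu.
\end{equation*}
I then choose $a=\frac12\kappa^2$ and $b=-\kappa'$. The normal component is $a'+b\kappa=\kappa\kappa'-\kappa'\kappa=0$. For the tangential component, (1) gives $b'=-\kappa''=\frac12\kappa^3-1$, so $1-\frac12\kappa^3+\frac12\kappa^3-1=0$. Hence
\begin{equation*}
p=\gamma+\tfrac12\kappa^2\nu-\kappa'\tau
\end{equation*}
is constant on $(0,\ell)$.

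Taking the inner product of $p-\gamma$ with $\nu$ gives $(p-\gamma)\cdot\nu=\frac12\kappa^2$, which is (4). Taking the squared norm gives $\abs{p-\gamma}^2=\frac14\kappa^4+\kappa'^2$, and inserting (2) yields $\abs{\gamma-p}^2=2\kappa+2C$ with the same $C$ and the same $p$ as in (4), which is (3).

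\emph{Expected obstacle.} The only delicate point is the weak formulation and regularity step, in particular making sure that the area term appears as $\lambda_1 Q\gamma$ tested against $\partial_s\varphi$ on an open interval, and that the constant of integration in the bootstrap is harmless. After smoothness is in hand, the remaining steps are direct computations. The curvature signs in $p$ have to match the inner-normal Frenet convention $\partial_s\nu=-\kappa\tau$ used in the paper; with the opposite convention the signs of $a$ and $b$ would change.
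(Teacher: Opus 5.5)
Your proof is correct. It is also more self-contained than the paper's treatment: the paper gives no argument of its own, only citing Bucur--Henrot, and remarks that (3) and (4) are obtained using that the solution corresponds to a closed curve.

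Your conserved vector $p=\gamma+\frac12\kappa^2\nu-\kappa'\tau$ takes a different route. Equivalently, $2\partial_s\kappa\,\nu+\kappa^2\tau-2Q\gamma=-2Qp$ is constant, which is the translation-type first integral; it follows from the identity $\partial_s(2\partial_s\kappa\,\nu+\kappa^2\tau)=(2\partial_s^2\kappa+\kappa^3)\nu$ and $\partial_s(Q\gamma)=\nu$. This shows that (3) and (4) are purely local consequences of (1). They hold on any interval with no closedness assumption, which matches the statement as posed on $(0,\ell)$. Your construction also yields the same $p$ and the same $C$ in (2)--(4) automatically, which is the content of the remark following the theorem.

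This locality is what the paper actually needs later. In the proof of \Cref{lem:area_constrained_elastica_properties}, (3) is applied to the curve generated by the ODE solution in order to prove that this curve closes up. A derivation of (3) that relied on closedness would therefore be circular there; yours is not. The citation route buys only brevity.

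Two minor remarks on your regularity step, neither a gap:
\begin{enumerate}
\item With test functions compactly supported in $(0,\ell)$, you first get $\int(\partial_s^2\gamma+F)\cdot\partial_s^2\varphi\,ds=0$ with $F'=f$. So $\partial_s^2\gamma+F$ is affine, i.e.\ $\partial_s^3\gamma=-f+c$, which is as harmless as you say.
\item The sign of the area multiplier in the weak formulation must be fixed so that the normal component of the strong equation reads $\kappa''+\frac12\kappa^3-1=0$. Its tangential component vanishes identically.
\end{enumerate}
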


\begin{remark}[{\cite[Remark~2.6]{bucur2014newisoperimetricinequalityelasticae}}]
    The constant $C$ in (2) and (3) is identical, as is the point $p$ in (3) and (4).
\end{remark}

\begin{lemma}
\label{lem:bound_C}
    Let $\gamma \in W^{2,2}_{\text{Imm}}(\mathbb{S}^1, \mathbb{R}^2)$ be a weak solution of \cref{eq:area_constrained_elastica_normalized} parametrized by arc-length which is not a circle. Then $C \geq 0$ for the constant in (2) and (3).
\end{lemma}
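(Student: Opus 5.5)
The plan is to use the first integral (2) together with the closedness of the curve, by way of the geometric identity (3). Write $F(\kappa) \coloneqq -\frac14\kappa^4 + 2\kappa + 2C$, so that (2) reads $\kappa'^2 = F(\kappa)$, and (3) reads $\abs{\gamma(x)-p}^2 = 2\kappa(x) + 2C$. Since $\gamma$ is not a circle, $\kappa$ is nonconstant. Hence $\kappa$ is a nonconstant periodic solution of the autonomous ODE (1), and it oscillates between two simple roots $\kappa_{\min} < \kappa_{\max}$ of $F$, with $F > 0$ strictly between them.

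Suppose for contradiction that $C < 0$. Identity (3) forces $2\kappa + 2C \geq 0$ along the curve, so $\kappa \geq -C > 0$ everywhere. In particular $\kappa_{\min} \geq -C > 0$.

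I would get the contradiction by combining this with (4). Since $\kappa > 0$ everywhere, the tangent angle $\theta$ is strictly increasing, so $\gamma$ is locally convex. Identity (4) says $(p-\gamma)\cdot\nu = \frac12\kappa^2 > 0$, so the support-type function with respect to $p$ is positive. I would then evaluate (3) and (4) at a point where $\abs{\gamma - p}$ attains its minimum. At such a point $\gamma - p$ is parallel to $\nu$, and since $(p-\gamma)\cdot\nu > 0$ we get $p - \gamma = \abs{\gamma - p}\,\nu$. Then (3) and (4) give
\begin{equation*}
\tfrac12\kappa^2 = \abs{\gamma-p} = \sqrt{2\kappa + 2C}.
\end{equation*}
Squaring gives $\frac14\kappa^4 = 2\kappa + 2C$, i.e. $F(\kappa) = 0$, which is consistent with $\kappa$ being at an extremum there.

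The same holds at a maximum of $\abs{\gamma - p}$, so both roots satisfy $\frac12\kappa_i^2 = \sqrt{2\kappa_i + 2C}$. This alone gives no contradiction, so the real leverage has to come from the sign of $C$ in $F$.

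The key step is to show that when $C < 0$, $F$ is positive on an interval $(\kappa_{\min}, \kappa_{\max})$ with $\kappa_{\min} > 0$, and that the resulting closed curve would satisfy an inconsistent area or rotation identity. The identity I would use comes from integrating (4) over one period:
\begin{equation*}
\int (p-\gamma)\cdot\nu\, ds = 2\mathcal A = 2\pi.
\end{equation*}
This follows from the formula for $\mathcal A$ (the $p$-term integrates to zero) and gives $\int \kappa^2\, ds = 4\pi$. Next I would integrate (1) over the closed curve, giving $\int \kappa^3\, ds = 2\mathcal L$, and multiply (1) by $\kappa$ and integrate by parts, giving $\int \kappa'^2\, ds = \frac12\int\kappa^4\, ds - \int \kappa\, ds$. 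Integrating (2) gives a further relation:
\begin{equation*}
\int \kappa'^2\, ds = -\frac14\int\kappa^4\, ds + 2\int\kappa\, ds + 2C\mathcal L.
\end{equation*}
Eliminating $\int\kappa'^2$ between the last two yields
\begin{equation*}
2C\mathcal L = \frac34\int\kappa^4\, ds - 3\int\kappa\, ds.
\end{equation*}
Since $\kappa > 0$, Hölder's inequality (or convexity of $\kappa^4$) applied together with $\int\kappa^3 = 2\mathcal L$ should show that the right-hand side is nonnegative, which contradicts $C < 0$.

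The main obstacle is exactly this final inequality $\frac34\int\kappa^4 \geq 3\int\kappa$. The first attempt is Chebyshev or Jensen: $\int\kappa^4 \cdot \int\kappa \geq (\int\kappa^{5/2})^2$ is the wrong direction, so instead I would use $\kappa^4 - 4\kappa \geq c(\kappa^3 - 2)$ with a suitable tangent-line choice at $\kappa = 2^{1/3}$, where $\kappa^3 - 2$ integrates to zero. Concretely, I expect $\kappa^4 - 4\kappa - 2^{1/3}(\kappa^3 - 2) \geq 0$ for $\kappa > 0$, after checking that the minimum is at $\kappa = 2^{1/3}$; this needs verification. If it holds, integrating gives $\int(\kappa^4 - 4\kappa)\, ds \geq 0$, hence $C \geq 0$.
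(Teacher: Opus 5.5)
Your algebra up to $2C\mathcal L=\tfrac34\int\kappa^4\,ds-3\int\kappa\,ds$ is correct. Reading $\kappa\ge -C>0$ off (3) when $C<0$ is also correct, and it is a neat replacement for the paper's monotonicity of the smaller root $\kappa_m(C)$. The final step, however, is false and cannot be repaired. The constant $\kappa\equiv 2^{1/3}$ satisfies $\kappa>0$ and $\int(\kappa^3-2)\,ds=0$, yet $\kappa^4-4\kappa=-2^{4/3}<0$. So your pointwise bound $\kappa^4-4\kappa-c(\kappa^3-2)\ge0$ fails at $\kappa=2^{1/3}$ for \emph{every} choice of $c$, and $\int\kappa^4\,ds\ge4\int\kappa\,ds$ does not follow from the constraints you use. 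This is no accident: $\kappa\equiv2^{1/3}$ is the circle solving the normalized equation, and it has $C=-\tfrac34\, 2^{1/3}<0$. That is exactly why the lemma excludes circles.

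Nonconstancy does not save the argument either. For each $C\in(-\tfrac34\, 2^{1/3},0)$ the first integral (2) has nonconstant periodic solutions oscillating between two positive roots. Every identity you derive holds over one period $T$ for them. Indeed, $p=\gamma-\kappa'\tau+\tfrac12\kappa^2\nu$ is constant along \emph{any} solution, closed or not, so (3) and (4) encode nothing about the curve closing up. For these solutions your identity gives $\tfrac34\int_0^T\kappa^4\,ds-3\int_0^T\kappa\,ds=2CT<0$. Hence no combination of (1)--(4) integrated over periods, together with $\kappa>0$, can yield $C\ge0$.

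What is missing is the global closing condition with winding number one. The paper uses it as follows. For $C<0$ one has $\kappa>0$, so the closed curve with $\int\kappa\,ds=2\pi$ is convex. After translating $p$ to the origin, (4) reads $-\gamma\cdot\nu=\tfrac12\kappa^2$. By \cite[Theorem~1.5]{andrews2002} the only convex solutions of this equation are circles, which is a contradiction.

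An ODE route would instead have to show that the turning per period, $\int_0^T\kappa\,ds$, is never of the form $2\pi/m$ with $m\in\mathbb N$ when $C<0$. That requires a genuine estimate on this period integral, not the integral identities.

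A minor slip: integrating (4) gives $\int\kappa^2\,ds=4\mathcal A$, not $4\pi$, since the area of a solution of the normalized equation is not prescribed.
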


\begin{proof}
    From \cite[Appendix]{bucur2014newisoperimetricinequalityelasticae} we know that a solution to $\kappa'^2 = -\frac{1}{4} \kappa^4 + 2 \kappa + 2C$ on $\mathbb{R}$ is periodic. On each period there is only one local minimum $\kappa_m$, which is actually a global minimum, and similarly the only local maximum $\kappa_M$ is a global maximum. They are the two real roots of the polynomial $P_C(x) = -\frac{1}{4} x^4 + 2x + 2C$. The mapping $C \mapsto \kappa_m(C)$ is decreasing and it holds $\kappa_m(0) = 0$. So if $C < 0$, then $\kappa_m(C) > 0$ and the associated curve $\gamma$ is convex. \cite[Theorem~1.5]{andrews2002} showed that the only convex solution to $-\gamma \cdot \nu = \frac{1}{2} \kappa^2$ are circles. This condition is precisely, up to translation, condition (4) of \Cref{thm:optimality_conditions}. Hence, $C \geq 0$.
\end{proof}

\begin{lemma}
\label{lem:area_constrained_elastica_properties}
    Let $\gamma$ be as in \Cref{lem:bound_C}. Then, $\kappa$ is periodic with number of periods $m \geq 2$. Conversely, for each $m \geq 2$ there is exactly one solution $\kappa$ to \cref{eq:area_constrained_elastica_normalized} with $m$ periods, which corresponds to a closed curve $\gamma$. Furthermore, the following estimates hold
    \begin{equation*}
    \begin{split}
        \mathcal{E}(\gamma) &\geq \sqrt{\frac{11}{6}}m\pi,\\
        \mathcal{A}(\gamma) &= \frac{1}{4} \mathcal{E}(\gamma) \geq \sqrt{\frac{11}{96}}m\pi.
        % \mathcal{L}(\gamma) &\geq \frac{2m(\kappa_M-\kappa_m)}{\sqrt{2^{-\frac{2}{3}}(4m^2-1)^{\frac{2}{3}}-2^{\frac{4}{3}} (4m^2-1)^{-\frac{4}{3}}+2^{-\frac{2}{3}}3}}.
    \end{split}
    \end{equation*}
\end{lemma}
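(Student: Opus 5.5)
The plan is to work entirely with the first integral from \Cref{thm:optimality_conditions} and to reduce every claim to one-dimensional integrals over a single period of $\kappa$.

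\textbf{Periodicity and the closing condition.} By \Cref{lem:bound_C} we have $C\ge 0$. As recalled in its proof from \cite[Appendix]{bucur2014newisoperimetricinequalityelasticae}, $\kappa$ then oscillates periodically between the two real roots $\kappa_m(C)\le 0<\kappa_M(C)$ of $P_C$. Its period and its turning angle per period are
\begin{equation*}
	S(C)=2\int_{\kappa_m}^{\kappa_M}\frac{d\kappa}{\sqrt{P_C(\kappa)}},\qquad \Theta(C)=2\int_{\kappa_m}^{\kappa_M}\frac{\kappa\,d\kappa}{\sqrt{P_C(\kappa)}}.
\end{equation*}
Condition (3) of \Cref{thm:optimality_conditions} expresses $\abs{\gamma-p}$ through $\kappa$, and condition (4) fixes the normal component of $\gamma-p$. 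Together they determine $\gamma-p$ pointwise from $\kappa$ and $\theta$. Hence $\gamma$ closes as soon as the tangent angle closes, and this happens exactly when the curve consists of an integer number $m$ of periods with $m\,\Theta(C)=2\pi n_\gamma$. For the elasticae considered here $n_\gamma=1$, so the condition becomes $\Theta(C)=2\pi/m$.

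\textbf{Why $m\ge 2$ and uniqueness for each $m$.} Next I analyse the range of $\Theta$ on $[0,\infty)$ and extend it to the degenerate endpoint near the circle. Linearising $\kappa''=-\tfrac12\kappa^3+1$ at $\kappa_0=2^{1/3}$ gives frequency $\sqrt{3/2}\,\kappa_0$. The turning angle per period therefore tends to $2\pi\sqrt{2/3}<2\pi$ as the orbit shrinks to the constant solution. The step I expect to be the main obstacle is proving that $\Theta$ is strictly monotone (decreasing) in $C$ with $\Theta(C)\to 0$ as $C\to\infty$. Since $\Theta$ is an abelian-type integral, I would first try to differentiate under the integral after the substitution $\kappa=\kappa_m+(\kappa_M-\kappa_m)\sin^2\phi$, which removes the endpoint singularities. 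I would then try to sign the derivative, following the monotonicity argument used for the drop in \cite{bucur2014newisoperimetricinequalityelasticae}. Granting monotonicity, $\Theta(C)<2\pi$ rules out $m=1$. Moreover, each value $2\pi/m$ with $m\ge 2$ lies in the range of $\Theta$ and is attained exactly once, which gives existence and uniqueness.

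\textbf{Area and energy estimates.} For the area identity I use condition (4) and the fact that area is translation invariant:
\begin{equation*}
	\mathcal A(\gamma)=\frac12\int (p-\gamma)\cdot\nu\,ds=\frac14\int\kappa^2\,ds=\frac14\mathcal E(\gamma).
\end{equation*}
The energy is $m$ times the energy of one period, namely $2\int_{\kappa_m}^{\kappa_M}\kappa^2/\sqrt{P_C(\kappa)}\,d\kappa$. For the lower bound $\sqrt{11/6}\,\pi$ per period, which is uniform in $C\ge0$, I would bound $P_C$ from above on $[\kappa_m,\kappa_M]$ by a quadratic $c(\kappa_M-\kappa)(\kappa-\kappa_m)$. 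This turns the integral into an explicit arcsine-type integral expressed through $\kappa_m$ and $\kappa_M$. I would then minimise the result over $C\ge0$ using $\kappa_M\ge 2^{1/3}$ and the relations between the roots coming from $P_C$. The area bound then follows from $\mathcal A=\mathcal E/4$.
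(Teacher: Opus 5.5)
\textbf{Closure and energy.} Your closure argument is sound and cleaner than the paper's symmetry argument. For any arc-length curve with $\kappa''=-\frac12\kappa^3+1$, the point $p=\gamma-\kappa'\tau+\frac12\kappa^2\nu$ is constant, as one sees by differentiating. So $\gamma$ closes after $m$ full periods exactly when $m\Theta(C)\in2\pi\mathbb Z$. Note that (3) and (4) alone fix the tangential component of $\gamma-p$ only up to sign; the sign is $\kappa'$, obtained by differentiating (3). Your energy route is also a legitimate alternative to citing the Bucur--Henrot constant. It uses the factorisation $P_C(k)=\frac14(\kappa_M-k)(k-\kappa_m)Q(k)$, where $Q(k)=k^2+Sk+\frac{S^2}{2}+\frac4S$, $S=\kappa_M+\kappa_m$, and $Q$ is maximal at $\kappa_M$ on $[\kappa_m,\kappa_M]$. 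At $C=0$ it already gives $\frac{\sqrt3}{2}\pi>\frac{\pi}{4}\sqrt{22/3}$ for $\int_{\kappa_m}^{\kappa_M}k^2P_C^{-1/2}\,dk$, although you still have to carry out the minimisation over $C\ge0$.

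\textbf{The gap: $m\ge2$ and one solution for each $m$.} You anchor the range of $\Theta$ at the near-circle value $2\pi\sqrt{2/3}$. That degenerate orbit sits at $C_*=-3\cdot2^{-5/3}<0$, where $P_C$ has a double root at $2^{1/3}$. It is not an endpoint of the admissible range $C\ge0$ given by \Cref{lem:bound_C}. To carry that value over to $[0,\infty)$ you would need strict monotonicity of $\Theta$ also on the convex branch $(C_*,0)$, where $\kappa_m>0$. The Bucur--Henrot argument you plan to imitate splits the integral at $\kappa=0$, which requires $\kappa_m\le0$, i.e.\ $C\ge0$, so it does not cover that branch. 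Without it, neither $\Theta<2\pi$ on $[0,\infty)$ (which excludes $m=1$) nor $\pi\in\Theta([0,\infty))$ (which gives existence for $m=2$) follows from what you have.

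\textbf{The fix.} The missing ingredient, which the paper uses, is the value at the left endpoint of the relevant range. At $C=0$ one has $\kappa_m=0$ and $\kappa_M=2$, and with $u=k^3/8$,
\[
\Theta(0)=4\int_0^2\frac{\sqrt{k}}{\sqrt{8-k^3}}\,dk=\frac43\int_0^1u^{-1/2}(1-u)^{-1/2}\,du=\frac{4\pi}{3}.
\]
Since $\pi\le\frac{4\pi}{3}<2\pi$, the conclusion follows from three facts: monotonicity on $[0,\infty)$ only, $\limsup_{C\to\infty}\Theta(C)\le0$, and \Cref{lem:bound_C} to exclude $C<0$. Together they give $m\ge2$ and exactly one $C\ge0$ with $\Theta(C)=2\pi/m$ for every $m\ge2$. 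The near-circle linearisation is then not needed at all.
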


\begin{proof}
    By \Cref{lem:bound_C} we know that $\gamma$ satisfies condition (2) from \Cref{thm:optimality_conditions} with $C \geq 0$. Since $C \mapsto \kappa_m(C)$ is decreasing and $\kappa_m(0) = 0$, it holds $\kappa_m(C) \leq 0$. We know from \cite[Appendix]{bucur2014newisoperimetricinequalityelasticae} that $\kappa$ is periodic. Assume that $\gamma$ is constant speed $1$ and let $T > 0$ be the length of one period. Define
    \begin{equation*}
        I(C) \coloneqq \int_0^T \kappa(x) dx = 2\int_{\kappa_m}^{\kappa_M} \frac{k}{\sqrt{-\frac{1}{4}k^4 + 2k + 2C}}dk,
    \end{equation*}
    where the second equality comes from condition (2) of \Cref{thm:optimality_conditions} and the fact that $\kappa$ is symmetric with respect to its minimizers and maximizers, and monotone between them (cf. \cite[Appendix]{bucur2014newisoperimetricinequalityelasticae}). Moreover, the only values where the denominator is zero are $\kappa_m, \kappa_M$. This is precisely the integral that has been analyzed in \cite[proof of theorem~3.5]{bucur2014newisoperimetricinequalityelasticae}. If we split the integral in two parts from $\kappa_m$ to $0$ and from $0$ to $\kappa_M$, we can compute the derivative explicitly and show that it is strictly negative. A direct computation yields $I(0) = \frac{4\pi}{3}$. Since $\gamma$ is a closed curve with winding number $1$, it has to hold $\frac{4 \pi}{3} \geq I(C) = \frac{2\pi}{m}$ for the number of periods $m$. This can only hold for $m \geq 2$.

    Let us prove that the limit of $I(C)$ as $C \to \infty$ is smaller than or equal to $0$. We want to estimate $I(C)$ from above.
    Let $S = \kappa_M + \kappa_m$, $P=\kappa_M \kappa_m$ and $P_C(x)=2C+2x-\frac{1}{4}x^4$. Since $\kappa_M, \kappa_m$ are the two real roots of $P_C$ (cf. \cite[Appendix]{bucur2014newisoperimetricinequalityelasticae}), we may write $-4 P_C(k) = (k-w)(k-z)(k-\kappa_M)(k-\kappa_m)$. $P_C$ does not have a cubic term, so it holds $w+z+\kappa_M+\kappa_m = 0$.
    This gives $-4P_C(k)=(k^2+Sk+wz)(k^2-Sk+P)$.
    From the quadratic term we obtain $P-S^2+wz=0$ and it follows $-4P_C(k)=\underbrace{(k^2+Sk+S^2-P)}_{Q(k)}(k^2-Sk+P)$. The linear and constant term yield the equations
    \begin{equation*}
    \begin{split}
        SP+(P-S^2)S &= -8,\\
        (S^2-P)P &= -8C.
    \end{split}
    \end{equation*}
    From this it follows 
    \begin{equation}
    \begin{split}
    \label{eq:relation_C_S}
        P &= -\frac{4}{S} + \frac{S^2}{2},\\
        0 &= 8C+\frac{S^4}{4}-\frac{16}{S^2}.
    \end{split}
    \end{equation}
    We conclude $Q(k)=k^2+Sk+\frac{S^2}{2}+\frac{4}{S}$.

    We define $D\coloneqq\frac{\kappa_M-\kappa_m}{2}$ and obtain by a substitution
    \begin{equation*}
    \begin{split}
        I(C) &= 4 \int_{\kappa_m}^{\kappa_M} \frac{k}{\sqrt{(\kappa_M-k)(k-\kappa_m)Q(k)}} dk\\
        &= 4 \int_{-D}^{D} \frac{k+\frac{S}{2}}{\sqrt{(D^2-k^2)Q(k+\frac{S}{2})}}dk\\
        &= 4 \int_{0}^D \frac{k}{\sqrt{D^2-k^2}}\Bigg( \frac{1}{\sqrt{Q(k+\frac{S}{2})}} - \frac{1}{\sqrt{Q(-k+\frac{S}{2})}} \Bigg)dk\\
        &\quad+ 2\int_{-D}^D \frac{S}{\sqrt{(D^2-k^2) Q(k+\frac{S}{2})}} dk.
    \end{split}
    \end{equation*}
    We have $Q(k+\frac{S}{2}) \geq Q(-k+\frac{S}{2})$ for $k \geq 0$, since $S \geq 0$ (due to the coeffiecient of the linear term of $P_C$, which is positive). We estimate
    \begin{equation*}
    \begin{split}
        I(C) &\leq 2S \int_{-D}^D \frac{1}{\sqrt{(D^2-k^2) Q(k+\frac{S}{2})}} dk\\
        &\leq \frac{2S}{\min_{k \in [-D,D]}\sqrt{Q(k+\frac{S}{2})}} \int_{-D}^D \frac{1}{\sqrt{D^2-k^2}} dk.
    \end{split}
    \end{equation*}
    Write $Q(k) = (k+\frac{S}{2})^2+ \frac{S^2}{4} + \frac{4}{S}$. Thus, $\min_{k \in [-D,D]}\sqrt{Q(k+\frac{S}{2})} \geq \sqrt{\frac{S^2}{4} + \frac{4}{S}}$. Together with a substitution this shows
    \begin{equation*}
    \begin{split}
        I(C) \leq \frac{2S}{\sqrt{\frac{S^2}{4}+\frac{4}{S}}} \int_{-1}^1 \frac{1}{\sqrt{1-k^2}} dk = \frac{2S \pi}{\sqrt{\frac{S^2}{4}+\frac{4}{S}}} = \frac{4 \pi S^{\frac{3}{2}}}{\sqrt{S^3+16}}.
    \end{split}
    \end{equation*}
    \Cref{eq:relation_C_S} reveals that $S \to 0$ as $C \to \infty$, proving the claim. One can use this analysis to find an upper bound on $C$ also.

    Conversely, let $m \geq 2$. For $C \in \mathbb{R}$, which will be determined later on, consider a solution to the ordinary differential equation
    \begin{equation}
    \label{eq:ode}
    \begin{split}
        \kappa'' &= -\frac{1}{2} \kappa^3+1,\\
        \kappa(0) &= \kappa_m(C),\\
        \kappa'(0) &= 0,
    \end{split}
    \end{equation}
    which has a unique solution. For any other curve $\tilde{\gamma}$ that solves \cref{eq:area_constrained_elastica_normalized} and has $m$ periods, a shifted version of its curvature $\tilde{\kappa}(\cdot+x_0)$ solves \cref{eq:ode}, where $x_0$ is a point that attains the minimum of the curvature $\tilde{\kappa}$. Therefore, proving uniqueness of $C$ demonstrates that there is at most one area-constrained elastica with $m$ periods. The solution $\kappa$ is periodic with period $T$. We claim that, if we find a value for $C$ such that $I(C) = \frac{2\pi}{m}$, we have found a closed solution to \cref{eq:area_constrained_elastica_normalized}. We have to check the closedness condition. Let $\gamma$ be the associated curve to $\kappa$. Without loss of generality, we assume $\gamma(0) = 0$ and $\theta(0) = 0$. The solution $\kappa$ of \cref{eq:ode} is symmetric with respect to its maximizers and minimizers (cf. \cite[Appendix]{bucur2014newisoperimetricinequalityelasticae}). Let $x_m = \frac{mT}{2}$. Then, $x_m$ is a minimizer or a maximizer, since it is either at the beginning of a period or exactly in the middle. By symmetry, it has to hold $\theta(x_m)=\pi$ and we have $\kappa(x_m+x) = \kappa(x_m-x)$. Integrating gives $\theta(x_m+x) = 2\pi - \theta(x_m-x)$. We deduce for $\gamma = (\gamma_1, \gamma_2)$
    \begin{equation*}
    \begin{split}
        \gamma_2(mT) &= \int_0^{x_m} \sin\theta(x) dx + \int_{x_m}^{mT} \sin\theta(x) dx\\
        &= \int_0^{x_m} \sin\theta(x)+\sin(2\pi - \theta(x))dx = 0.
    \end{split}
    \end{equation*}
    
    Since the curvature is minimal at $0$, by (3) from \Cref{thm:optimality_conditions} we have $0 = \frac{d}{dx}|_{x=0} \abs{\gamma(x)-p}^2 = 2 \tau(0) \cdot (\gamma(0) - p)$. Therefore, $p$ lies on the $y$-axis. Since we know that $x_m$ is a minimizer or maximizer of $\kappa$ and $\theta(x_m) = \pi$, we have by the same argument that $\gamma(x_m)$ lies on the $y$-axis. We obtain
    \begin{equation*}
    \begin{split}
        \gamma_1(mT) &= \int_0^{mT} \cos(\theta(x)) dx = \int_0^{x_m} \cos\theta(x)+\cos(2\pi - \theta(x))dx\\
        &= 2 \gamma_1(x_m) = 0.
    \end{split}
    \end{equation*}
    
    As mentioned before, $I$ is strictly decreasing with $I(0) = \frac{4 \pi}{3}$ and the limit of $I(C)$ is smaller than or equal to $0$ as $C \to \infty$. Consequently, there actually is exactly one solution to $I(C) = \frac{2 \pi}{m}$ with $C \geq 0$. Moreover, by \Cref{lem:bound_C} we deduce that it is the only solution.

    For the energy observe $\mathcal{E}(\gamma) = 2m\int_{0}^{x_M} \kappa^2(x) dx$, where $x_M \in [0,T]$ is the first maximizer of $\kappa$. Since $\kappa$ is increasing on $[0,x_M]$ and $\kappa'(x) = \sqrt{2C + 2\kappa(x) - \frac{1}{4}\kappa(x)^4} = 0$ only for $0$ and $x_M$, it holds
    \begin{equation*}
        \mathcal{E}(\gamma) = 2m \int_{\kappa_m}^{\kappa_M} \frac{k^2}{\sqrt{2C + 2k - \frac{1}{4}k^4}} dk.
    \end{equation*}
    \cite{bucur2014newisoperimetricinequalityelasticae} estimated in the proof of Theorem 3.5
    \begin{equation*}
        \int_{\kappa_m}^{\kappa_M} \frac{k^2}{\sqrt{2C + 2k - \frac{1}{4}k^4}} dk \geq \frac{\pi}{4} \sqrt{\frac{22}{3}}.
    \end{equation*}
    As the area is invariant under translation, condition (4) of \Cref{thm:optimality_conditions} gives
    \begin{equation*}
        \mathcal{A}(\gamma) = \frac{1}{2}\int_{0}^{\mathcal{L}(\gamma)} (p-\gamma(x))\cdot \nu(x) dx = \frac{1}{4} \int_0^{\mathcal{L}(\gamma)} \kappa^2(x)dx = \frac{1}{4} \mathcal{E}(\gamma).
    \end{equation*}

    % Lastly, we need to estimate the length. Since $\kappa$ is periodic and symmetric with respect to its maximizers and minimizers, it is sufficient to consider only one half period. Between a minimizer and maximizer $\kappa$ is strictly increasing with $\kappa' = 0$ only at minimizer and maximizer. We estimate
    % \begin{equation}
    % \begin{split}
    % \label{eq:length_bound_elastica}
    %     \mathcal{L}(\gamma) &= \int_0^{\mathcal{L}(\gamma)} dx = 2m \int_0^T dx = 2m \int_{\kappa_m}^{\kappa_M} \frac{1}{\kappa'(\kappa^{-1}(k))} dk\\
    %     &= 2m \int_{\kappa_m}^{\kappa_M} \frac{1}{\sqrt{2C+2k-\frac{1}{4}k^4}} dk \geq \frac{2(\kappa_M-\kappa_m)}{\max_{k \in [\kappa_m, \kappa_M]}\sqrt{2C+2k-\frac{1}{4}k^4}}\\
    %     &= \frac{2m(\kappa_M-\kappa_m)}{\sqrt{2C+2^{-\frac{2}{3}}3}}.
    % \end{split}
    % \end{equation}
\end{proof}

\begin{lemma}
\label{lem:example_branch1}
    There is a family of curves in $W^{2,2}_{\text{Imm}}(\mathbb{S}^1, \mathbb{R}^2)$ with area $\pi$ and length varying between the values $2\pi$ and $\frac{8}{3} \pi$, whose energy is bounded from above by $\frac{16}{3} \pi$.
\end{lemma}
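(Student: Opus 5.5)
The plan is to use an explicit one-parameter family of stadium curves: two semicircles of radius $r\in(0,1]$ joined by two parallel straight segments of length $L\ge 0$. Such a curve is $C^{1,1}$: it is $C^1$ at the four gluing points and smooth elsewhere, so it lies in $W^{2,2}_{\text{Imm}}(\mathbb{S}^1,\mathbb{R}^2)$ once parametrized with constant speed. It is simple and convex, so $n_\gamma=1$. The three quantities are elementary to compute. The curvature is $1/r$ on the arcs, which have total length $2\pi r$, and $0$ on the segments. This gives
\begin{equation*}
\mathcal{E}=\frac{2\pi}{r},\qquad \mathcal{A}=\pi r^2+2rL,\qquad \mathcal{L}=2\pi r+2L .
\end{equation*}

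First, impose the area constraint $\mathcal{A}=\pi$ by choosing $L=L(r)=\frac{\pi(1-r^2)}{2r}\ge 0$ for $r\in(0,1]$. The length then becomes $\mathcal{L}(r)=\pi\bigl(r+\tfrac1r\bigr)$. This is continuous and strictly decreasing on $(0,1]$, with $\mathcal{L}(1)=2\pi$, the circle.

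Second, choose the parameter range. Solve $r+\frac1r=\frac83$, which gives $r_0=\frac{8-2\sqrt7}{6}=\frac{4-\sqrt7}{3}$. By the intermediate value theorem and monotonicity, as $r$ ranges over $[r_0,1]$ the length sweeps exactly $[2\pi,\tfrac83\pi]$.

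Third, bound the energy. Since $\mathcal{E}=2\pi/r$ is decreasing in $r$, for all $r\in[r_0,1]$ we have
\begin{equation*}
\mathcal{E}\le \frac{2\pi}{r_0}=\frac{6\pi}{4-\sqrt7}.
\end{equation*}
It remains to check $\frac{6\pi}{4-\sqrt7}\le\frac{16}{3}\pi$, which is equivalent to $4-\sqrt7\ge\frac98$, i.e.\ $\sqrt7\le\frac{23}{8}$. This holds because $7\le\frac{529}{64}$.

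The only point needing care is the regularity claim: the curvature of the stadium is piecewise constant with jumps, so it lies in $L^2$ but is not continuous. Hence the curve belongs to $W^{2,2}$ but not to $C^2$, which is exactly the admissible class used in \cref{eq:energy_profile_curve} and in the proof of \Cref{thm:energy_continuity}. Everything else is direct computation. As a by-product, this yields $E(\ell)\le \frac{2\pi}{r(\ell)}\le\frac{16}{3}\pi$ for $\ell\in[2\pi,\frac83\pi]$.
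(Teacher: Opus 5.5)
Your proof is correct and is essentially the paper's argument: the same stadium family with straight segments of length $\frac{\pi}{2r}-\frac{\pi}{2}r$, length $\pi(r+\frac1r)$ and energy $\frac{2\pi}{r}$. The only difference is that you cut the family at the exact radius $r_0=\frac{4-\sqrt7}{3}$, where the length equals $\frac83\pi$, which gives the sharper bound $\frac{6\pi}{4-\sqrt7}\le\frac{16}{3}\pi$; the paper instead takes $r\in[\frac38,1]$, whose lengths overshoot to $\frac{73}{24}\pi$ and whose energy bound is exactly $\frac{16}{3}\pi$, attained at $r=\frac38$.
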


\begin{proof}
    Let $\gamma \in W^{2,2}_{\text{Imm}}(\mathbb{S}^1, \mathbb{R}^2)$ with $\mathcal{A}(\gamma) = \pi$ of the form illustrated in \Cref{fig:example_branch1}. It consists of a half circle on the left which is connected via straight lines to another half circle.
    Let $r>0$ be the radius of the circles and $b \geq 0$ the length of the straight line segments. Then, it holds
    \begin{equation*}
    \begin{split}
        \mathcal{A}(\gamma) &= \pi r^2 + 2br,\\
        \mathcal{L}(\gamma) &= 2\pi r + 2b,\\
        \mathcal{E}(\gamma) &= \frac{2\pi}{r}.
    \end{split}
    \end{equation*}
    We want $\mathcal{A}(\gamma) = \pi$, which gives $b = \frac{\pi}{2r} - \frac{\pi}{2} r$. Hence, for the length we have $\mathcal{L}(\gamma) = \pi r + \frac{\pi}{r}$. For $r \in (0,1]$ this is strictly decreasing in $r$. To cover all lengths $\ell \in [2\pi, \frac{8}{3}\pi]$, we restrict our family to radii $r \in [\frac{3}{8}, 1]$. At the lower endpoint $r=\frac{3}{8}$, the length is $\mathcal{L}(\gamma) = \frac{73}{24}\pi$, which is strictly greater than $\frac{8}{3}\pi$. By continuity, this family of curves covers all lengths between $2\pi$ and $\frac{8}{3}\pi$. The energy is also decreasing in $r$. Consequently, over this restricted family, the maximum energy is attained at $r=\frac{3}{8}$, and is given by $\mathcal{E}(\gamma) = \frac{16}{3} \pi$.
\end{proof}

\begin{figure}
    \begin{subfigure}[b]{0.48\textwidth}
        \centering
        \begin{tikzpicture}
            \draw (0,0) circle (1);
            \draw[-] (0,0) -- node[left] {$r$} (0,1);
        \end{tikzpicture}
    \end{subfigure}
    \hfill
    \begin{subfigure}[b]{0.48\textwidth}
        \centering
        \begin{tikzpicture}
            \draw (0,1) arc (90:270:1);
            \draw (2,-1) arc (-90:90:1);
            \draw (0,1) -- node[above] {$b$} (2,1);
            \draw (0,-1) -- (2,-1);
            \draw[-] (0,0) -- node[left] {$r$} (0,1);
        \end{tikzpicture}
    \end{subfigure}
    \caption{Family of curves with small length.}
\label{fig:example_branch1}
\end{figure}

\begin{lemma}
\label{lem:example_branch2}
    There is a family of curves in $W^{2,2}_{\text{Imm}}(\mathbb{S}^1, \mathbb{R}^2)$ with area $\pi$ and length varying between the values $4 \pi \sqrt{\frac{\pi}{\pi+4}} \approx 2.653 \pi$ and $\infty$, whose energy is bounded from above by $4\pi\sqrt{\frac{\pi+4}{\pi}} \approx 6.031 \pi$.
\end{lemma}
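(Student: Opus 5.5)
The plan is an explicit construction, in the spirit of \Cref{lem:example_branch1} and the triple-eight curve of \Cref{fig:triple_eight_energy_to_zero}, with all circular arcs sharing one radius $r$. Since $\mathcal{E}=\int\kappa^2ds$ is just (total turning angle)$/r$ for such a curve, I expect a uniform energy bound of the form $4\pi/r$.

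The constants in the statement suggest the construction. Their product is
\begin{equation*}
4\pi\sqrt{\tfrac{\pi}{\pi+4}}\cdot 4\pi\sqrt{\tfrac{\pi+4}{\pi}}=16\pi^2 .
\end{equation*}
This is exactly what a $C^1$ curve made only of arcs of radius $r_0$ with total absolute turning $4\pi$ gives: length $4\pi r_0$ and energy $4\pi/r_0$, where $r_0^2=\frac{\pi}{\pi+4}$. The area condition then reads $(\pi+4)r_0^2=\pi$. So the first step is to exhibit such a base curve with $n_\gamma=1$ and algebraic area $(\pi+4)r_0^2$.

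\textbf{Base curve and family.} I would use a three-belly curve like \Cref{fig:triple_eight_energy_to_zero}. The outer bellies are $\frac34$-circles of radius $r$, contributing turning $2\cdot\frac{3\pi}{2}$ and positive area. The central belly is bounded by two quarter-circles of radius $r$, contributing turning $-\pi$ and negative area. The total turning is $2\pi$, so $n_\gamma=1$, and the total absolute turning is $4\pi$. Gluing with matched tangents makes the curve $C^1$ and piecewise smooth, hence a $W^{2,2}_{\text{Imm}}$ competitor as in \Cref{thm:energy_continuity}.

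Where the arcs do not meet directly, I insert straight segments whose length is a free parameter $b\ge0$. These segments are placed so that lengthening them enlarges the negatively oriented central region. As a result, $\mathcal{A}=\alpha r^2-\beta(b)\,r$ for some $\alpha>0$ and some affine $\beta\ge0$, and $\mathcal{L}=4\pi r+c\,b$ for a constant $c>0$.

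The area constraint $\mathcal{A}=\pi$ then defines a continuous branch $b=b(r)$ for $r\ge r_0$, with $b(r_0)=0$ and $b(r)\to\infty$ as $r\to\infty$. This branch must be checked to stay admissible: the construction should remain a closed immersed curve with $n_\gamma=1$.

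\textbf{Conclusion and the hard step.} Along the branch, $r\mapsto\mathcal{L}=4\pi r+c\,b(r)$ is continuous, equals $4\pi r_0=4\pi\sqrt{\pi/(\pi+4)}$ at $r=r_0$, and tends to $\infty$. By the intermediate value theorem every length in $[4\pi\sqrt{\pi/(\pi+4)},\infty)$ is attained. Straight segments carry no energy, so $\mathcal{E}=4\pi/r\le4\pi/r_0=4\pi\sqrt{(\pi+4)/\pi}$.

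The main obstacle is the bookkeeping of the base configuration. One must check that the arcs close up with only the prescribed turning, and that the net algebraic area at $b=0$ is exactly $(\pi+4)r^2$, i.e.\ $\pi r^2$ from the arcs plus $4r^2$ from polygonal pieces such as the kites in \Cref{fig:triple_eight_energy_to_zero}. One must also check that inserting the segments decreases the area linearly in $b$ rather than increasing it. I would first compute everything in coordinates for $r=1$ and then scale.
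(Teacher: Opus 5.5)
Your reading of the constants is right. The curve must consist of two $\frac34$-arcs and two oppositely turning quarter arcs of one radius $r$, so that $\mathcal{E}=4\pi/r$, the base length is $4\pi r$ and the base area is $(\pi+4)r^2$. However, the step you defer as ``bookkeeping'' is the entire content of the lemma, and the configuration you describe does not exist.

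A closed $C^1$ curve made only of four arcs of radius $r$, with turning angles $\frac{3\pi}{2},-\frac{\pi}{2},\frac{3\pi}{2},-\frac{\pi}{2}$ in this cyclic order, is determined up to rigid motion. It is the simple ``peanut'' on the left of \Cref{fig:example_branch2}: two $\frac34$-circles joined directly by two inward-bending quarter arcs with different centres. Its central part is positively oriented, so there is no negative central belly at $b=0$.

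Conversely, in the equal-radius triple-eight of \Cref{fig:triple_eight_energy_to_zero} the two central quarter arcs share a centre. This forces the crossing segments to have length $2r$, so there is no free parameter. Its kites also cancel ($+r^2$ for each outer kite, $-r^2$ for each central one), so its area is $\pi r^2$, not $(\pi+4)r^2$. Finally, your area law $\alpha r^2-\beta(b)r$ with $\beta$ affine is not what occurs.

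The missing ingredient is the paper's construction: insert four segments of length $\sqrt2\,b$ along the tangents at the four arc junctions, as on the right of \Cref{fig:example_branch2}. The shoelace formula gives the polygon of chords signed area $2r^2-2b^2$. Each arc with signed turning $\varphi$ adds $\frac{r^2}{2}(\varphi-\sin\varphi)$. Together this gives
\begin{equation*}
\mathcal{A}=(\pi+4)r^2-2b^2,\qquad \mathcal{L}=4\pi r+4\sqrt2\,b,\qquad \mathcal{E}=\frac{4\pi}{r}.
\end{equation*}
The paper stops at $b=(\sqrt2-1)r$, where the quarter arcs touch tangentially. It then lengthens the curve by splitting at the contact point and inserting two coincident straight segments, which add neither area nor energy.

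Once you have this formula, your continuation also works and is a genuinely different second phase. Signed area is computed the same way for self-intersecting curves, so you may keep increasing $b$ past contact. The curve becomes a triple-eight and passes through the equal-radius curve of \Cref{fig:triple_eight_energy_to_zero} at $b=\sqrt2\,r$. Along the way $r=\sqrt{(\pi+2b^2)/(\pi+4)}\to\infty$ and $n_\gamma=1$ throughout. This avoids the cut-and-insert step and even gives $\mathcal{E}\to0$, as in \Cref{thm:energy_continuity}.
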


\begin{proof}
    Let $\gamma \in W^{2,2}_{\text{Imm}}(\mathbb{S}^1, \mathbb{R}^2)$ with $\mathcal{A}(\gamma) = \pi$ of the form illustrated in \Cref{fig:example_branch2}. It consists of two three-quarter circles (solid), that are connected via straight line segments (dotted) to the missing quarters of the circles (dashed). We may vary its length by altering the length of the straight line segments up until the point where the two middle quarters touch each other tangentially. From here on, we can increase the length of the curve indefinitely by splitting the curve at the touching point and inserting two perfectly overlapping straight line segments. Because these segments perfectly overlap, they enclose zero area, and because they are straight, they contribute zero to the elastic energy. Hence, the length grows to infinity while the area and energy remain constant.
    Let $r \in (0,1]$ be the radius of the circles. The energy of the curve is given by $\mathcal{E}(\gamma) = \frac{4\pi}{r}$. Let $\sqrt{2}b$ be the length of the straight line segments for $b\geq0$ and compute the enclosed area by $\mathcal{A}(\gamma) = r^2(\pi+4) - 2b^2$. We want $\mathcal{A}(\gamma) = \pi$, which leads to $r = \sqrt{\frac{\pi+2b^2}{\pi+4}}$. So, $r$ is increasing in $b$. As the energy is decreasing in $r$, its maximal value is obtained for $b=0$. Inserting shows $\mathcal{E}(\gamma) \leq 4\pi\sqrt{\frac{\pi+4}{\pi}}$. The length is given by $\mathcal{L}(\gamma) = 4 \pi r + 4\sqrt{2}b$. The minimum is achieved for $b=0$, yielding the result.
\end{proof}

\begin{figure}
    \begin{subfigure}[b]{0.48\textwidth}
        \centering
        \begin{tikzpicture}
            \draw (0,0) arc (45:315:1);
            \draw[dashed] (0,0) arc (225:315:1);
            \draw ({sqrt(2)},0) arc (135:-135:1);
            \draw[dashed] ({sqrt(2)},{-sqrt(2)}) arc (45:135:1);
            \draw ({1/sqrt(2)},{1/sqrt(2)}) -- node[left] {$r$} ({1/sqrt(2)}, {1/sqrt(2)-1});
            \draw[-] ({-1/sqrt(2)},{-1/sqrt(2)}) -- node[below right] {$r$} (0,0);
        \end{tikzpicture}
    \end{subfigure}
    \hfill
    \begin{subfigure}[b]{0.48\textwidth}
        \centering
        \begin{tikzpicture}
            \draw (0,0) arc (45:315:1);
            \draw[dashed] ({sqrt(2)-1},{1-sqrt(2)}) arc (225:315:1);
            \draw ({3*sqrt(2)-2},0) arc (135:-135:1);
            \draw[dashed] ({sqrt(2)-1},-1) arc (135:45:1);
            \draw[dotted] (0,0) -- node[above right] {$\sqrt{2}b$} ({sqrt(2)-1},{1-sqrt(2)});
            \draw[dotted] ({2*sqrt(2)-1},{1-sqrt(2)}) -- ({3*sqrt(2)-2},0);
            \draw[dotted] (0,{-sqrt(2)}) -- ({sqrt(2)-1},-1);
            \draw[dotted] ({2*sqrt(2)-1},-1) -- ({3*sqrt(2)-2},{-sqrt(2)});
        \end{tikzpicture}
    \end{subfigure}
    \caption{Family of curves with increasing length.}
\label{fig:example_branch2}
\end{figure}

\begin{theorem}
\label{thm:number_critical_points}
    Let $\gamma \in C^\infty(\mathbb{S}^1, \mathbb{R}^2)$ be a solution to \cref{eq:area_constrained_elastica_normalized} parametrized by arc-length with the number of periods $m \geq 4$. Then, for the rescaled version $\tilde{\gamma}$ with $\mathcal{A}(\tilde{\gamma}) = \pi$, we have $\mathcal{E}(\tilde{\gamma}) > E(\ell^*)$. Hence, $E: [2\pi, \infty) \to [0,\infty)$ has at most $3$ critical points, one of which is the circle.
\end{theorem}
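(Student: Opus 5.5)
Since $\mathcal{E}$ scales like $r^{-1}$ and $\mathcal{A}$ like $r^2$, the quantity $\mathcal{E}(\gamma)\sqrt{\mathcal{A}(\gamma)}$ is scale invariant. I will combine the two estimates of \Cref{lem:area_constrained_elastica_properties}, namely $\mathcal{E}(\gamma)\ge \sqrt{11/6}\,m\pi$ and $\mathcal{A}(\gamma)=\frac14\mathcal{E}(\gamma)$, with the explicit upper bounds on $E$ from \Cref{lem:example_branch1,lem:example_branch2}.

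The first step is to compute the energy of the rescaled curve. Let $\tilde\gamma = r\gamma$ with $r^2\mathcal{A}(\gamma)=\pi$. Then
\begin{equation*}
\mathcal{E}(\tilde\gamma)=\frac{\mathcal{E}(\gamma)}{r}=\mathcal{E}(\gamma)\sqrt{\frac{\mathcal{A}(\gamma)}{\pi}}=\frac{\mathcal{E}(\gamma)^{3/2}}{2\sqrt{\pi}}\ge \frac{(\sqrt{11/6}\,m\pi)^{3/2}}{2\sqrt\pi}=\frac{\pi}{2}\Big(\frac{11}{6}\Big)^{3/4}m^{3/2}.
\end{equation*}
For $m=4$ the right-hand side equals $4\pi(11/6)^{3/4}\approx 6.29\pi$, and it increases with $m$.

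The second step is to bound $E(\ell^*)$ from above. For every $\ell\in[2\pi,\frac83\pi]$, \Cref{lem:example_branch1} gives a test curve with $E(\ell)\le\frac{16}{3}\pi$. For every $\ell\ge 4\pi\sqrt{\pi/(\pi+4)}$, \Cref{lem:example_branch2} gives $E(\ell)\le 4\pi\sqrt{(\pi+4)/\pi}\approx 6.031\pi$. Since $4\pi\sqrt{\pi/(\pi+4)}\approx 2.653\pi<\frac83\pi$, the two families together cover all of $[2\pi,\infty)$. Hence $E(\ell^*)=\max E\le 4\pi\sqrt{(\pi+4)/\pi}$.

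The main obstacle is the numerical comparison $4(11/6)^{3/4}>4\sqrt{(\pi+4)/\pi}$. It is equivalent to $(11/6)^{3/2}>(\pi+4)/\pi$, and numerically $2.482>2.273$. I would verify this with rational bounds on $\pi$, say $\pi>3.14$. Once it holds, $\mathcal{E}(\tilde\gamma)>E(\ell^*)$ for all $m\ge4$.

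For the consequence, let $\ell\in(2\pi,\infty)$ be a critical point of $E$, i.e.\ $E$ is differentiable there with derivative zero. By \Cref{thm:critical_point_area_constrained_elastica}, a minimizer $\gamma$ at $\ell$ is an area-constrained elastica with $\mathcal{E}(\gamma)=E(\ell)\le E(\ell^*)$. It is not a circle, so by \Cref{lem:area_constrained_elastica_properties} it is a rescaling of the unique elastica with some $m\ge2$ periods. The first part excludes $m\ge4$, leaving $m\in\{2,3\}$.

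Each $m$ gives a unique curve and therefore a unique length $\ell$. This length is recovered as $\mathcal{L}(\tilde\gamma)$, because $\gamma$ must coincide with $\tilde\gamma$ up to rigid motion. Counting $\ell=2\pi$ (the circle) as well, $E$ has at most three critical points.
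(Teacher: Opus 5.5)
Your proposal is correct and follows the paper's proof. You use the area-constrained elastica lemma to obtain $\mathcal{E}(\tilde\gamma)\ge \frac{\pi}{2}(11/6)^{3/4}m^{3/2}$, which is the same as the paper's $(11/96)^{1/4}(11/6)^{1/2}m^{3/2}\pi$; you bound $E(\ell^*)$ with the two explicit test families (making explicit the covering $4\pi\sqrt{\pi/(\pi+4)}<\frac83\pi$ that the paper leaves implicit); and you then apply \Cref{thm:critical_point_area_constrained_elastica} together with uniqueness for each $m$ — only the $m=4$ value is slightly off, $4(11/6)^{3/4}\pi\approx 6.302\pi$ rather than $6.29\pi$, which does not affect the comparison with $4\pi\sqrt{(\pi+4)/\pi}\approx 6.031\pi$.
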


\begin{proof}
    From \Cref{lem:area_constrained_elastica_properties} we obtain
    \begin{equation*}
    \begin{split}
        \mathcal{E}(\gamma) &\geq \sqrt{\frac{11}{6}}m\pi,\\
        \mathcal{A}(\gamma) &= \frac{1}{4} \mathcal{E}(\gamma) \geq \sqrt{\frac{11}{96}}m\pi.
    \end{split}
    \end{equation*}
    Hence, for the rescaled version $\tilde{\gamma}$ with area $\pi$ it holds
    \begin{equation*}
        \mathcal{E}(\tilde{\gamma}) = \sqrt{\frac{\mathcal{A}(\gamma)}{\pi}} \mathcal{E}(\gamma) \geq \Big(\frac{11}{96}\Big)^\frac{1}{4}\Big(\frac{11}{6}\Big)^{\frac{1}{2}}m^{\frac{3}{2}}\pi.
    \end{equation*}
    The expression is increasing in $m$ and for $m=4$ it evaluates to $\mathcal{E}(\tilde{\gamma}) \geq 6.302 \pi$. By \Cref{lem:example_branch1} and \Cref{lem:example_branch2}, the maximum of $E: [2\pi, \infty) \to [0, \infty)$ is strictly below $6.302 \pi$. Therefore, $\mathcal{E}(\tilde{\gamma}) > E(\ell^*)$, meaning $\tilde{\gamma}$ does not lie on the profile curve.

    Every critical point of the energy profile curve is attained by an area-constrained elastica by \Cref{thm:critical_point_area_constrained_elastica}. By \Cref{lem:area_constrained_elastica_properties} every number $m \in \mathbb{N}$ corresponds to exactly one area-constrained elastica (the circle corresponding to $m=1$) and vice versa. By the previous calculation, the only possible critical points can be attained by the area-constrained elasticae with $m=2,3$ or by the circle.
\end{proof}

This result allows us to prove that the length diverges under specific conditions on the length and energy.

\begin{proof}[Proof of \Cref{cor:length_divergence}]
    Assume there is an increasing sequence $0 \leq t_i \to \infty$ as $i \to \infty$ such that $\mathcal{L}(\gamma_{t_i}) \leq c$ for some constant $c > 0$. We apply the same method of proof as the one of \Cref{thm:subconvergence} and define the translated and arc-length parametrized version $\tilde{\gamma}$ of $\gamma$. However, this time, we obtain compactness only for the sequence $(\tilde{\gamma}_{t_i})_{i \in \mathbb{N}}$, since only for these times the length is bounded. Then, up to a subsequence, $\tilde{\gamma}_{t_i} \to \gamma_\infty \in C^\infty(\mathbb{S}^1, \mathbb{R}^2)$ smoothly.

    We claim that $\gamma_\infty$ is an area-constrained elastica. As in the proof of \Cref{thm:subconvergence}, define $u(t) \coloneqq \norm{\partial_t \gamma_t}_{L^2(ds)}^2$. By the energy identity $u(t) = - \partial_t \mathcal{E}(\gamma)$ we conclude $u \in L^1((0,\infty))$. This is not sufficient to conclude $u(t_i) \to 0$ as $i \to \infty$ up to a subsequence. $L^1((0,\infty))$ only gives us a sequence $\tilde{t}_i$ such that $u(\tilde{t}_i) \to 0$. However, $\tilde{t}_i$ could be a completely different sequence of times than $t_i$. Consequently, we require bounds on the derivative of $u$. Define $\xi \coloneqq \partial_t \gamma \cdot \nu$. Compute with \Cref{lem:differential_properties}
    \begin{equation*}
    \begin{split}
        \partial_t u(t) &= 2\int_{\mathbb{S}^1} \partial_t^2 \gamma \cdot \partial_t \gamma ds - \int_{\mathbb{S}^1} \abs{\partial_t \gamma}^2 \xi \kappa ds,\\
        \partial_t^2 \gamma &= -2 \partial_t(\partial_s^2 \kappa) \nu - 3 \partial_t \kappa \kappa^2 \nu - \partial_t \lambda \nu + \xi \partial_t \nu,\\
        \partial_t \lambda &= - \frac{\int_{\mathbb{S}^1} 3 \partial_t \kappa \kappa^2 ds}{\mathcal{L}(\gamma)} + \frac{\int_{\mathbb{S}^1} \kappa^4 \xi ds}{\mathcal{L}(\gamma)} + \frac{\partial_t \mathcal{L}(\gamma) \int_{\mathbb{S}^1} \kappa^3 ds}{\mathcal{L}^2(\gamma)},\\
        \partial_t \mathcal{L}(\gamma) &= -\int_{\mathbb{S}^1} \kappa \xi ds,\\
        \partial_t \kappa &= \partial_s^2 \xi + \kappa^2 \xi,\\
        \partial_t \nu &= -\partial_s \xi \tau.
    \end{split}
    \end{equation*}
    First, consider the integral $\int_{\mathbb{S}^1} \partial_t^2 \gamma \cdot \partial_t \gamma ds$. Let us compute the first summand $-2\partial_t(\partial_s^2\kappa)\nu$ in $\partial_t^2 \gamma_t$ with \Cref{lem:differential_properties}
    \begin{equation}
    \label{eq:second_time_derivative_gamma1}
    \begin{split}
        \partial_t (\partial_s^2 \kappa) &= \partial_s \partial_t \partial_s \kappa + \kappa \xi \partial_s^2 \kappa\\
        &= \partial_s^2 \partial_t \kappa + \partial_s(\kappa \xi \partial_s \kappa) + \kappa \xi \partial_s^2 \kappa\\
        &= \partial_s^2(\partial_s^2 \xi + \kappa^2 \xi)  + \xi (\partial_s \kappa)^2+\kappa \partial_s \xi \partial_s \kappa + 2\kappa \xi \partial_s^2 \kappa.
    \end{split}
    \end{equation}
    With the notation from \Cref{sec:global_existence} and $\xi = -2\partial_s^2 \kappa - \kappa^3 - \lambda$ we write
    \begin{equation*}
        -2 \partial_t(\partial_s^2\kappa) = P_1^6(\kappa)+P_3^4(\kappa)+P_5^2(\kappa) + \lambda P_2^2(\kappa).
    \end{equation*}
    For the second summand $-3\partial_t \kappa \kappa^2 \nu$ we write
    \begin{equation*}
    \begin{split}
        -3 \partial_t \kappa \kappa^2 &= -3(\partial_s^2 \xi + \kappa^2 \xi) \kappa^2\\
        &= P_3^4(\kappa) + P_5^2(\kappa) + P_7^0(\kappa) + \lambda P_4^0(\kappa).
    \end{split}
    \end{equation*}
    We leave the third summand $-\partial_t \lambda \nu$ as it is for now. The last term $\xi \partial_t \nu$ in $\partial_t^2 \gamma_t$ can be neglected, since it has only a tangential component and $\partial_t \gamma$ has only a normal component.
    Similarly, we write
    \begin{equation*}
        \partial_t \gamma = (P_1^2(\kappa)+P_3^0(\kappa)-\lambda)\nu.
    \end{equation*}
    The product $\partial_t^2 \gamma \cdot \partial_t \gamma$ is a sum of terms of the form $\lambda^k (\partial_t \lambda)^l P_\beta^\alpha(\kappa)$ with $k,l,\beta,\alpha \in \mathbb{N}$. We either have $k+l > 0$ or $\beta \geq 2$. If $\beta \geq 2$, we estimate with \Cref{lem:infinity_bound_kappa}
    \begin{equation}
    \begin{split}
    \label{eq:estimates_lambda_dt_lambda1}
        \abs{\lambda} &\leq \frac{\int_{\mathbb{S}^1} \abs{\kappa}^3 ds}{\mathcal{L}(\gamma)} \leq c(\gamma_0) \frac{\int_{\mathbb{S}^1}ds}{\mathcal{L}(\gamma)} = c(\gamma_0),\\
        \abs{\partial_t \lambda} &\leq 3\frac{\int_{\mathbb{S}^1} \abs{\partial_t \kappa \kappa^2} ds}{\mathcal{L}(\gamma)} + \frac{\int_{\mathbb{S}^1} \abs{\kappa^4 \xi} ds}{\mathcal{L}(\gamma)} + \frac{\int_{\mathbb{S}^1} \abs{\kappa \xi} ds \int_{\mathbb{S}^1}\abs{\kappa^3}ds}{\mathcal{L}^2(\gamma)}\\
        &\leq c(\gamma_0)\Big(\frac{ \int_{\mathbb{S}^1}ds}{\mathcal{L}(\gamma)} + \frac{\int_{\mathbb{S}^1}ds \int_{\mathbb{S}^1}ds}{\mathcal{L}^2(\gamma)}\Big) = c(\gamma_0).
    \end{split}
    \end{equation}
    Then \Cref{prop:inequality_P} and \cref{eq:2_norm_kappa} show
    \begin{equation*}
        \int_{\mathbb{S}^1} \abs{\lambda^k (\partial_t \lambda)^l P_\beta^\alpha(\kappa)} ds \leq c(\gamma_0).
    \end{equation*}
    If $k+l > 0$, we estimate with \Cref{lem:infinity_bound_kappa}
    \begin{equation*}
        \abs{P_\beta^\alpha(\kappa)} \leq c(\gamma_0).
    \end{equation*}
    Moreover, we can bound with \Cref{lem:infinity_bound_kappa}
    \begin{equation*}
    \begin{split}
    \label{eq:estimates_lambda_dt_lambda2}
        \abs{\lambda} &\leq \frac{\int_{\mathbb{S}^1} \abs{\kappa}^3 ds}{\mathcal{L}(\gamma)} \leq c(\gamma_0) \frac{\int_{\mathbb{S}^1}\kappa^2 ds}{\mathcal{L}(\gamma)} = \frac{c(\gamma_0)}{\mathcal{L}(\gamma)},\\
        \abs{\partial_t \lambda} &\leq 3\frac{\int_{\mathbb{S}^1} \abs{\partial_t \kappa \kappa^2} ds}{\mathcal{L}(\gamma)} + \frac{\int_{\mathbb{S}^1} \abs{\kappa^4 \xi} ds}{\mathcal{L}(\gamma)} + \frac{\int_{\mathbb{S}^1} \abs{\kappa \xi} ds \int_{\mathbb{S}^1}\abs{\kappa^3}ds}{\mathcal{L}^2(\gamma)}\\
        &\leq c(\gamma_0)\Big(\frac{\int_{\mathbb{S}^1}\kappa^2 ds}{\mathcal{L}(\gamma)} + \frac{\int_{\mathbb{S}^1}ds \int_{\mathbb{S}^1}\kappa^2ds}{\mathcal{L}^2(\gamma)}\Big) = \frac{c(\gamma_0)}{\mathcal{L}(\gamma)}.
    \end{split}
    \end{equation*}
    This shows together with \Cref{eq:estimates_lambda_dt_lambda1}
    \begin{equation*}
        \int_{\mathbb{S}^1} \abs{\lambda^k (\partial_t \lambda)^l P_\beta^\alpha(\kappa)} ds \leq \frac{c(\gamma_0) \int_{\mathbb{S}^1}ds}{\mathcal{L}(\gamma)} = c(\gamma_0).
    \end{equation*}
    In total, we have found a bound $\abs{\int_{\mathbb{S}^1} \partial_t^2 \gamma \cdot \partial_t \gamma ds} \leq c(\gamma_0)$. Using the same strategy we can also bound $\abs{\int_{\mathbb{S}^1} \abs{\partial_t \gamma}^2 \xi \kappa ds} \leq c(\gamma_0)$. Consequently, $\partial_t u(t)$ is bounded. Since it is in $L^1((0,\infty))$, $u(t_i) \to 0$ for $i \to \infty$. Now, we can proceed as in the proof of \Cref{thm:subconvergence} and obtain that $\gamma_\infty$ is an area-constrained elastica.

    However, this area-constrained elastica cannot have more than $3$ periods, since the energy of such an elastica is larger than $E(\ell^*)$ by \Cref{thm:number_critical_points}. Also, it cannot be an area-constrained elastica with $2$ or $3$ periods because $\mathcal{E}(\gamma_2), \mathcal{E}(\gamma_3) > \mathcal{E}(\gamma_0)$. If it were the circle, there is a time $t^*> 0$ such that $\mathcal{L}(\gamma_{t^*}) = \ell^*$. Then, $\mathcal{E}(\gamma_{t^*}) \geq E(\ell^*) > \mathcal{E}(\gamma_0)$, which is a contradiction. Therefore, $\mathcal{L}(\gamma) \to \infty$ as $t \to \infty$.
\end{proof}

\section{Numerical simulations}
\label{sec:numerical_simulations}
In this chapter, we complement the analysis of the area-constrained elastic flow with numerical simulations, adapting the discretization schemes proposed by \cite{Dziuk2002}. Our goal is to investigate the behavior of the gradient flow under various initial configurations, offering insights into potential regimes of convergence.

First, recall that by the Frenet equations, we have
\begin{equation*}
\begin{split}
    \partial_s(2\partial_s(\kappa \nu)+3\kappa^2 \tau) &= \partial_s(2\partial_s \kappa \nu + \kappa^2\tau)\\
    &= 2 \partial_s^2\kappa \nu - 2\partial_s \kappa \kappa \tau + 2 \partial_s \kappa \kappa \tau + \kappa^3\nu\\
    &= (2 \partial_s^2 \kappa + \kappa^3)\nu.
\end{split}
\end{equation*}
For that reason, we write \cref{eq:evolution_gradient_flow} as
\begin{equation*}
    \partial_t \gamma + 2 \partial_s^2 \vec{\kappa} + 3 \partial_s(\kappa^2 \partial_s \gamma) + \lambda \nu = 0,
\end{equation*}
where we denote $\vec{\kappa} \coloneqq \kappa \nu$.
By definition, the following hold true
\begin{equation*}
\begin{split}
    0 &= \nu - Q \partial_s \gamma,\\
    0 &= \vec{\kappa}-\partial_s^2 \gamma,
\end{split}
\end{equation*}
where the rotation $Q$ is given by $Q = \begin{pmatrix}
    0 & -1\\
    1 & 0
\end{pmatrix}$.
The variational formulation of \cref{eq:evolution_gradient_flow} is
\begin{equation}
\begin{split}
\label{eq:a_numeric_variational_formulation}
    \int_{\mathbb{S}^1} \partial_t \gamma \cdot \varphi \abs{\partial_x \gamma} dx - 2 \int_{\mathbb{S}^1} \frac{\partial_x \vec{\kappa}}{\abs{\partial_x \gamma}} \cdot \partial_x \varphi dx - 3 \int_{\mathbb{S}^1} \abs{\vec{\kappa}}^2 \frac{\partial_x \gamma}{\abs{\partial_x \gamma}} \cdot \partial_x \varphi dx\\
    + \lambda \int_{\mathbb{S}^1} \nu \cdot \varphi \abs{\partial_x \gamma}dx &=0,\\
    \int_{\mathbb{S}^1} \nu \cdot \theta \abs{\partial_x \gamma} dx - \int_{\mathbb{S}^1} Q \partial_x \gamma \cdot \theta dx &= 0,\\
    \int_{\mathbb{S}^1} \vec{\kappa} \cdot \psi \abs{\partial_x \gamma} dx + \int_{\mathbb{S}^1} \frac{\partial_x \gamma}{\abs{\partial_x \gamma}} \cdot \partial_x \psi dx &= 0,
\end{split}
\end{equation}
for all test functions $\varphi, \theta, \psi \in W^{1,2}(\mathbb{S}^1, \mathbb{R}^2)$.

For the purpose of the simulations, we want to use a finite element discretization in space. Consider the parametrization $I = [0,2 \pi]$ of $\mathbb{S}^1$. For a decomposition $I = \bigcup_{j=1}^N I_j$ given by nodes $0 = x_0 < x_1 < \ldots < x_N = 2 \pi$ with $I_j = [x_{j-1}, x_j]$ define $h_j = \abs{I_j}$ and the diameter of the maximal grid element $h = \max_{j \in [1,\ldots, N]} h_j$. We use the discretization $Y_h \coloneqq (X_h)^2 \subset W^{1,2}(I, \mathbb{R}^2)$ as the space of test functions, where
\begin{equation*}
    X_h \coloneqq \{w \in C^0(I,\mathbb{R}) \mid \forall j \in [1,\ldots, N]\quad w|_{I_j} \in \mathbb{P}_1(I_j), w(x_0)=w(x_N)\}.
\end{equation*}
We denote by $\mathbb{P}_1$ polynomials of degree $1$. Hence, $X_h$ is the space of scalar continuous piecewise affine functions. The scalar nodal basis functions $\varphi_j \in X_h$, which are uniquely defined by $\varphi_j(x_i) = \delta_{ij}$, form a basis of $X_h$. Therefore, it is sufficient to test with $\varphi_j e_1$ and $\varphi_j e_2$ for all $j \in [1,\ldots,N]$, where $\{e_1,e_2\}$ denotes the standard basis of $\mathbb{R}^2$.

For a function $w \in C^0(I, \mathbb{R})$ with $w(0)=w(2\pi)$ consider the pointwise interpolation $I_h(w) \in X_h$, which is uniquely defined by $I_h(w)(x_j) = w(x_j)$. We may write
\begin{equation*}
    I_h(w)(x) = \sum_{j=1}^N w(x_j) \varphi_j(x).
\end{equation*}
A numerical solution to \cref{eq:a_numeric_variational_formulation} will be a triplet $\gamma_h: [0,T] \to Y_h, \nu_h:[0,T] \to Y_h, \vec{\kappa}_h:[0,T] \to Y_h$ given by
\begin{equation*}
\begin{split}
    \gamma_h(t,x) &= \sum_{j=1}^N \gamma_j(t) \varphi_j(x),\\
    \nu_h(t,x) &= \sum_{j=1}^N \nu_j(t)\varphi_j(x),\\
    \vec{\kappa}_h(t,x) &= \sum_{j=1}^N \vec{\kappa}_j(t)\varphi_j(x).
\end{split}
\end{equation*}
Observe that the normal as well as the curvature cannot be computed at the nodes of functions in $Y_h$ as they are essentially a rotation of the first derivative and the second derivative of the curve $\gamma_h$. For that reason we need the variational formulation of the normal and curvature equations.

The problem is the following: search $\gamma_h(t,\cdot), \nu_h(t,\cdot), \vec{\kappa}_h(t,\cdot) \in Y_h$ such that $\gamma_h(0,\cdot)=I_h(\gamma_0)$ and for all discrete test functions $\varphi_h, \theta_h, \psi_h \in Y_h$ the equations
\begin{equation}
\begin{split}
\label{eq:a_numeric_variational_formulation_discrete}
    \int_{I} I_h(\partial_t \gamma_h \cdot \varphi_h) \abs{\partial_x \gamma_h} dx - 2 \int_{I} \frac{\partial_x \vec{\kappa}_h}{\abs{\partial_x \gamma_h}} \cdot \partial_x \varphi_h dx\\
    - 3 \int_{I} \abs{\vec{\kappa}_h}^2 \frac{\partial_x \gamma_h}{\abs{\partial_x \gamma_h}}\cdot \partial_x \varphi_h dx + \lambda \int_{I} I_h(\nu_h \cdot \varphi_h) \abs{\partial_x \gamma_h}dx &=0,\\
    \int_{I} I_h(\nu_h \cdot \theta_h) \abs{\partial_x \gamma_h} dx - \int_{I} Q \partial_x \gamma_h \cdot \theta_h dx &= 0,\\
    \int_{I} I_h(\vec{\kappa}_h \cdot \psi_h) \abs{\partial_x \gamma_h} dx + \int_{I} \frac{\partial_x \gamma_h}{\abs{\partial_x \gamma_h}} \cdot \partial_x \psi_h dx &= 0
\end{split}
\end{equation}
are satisfied. We apply the interpolation $I_h$ in the equations, which is known as mass lumping, to accelerate and stabilize the computations. As mentioned before, testing against $\varphi_j$ in both coordinates is enough. Inserting and integrating gives a system of $2N$ ODEs
\begin{equation*}
\begin{split}
    \frac{1}{2}(\partial_t \gamma_j+\lambda \nu_j)(\abs{\gamma_j-\gamma_{j-1}}+\abs{\gamma_{j+1}-\gamma_j})\\
    -2\Big(\frac{\vec{\kappa}_j-\vec{\kappa}_{j-1}}{\abs{\gamma_j-\gamma_{j-1}}}-\frac{\vec{\kappa}_{j+1}-\vec{\kappa}_j}{\abs{\gamma_{j+1}-\gamma_j}}\Big)
    -(\abs{\vec{\kappa}_{j-1}}^2+\vec{\kappa}_{j-1}\cdot\vec{\kappa}_j+\abs{\vec{\kappa}_j}^2)\frac{\gamma_j-\gamma_{j-1}}{\abs{\gamma_j-\gamma_{j-1}}}\\
    +(\abs{\vec{\kappa}_j}^2+\vec{\kappa}_j\cdot\vec{\kappa}_{j+1}+\abs{\vec{\kappa}_{j+1}}^2) \frac{\gamma_{j+1}-\gamma_j}{\abs{\gamma_{j+1}-\gamma_j}} =& 0,\\
    \frac{1}{2}\nu_j(\abs{\gamma_j-\gamma_{j-1}}+\abs{\gamma_{j+1}-\gamma_j})-\frac{1}{2}Q(\gamma_{j+1}-\gamma_{j-1}) =& 0,\\
    \frac{1}{2}\vec{\kappa}_j(\abs{\gamma_j-\gamma_{j-1}}+\abs{\gamma_{j+1}-\gamma_j})+\frac{\gamma_j-\gamma_{j-1}}{\abs{\gamma_j-\gamma_{j-1}}}-\frac{\gamma_{j+1}-\gamma_j}{\abs{\gamma_{j+1}-\gamma_j}} =& 0,
\end{split}
\end{equation*}
where $\gamma_0=\gamma_N, \gamma_{N+1}=\gamma_1, \nu_0=\nu_N, \nu_{N+1}=\nu_1, \vec{\kappa}_0=\vec{\kappa}_N, \vec{\kappa}_{N+1}=\vec{\kappa}_1$. The initial values are given by $\gamma_j(0)=\gamma_0(x_j)$.

Moreover, in order to compute $\lambda$, we test with the normal $\nu_h$. The area constraint leads to
\begin{equation*}
    \int_{I}I_h(\partial_t \gamma_h \cdot \nu_h)\abs{\partial_x \gamma_h}dx = 0.
\end{equation*}

Then \cref{eq:a_numeric_variational_formulation_discrete} shows
\begin{equation*}
\begin{split}
    - 2 \int_{I} \frac{\partial_x \vec{\kappa}_h}{\abs{\partial_x \gamma_h}} \cdot \partial_x \nu_h dx - 3 \int_{I} \abs{\vec{\kappa}_h}^2 \frac{\partial_x \gamma_h}{\abs{\partial_x \gamma_h}} \cdot \partial_x \nu_h dx\\
    + \lambda \int_{I} I_h(\abs{\nu_h}^2) \abs{\partial_x \gamma_h}dx =0.
\end{split}
\end{equation*}
Rearranging and computing the integrals gives
\begin{equation*}
\begin{split}
    \lambda &= \Big(\sum_{j=1}^N \abs{\gamma_j-\gamma_{j-1}}(\abs{\nu_j}^2+\abs{\nu_{j-1}}^2)\Big)^{-1}\\
    &\quad\Big(4\sum_{j=1}^N \frac{\vec{\kappa}_j-\vec{\kappa}_{j-1}}{\abs{\gamma_j-\gamma_{j-1}}}\cdot(\nu_j-\nu_{j-1})\\
    &\quad+2\sum_{j=1}^N\frac{\gamma_j-\gamma_{j-1}}{\abs{\gamma_j-\gamma_{j-1}}}\cdot(\nu_j-\nu_{j-1})(\abs{\vec{\kappa}_{j-1}}^2+\vec{\kappa}_{j-1}\cdot\vec{\kappa}_j+\abs{\vec{\kappa}_j}^2)\Big).
\end{split}
\end{equation*}
 For the algorithm we will use the notation $w^m=w(m \Delta t, \cdot)$ where $\Delta t$ is the time step and $T=M \Delta t$ is the total simulation time.

\subsection{Algorithm}
For $j=1,\ldots,N$:
\begin{equation*}
\begin{split}
    \gamma_j^0 &= \gamma_0(x_j)\\
    h_j^0 &= \abs{\gamma_j^0-\gamma_{j-1}^0}\\
    \nu_j^0 &= \frac{Q(\gamma_{j+1}^{0}-\gamma_{j-1}^{0})}{h_j^0+h_{j+1}^0}\\
    \vec{\kappa}_j^0 &= \frac{2}{h_{j+1}^0(h_j^0+h_{j+1}^0)} \gamma_{j+1}^0 - \frac{2}{h_j^0h_{j+1}^0} \gamma_j^0 + \frac{2}{h_j^0(h_j^0+h_{j+1}^0)}\gamma_{j-1}^0.
\end{split}
\end{equation*}
For $m=0,\ldots,M-1$:
\begin{equation*}
\begin{split}
    h_j^m &= \abs{\gamma_j^m-\gamma_{j-1}^m}\\
    \beta_j^m &= \abs{\vec{\kappa}_{j-1}^m}^2+\vec{\kappa}_{j-1}^m\cdot\vec{\kappa}_j^m+ \abs{\vec{\kappa}_j^m}^2\\
    \lambda^m &= \Big(\sum_{j=1}^N h_j^m(\abs{\nu^m_j}^2+\abs{\nu^m_{j-1}}^2)\Big)^{-1}\\
    &\quad\Big(4\sum_{j=1}^N\frac{\vec{\kappa}^m_j-\vec{\kappa}^m_{j-1}}{h_j^m}\cdot(\nu^m_j-\nu^m_{j-1})\\
    &\quad+2\sum_{j=1}^N\frac{\gamma^m_j-\gamma^m_{j-1}}{h_j^m}\cdot(\nu^m_j-\nu^m_{j-1})\beta_j^m\Big).
\end{split}
\end{equation*}
Determine $\gamma_j^{m+1}, \nu_j^{m+1}, \vec{\kappa}_j^{m+1} \in \mathbb{R}^2$ from the linear system
\begin{equation*}
\begin{split}
    &\quad\frac{\beta_j^m}{h_j^m} \gamma_{j-1}^{m+1} + \Big(\frac{h_j^m+h_{j+1}^m}{2 \Delta t}- \frac{\beta_j^m}{h_j^m}- \frac{\beta_{j+1}^m}{h_{j+1}^m}\Big)\gamma_j^{m+1} +\frac{\beta_{j+1}^m}{h_{j+1}^m}\gamma_{j+1}^{m+1}\\
    &+ \frac{2}{h_j^m}\vec{\kappa}_{j-1}^{m+1} - \Big(\frac{2}{h_j^m}+\frac{2}{h_{j+1}^m}\Big)\vec{\kappa}_j^{m+1}+\frac{2}{h_{j+1}^{m}} \vec{\kappa}_{j+1}^{m+1}\\
    &+ \frac{\lambda^m}{2} (h_j^m+h_{j+1}^m) \nu_j^{m+1} = \frac{h_j^m+h_{j+1}^m}{2\Delta t} \gamma_j^m,\\
    &-Q (\gamma_{j+1}^{m+1}-\gamma_{j-1}^{m+1}) + (h_j^m+h_{j+1}^m)\nu_j^{m+1} = 0,\\
    &-\frac{1}{h_j^m} \gamma_{j-1}^{m+1} + (\frac{1}{h_j^m}+\frac{1}{h_{j+1}^m})\gamma_j^{m+1} - \frac{1}{h_{j+1}^m} \gamma_{j+1}^{m+1}\\
    &+\frac{1}{2}(h_j^m+h_{j+1}^m)\vec{\kappa}_j^{m+1} = 0.
\end{split}
\end{equation*}

For the sake of numeric stability, we rearranged the points $\gamma_j$ every $k \in \mathbb{N}$ steps such that they are evenly distributed along the curve.

\clearpage

\subsection{Simulation results}
In the following calculations, we used $N=100$ for the first two simulations and $N=300$ for the last two for stability. We parametrized $\mathbb{S}^1$ with $I=[0,2 \pi]$ and distributed nodes $x_j = 2\pi\frac{j-1}{N}$ evenly, omitting the redundant closure point at $2\pi$. As in \cite{Dziuk2002}, we choose $\Delta t=0.5 h^2$ for the maximal grid element $h$. As mentioned before, we rearrange the points $\gamma_j$ every $k\coloneqq100$ steps.\\
We are particularly interested in the four following cases.

\textbf{Convergence to the circle}\\
The circle is an area-constrained elastica, satisfying the equation
\begin{equation*}
    -2\partial_s^2\kappa-\kappa^3 - \lambda  = 0
\end{equation*}
for some $\lambda \in \mathbb{R}$. We expect that if the gradient flow starts from a curve $\gamma_0$ sufficiently close to the circle, it will converge to the circle. For example, consider the initial curve $\gamma_0(x) = (\cos(x), \frac{1}{2}\sin(x))$, an oval shape. The gradient flow starting from this curve is illustrated in \Cref{fig:oval}.

\begin{figure}[!htbp]
    \centering
    \begin{subfigure}[b]{0.3\textwidth}
        \centering
        \includegraphics[width=\textwidth]{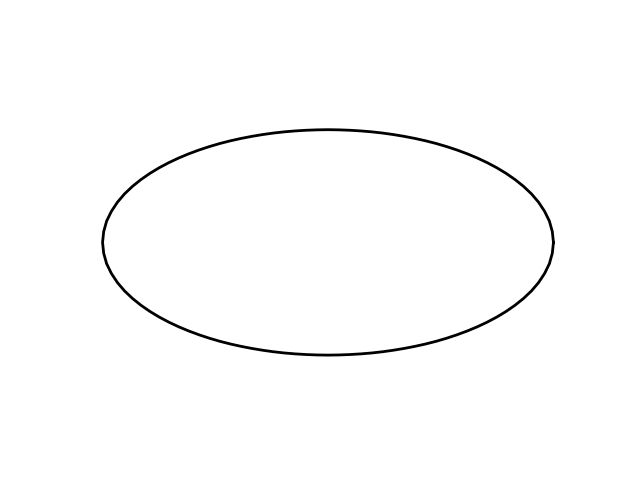}
        \caption{$t=0$} % Iteration 0
    \label{fig:oval1}
    \end{subfigure}
    \hfill
    \begin{subfigure}[b]{0.3\textwidth}
        \centering
        \includegraphics[width=\textwidth]{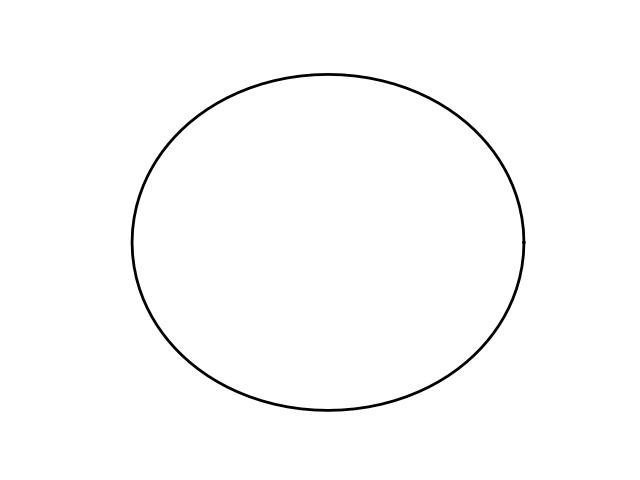}
        \caption{$t=0.04$} % Iteration 20
    \label{fig:oval2}
    \end{subfigure}
    \hfill
    \begin{subfigure}[b]{0.3\textwidth}
        \centering
        \includegraphics[width=\textwidth]{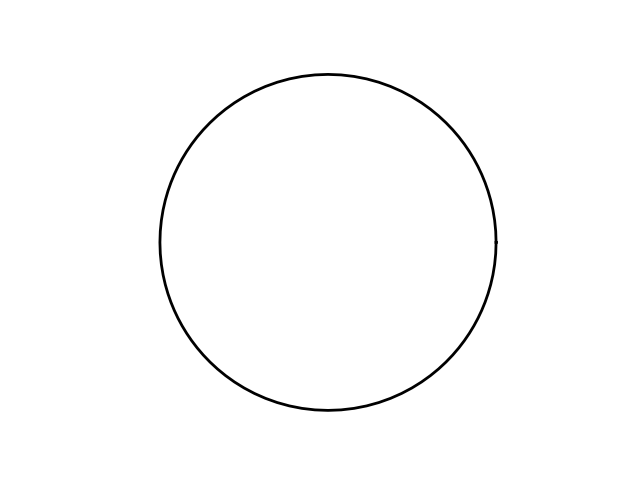}
        \caption{$t=0.16$} % Iteration 80
    \label{fig:oval3}
    \end{subfigure}
    \caption{Oval shape $\gamma_0(x) = (\cos(x), \frac{1}{2}\sin(x))$.}
\label{fig:oval}
\end{figure}

\textbf{Unbounded growth due to scaling}\\
The elastic energy $\mathcal{E}$ exhibits scaling properties: $\mathcal{E}(r\gamma) = \frac{1}{r}\mathcal{E}(\gamma)$. This implies scaling a curve by $r>1$ reduces its elastic energy. For the unconstrained elastic flow, we therefore expect the curve to grow indefinitely. However, under an area constraint, this growth is limited---unless the algebraic area $\mathcal{A}$ remains unchanged. Consider the lemniscate of Bernoulli
\begin{equation*}
    \gamma_0(x) = \frac{1}{1+\sin^2 (x)}(\cos(x), \frac{1}{2}\sin(2x)).
\end{equation*}
As mentioned in \cite{Miura_2025}, the lemniscate expands self-similarly for the free elastic flow with length in $\mathcal{O}(t^{\frac{1}{4}})$ as $t \to \infty$. It holds $\lambda = - \frac{\int_{\mathbb{S}^1} \kappa^3 ds}{\mathcal{L}(\gamma_0)} = 0$ for the lemniscate by its symmetry properties. Therefore, the area-constrained elastic flow behaves in the same way. Since $\mathcal{A}(\gamma_0) = 0$, scaling $\gamma_0$ does not alter its algebraic area. The gradient flow for this initial curve is shown in \Cref{fig:lemniscate}, with curve lengths provided in \Cref{tab:lengths_lemniscate} to highlight the growth behavior. Moreover, in \Cref{tab:growth_lemniscate} we observe the same scaling behavior $t^{\frac{1}{4}}$ as for the free elastic flow.

\clearpage

\begin{figure}[!t]
    \centering
    \begin{subfigure}[b]{0.48\textwidth}
        \centering
        \includegraphics[width=\textwidth]{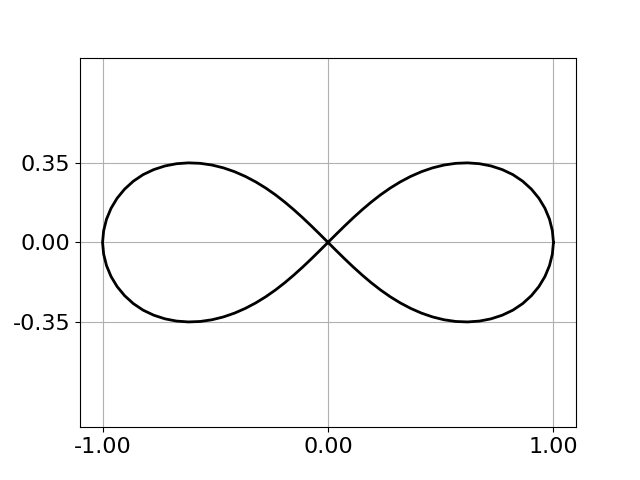}
        \caption{$t=0$} % Iteration $0$
    \label{fig:lemniscate1}
    \end{subfigure}
    \hfill
    \begin{subfigure}[b]{0.48\textwidth}
        \centering
        \includegraphics[width=\textwidth]{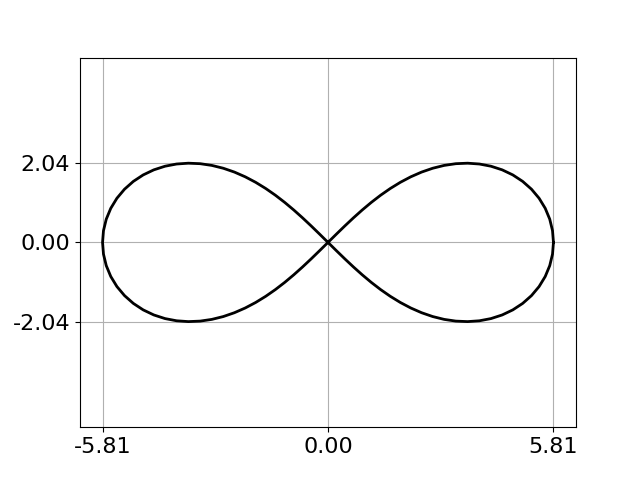}
        \caption{$t=19.74$} % Iteration $10000$
        \label{fig:lemniscate2}
    \end{subfigure}
    \caption{Lemniscate of Bernoulli $\gamma_0(x) = \frac{1}{1+\sin^2 (x)}(\cos(x), \frac{1}{2}\sin(2x))$.}
\label{fig:lemniscate}
\end{figure}

\begin{table}[!t]
    \centering
    \pgfplotstableread[col sep=comma]{tables/lengths_lemniscate_rounded.csv}\mydata
    \pgfplotstabletranspose[colnames from=Time]\transposeddata{\mydata}
    \pgfplotstabletypeset[
        display columns/0/.style={
            column name={$t$},
            column type=|l|,
            string type,
            string replace={Length}{$\mathcal{L}(\gamma_t)$}
        },
        every column/.style={
            column type=l| 
        },
        every head row/.style={
            before row=\hline,
            after row=\hline
        },
        every last row/.style={after row=\hline},
        every row/.style={after row=\hline},
    ]{\transposeddata}
    \caption{Lengths of lemniscate of Bernoulli.}
    \label{tab:lengths_lemniscate}
    \vspace{1em}
    \begin{filecontents*}{tables/growth_lemniscate_rounded.csv}
Time 1, Time 2, Length 1, Length 2, Ratio
7.9,1.97,24.27,17.41,3.78
15.79,3.95,28.78,20.52,3.87
19.74,4.93,30.41,21.65,3.89
\end{filecontents*}
\pgfplotstabletypeset[
        col sep=comma,
        columns/Time 1/.style={
            column name={$t_1$},
            column type=|l| % Vertical line and left-aligned
        },
        columns/Time 2/.style={
            column name={$t_2$},
            column type=l| % Vertical line and left-aligned
        },
        columns/Length 1/.style={
            column name={$\mathcal{L}(\gamma_{t_1})$},
            column type=l| % Vertical line and left-aligned
        },
        columns/Length 2/.style={
            column name={$\mathcal{L}(\gamma_{t_2})$},
            column type=l| % Vertical line and left-aligned
        },
        columns/Ratio/.style={
            column name={$\Big(\frac{\mathcal{L}(\gamma_{t_1})}{\mathcal{L}(\gamma_{t_2})}\Big)^4$},
            column type=l| % Vertical line and left-aligned
        },
        every head row/.style={
            before row=\hline,
            after row=\hline
        },
        every last row/.style={after row=\hline},
        every row/.style={after row=\hline},
    ]{tables/growth_lemniscate_rounded.csv}
    \caption{Lengths of the flow for different times $t_1,t_2$. The ratio $\Big(\frac{\mathcal{L}(\gamma_{t_1})}{\mathcal{L}(\gamma_{t_2})}\Big)^4$ highlights the expected length growth of $t^{\frac{1}{4}}$ and approximates the ratio of times $\frac{t_1}{t_2}=4$.}
\label{tab:growth_lemniscate}
\end{table}

\begin{figure}[!htbp]
    \centering
    \begin{subfigure}[b]{0.48\textwidth}
        \centering
        \includegraphics[width=\textwidth]{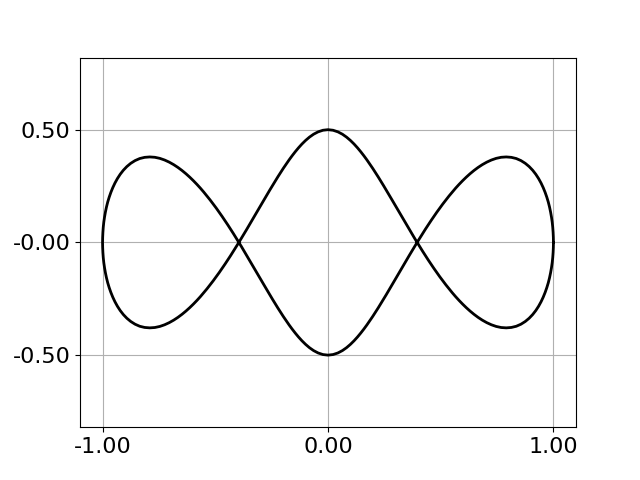}
        \caption{$t=0$} % Iteration 0
        \label{fig:triple-eight1}
    \end{subfigure}
    \hfill
    \begin{subfigure}[b]{0.48\textwidth}
        \centering
        \includegraphics[width=\textwidth]{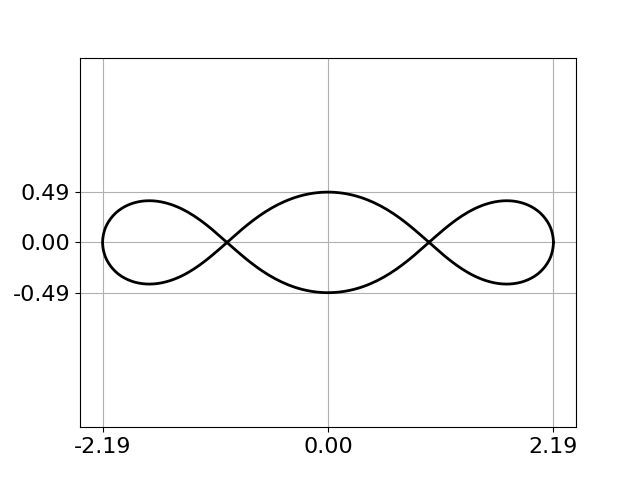}
        \caption{$t=0.11$} % Iteration 500
        \label{fig:triple-eight2}
    \end{subfigure}
    \vspace{1em}
    \begin{subfigure}[b]{0.48\textwidth}
        \centering
        \includegraphics[width=\textwidth]{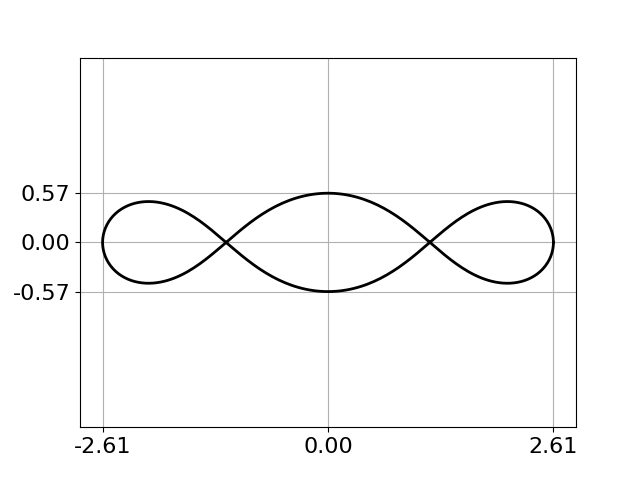}
        \caption{$t=0.22$} % Iteration 1000
        \label{fig:triple-eight3}
    \end{subfigure}
    \hfill
    \begin{subfigure}[b]{0.48\textwidth}
        \centering
        \includegraphics[width=\textwidth]{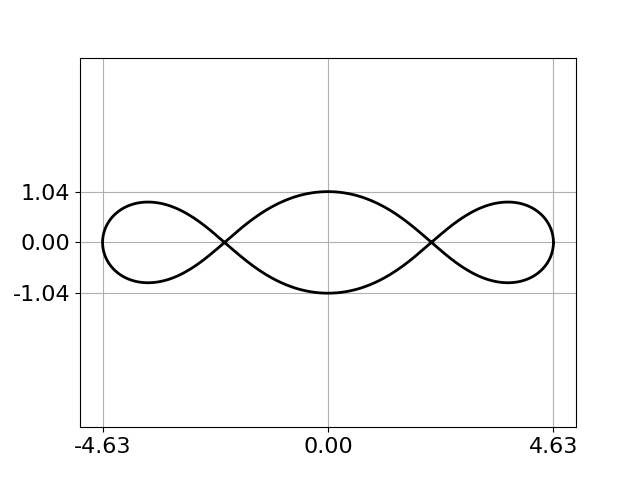}
        \caption{$t=2.19$} % Iteration 10000
        \label{fig:triple-eight4}
    \end{subfigure}
    \caption{Triple-eight shape $\gamma_0(x) = \Big(\frac{\cos(x)}{\sin^2(x) + 1}, \frac{\cos(x) \sin(2x)}{\sin^2(x) + 1}-\frac{\sin(x)}{2}\Big)$.}
    \label{fig:triple-eight}
\end{figure}

\textbf{Unbounded length with fixed algebraic area}\\
Finally, we examine a curve $\gamma_0$ with winding number $n_{\gamma_0}=1$, such as the 'triple-eight' $\gamma_0(x) = \Big(\frac{\cos(x)}{\sin^2(x) + 1}, \frac{\cos(x) \sin(2x)}{\sin^2(x) + 1}-\frac{\sin(x)}{2}\Big)$. Here, the algebraic area $\mathcal{A}$ accounts for the signed contributions of each loop. The curve can expand indefinitely while preserving $\mathcal{A}$, if the central 'belly' compensates for the gain in area of the outer loops. This behavior is observed in \Cref{fig:triple-eight} and the evolution of its lengths can be seen in \Cref{tab:lengths_triple-eight}, while the scaling behavior can be observed in \Cref{tab:growth_triple_eight}.

\begin{table}[!htbp]
    \centering
    \pgfplotstableread[col sep=comma]{tables/lengths_triple_eight_rounded.csv}\mydata
    \pgfplotstabletranspose[colnames from=Time]\transposeddata{\mydata}
    \pgfplotstabletypeset[
        display columns/0/.style={
            column name={$t$},
            column type=|l|,
            string type,
            string replace={Length}{$\mathcal{L}(\gamma_t)$}
        },
        every column/.style={
            column type=l| 
        },
        every head row/.style={
            before row=\hline,
            after row=\hline
        },
        every last row/.style={after row=\hline},
        every row/.style={after row=\hline},
    ]{\transposeddata}
    \caption{Lengths of 'triple-eight' shaped curves.}
    \label{tab:lengths_triple-eight}
    \vspace{1em}
    \begin{filecontents*}{tables/growth_triple_eight_rounded.csv}
Time 1, Time 2, Length 1, Length 2, Ratio
0.88,0.22,17.9,12.67,3.98
1.75,0.44,21.32,15.05,4.03
2.19,0.55,22.56,15.91,4.04
\end{filecontents*}
\pgfplotstabletypeset[
        col sep=comma,
        columns/Time 1/.style={
            column name={$t_1$},
            column type=|l| % Vertical line and left-aligned
        },
        columns/Time 2/.style={
            column name={$t_2$},
            column type=l| % Vertical line and left-aligned
        },
        columns/Length 1/.style={
            column name={$\mathcal{L}(\gamma_{t_1})$},
            column type=l| % Vertical line and left-aligned
        },
        columns/Length 2/.style={
            column name={$\mathcal{L}(\gamma_{t_2})$},
            column type=l| % Vertical line and left-aligned
        },
        columns/Ratio/.style={
            column name={$\Big(\frac{\mathcal{L}(\gamma_{t_1})}{\mathcal{L}(\gamma_{t_2})}\Big)^4$},
            column type=l| % Vertical line and left-aligned
        },
        every head row/.style={
            before row=\hline,
            after row=\hline
        },
        every last row/.style={after row=\hline},
        every row/.style={after row=\hline},
    ]{tables/growth_triple_eight_rounded.csv}
    \caption{Lengths of the flow for different times $t_1,t_2$. The ratio $\Big(\frac{\mathcal{L}(\gamma_{t_1})}{\mathcal{L}(\gamma_{t_2})}\Big)^4$ highlights the expected length growth of $t^{\frac{1}{4}}$ and approximates the ratio of times $\frac{t_1}{t_2}=4$.}
\label{tab:growth_triple_eight}
\end{table}

\clearpage

\textbf{Convergence to the circle for nonsimple curve}\\
To highlight the importance of the convergence result \Cref{cor:energy_profile_convergence}, we present a flow starting from a nonsimple initial curve \Cref{fig:triple-eight20}, which converges to a circle. Consequently, there may exist a broader regime of guaranteed convergence than that specified in \Cref{thm:convergence}, which also encompasses nonsimple curves. This suggests that the conditions stated in \Cref{thm:convergence} are not sharp.

\begin{figure}[!htbp]
    \centering
    \begin{subfigure}[b]{0.24\textwidth}
        \centering
        \includegraphics[width=\textwidth]{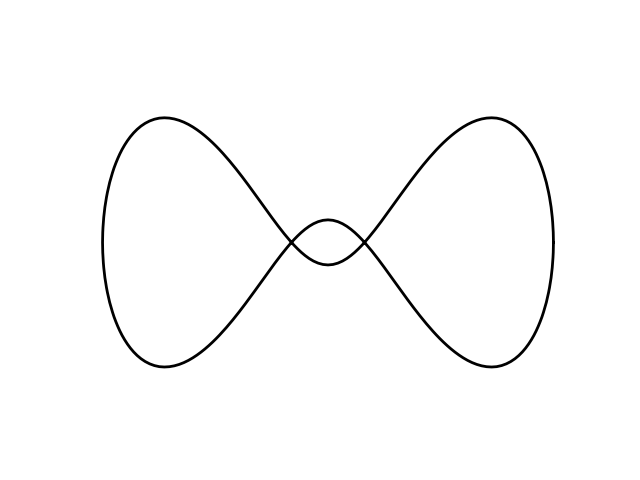}
        \caption{$t=0$} % Iteration 0
    \label{fig:triple-eight21}
    \end{subfigure}
    \hfill
    \centering
    \begin{subfigure}[b]{0.24\textwidth}
        \centering
        \includegraphics[width=\textwidth]{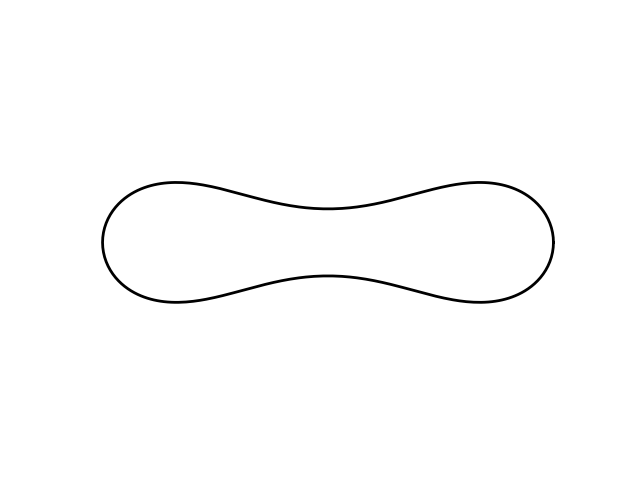}
        \caption{$t=0.01$} % Iteration 50
    \label{fig:triple-eight22}
    \end{subfigure}
    \hfill
    \begin{subfigure}[b]{0.24\textwidth}
        \centering
        \includegraphics[width=\textwidth]{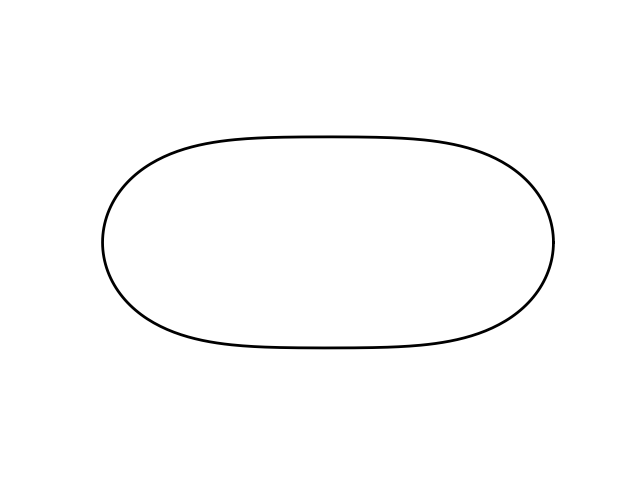}
        \caption{$t=0.04$} % Iteration 200
    \label{fig:triple-eight23}
    \end{subfigure}
    \hfill
    \begin{subfigure}[b]{0.24\textwidth}
        \centering
        \includegraphics[width=\textwidth]{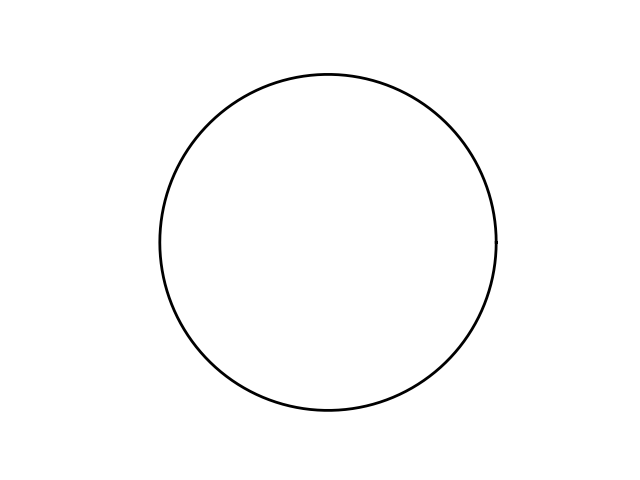}
        \caption{$t=0.11$} % Iteration 500
    \label{fig:triple-eight24}
    \end{subfigure}
    \caption{Nonsimple curve $\gamma_0(x) = \Big(\frac{\cos(x)}{\sin^2(x) + 1}, \frac{\cos(x) \sin(2x)}{\sin^2(x) + 1}-\frac{\sin(x)}{10}\Big)$.}
\label{fig:triple-eight20}
\end{figure}

\section*{Acknowledgments}
This article was submitted as the first author's master thesis at the University of Bonn. The authors want to thank Stefan Müller for his interest in the work and
for reviewing the thesis.

\begin{appendix}
\section{Derivatives of operators}
\label{sec:app_derivative_F}

In the following, the derivative of the operator $\mathcal{F}$ from \cref{eq:definition_F} is presented. Recall that it was given by
\begin{equation*}
\begin{split}
    \mathcal{F}(\gamma) &= -2\frac{\partial_x^{4}\gamma}{\abs{\partial_x \gamma}^{4}} + 12 \frac{\partial_{x}^{2} \gamma \cdot \partial_x \gamma}{\abs{\partial_x \gamma}^{6}}\partial_x^{3}\gamma + 8 \frac{\partial_x^{3}\gamma \cdot \partial_x \gamma}{\abs{\partial_x \gamma}^{6}}\partial_x^{2}\gamma\\
    &\quad+ 5 \frac{\abs{\partial_x^{2} \gamma}^{2}}{\abs{\partial_x \gamma}^{6}} \partial_x^{2} \gamma - 35 \frac{(\partial_x^{2}\gamma \cdot \partial_x \gamma)^{2}}{\abs{\partial_x \gamma}^{8}} \partial_x^{2}\gamma.
\end{split}
\end{equation*}
We compute the derivative of the summands separately:

\begin{equation}
\label{eq:computation_derivative_F}
\begin{aligned}
    &\quad\frac{d}{dl}\bigg|_{l=0} \frac{\partial_x^4(\gamma + lv)}{|\partial_x (\gamma + lv)|^4} = \frac{\partial_x^4 v}{|\partial_x \gamma|^4} - 4 \frac{(\partial_x \gamma \cdot \partial_x v)}{|\partial_x \gamma|^6} \partial_x^4 \gamma;\\[1ex]
    &\quad\frac{d}{dl}\bigg|_{l=0} \frac{(\partial_x^2(\gamma + lv) \cdot \partial_x(\gamma + lv))}{|\partial_x (\gamma + lv)|^6} \partial_x^3(\gamma + lv)\\
    &= \frac{(\partial_x^2 v \cdot \partial_x \gamma) + (\partial_x^2 \gamma \cdot \partial_x v)}{|\partial_x \gamma|^6} \partial_x^3 \gamma - 6 \frac{(\partial_x^2 \gamma \cdot \partial_x \gamma)(\partial_x \gamma \cdot \partial_x v)}{|\partial_x \gamma|^8} \partial_x^3 \gamma\\
    &\quad+ \frac{(\partial_x^2 \gamma \cdot \partial_x \gamma)}{|\partial_x \gamma|^6} \partial_x^3 v;\\[1ex]
    &\quad\frac{d}{dl}\bigg|_{l=0} \frac{(\partial_x^3(\gamma + lv) \cdot \partial_x(\gamma + lv))}{|\partial_x (\gamma + lv)|^6} \partial_x^2(\gamma + lv)\\
    &= \frac{(\partial_x^3 v \cdot \partial_x \gamma) + (\partial_x^3 \gamma \cdot \partial_x v)}{|\partial_x \gamma|^6} \partial_x^2 \gamma - 6 \frac{(\partial_x^3 \gamma \cdot \partial_x \gamma)(\partial_x \gamma \cdot \partial_x v)}{|\partial_x \gamma|^8} \partial_x^2 \gamma \\
    &\quad + \frac{(\partial_x^3 \gamma \cdot \partial_x \gamma)}{|\partial_x \gamma|^6} \partial_x^2 v;\\[1ex]
    &\quad\frac{d}{dl}\bigg|_{l=0} \frac{|\partial_x^2 (\gamma + lv)|^2}{|\partial_x (\gamma + lv)|^6} \partial_x^2(\gamma + lv) = 2 \frac{(\partial_x^2 \gamma \cdot \partial_x^2 v)}{|\partial_x \gamma|^6} \partial_x^2 \gamma - 6 \frac{|\partial_x^2 \gamma|^2 (\partial_x \gamma \cdot \partial_x v)}{|\partial_x \gamma|^8} \partial_x^2 \gamma\\
    &\quad+ \frac{|\partial_x^2 \gamma|^2}{|\partial_x \gamma|^6} \partial_x^2 v;\\[1ex]
    &\quad\frac{d}{dl}\bigg|_{l=0} \frac{(\partial_x^2(\gamma + lv) \cdot \partial_x(\gamma + lv))^2}{|\partial_x (\gamma + lv)|^8} \partial_x^2(\gamma + lv)\\
    &= 2 \frac{(\partial_x^2 \gamma \cdot \partial_x \gamma)((\partial_x^2 v \cdot \partial_x \gamma) + (\partial_x^2 \gamma \cdot \partial_x v))}{|\partial_x \gamma|^8} \partial_x^2 \gamma - 8 \frac{(\partial_x^2 \gamma \cdot \partial_x \gamma)^2 (\partial_x \gamma \cdot \partial_x v)}{|\partial_x \gamma|^{10}} \partial_x^2 \gamma \\
    &\quad + \frac{(\partial_x^2 \gamma \cdot \partial_x \gamma)^2}{|\partial_x \gamma|^8} \partial_x^2 v.
\end{aligned}
\end{equation}

These are all linear expressions in $v$ and hence we can write the derivative in the claimed form.

\label{sec:app_derivative_G}
Let us compute the derivative of $\mathcal{G}$ from \cref{eq:definition_G}. Recall that it is given by
\begin{equation*}
    \mathcal{G}(\gamma) = \frac{\int_{\mathbb{S}^1} \kappa^3 ds}{\int_{\mathbb{S}^1}ds} \nu.
\end{equation*}

Recall $\nu = \frac{1}{\abs{\partial_x \gamma}}\begin{pmatrix}
    -\partial_x \gamma_2\\
    \partial_x \gamma_1
\end{pmatrix}$. Moreover, we write $\kappa = (\kappa \nu)\cdot \nu$ and with \Cref{prop:a_explicit_formulation}, we can compute the derivative of $\int_{\mathbb{S}^1} \kappa^3 ds$ by

\begin{equation*}
\begin{split}
    &\frac{d}{dl}\bigg|_{l=0} \int_{\mathbb{S}^1} \Bigg(\bigg(\frac{\partial_x^2 (\gamma+lv)}{\abs{\partial_x (\gamma+lv)}^2} - \frac{\partial_x^2 (\gamma+lv) \cdot \partial_x (\gamma+lv)}{\abs{\partial_x (\gamma+lv)}^4} \partial_x (\gamma+lv)\bigg)\\
    &\quad\cdot\frac{1}{\abs{\partial_x (\gamma+lv)}}\begin{pmatrix}
        -\partial_x (\gamma+lv)_2\\
        \partial_x (\gamma+lv)_1
    \end{pmatrix}\Bigg)^3 \abs{\partial_x(\gamma+lv)} dx\\
    &= \int_{\mathbb{S}^1} 3\Bigg(\bigg(\frac{\partial_x^2v}{\abs{\partial_x \gamma}^2} - 2\frac{(\partial_x \gamma \cdot \partial_x v)}{\abs{\partial_x \gamma}^4}\partial_x^2 \gamma-\frac{(\partial_x^2\gamma \cdot \partial_x v) + (\partial_x^2 v \cdot \partial_x \gamma)}{\abs{\partial_x \gamma}^4}\partial_x \gamma\\
    &\quad + 4\frac{(\partial_x^2 \gamma \cdot \partial_x \gamma)(\partial_x \gamma \cdot \partial_x v)}{\abs{\partial_x \gamma}^6} \partial_x \gamma - \frac{(\partial_x^2 \gamma \cdot \partial_x \gamma)}{\abs{\partial_x \gamma}^4} \partial_x v\bigg)\cdot \frac{1}{\abs{\partial_x \gamma}} \begin{pmatrix}
        -\partial_x \gamma_2\\
        \partial_x \gamma_1
    \end{pmatrix}\\
    &\quad+ \bigg(\frac{\partial_x^2 \gamma}{\abs{\partial_x \gamma}^2} - \frac{\partial_x^2 \gamma \cdot \partial_x \gamma}{\abs{\partial_x \gamma}^4} \partial_x \gamma\bigg) \cdot \bigg(-\frac{(\partial_x \gamma \cdot \partial_x v)}{\abs{\partial_x \gamma}^3}\begin{pmatrix}
        -\partial_x \gamma_2\\
        \partial_x \gamma_1
    \end{pmatrix}\\
    &\quad+ \frac{1}{\abs{\partial_x \gamma}} \begin{pmatrix}
        -\partial_x v_2\\
        \partial_x v_1
    \end{pmatrix}\bigg) \Bigg)\\
    &\quad\Bigg(\big(\frac{\partial_x^2 \gamma}{\abs{\partial_x \gamma}^2} - \frac{\partial_x^2 \gamma \cdot \partial_x \gamma}{\abs{\partial_x \gamma}^4} \partial_x \gamma\big)\cdot\frac{1}{\abs{\partial_x \gamma}}\begin{pmatrix}
        -\partial_x \gamma_2\\
        \partial_x \gamma_1
    \end{pmatrix}\Bigg)^2 \abs{\partial_x \gamma} dx\\
    &\quad+ \int_{\mathbb{S}^1} \bigg(\big(\frac{\partial_x^2 \gamma}{\abs{\partial_x \gamma}^2} - \frac{\partial_x^2 \gamma \cdot \partial_x \gamma}{\abs{\partial_x \gamma}^4} \partial_x \gamma\big)\cdot\frac{1}{\abs{\partial_x \gamma}}\begin{pmatrix}
        -\partial_x \gamma_2\\
        \partial_x \gamma_1
    \end{pmatrix}\bigg)^3\frac{(\partial_x \gamma \cdot \partial_x v)}{\abs{\partial_x \gamma}} dx.
\end{split}
\end{equation*}

The derivative of $\frac{1}{\int_{\mathbb{S}^1} ds}$ is given by
\begin{equation*}
    \frac{d}{dl}\bigg|_{l=0} \frac{1}{\int_{\mathbb{S}^1} \abs{\partial_x (\gamma+lv)}dx} = -\frac{\int_{\mathbb{S}^1} \frac{(\partial_x \gamma \cdot \partial_x v)}{\abs{\partial_x \gamma}} dx}{\big(\int_{\mathbb{S}^1} \abs{\partial_x \gamma} dx\big)^2}.
\end{equation*}

Lastly, the derivative of $\nu$ is given by

\begin{equation*}
\begin{split}
    \frac{d}{dl}\bigg|_{l=0} \frac{1}{\abs{\partial_x(\gamma+lv)}}\begin{pmatrix}
        -\partial_x (\gamma+lv)_2\\
        \partial_x (\gamma+lv)_1
    \end{pmatrix} &= -\frac{(\partial_x \gamma \cdot \partial_x v)}{\abs{\partial_x \gamma}^3}\begin{pmatrix}
        -\partial_x \gamma_2\\
        \partial_x \gamma_1
    \end{pmatrix}\\
    &\quad+ \frac{1}{\abs{\partial_x \gamma}} \begin{pmatrix}
        -\partial_x v_2\\
        \partial_x v_1
    \end{pmatrix}.
\end{split}
\end{equation*}

Since Hölder spaces are algebras and we can pull the Hölder-norm inside the integral, while $\mathbb{S}^1$ has finite measure, the derivative can be estimated by
\begin{equation}
\label{eq:estimate_norm_derivative_G}
    \norm{D\mathcal{G}[\gamma]v}_{H^{\frac{\alpha}{4}, \alpha}} \leq c(\abs{\partial_x \gamma}^{-1}, \norm{\gamma}_{H^{\frac{4+\alpha}{4}, 4+\alpha}})(\norm{\partial_x v}_{H^{\frac{\alpha}{4},\alpha}}+\norm{\partial_x^2 v}_{H^{\frac{\alpha}{4}, \alpha}})
\end{equation}
as claimed.

\end{appendix}

\bibliographystyle{abbrv}
\bibliography{references}

\begin{thebibliography}{10}

\bibitem{MR1071170}
H.~Amann.
\newblock {\em Ordinary differential equations}, volume~13 of {\em De Gruyter
  Studies in Mathematics}.
\newblock Walter de Gruyter \& Co., Berlin, 1990.
\newblock An introduction to nonlinear analysis, Translated from the German by
  Gerhard Metzen.

\bibitem{andrews2002}
B.~Andrews.
\newblock Classification of limiting shapes for isotropic curve flows.
\newblock {\em J. Amer. Math. Soc.}, 16(2):443--459, 2003.

\bibitem{brezis2011functional}
H.~Brezis.
\newblock {\em Functional analysis, {S}obolev spaces and partial differential
  equations}.
\newblock Universitext. Springer, New York, 2011.

\bibitem{bucur2014newisoperimetricinequalityelasticae}
D.~Bucur and A.~Henrot.
\newblock A new isoperimetric inequality for elasticae.
\newblock {\em J. Eur. Math. Soc. (JEMS)}, 19(11):3355--3376, 2017.

\bibitem{Dall_Acqua_2016}
A.~Dall'Acqua, P.~Pozzi, and A.~Spener.
\newblock The \l ojasiewicz-{S}imon gradient inequality for open elastic
  curves.
\newblock {\em J. Differential Equations}, 261(3):2168--2209, 2016.

\bibitem{dallacqua2017elasticflowcurveshyperbolic}
A.~Dall'Acqua and A.~Spener.
\newblock The elastic flow of curves in the hyperbolic plane, 2017.

\bibitem{Doria2021}
C.~M. Doria.
\newblock {\em Differentiability in {B}anach spaces, differential forms and
  applications}.
\newblock Springer, Cham, [2021] \copyright 2021.

\bibitem{Dziuk2002}
G.~Dziuk, E.~Kuwert, and R.~Sch\"atzle.
\newblock Evolution of elastic curves in {$\mathbb{R}^n$}: existence and
  computation.
\newblock {\em SIAM J. Math. Anal.}, 33(5):1228--1245, 2002.

\bibitem{Eidelman1998}
S.~D. Eidelman and N.~V. Zhitarashu.
\newblock {\em Parabolic boundary value problems}, volume 101 of {\em Operator
  Theory: Advances and Applications}.
\newblock Birkh\"auser Verlag, Basel, 1998.
\newblock Translated from the Russian original by Gennady Pasechnik and Andrei
  Iacob.

\bibitem{ferone2015elasticaproblemareaconstraint}
V.~Ferone, B.~Kawohl, and C.~Nitsch.
\newblock The elastica problem under area constraint.
\newblock {\em Math. Ann.}, 365(3-4):987--1015, 2016.

\bibitem{gerhardt2006curvature}
C.~Gerhardt.
\newblock {\em Curvature problems}, volume~39 of {\em Series in Geometry and
  Topology}.
\newblock International Press, Somerville, MA, 2006.

\bibitem{ioffe}
A.~D. Ioffe.
\newblock On lower semicontinuity of integral functionals. {I}.
\newblock {\em SIAM J. Control Optim.}, 15(4):521--538, 1977.

\bibitem{MR1882174}
W.~K\"uhnel.
\newblock {\em Differential geometry}, volume~16 of {\em Student Mathematical
  Library}.
\newblock American Mathematical Society, Providence, RI, 2002.
\newblock Curves---surfaces---manifolds, Translated from the 1999 German
  original by Bruce Hunt.

\bibitem{LANGER198575}
J.~Langer and D.~A. Singer.
\newblock Curve straightening and a minimax argument for closed elastic curves.
\newblock {\em Topology}, 24(1):75--88, 1985.

\bibitem{lee2003smooth}
J.~M. Lee.
\newblock {\em Introduction to smooth manifolds}, volume 218 of {\em Graduate
  Texts in Mathematics}.
\newblock Springer, New York, second edition, 2013.

\bibitem{Miura_2025}
T.~Miura and G.~Wheeler.
\newblock The free elastic flow for closed planar curves.
\newblock {\em J. Funct. Anal.}, 289(7):Paper No. 111030, 22, 2025.

\bibitem{Mueller_2021}
M.~M\"uller and F.~Rupp.
\newblock A {L}i-{Y}au inequality for the 1-dimensional {W}illmore energy.
\newblock {\em Adv. Calc. Var.}, 16(2):337--362, 2023.

\bibitem{MinoruMurai2013ConferencePublications}
M.~Murai, W.~Matsumoto, and S.~Yotsutani.
\newblock Representation formula for the plane closed elastic curves.
\newblock {\em Discrete Contin. Dyn. Syst.}, pages 565--585, 2013.

\bibitem{okabe2007}
S.~Okabe.
\newblock The motion of elastic planar closed curves under the area-preserving
  condition.
\newblock {\em Indiana Univ. Math. J.}, 56(4):1871--1912, 2007.

\bibitem{osserman1978isoperimetric}
R.~Osserman.
\newblock The isoperimetric inequality.
\newblock {\em Bull. Amer. Math. Soc.}, 84(6):1182--1238, 1978.

\bibitem{POZZETTA2022112581}
M.~Pozzetta.
\newblock Convergence of elastic flows of curves into manifolds.
\newblock {\em Nonlinear Anal.}, 214:Paper No. 112581, 53, 2022.

\bibitem{RUPP2020108708}
F.~Rupp.
\newblock On the \l ojasiewicz-{S}imon gradient inequality on submanifolds.
\newblock {\em J. Funct. Anal.}, 279(8):108708, 33, 2020.

\bibitem{Rupp_Phd}
F.~Rupp.
\newblock {\em Constrained gradient flows for Willmore-type functionals}.
\newblock PhD thesis, Universität Ulm, 2022.

\bibitem{Rupp2024}
F.~Rupp and A.~Spener.
\newblock Existence and convergence of the length-preserving elastic flow of
  clamped curves.
\newblock {\em J. Evol. Equ.}, 24(3):Paper No. 59, 41, 2024.

\bibitem{seeley1964extension}
R.~T. Seeley.
\newblock Extension of {$C\sp{\infty }$} functions defined in a half space.
\newblock {\em Proc. Amer. Math. Soc.}, 15:625--626, 1964.

\bibitem{Spener2017}
A.~Spener.
\newblock Short time existence for the elastic flow of clamped curves.
\newblock {\em Math. Nachr.}, 290(13):2052--2077, 2017.

\bibitem{Watanabe2002}
K.~Watanabe.
\newblock Plane domains which are spectrally determined.
\newblock {\em Ann. Global Anal. Geom.}, 18(5):447--475, 2000.

\end{thebibliography}
\end{document}